\documentclass[10pt,a4paper]{article}
\usepackage[utf8]{inputenc}
\usepackage[english]{babel}
\usepackage{multicol,graphicx,color}
\usepackage{pslatex}
\usepackage{authblk}
\usepackage{amsthm}
\usepackage{amsmath}
\usepackage{amssymb}
\usepackage{latexsym}
\usepackage{lscape}
\usepackage{epsfig}
\usepackage{amsfonts}
\usepackage{mathrsfs,bbm}
\usepackage[
hmarginratio={1:1},     
vmarginratio={1:1},     
textwidth=15cm,        
textheight=21cm,
heightrounded,          
]{geometry}

\usepackage{tikz}

\makeatletter

\newtheorem{theorem}{Theorem}[section]

\newtheorem{definition}[theorem]{Definition}

\newtheorem{lemma}[theorem]{Lemma}
\newtheorem{proposition}[theorem]{Proposition}
\newtheorem{remark}[theorem]{Remark}

\theoremstyle{definition} \theoremstyle{remark}
\numberwithin{equation}{section}

\newcommand{\R}{\mathbb{R}}
\newcommand{\N}{\mathbb{N}}
\newcommand{\B}{\mathcal{B}}
\newcommand{\U}{\mathcal{U}}
\newcommand{\K}{\mathcal{K}}

\newcommand{\p}{\mathcal{P}}
\newcommand{\s}{\mathbb{S}}

\newcommand{\la}{\lambda}
\newcommand{\La}{\Lambda}
\newcommand{\Ga}{\Gamma}
\newcommand{\ga}{\gamma}
\newcommand{\ep}{\epsilon}

\begin{document}

\title{Exact number of positive solutions and existence of sign-changing solutions with prescribed mass for NLS on bounded domains
}

\author{Linjie Song\footnote{songlinjie18@mails.ucas.edu.cn, linjie.song@univ-fcomte.fr}} \affil{{\small Universit\'e Marie et Louis Pasteur, CNRS, LmB (UMR 6623), Besan\c{c}on F-25000, France}} 

\author{Wenming Zou\footnote{zou-wm@mail.tsinghua.edu.cn}}
\affil{{\small Tsinghua University, Department of Mathematical Sciences, Beijing 100084, China}}

\date{}
	
\maketitle
\begin{abstract}
	\noindent Given $\mu > 0$, we study the elliptic problem:
	\begin{align*}
		\text{ find } (u,\la) \in H_0^1(\Omega) \times \R \text{ such that } -\Delta u + \la u = |u|^{p-2}u \text{ in } \Omega \text{ and } \int_\Omega|u|^2dx = \mu,
	\end{align*}
    where $\Omega \subset \R^N$ is a bounded domain and $p > 2$ is Sobolev-subcritical. When $p$ is $L^2$-subcritical, i.e. $2 < p < 2 + 4/N$, we show that the problem admits infinitely many sign-changing solutions whose energies are unbounded for every fixed $\mu > 0$. Moreover, we give the limit behavior for both the parameter $\la$ and the energy of the solutions  as  $\mu \to 0^+$ and $\mu \to +\infty$ respectively. Such a multiplicity result also holds when $p$ is $L^2$-critical, i.e. $p = 2 + 4/N$, for each small $\mu > 0$, and we describe precisely what happen when $\mu \to 0^+$.  In the $L^2$-supercritical case, i.e. $2+4/N < p < 2^*$, we find as many sign-changing solutions as we want at the expense of possibly reducing the mass $\mu$. As $\mu$ tends to $0$, the energy of these solutions goes to $0$ and the limit of the parameter $\la$ is a Dirichlet eigenvalue of $-\Delta$ on $\Omega$ multiplying $-1$. When $\Omega = B_1$, the unitary ball, and the nonlinear term is $\tau |u|^{p-2}u$ with $\tau \in [1/2,1]$ fixed, in the $L^2$-supercritical regime, we prove that the problem admits exactly two positive solutions for small $\mu > 0$ and how small $\mu > 0$ must be does not depend on the value of $\tau$. Moreover, sending $\mu$ to $0$ we get that the energy of one positive solution tends to $0$ and the parameter tends to $-\la_1(B_1)$, where $\la_1(B_1)$ is the first Dirichlet eigenvalue of $-\Delta $ on the unit ball $B_1$, while both the energy of the other positive solution and the parameter $\la$ go to infinity uniformly with respect to $\tau$. The result on the exact number of positive solutions not only  has its own interest but it  also is a crucial step in our framework for  searching  sign-changing solutions with large energy and large parameter $\la$. In fact, this result enables us to show that the problem admits a sign-changing solution for small $\mu$ whose energy and the parameter $\la$ tend to infinity as $\mu \to 0^+$.

\medskip

{\small \noindent \textit{\bf Key Words:} Positive solutions; sign-changing solutions; $L^2$ constraint; bounded domain.\\
\textit{\bf Mathematics Subject Classification:} 35A15, 35J60}

\end{abstract}

\newpage
\tableofcontents

\vskip0.66in

\section{Introduction and main results}

To introduce our questions, let us begin with the NLS with a pure-power nonlinearity set on $\R^N$. For any fixed $\la > 0$, it is well known that the following equation
\begin{align} \label{eqonrn}
	-\Delta u + \la u = |u|^{p-2}u, \quad u \in H^1(\R^N)
\end{align}
has a unique positive solution $u_\la$ (up to translations) whenever $2 < p < 2^*:= 2N/(N-2)^+$, see \cite{Kwong}. Moreover, by using Pohozaev identity, see e.g. \cite[Appendix B]{Willem}, it can be checked that \eqref{eqonrn} admits no solution when $\la \leq 0$. Due to scaling invariance, direct computation yields
\begin{align*}
	u_\la = \la^{\frac1{p-2}}u_1(\sqrt{\la}x) \quad \text{and} \quad \int_{\R^N}|u_\la|^2dx = \la^{\frac2{p-2}-\frac N2}\int_{\R^N}|u_1|^2dx.
\end{align*}
This indicates that whenever $p \in (2,2^*)$ and $p \neq 2 + 4/N$, for any $\mu > 0$, there exists a \emph{unique} positive function $u_\mu$ satisfying $\int_{\R^N}|u_\mu|^2dx = \mu$, with a parameter $\la_\mu$, solving \eqref{eqonrn} with $\la = \la_\mu$. Define the energy functional
\begin{align*}
	E(u,\R^N) = \frac12\int_{\R^N}|\nabla u|^2dx - \frac1p\int_{\R^N}|u|^pdx.
\end{align*}
In the \emph{mass subcritical} case, that is $$2 < p < p_c := 2 + 4/N,$$ we see that $E(u_\mu,\R^N) \to 0$ and $\la_\mu \to 0^+$ as $\mu \to 0^+$ while $E(u_\mu,\R^N) \to -\infty$ and $\la_\mu \to +\infty$ as $\mu \to +\infty$. On the contrary, in the \emph{mass supercritical} case, that is, $p_c < p <2^*$, it holds that $E(u_\mu,\R^N) \to +\infty$ and $\la_\mu \to +\infty$ as $\mu \to 0^+$ while $E(u_\mu,\R^N) \to 0$ and $\la_\mu \to 0^+$ as $\mu \to +\infty$. 

\vskip0.1in

The situation on a {\bf bounded domain}  is quite different. We focus on the case of the unitary ball. For any fixed $\la > -\la_1(B_1)$ where $B_1$ is the unitary ball centralized at $0$ in $\R^N$ and $\la_1(B_1)$ is the first eigenvalue of $-\Delta$ with Dirichlet boundary condition, as well known, the following equation
\begin{align} \label{eqonB1}
	-\Delta u + \la u = |u|^{p-2}u \quad \text{in } B_1, \quad u \in H_0^1(B_1)
\end{align}
has a unique positive solution $u_\la$ whenever $2 < p < 2^*$, see \cite{GNN,Korman,Kwong,KL,Zhang}. Moreover, \eqref{eqonB1} admits no positive solution when $\la \leq -\la_1(B_1)$, see e.g. \cite[Theorem 1.19]{Willem}. Though scaling invariance is lost in this case, the authors in \cite{NTV} showed that when $2 < p < p_c$, \eqref{eqonB1} has a unique positive solution with $L^2$-norm $\mu$ for any fixed $\mu > 0$; when $p = p_c$, there exists $\bar \mu > 0$ such that \eqref{eqonB1} admits a unique positive solution with $L^2$-norm $\mu$ for $0 < \mu < \bar \mu$ and no positive solution with $L^2$-norm $\mu \geq \bar \mu$. In striking contrast, in the supercritical regime, readers can see \cite{Song} that $u_\la$ forms a $C^1$ curve in $H_0^1(B_1)$ parameterized by $\la$ and
$$\lim_{\la \to -\la_1(B_1)}\int_{B_1}|u_\la|^2dx = 0 = \lim_{\la \to +\infty}\int_{B_1}|u_\la|^2dx.$$
Thus, for small $\mu$, \eqref{eqonB1} has at least two positive solutions $u_{\mu,1}, u_{\mu,2}$ having prescribed $L^2$-norm $\mu$ with different parameters $\la_{\mu,1} < \la_{\mu,2}$ and for sufficiently large $\mu$, there is no positive solution whose $L^2$-norm is $\mu$, see also \cite{NTV} for such results. (The multiplicity result also holds for more general bounded domains, see \cite{CLY,PVY,SZ2024}.) In addition, as $\mu \to 0^+$ we have $E(u_{\mu,1}) \to 0$ and $\la_{\mu,1} \to - \la_1(B_1)$ while $E(u_{\mu,2}) \to +\infty$ and $\la_{\mu,2} \to +\infty$, where
\begin{align*}
	E(u) = \frac12 \int_{B_1}|\nabla u|^2dx - \frac{1}p\int_{B_1} |u|^pdx.
\end{align*}	
Inspired by the  above results regarding positive solutions, we guess that for small $\mu > 0$ one may find \emph{two  sign-changing} solutions $u_{\text{sc},\mu}, \tilde u_{\text{sc},\mu}$ for \eqref{eqonB1} with $\la = \la_{\text{sc},\mu}, \tilde \la_{\text{sc},\mu}$ satisfying $$\int_{B_1}|u_{\text{sc},\mu}|^2dx = \int_{B_1}|\tilde u_{\text{sc},\mu}|^2dx = \mu.$$
Moreover,
$$ \lim_{\mu \to 0^+} E(u_{\text{sc},\mu})= 0,\quad \lim_{\mu \to 0^+} \Big(-\la_{\text{sc},\mu}\Big) =  \text{some Dirichlet eigenvalue of}  -\Delta;$$
$$ \lim_{\mu \to 0^+} E(\tilde u_{\text{sc},\mu}) = +\infty,\quad \lim_{\mu \to 0^+} \tilde \la_{\text{sc},\mu} = +\infty.$$


\vskip0.1in
 
	In this paper, we are concerned with the exact number of positive solutions and existence of sign-changing solutions with prescribed mass for NLS on \textbf{bounded domains}. More precisely, we consider the following equation
	\begin{align} \label{eqonbounddomain}
		-\Delta u + \la u = |u|^{p-2}u \quad \text{in } \Omega, \quad u \in H_0^1(\Omega), \quad \int_{\Omega}|u|^2dx = \mu.
	\end{align}
    Here $\Omega \subset \R^N$ is bounded, $\mu > 0$ is a prescribed constant, and $\la \in \R$ is an undetermined Lagrange multiplier. Let $E(\cdot,\Omega)$ be the corresponding energy functional defined by
    \begin{align*}
    	E(u,\Omega) = \frac12\int_\Omega |\nabla u|^2dx - \frac1p \int_\Omega |u|^pdx.
    \end{align*}
    Note that $E(u,B_1)$ is denoted by $E(u)$ as defined before. Based on previous observations in the first two paragraphs, we aim to solve the following problems.

    \vskip0.1in

    Firstly, for a general $C^1$ bounded domain, we study the existence and multiplicity of sign-changing solutions of \eqref{eqonbounddomain} with small energies and investigate the limit behaviors of their energies and the Lagrange multipliers. Our research includes  mass-subcritical, mass-critical  and mass-supercritical cases. Precisely, we will give a positive answer to the following question.
    \begin{itemize}
    	\item[{\bf(Q1)}:]
    	\begin{itemize}
    		\item Find infinitely many sign-changing solutions of \eqref{eqonbounddomain} for all $\mu > 0$ when $2 < p < p_c$, whose energies tend to $0$ and the Lagrange multipliers tend to Dirichlet eigenvalues of $-\Delta$ multiplying $-1$ as $\mu \to 0^+$. The limit behaviors as $\mu \to +\infty$ will  be investigated.
    		\item Find infinitely many sign-changing solutions of \eqref{eqonbounddomain} for small $\mu > 0$ when $p = p_c$, whose energies tend to $0$ and Lagrange multipliers tend to Dirichlet eigenvalues of $-\Delta$ by multiplying $-1$ as $\mu \to 0^+$.
    		\item Find multiple sign-changing solutions of \eqref{eqonbounddomain} for small $\mu > 0$ when $p_c < p < 2^*$, whose energies tend to $0$ and Lagrange multipliers tend to Dirichlet eigenvalues of $-\Delta$ by multiplying $-1$ as $\mu \to 0^+$.
    	\end{itemize}
    \end{itemize}

Secondly, we focus on the mass-supercritical case. When $\Omega = B_1$ and $\mu$ is small, we study the exact number of positive solutions and the existence of a sign-changing solution with large energy and with large Lagrange multiplier of \eqref{eqonbounddomain}. In particular, we will provide affirmative answers to the following questions.
\begin{itemize}
	\item[{\bf(Q2)}:] Characterize the number of positive solutions of \eqref{eqonbounddomain} for small $\mu > 0$ when $\Omega = B_1$ and $p_c < p < 2^*$.
	\item[{\bf(Q3)}:] Find a sign-changing solution of \eqref{eqonbounddomain} for small $\mu > 0$ when $\Omega = B_1$ and $p_c < p < 2^*$, whose energy and Lagrange multiplier tend to infinity as $\mu \to 0^+$.
\end{itemize}
We point out that (Q3) can be proposed for a general bounded domain. However, our framework to solve it depends on the positive answer to (Q2), and so we just consider the case of the unitary ball in this paper.

\vskip0.1in
The following result answers (Q1). 


\begin{theorem} \label{thmsmall}
	Let $\Omega \subset \R^N$ be a bounded $C^1$ domain.
	\begin{itemize}
		\item[(i)] When $2 < p < p_c$, for any $\mu > 0$, \eqref{eqonbounddomain} has infinitely many sign-changing solutions $u_{\mu,1}$, $u_{\mu,2}$, $\cdots$, $u_{\mu,j}$, $\cdots$, with $\la = \la_{\mu,1}$, $\la_{\mu,2}$, $\cdots$, $\la_{\mu,j}$, $\cdots$.
		For fixed $\mu > 0$, $E(u_{\mu,j},\Omega) \to +\infty$ as $j \to +\infty$. For fixed $j$, $E(u_{\mu,j},\Omega) \to 0$ as $\mu \to 0^+$ and $E(u_{\mu,j},\Omega) \to -\infty$ as $\mu \to +\infty$. Moreover, the parameter $\displaystyle \la_{\mu,j} \to +\infty$ as $\mu \to +\infty$; the value $\displaystyle -\lim_{\mu \to 0^+}\la_{\mu,j}$ is an eigenvalue of $-\Delta$ on $\Omega$ with Dirichlet boundary condition, and particularly, $\displaystyle -\lim_{\mu \to 0^+}\la_{\mu,1} = \la_2(\Omega)$, the second Dirichlet eigenvalue.
		\item[(ii)] When $p = p_c$, there exists $\bar \mu > 0$ such that for any $0 < \mu < \bar \mu$, \eqref{eqonbounddomain} has infinitely many sign-changing solutions $u_{\mu,1}$, $u_{\mu,2}$, $\cdots$, $u_{\mu,j}$, $\cdots$, with $\la = \la_{\mu,1}$, $\la_{\mu,2}$, $\cdots$, $\la_{\mu,j}$, $\cdots$. For fixed $\mu > 0$, $E(u_{\mu,j},\Omega) \to +\infty$ as $j \to +\infty$. For fixed $j$, $E(u_{\mu,j},\Omega) \to 0$ as $\mu \to 0^+$. Moreover, the value $-\lim_{\mu \to 0^+}\la_{\mu,j}$ is an eigenvalue of $-\Delta$ on $\Omega$ with Dirichlet boundary condition, and particularly, $\displaystyle -\lim_{\mu \to 0^+}\la_{\mu,1} = \la_2(\Omega)$, the second Dirichlet eigenvalue.
		\item[(iii)] When $p_c < p < 2^*$, given a positive integer $j$, there exists $\mu^*_{p,j} > 0$ such that for any $0 < \mu < \mu^*_{p,j}$, \eqref{eqonbounddomain} has $j$ different sign-changing solutions $u_{\mu,1}$, $u_{\mu,2}$, $\cdots$, $u_{\mu,j}$, with $\la = \la_{\mu,1}$, $\la_{\mu,2}$, $\cdots$, $\la_{\mu,j}$. For fixed $i \in \{1,2,\cdots,j\}$, $E(u_{\mu,i},\Omega) \to 0$ as $\mu \to 0^+$. Moreover, the value $\displaystyle -\lim_{\mu \to 0^+}\la_{\mu,i}$ is an eigenvalue of $-\Delta$ on $\Omega$ with Dirichlet boundary condition, and particularly, $\displaystyle -\lim_{\mu \to 0^+}\la_{\mu,1} = \la_2(\Omega)$, the second Dirichlet eigenvalue.
	\end{itemize}
\end{theorem}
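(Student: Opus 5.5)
We work on the constraint manifold $S_\mu=\{u\in H_0^1(\Omega):\int_\Omega u^2=\mu\}$. Sign-changing critical points will be obtained from a minimax over symmetric sets of genus $\ge k$ taken inside a region that excludes both a neighbourhood of the positive/negative cones and, in the non-coercive cases, the large-gradient region. The plan is to adapt the invariant-set descending-flow method for sign-changing critical points (Liu--Liu--Wang, Bartsch--Liu--Weth) to the $L^2$-constrained setting. The multiplier $\la$ comes from the Lagrange rule,
\[
\la\mu=-\|\nabla u\|_2^2+\|u\|_p^p .
\]
We rescale $u=\sqrt\mu\,v$ with $\|v\|_2=1$, so that
\[
E(u,\Omega)=\mu\Big(\tfrac12\|\nabla v\|_2^2-\tfrac{\mu^{(p-2)/2}}{p}\|v\|_p^p\Big).
\]
This makes the small-$\mu$ problem a perturbation of the Dirichlet Rayleigh quotient on the unit sphere. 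We use the Gagliardo--Nirenberg inequality
\[
\|v\|_p^p\le C\|\nabla v\|_2^{N(p-2)/2}\quad\text{for }\|v\|_2=1 .
\]

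\textbf{Step 1 (the functional and the invariant sets).} In case (i), $E$ is coercive and bounded below on $S_\mu$ and satisfies (PS), by compactness of $H_0^1\hookrightarrow L^p$ on a bounded domain. In cases (ii) and (iii) we restrict to the bounded region $\{\|\nabla v\|_2^2< R\}$. In case (iii), with $\mu$ small depending on $j$, we take $R=2\la_{j+2}(\Omega)$ or larger. On the boundary $\{\|\nabla v\|_2^2=R\}$ we have $\tfrac{d}{dt}E(\sqrt\mu\, t\star v)>0$ under the $L^2$-preserving dilation, while the minimax levels stay below $\mu(\la_{j+1}+o(1))/2 < \mu R/4$. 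Hence a pseudo-gradient flow of $E|_{S_\mu}$ can be arranged not to exit, and the region is effectively invariant. In case (ii) the same works provided $\mu^{2/N}C<p/2$, which determines $\bar\mu$. For the sign-changing structure we set
\[
P_\ep=\{u\in S_\mu:\operatorname{dist}_{L^2}(u,\text{positive cone})<\ep\},\qquad -P_\ep .
\]
The key lemma is that for small $\ep$ the projected gradient flow maps $\pm P_\ep$ into their interiors. We prove it through the operator $u\mapsto(-\Delta+\la_u+M)^{-1}(|u|^{p-2}u+Mu)$, where $\la_u$ is the multiplier formula above; positivity-preservation of this operator is the usual mechanism. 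The constraint term $\la_u u$ is the technical point here, and we handle it by absorbing it into $M$ uniformly on the bounded region.

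\textbf{Step 2 (minimax values and sign-changing critical points).} Let $\Sigma_k$ be the family of closed symmetric subsets $A$ of the admissible region with $\gamma(A\setminus(P_\ep\cup-P_\ep))\ge k$. Set
\[
c_k(\mu)=\inf_{A\in\Sigma_{k+1}}\sup_{A\setminus(P_\ep\cup -P_\ep)}E .
\]
Nonemptiness of $\Sigma_{k+1}$ comes from $A=\sqrt\mu\,\{$unit sphere of span$(e_1,\dots,e_{k+1})\}$, where the $e_i$ are Dirichlet eigenfunctions. The intersection lemma gives that any such $A$ meets the complement of the cones as soon as $k\ge1$. The deformation lemma in the invariant-set framework then yields a sign-changing critical point $u_{\mu,k}$ at level $c_k$. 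If levels coincide, the genus argument gives infinitely many critical points, so we always obtain distinct solutions. In case (iii) we only take $k\le j$ and $\mu<\mu^*_{p,j}$.

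\textbf{Step 3 (asymptotics).} Test with the eigenspace spheres for the upper bound, and use the fact that $\|v\|_p^p$ is bounded on the admissible region for the lower bound. Together these give
\[
\tfrac{\mu}{2}\la_k^*-C\mu^{p/2}\le c_k(\mu)\le \tfrac{\mu}{2}\la_{k+1}(\Omega),
\]
where $\la_k^*$ is the usual sign-changing/genus Rayleigh minimax. So $E(u_{\mu,k})\to0$. Since $\|v_\mu\|_p^p\mu^{(p-2)/2}\to0$, the multiplier satisfies $-\la_{\mu,k}=\|\nabla v_\mu\|_2^2+o(1)$, and a subsequence of $v_\mu$ converges strongly to a normalized solution of $-\Delta v=\nu v$. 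Hence $-\lim\la_{\mu,k}$ is an eigenvalue; once these limits are identified along the full family, the limit exists. For $k=1$ the lower and upper bounds both equal $\la_2$: a sign-changing normalized $v$ has $\|\nabla v\|^2\ge\la_2$, by the nodal-domain and Courant argument applied to the limit eigenfunction, which changes sign because it is the $L^2$ limit of elements outside $P_\ep\cup-P_\ep$ and therefore is not $\pm e_1$. This gives $-\lim\la_{\mu,1}=\la_2$.

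For $E\to+\infty$ as $k\to\infty$ at fixed $\mu$, we use the standard genus estimate: sets of genus $k$ must meet $\{\|\nabla v\|^2\ge\la_k\}$, together with $\|v\|_p^p\le C\|\nabla v\|^{N(p-2)/2}$ and $N(p-2)/2\le2$.

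For $\mu\to\infty$ in case (i), we test with $\sqrt\mu\,\phi$ for fixed $\phi$ of genus-type sets, giving $c_k\le C\mu-c\mu^{p/2}\to-\infty$. Then $\la_{\mu,k}\mu=-\|\nabla u\|^2+\|u\|_p^p$ together with $2E<0$ gives $\la\mu\ge(1-2/p)\|u\|_p^p\ge c\mu^{p/2}|\Omega|^{1-p/2}$, so $\la\to\infty$.

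\textbf{Main obstacle.} The main obstacle is the cone-invariance lemma in Step 1 on the constraint with a variable multiplier, together with ensuring in cases (ii) and (iii) that the flow neither leaves the bounded gradient region nor crosses the cones. I would first try the auxiliary-operator approach with a uniform shift $M$ valid on the bounded region.
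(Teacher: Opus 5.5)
\textbf{Verdict: there is a genuine gap, at the cone-invariance lemma.} Your overall architecture matches the paper's:
\begin{itemize}
\item a genus minimax on $S_\mu(\Omega)$ whose sup is taken only over the part of $A$ outside neighbourhoods of $\pm\p$;
\item eigen-sphere test sets for the upper bound;
\item a bounded region with an energy gap on its boundary in the non-coercive cases;
\item a deformation that keeps the cone neighbourhoods invariant.
\end{itemize}
The gap is exactly the step you flag as the main obstacle. The mechanism you propose for cone invariance does not work as stated, for two reasons.

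First, take $T(u)=(-\Delta+\la_u+M)^{-1}(|u|^{p-2}u+Mu)$ with your $\la_u$. Then $u-T(u)$ is not tangent to $S_\mu(\Omega)$. Your choice of $\la_u$ only gives $\int_\Omega\big(\nabla(u-T(u))\cdot\nabla u+(\la_u+M)(u-T(u))u\big)dx=0$, not $\int_\Omega (u-T(u))u\,dx=0$. So the Brezis--Martin argument on the sphere is not available. That argument writes $u-sV(u)=(1-s)u+sT(u)$, a point of the convex neighbourhood, and then renormalizes radially by a factor $\le 1$ (Lemma \ref{lemconvex}). Projecting onto $T_uS_\mu(\Omega)$ instead introduces a multiplier whose sign you would have to control.

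Second, and more seriously, even the unconstrained estimate fails where it matters. Test the equation for $T(u)$ with $T(u)^-$ and work in the norm associated with $-\Delta+\la_u+M$. This gives
$\text{dist}(T(u),\p)\le C\,\text{dist}(u,\p)^{p-1}+\frac{M}{M+\la_1(\Omega)+\la_u}\,\text{dist}(u,\p)$.
The linear factor is $<1$ only if $\la_u>-\la_1(\Omega)$; enlarging $M$ moves it towards $1$, never below. But the solutions you want have, for small $\mu$, $\la\to-\la_{k+1}(\Omega)\le-\la_2(\Omega)<-\la_1(\Omega)$. So absorbing $\la_u u$ into $M$ cannot give $\text{dist}(T(u),\p)\le\frac12\text{dist}(u,\p)$ near them.

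The paper overcomes both problems with three ingredients.
\begin{itemize}
\item Tangency is built into the auxiliary problem: $w=G(u)$ solves $-\Delta w+\bar\la w=|u|^{p-2}u+\omega u$ with $\int_\Omega wu\,dx=\mu$ (Proposition \ref{propuniquesolutionw}). Hence $V=u-G(u)\in T_uS_\mu(\Omega)$, and $V$ is a pseudogradient (Lemma \ref{lempseudogradient}).
\item $\bar\la$ is fixed large, depending on $\rho$ and $\mu$, so that $\omega\ge 0$ on $\B_\rho^\mu$.
\item The decisive one is Lemma \ref{lempre1}. By compactness of $G$ (Lemma \ref{lemproG3}) and the strong maximum principle, $|\{G(u)<0\}|\to 0$ uniformly on $\overline{\p_\delta}\cap\B_\rho^\mu$ as $\delta\to 0$. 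Applying H\"older on this small set makes the linear term $\omega\int u^-w^-$ negligible, however large $\omega$ is relative to $\la_1(\Omega)+\bar\la$.
\end{itemize}
This yields $G(\overline{(\pm\p)_\delta}\cap\B_\rho^\mu)\subset(\pm\p)_{\delta/2}$ (Lemma \ref{lemproG4}), and invariance then follows from Proposition \ref{propbmmanifold}. Your $T$ also has a nonnegative linear coefficient, so a small-measure argument could probably be adapted to it. The tangency problem would remain.

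\textbf{Smaller points.}
\begin{itemize}
\item As written, $\Sigma_k$ requires $\ga(A\setminus(P_\ep\cup -P_\ep))\ge k$. Removing neighbourhoods of $\pm\sqrt\mu\, e_1$ from $\sqrt\mu\,\{v\in\text{span}\{e_1,\dots,e_{k+1}\}:\|v\|_2=1\}$ leaves genus at most $k$. So your test set is not in $\Sigma_{k+1}$, and neither $c_k\le\frac\mu2\la_{k+1}(\Omega)$ nor $\la_{\mu,1}\to-\la_2(\Omega)$ follows. You want $\ga(A)\ge k+1$, as in the paper.
\item For $k\ge 2$ you only show that subsequential limits of $-\la_{\mu,k}$ are eigenvalues. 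Existence of the limit needs the lower bound $\mu^{-1}c_k\ge\frac12\la_{k+1}(\Omega)-o(1)$, from intersecting with $\text{span}\{e_1,\dots,e_k\}^\perp$ via Borsuk--Ulam (Lemma \ref{lemnonempty}). Combined with your upper bound, this gives $\mu^{-1}E(u_{\mu,k})\to\frac12\la_{k+1}(\Omega)$ and $\la_{\mu,k}\to-\la_{k+1}(\Omega)$.
\item The dilation $t^{N/2}v(tx)$ does not act on $H_0^1(\Omega)$ for a general domain. You do not need it: the energy gap on the shell $\|\nabla u\|_2^2=\mu R$ (as in Lemma \ref{lemsufficientcon}) suffices.
\end{itemize}

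\textbf{What works well.} Your $\mu\to\infty$ argument is sound: the test-set bound gives $c_k\le C\mu-c\mu^{p/2}$, and $E<0$ gives $\la\mu>(1-\frac2p)\|u\|_p^p\ge c\mu^{p/2}$. So is your Gagliardo--Nirenberg treatment of $\mu\to 0$. Both are more elementary than the paper's route through Lemma \ref{lemlanbounded} and Proposition \ref{proprelationship}, and they avoid Morse-index bounds entirely.
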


In the Sobolev critical case ($p = 2^*$), a similar result to Theorem \ref{thmsmall} (iii) was proved in \cite{SZ} by  introducing a new kind of linking. In this paper we use a different approach based on genus theory to address all supercritical, critical and subcritical cases.  We note that in \cite{PV}, the authors studied the existence of solutions with bounded Morse index. For any positive integer $k$, they found a solution having prescribed mass $\mu$ whose Morse index is no more than $k$. In particular,  when $2 < p < p_c$ 
 then $\mu$ may be any positive real number;  but when $p_c \le p < 2^*$, then $\mu$ must small enough. They also showed that the requirement of $\mu > 0$ being small enough  is necessary in dealing with the critical and supercritical cases by a detailed blow-up analysis of the sequences of solutions with bounded Morse index. However, one does not know whether the solution obtained in \cite{PV} is sign-changing or not in general, unless in the regime of $\mu$ such that no positive solution exists. Compared to the results in \cite{PV}, the current  Theorem \ref{thmsmall} has improvements in three aspects:

\begin{itemize}
	\item The solutions that we obtain are sign-changing.
	\item  {Only one solution was found in \cite{PV} while we establish the multiplicity of solutions. Particularly, in the mass subcritical and critical cases, we find infinitely many sign-changing solutions.} It seems that this is the first result on the existence of infinitely many sign-changing solutions  {with prescribed mass} for NLS set on bounded domains.
	\item The asymptotic behaviors of the energy and of Lagrange multiplier with respect to the parameter $\mu$ are provided.
\end{itemize}

As we know, searching for a sign-changing solution is much more difficult than just finding weak solutions without any sign information. It is worth mentioning that there are many papers and results on free sign-changing solutions (without the mass constraint), but few results of \eqref{eqonbounddomain} and of other equations with $L^2$ constraint. Recently the authors of \cite{CDDS} established  the existence of one sign-changing solution with prescribed mass by developing an action approach. In addition, descending flow technique was developed on a manifold setting to search for multiple sign-changing solutions, see \cite{JS,SZ}. Our work will further enrich the methods and results in this direction.

\vskip0.1in

The Morse index information will be used in the proof of Theorem \ref{thmsmall}. As a foresight,  the boundedness of the  Morse index  is useful for studying the asymptotic behaviors of the energy and of the Lagrange multiplier via blow-up analyses. In this respect, the $C^1$ regularity assumption on $\partial \Omega$ simplifies the treatment of possible concentration phenomena towards the boundary. In \cite{PV}, the estimate of the Morse index is a direct consequence of the  genus (see e.g. \cite{BL} and \cite{Sol}). The situation is much more complex when we need a sign-changing  information.  {To overcome these difficulties,   we provide an abstract tool to obtain Palais-Smale sequences with \emph{location} information and with \emph{approximating Morse index}, and the location information ensures that the solution obtained is sign-changing. Such an abstract tool and framework  have their own interest and we believe they will have more applications.}



\vskip0.1in
Next, we give an answer to (Q2). In order to search for a sign-changing solution with large energy and large Lagrange multiplier, we prepare  a more general result having a parameter $\tau$ in the nonlinear term.

\begin{theorem} \label{thmunique}
	Let $p_c < p < 2^*$. Given $\tau \in [1/2,1]$, there exists $\mu^*_{p,0} > 0$ independent of $\tau$ such that, for any $0 < \mu < \mu^*_{p,0}$, the following equation
	\begin{align} \label{eqequationtau}
		-\Delta u + \la u = \tau|u|^{p-2}u \quad \text{in } B_1, \quad u \in H_0^1(B_1), \quad \int_{B_1}|u|^2dx = \mu,
	\end{align}
	has exactly two positive solutions $u_{\mu,\tau}, \tilde u_{\mu,\tau}$ with $\la=\la_{\mu,\tau},\tilde\la_{ \mu,\tau}$ respectively. Moreover, let
	\begin{align*}
		E_\tau(u) = \frac12 \int_{B_1}|\nabla u|^2dx - \frac{\tau}p\int_{B_1} |u|^pdx,
	\end{align*}
then the following conclusions hold true:
	\begin{itemize}
		\item[(i)] $\displaystyle \lim_{\mu \to 0^+} E_\tau(u_{\mu,\tau}) = 0$ and $\displaystyle \lim_{\mu \to 0^+} \la_{\mu,\tau} = -\la_1(B_1)$ uniformly with respect to $\tau \in [1/2,1]$;
		\item[(ii)] $\displaystyle \lim_{\mu \to 0^+}  E_\tau(\tilde u_{\mu,\tau})= +\infty$ and $\displaystyle \lim_{\mu \to 0^+}  \tilde\la_{\mu,\tau} = +\infty$ uniformly with respect to $\tau \in [1/2,1]$.
	\end{itemize}	
\end{theorem}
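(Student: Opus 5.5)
Since the positive solution on $B_1$ is unique for each $\la>-\la_1(B_1)$ (by \cite{GNN,Kwong,KL,Zhang}), I reduce the problem to counting the solutions $\la$ of a scalar equation. By scaling, $\tau^{1/(p-2)}u$ solves the equation with coefficient $\tau$ whenever $u$ solves it with coefficient $1$. Hence the unique positive solution of $-\Delta u+\la u=\tau|u|^{p-2}u$ is $u_{\la,\tau}=\tau^{-1/(p-2)}u_\la$, where $u_\la$ is the solution of \eqref{eqonB1}. Set
\[
m(\la):=\int_{B_1}|u_\la|^2dx .
\]
Positive solutions of \eqref{eqequationtau} then correspond bijectively to solutions of $m(\la)=\tau^{2/(p-2)}\mu$. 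Since $\tau^{2/(p-2)}\in[2^{-2/(p-2)},1]$, the problem is $\tau$-independent: it suffices to show that there is $\bar m>0$ such that $m(\la)=c$ has exactly two solutions for every $c\in(0,\bar m)$. I then take $\mu^*_{p,0}=\bar m$, which works because $\tau^{2/(p-2)}\mu\le\mu$.

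By \cite{Song}, $m$ is $C^1$ on $(-\la_1(B_1),\infty)$ and tends to $0$ at both ends. I plan to prove two facts.
\begin{itemize}
\item[(a)] $m'(\la)>0$ for $\la\in(-\la_1(B_1),-\la_1(B_1)+\delta)$.
\item[(b)] $m'(\la)<0$ for $\la>\La$.
\end{itemize}
On the compact interval $[-\la_1(B_1)+\delta,\La]$, $m$ is continuous and positive, so it has a positive minimum $m_0$. For $c<\min\{m_0,\ m(-\la_1(B_1)+\delta),\ m(\La)\}$, strict monotonicity on the two end intervals gives exactly one solution in each of them and none in the middle. This proves the exact count.

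For (a), I would use bifurcation from the simple eigenvalue $\la_1(B_1)$. Write $u_\la=s\varphi_1+w$ with $w\perp\varphi_1$. Then $\la+\la_1(B_1)\sim c\,s^{p-2}$ and $m(\la)\sim s^2$, so $m\sim C(\la+\la_1(B_1))^{2/(p-2)}$. By Crandall–Rabinowitz the branch is $C^1$ in $s$, and this gives a positive derivative near the bifurcation point.

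For (b), I would do a blow-up analysis as $\la\to\infty$. Set $v_\la(x)=\la^{-1/(p-2)}u_\la(x/\sqrt\la)$, defined on $B_{\sqrt\la}$. Then $v_\la\to U$, the ground state of \eqref{eqonrn} with $\la=1$, in $C^1$ and in $H^1$ with exponential tails. Also $m(\la)=\la^{2/(p-2)-N/2}\|v_\la\|_2^2$. Differentiating, $m'(\la)$ equals $\la^{2/(p-2)-N/2-1}$ times
\[
\Bigl(\tfrac{2}{p-2}-\tfrac N2\Bigr)\|v_\la\|_2^2+\la\,\tfrac{d}{d\la}\|v_\la\|_2^2 .
\]
The first term tends to a negative constant because $p>p_c$. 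The main obstacle is to show that $\la\,\frac{d}{d\la}\|v_\la\|_2^2\to0$. I would write $\partial_\la u_\la$ through the linearized operator $L_\la=-\Delta+\la-(p-1)u_\la^{p-2}$, which is invertible by nondegeneracy (the uniqueness results of \cite{Kwong,KL} give this). The rescaled derivative then solves a linear problem on $B_{\sqrt\la}$ that converges to the one for $U$ on $\R^N$. There the derivative is exactly the scaling field, whose contribution is accounted for by the first term. The remainder is controlled by boundary effects that are exponentially small, since $U$ decays exponentially and radial symmetry rules out translation modes. This requires a uniform bound on $L_\la^{-1}$ in the rescaled variables, which I would obtain by contradiction using the nondegeneracy of $U$ in the radial class.

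Finally I take $u_{\mu,\tau}$ to be the branch near $-\la_1(B_1)$ and $\tilde u_{\mu,\tau}$ the branch at large $\la$. In the chosen intervals $m$ is a monotone homeomorphism, so as $\mu\to0$ the inverse values satisfy $\la_{\mu,\tau}\to-\la_1(B_1)$ and $\tilde\la_{\mu,\tau}\to\infty$. This convergence is uniform in $\tau$, because the target value $\tau^{2/(p-2)}\mu$ is squeezed between constant multiples of $\mu$.

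For the energies, on the first branch $u$ is small in $H^1$, so $E_\tau\to0$. On the second branch, scaling gives
\[
E_\tau(\tilde u)=\tau^{-2/(p-2)}\la^{1+2/(p-2)-N/2}\bigl(E(U,\R^N)+o(1)\bigr),
\]
where $E(U,\R^N)>0$ by Pohozaev since $p>p_c$, and the exponent is positive for $p<2^*$. Hence $E_\tau(\tilde u_{\mu,\tau})\to+\infty$ uniformly in $\tau$.
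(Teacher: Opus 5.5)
Your proposal is correct, and it is organized differently from the paper and is more economical.

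\textbf{Scaling in $\tau$.} Your key move is the exact scaling $u_{\la,\tau}=\tau^{-1/(p-2)}u_\la$. Note that your first sentence flips the sign of the exponent: it is $\tau^{-1/(p-2)}u$, not $\tau^{1/(p-2)}u$, that solves the equation with coefficient $\tau$. The formula you actually use is the right one. It gives $M_\tau(\la)=\tau^{-2/(p-2)}m(\la)$, so uniformity in $\tau$ costs nothing. The paper mentions this scaling only in the remark after the theorem. Otherwise it carries sequences $\tau_n\in[1/2,1]$ through modified versions of \cite[Propositions 11, 12]{FSK} in Lemma \ref{lemMmu}, even though its own rescaling there already eliminates $\tau$, and through Lemma \ref{lemboundedla}.

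\textbf{The count.} The paper argues by contradiction. Lemma \ref{lemboundedla} (an $L^\infty$ bound plus bifurcation from $\la_1(B_1)$) forces $\la_n\to-\la_1(B_1)$ or $\la_n\to+\infty$ when the mass vanishes. Among three solutions, two parameters then fall in the same monotone end range and must coincide. This yields only ``at most two''; existence and the limits in (i)--(ii) are left to \cite{Song,NTV} and Proposition \ref{proprelationship}. Your positive minimum of the continuous $m$ on the compact middle interval replaces Lemma \ref{lemboundedla}. It gives in one stroke existence, the exact count, the uniform limits of the multipliers, and an explicit energy asymptotic on the large branch.

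\textbf{The two monotonicity steps.} Your step (b) is the paper's argument: the rescaled $\partial_\la u_\la$ converges to the scaling field $\frac{1}{p-2}U+\frac12 x\cdot\nabla U$, using uniform radial invertibility of the rescaled linearized operator. For (a) you use local bifurcation instead of showing that $u/\|u\|_{L^2}$ and $\partial_\la u/\|\partial_\la u\|_{L^2}$ both tend to $\phi_1$. After your reduction, (a) and (b) are exactly \cite[Propositions 12, 11]{FSK} at $\tau=1$, so you could simply cite them.

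What the paper's sequential formulation buys is independence from the exact homogeneity of $\tau|u|^{p-2}u$ in $\tau$, on which your reduction relies.

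\textbf{A technical point in (a).} Crandall--Rabinowitz gives a $C^1$ branch only if the nonlinearity is $C^2$ at $u=0$, which fails for $2<p<3$. The asymptotic relation $m\sim C(\la+\la_1(B_1))^{2/(p-2)}$ does not by itself give $m'>0$. A clean fix is to write $u=t^{1/(p-2)}(\phi_1+ty)$ with $y\perp\phi_1$ and $\la=-\la_1(B_1)+t\beta$. After dividing by $t^{(p-1)/(p-2)}$, the equation becomes $(-\Delta-\la_1(B_1))y+\beta(\phi_1+ty)=|\phi_1+ty|^{p-2}(\phi_1+ty)$. This is $C^1$ in $(t,y,\beta)$, and at $t=0$ it is uniquely solvable with $\beta(0)=\int_{B_1}\phi_1^p\,dx\big/\int_{B_1}\phi_1^2\,dx>0$. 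The implicit function theorem then gives $d\la/dt>0$ and $dm/dt>0$ for small $t>0$. These solutions are positive by elliptic regularity and Hopf's lemma for $\phi_1$, so uniqueness of positive solutions identifies this branch with $\la\mapsto u_\la$.
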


\begin{remark}
 {By a scaling argument, we can see that for any $\mu > 0$, there exists $\tau^* > 0$ such that \eqref{eqequationtau} admits exactly two positive solutions when $0 < \tau < \tau^*$.}
\end{remark}

Theorem \ref{thmunique} not only has its own interest but is also a crucial step in our framework to search for a sign-changing  normalized  solution (prescribed mass) with large energy and large Lagrange multiplier.




\vskip0.1in
Finally, the following result give an affirmative answer to (Q3).

\begin{theorem} \label{thmlarge}
	Let $p_c < p < 2^*$ and $\Omega = B_1$. There exists $\mu^{**}_p > 0$ such that, for any $0 < \mu < \mu^{**}_p$, \eqref{eqonbounddomain} has a sign-changing solution $u_{\text{sc},\mu}$ with $\la=\la_{\text{sc},\mu}$. Moreover,   we have 
$$\lim_{\mu \to 0^+}E(u_{\text{sc},\mu}) = +\infty, \quad  \lim_{\mu \to 0^+}\la_{\text{sc},\mu} = +\infty.$$
\end{theorem}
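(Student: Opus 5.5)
The plan is a minimax argument on the mass-constrained manifold $S_\mu=\{u\in H_0^1(B_1):\int_{B_1}|u|^2dx=\mu\}$, restricted to a bounded region of large energy, with the sign-changing property forced by location information. The exact count of positive solutions in Theorem \ref{thmunique} tells us which critical points we must avoid.

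\textbf{Step 1 (the scale of the solution).} In the mass-supercritical regime, the solution with large $\la$ should look like a rescaled sign-changing object. I would use a rescaling $u(x)=\mu^{\alpha}v(\mu^{\beta}x)$, or equivalently let $\la$ play the role of $\mu^{-\gamma}$. This turns \eqref{eqonbounddomain} on $B_1$ with small $\mu$ into a problem of order one on a large ball $B_{R}$ with $R\to\infty$. The role of the parameter $\tau\in[1/2,1]$ in Theorem \ref{thmunique} is the monotonicity trick of Struwe and Jeanjean. I would consider the family $E_\tau$ and build a two-dimensional-type minimax class over the constrained manifold. The class consists of paths or sets that link the ``small'' positive solutions $\pm u_{\mu,\tau}$ (energy near $0$) with the ``large'' positive solutions $\pm\tilde u_{\mu,\tau}$. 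The minimax level $c_\tau(\mu)$ is chosen strictly above $E_\tau(\tilde u_{\mu,\tau})$, for instance by taking sets of genus $2$ in a region where the Pohozaev-type quantity $\int|\nabla u|^2-\tfrac{N(p-2)}{2p}\tau\int|u|^p$ vanishes. This gives the natural Pohozaev-mountain-pass structure in the supercritical case.

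\textbf{Step 2 (bounded PS sequences and sign change).} Since $\tau\mapsto c_\tau/\tau$ is monotone, for almost every $\tau$ we obtain a bounded Palais--Smale sequence carrying an approximate Morse index bound, via the abstract location tool developed for Theorem \ref{thmsmall}. Compactness on the bounded domain then gives a critical point $u_\tau$ with $\la_\tau$. The location information is as follows: the minimax is performed on the complement of a neighbourhood of the cones of positive and negative functions, invariant under a descending flow. This forces $u_\tau$ to be sign-changing. Here Theorem \ref{thmunique} is essential. Since the only positive critical points are $u_{\mu,\tau}$ and $\tilde u_{\mu,\tau}$, isolated with known energies, we can construct the flow-invariant neighbourhoods of $P^\pm$ uniformly in $\tau$. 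We can also check that the minimax level separates from these, using the uniform limits in Theorem \ref{thmunique}(i)--(ii).

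\textbf{Step 3 (passing $\tau_n\to1$ and asymptotics).} We take $\tau_n\to 1$ and use the bounded Morse index to get uniform bounds on $(u_{\tau_n},\la_{\tau_n})$ for fixed small $\mu$. This comes from a blow-up analysis, in which a Morse-bounded blow-up on $B_1$ would produce a solution on $\R^N$ or a half-space contradicting Liouville theorems. Passing to the limit gives a sign-changing $u_{\text{sc},\mu}$; the sign change is preserved because both $u^\pm$ have $L^p$ norm bounded below, by testing the equation with $u^\pm$. For the asymptotics, note that $E\ge c_1(\mu)\ge E(\tilde u_{\mu})\to+\infty$. This uses Theorem \ref{thmunique}(ii), or a direct lower bound from the Gagliardo--Nirenberg inequality on the Pohozaev set, which gives $\int|\nabla u|^2\gtrsim \mu^{-\frac{4-N(p-2)}{N(p-2)-4}\cdot(\ldots)}\to\infty$. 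The Pohozaev identity on the ball (with boundary term of sign favorable for star-shaped domains) combined with the equation yields $\la\mu\gtrsim\int|\nabla u|^2$ up to lower-order terms, so $\la_{\text{sc},\mu}\to+\infty$.

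\textbf{Main obstacle.} I expect the main obstacle to be Step 2. We must show the minimax class avoids the positive and negative cones uniformly in $\tau$, that is, that the level $c_\tau$ is not attained by $\pm\tilde u_{\mu,\tau}$. First I would try an energy comparison: show $c_\tau(\mu)\ge E_\tau(\tilde u_{\mu,\tau})+\delta$ by exhibiting that any admissible set must contain a function whose positive and negative parts each carry a Pohozaev-critical amount of energy, so that $c_\tau\gtrsim 2E_\tau(\tilde u)$ heuristically.
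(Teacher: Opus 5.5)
There is a genuine gap in your Step 2. It sits exactly at what you call the main obstacle, and nothing in the proposal closes it.

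\textbf{The gap in Step 2.} Theorem \ref{thmunique}(i) does separate a large minimax level from $\pm u_{\mu,\tau}$. Part (ii) cannot separate it from $\pm\tilde u_{\mu,\tau}$, because your two-dimensional level and $E_\tau(\tilde u_{\mu,\tau})$ both tend to $+\infty$ uniformly in $\tau$. The heuristic $c_\tau\gtrsim 2E_\tau(\tilde u_{\mu,\tau})$ is unproved, and the paper states explicitly that it does not know whether $c_1^1<c_1^2$.

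Theorem \ref{thmunique} also does not give flow-invariant neighbourhoods of $\pm\p$. Invariance comes from the pseudogradient $V_\tau=\mathrm{Id}-G_\tau$ (Lemma \ref{lemproG4}), and it holds only on a bounded set $\B_\rho^\mu$ with $\delta<\hat\delta(\rho,\mu)$. Suppose you run the minimax on $S^*(\delta)$ in the unbounded supercritical geometry. The bound on the region where the Palais--Smale sequence lives then comes only a posteriori from the monotonicity trick, through $(c_\tau)'$. So it depends on the level and hence on $\delta$, which makes the choice of $\delta$ circular.

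Two further points break the construction itself. A class linking $\pm u_{\mu,\tau}$ to $\pm\tilde u_{\mu,\tau}$ depends on $\tau$, which is incompatible with the monotonicity trick; moreover $\tilde u_{\mu,\tau}$ is the mountain-pass point, not an endpoint. The Pohozaev set is not a natural constraint on $B_1$: dilations $t^{N/2}u(tx)$ with $t<1$ leave $H_0^1(B_1)$, and the identity carries a boundary term.

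\textbf{The missing idea.} The paper uses a dichotomy for the $\tau$-independent classes $\Gamma_1,\Gamma_2$ of Lemma \ref{lemsadpoi}. These consist of odd paths from the eigen-sphere $\mathbb{M}_k$ to concentrated dilations of very negative energy. Every such path crosses $\U_\rho^\mu$, so $c_\tau^k\to+\infty$ uniformly as $\mu\to0^+$.
\begin{itemize}
\item If $c_\tau^1<c_\tau^2$: the mountain-pass critical point $u_\tau^1$ is positive at level $c_\tau^1\to+\infty$, so by the exact count it \emph{is} $\tilde u_{\mu,\tau}$. Then $u_\tau^2$, lying at a different large level, is neither $\pm u_{\mu,\tau}$ nor $\pm\tilde u_{\mu,\tau}$, and hence changes sign.
\item If $c_\tau^1=c_\tau^2$: Proposition \ref{propc1=c2} applies (Lemmas \ref{lemtopo1} and \ref{lemtopo2} with $F^*=\overline{\p_\delta}$, together with Lemma \ref{lemcapnonemp}). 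It turns near-optimal $\Gamma_2$-paths into near-optimal $\Gamma_1$-paths whose high-energy part avoids $\overline{\p_\delta}\cup\overline{(-\p)_\delta}$. This yields a bounded Palais--Smale sequence away from the cones.
\end{itemize}
This is how Theorem \ref{thmunique} actually enters, and no energy gap between $c_\tau^2$ and $E_\tau(\tilde u_{\mu,\tau})$ is ever needed.

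Your passage $\tau_n\to1$ is fine in outline. Testing the equation with $u_n^\pm$ is a valid substitute for the paper's use of Lemma \ref{lemproG4}, once you note that a positive limit has $\la>-\la_1(B_1)$.

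\textbf{The error at the end of Step 3.} Your claim that Pohozaev gives $\la\mu\gtrsim\int|\nabla u|^2$ goes in the wrong direction. On $B_1$ the boundary term yields $\ga_p\,\la\mu\le(1-\ga_p)\int_{B_1}|\nabla u|^2dx$, which is an \emph{upper} bound on $\la\mu$. The small positive solutions, with $\la\to-\la_1(B_1)$, show that no general lower bound of your form holds. The reverse inequality needs the boundary term to be negligible, and that is exactly what the blow-up analysis supplies.

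The paper instead uses Proposition \ref{proprelationship}. Since $m(u_{\text{sc},\mu})\le 4$, a bounded $\la$ would give an $L^\infty$ bound by Lemma \ref{lemlanbounded}, hence bounded energy. That contradicts $E(u_{\text{sc},\mu})=c_1^2\to+\infty$.
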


In order to prove Theorem \ref{thmlarge}, we use a variational method and study the sign-changing critical points for the energy functional constrained to the $L^2$-sphere
\begin{equation*}
	S_\mu(\Omega)
	:= \bigl\{u\in H_0^1(\Omega):
	\int_{\Omega}|u|^2 dx=\mu \bigr\}.
\end{equation*}
When $\Omega = B_1$, we use notation $S_\mu$ for simplicity. A key idea is to construct the min-max level by generalizing the mountain pass structure via a saddle-point theorem. To address the bounded issue of Palais-Smale sequences,  inspired by \cite{BCJN} and \cite{Louis}, we consider approximating problems firstly and then come back to the origin problem. More precisely, we aim to find two odd functions $\gamma_{i,k}: \s^{k-1} \to S_\mu(\Omega)$ where $i = 0,1$, such that the set
\begin{align*}
	\Gamma_k := \big\{\gamma \in C([0,1]\times \s^{k-1}, S_\mu(\Omega)): \forall t \in [0,1], \gamma(t,\cdot) \text{ is odd}, \gamma(0,\cdot) = \gamma_{0,k}, \text{ and } \gamma(1,\cdot) = \gamma_{1,k}\big\}
\end{align*}
is not empty and is independent of $\tau$, and
\begin{align*}
	c_\tau^k := \inf_{\gamma \in \Gamma_k}\sup_{(t,s) \in [0,1]\times\s^{k-1}}E_\tau(\gamma(t,s),\Omega) > \max_{s\in \s^{k-1}}\max\big\{ E_\tau(\gamma_{0,k}(s),\Omega), E_\tau(\gamma_{1,k}(s),\Omega)\big\},
\end{align*}
where
\begin{align*}
	E_\tau(u,\Omega) = \frac12 \int_{\Omega}|\nabla u|^2dx - \frac{\tau}p\int_{\Omega} |u|^pdx, \quad \tau \in [\frac12,1].
\end{align*}
When $k = 1$, $c_\tau^1$ is exactly the mountain pass level. This min-max structure was pointed out in \cite[Remark 4.5]{PV}, and later in \cite{CGJT}, it was used to search for infinitely many solutions with prescribed mass of $L^2$-supercritical NLS equations on noncompact metric graphs with localized nonlinearities.

\vskip0.1in

For nonlinear elliptic equations or Schr\"odinger equations set on bounded domains or on compact metric graphs, it is unknown how to use this min-max structure to find a solution different from the one obtained by mountain pass geometry. In fact, if $c_1^1 < c_1^2$, we obtain a new solution immediately but we do not know whether $c_1^1 < c_1^2$ holds true or not. Observing that the solution at mountain pass level can be taken as a positive function, our strategy is to search for a sign-changing one: Using \cite[Theorem 1.12]{BCJN}, which is stated as Theorem \ref{Objective1} in this paper, we can prove that for almost every $\tau \in [1/2,1]$, $c_\tau^k$ is indeed a critical value and there is a constrained critical point $u_\tau^k$ whose Morse index is no more than $k+2$. Moreover, when $k = 1$, then $u_\tau^1$ is positive and its Morse index is no more than $2$. Using some new arguments, we prove that if $c_\tau^1 = c_\tau^2$, then $u_\tau^2$ is sign-changing and its Morse index is no more than $2$, see Proposition \ref{propc1=c2} below. If $c_\tau^1 < c_\tau^2$, we know $u_\tau^2$ is different from $u_\tau^1$. When $\Omega = B_1$, using the fact that $c_\tau^k$, depending on $\mu$, tends to $+\infty$ uniformly with respect to $\tau$ as $\mu \to 0^+$, and by Theorem \ref{thmunique}, for sufficiently small $\mu > 0$, we get that $u_\tau^2$ is sign-changing. Thus, regardless of $c_\tau^1 < c_\tau^2$ or not, $u_\tau^2$ is sign-changing and its Morse index is no more than $4$. Since $\tau$ is almost everywhere, we can take a sequence $\{\tau_n\}$ with $\tau_n \to 1^-$. Then, like \cite{CJS}, using blow-up analyses we can obtain that $\{u_{\tau_n}^2\}$ is bounded in $H_0^1(B_1)$. Next it is not difficult to get a constrained critical point $u_1^2$ of $E_1$ at level $c_1^2$. We cannot conclude $u_1^2 \neq u_1^1$ immediately since sign-changing functions may converge to a positive function. To complete the proof, we develop descending flow techniques. In particular, we shall introduce the positive and negative cones
\begin{align}
	\pm \p = \bigl\{u \in H_0^1(\Omega): \pm u \geq 0\bigr\}.
\end{align}
Moreover, for $\delta > 0$ we set
\begin{align}
	& (\pm \p)_\delta := \bigl\{u \in H_0^1(\Omega): \text{dist}(u,\pm \p) < \delta\bigr\}, \\
	& D^*_\delta = \overline{\p_\delta} \cup \overline{(-\p)_\delta}, \quad S^*(\delta) = H_0^1(\Omega) \backslash D^*_\delta,
\end{align}
where $\text{dist}$ denotes the distance associated with the $H_0^1$-norm. For a small $\delta > 0$ independent of $\tau_n$, we will use descending flow techniques developed in Section \ref{secinv} to show that $u_{\tau_n}^2 \in S^*(\delta)$. Thus $u_1^2 \in \overline{S^*(\delta)}$ is sign-changing and different from the positive function $u_1^1$. Descending flow techniques developed in Section \ref{secinv} are also crucial in proving Theorem \ref{thmsmall}.

\vskip0.1in

Flow invariance method was recently developed in \cite{JS} in a \emph{manifold} setting in which the constrained problem for \eqref{eqonbounddomain} is included. Particularly, Proposition 4.5, Lemma 4.7, and Corollary 4.6 in \cite{JS} constitute a general framework to prove that some subset is invariant with respect to a flow on a manifold. In practice, the gradient was used in \cite{JS} to construct the descending flow. The gradient $\nabla E|_{S_\mu(\Omega)}(u,\Omega)$ of $E$ as a constrained functional to $S_\mu(\Omega)$, at the point $u \in S_\mu(\Omega)$, belongs to $T_uS_\mu(\Omega)$, the tangent space of $S_\mu(\Omega)$ at $u$, and is given by $$\nabla E|_{S_\mu(\Omega)}(u,\Omega) = u - (-\Delta)^{-1}(|u|^{p-2}u + \la_u u).$$ Here $\la_u$ is determined by the condition $\nabla E|_{S_\mu(\Omega)}(u,\Omega) \in T_uS_\mu(\Omega)$. Different from the unconstrained case where $\la_u$ is independent of $u$, to determine the sign of $\la_u$ is important in the $L^2$-constrained problem. In \cite{JS} we used conditions on $\mu$ to check that $\la_u \geq 0$ for all $u$ in a suitable bounded set, which guarantees that the assumptions in \cite[Lemma 4.7]{JS} hold true for the gradient $\nabla E|_{S_\mu(\Omega)}$. Let us remark that, if $\nabla E|_{S_\mu(\Omega)}(u,\Omega) = 0$, it is clear that $u$ solves \eqref{eqonbounddomain} with $\la = -\la_u$. When searching for solutions with large Lagrange multiplier, the case $\la_u < 0$ for some $u$ cannot be avoided. To solve this difficult issue, we succeed to find a pseudogradient operator with well properties and define the descending flow with respect to this operator, rather than the gradient. The natural idea is to consider the operator $V = Id - G$ where $G(u)$ has a desired form as $(-\Delta + \bar\la)^{-1}(|u|^{p-2}u + \omega u)$ where $\omega \in \R$ is a parameter depending on $u$ while $\bar\la$ is independent of $u$. Further, we notice that $G(u)$ is the unique solution of \eqref{eqequofw1} studied in Proposition \ref{propuniquesolutionw} appeared in the following text, and $\omega$ is the corresponding Lagrange multiplier. For all $u$ in a bounded set, we can prove that $\omega \geq 0$ when $\bar \la$ is sufficiently large.
See Section \ref{secinv} for more details.

	\begin{remark}
{All the techniques to prove Theorem \ref{thmlarge}, except for Theorem \ref{thmunique}, are prepared for a general bounded domain $\Omega$. This means that as long as the result in Theorem \ref{thmunique} is proved for $\Omega$, one can get the conclusion in Theorem \ref{thmlarge} for the same domain immediately. Moreover, we think that the result in Theorem \ref{thmunique}  can indeed be extended to more general bounded domains, such as a convex one with certain conditions. However, this will involve complicated techniques and is far beyond our initial motivation, that is finding an approach to search for a sign-changing prescribed mass solution with large energy. Therefore,  we keep  it as a future research topic.}
\end{remark}

\vskip0.3in

The rest part of this paper is organized as follows. In Section \ref{secpre} we give some preliminary results. In section \ref{secLagrange multiplier}, we study the relationship between the boundedness of Lagrange multipliers and energies of solutions with bounded Morse index, which is useful in investigating the limit behaviors with respect to $\mu$. In Section \ref{secpositionsolu} we provide the proof of Theorem \ref{thmunique}. In Section \ref{secinv}, we develop descending flow techniques and flow invariance arguments, which are used for both the proof of Theorem \ref{thmsmall} and the proof of Theorem \ref{thmlarge}. In Section \ref{sectool} we establish a powerful tool to obtain bounded Palais-Smale sequences with location information and with Morse type information for some constrained functional having minimax geometry, which will be used to search for sign-changing critical points with estimates of Morse index for $L^2$-constrained functionals. In Section \ref{secgenus}, we search for sign-changing prescribed mass solutions by using genus and complete the proof of Theorem \ref{thmsmall}. In Section \ref{secreview}, we   make further development to prepare the proof of Theorem \ref{thmlarge}. Finally in Section \ref{secappropro} we succeed in finding a sign-changing solution with large energy via generalizing the mountain-pass geometry and prove Theorem \ref{thmlarge}.

\bigskip

\noindent \textbf{Notations}

\begin{itemize}
	\item $\Omega \subset \R^N$ is a bounded $C^1$ domain. $|\Omega|$ means the Lebesgue measure of $\Omega$.
	\item For $y \in \R^N$ and $R > 0$, $B_R(y) := \{x \in \R^N: |x - y| < R\}$, we denote $B_R = B_R(0)$ for simplicity.
	\item For $q \ge 1$, $L^q = L^q(\Omega)$ denotes the space of real-valued measurable functions $u$ on $\Omega$ satisfying $\int_\Omega|u|^qdx < \infty$ whose norm is defined by $\|u\|_{L^q} := (\int_\Omega|u|^qdx)^{1/q}$.
	\item $L^\infty = L^\infty(\Omega)$ denotes the space of real-valued essentially bounded measurable functions
	$u$ on $\Omega$ whose norm is defined by $\|u\|_{L^\infty} = \text{esssup}_{x \in \Omega}|u(x)|$.
	\item We regard $L^2$ as a Hilbert space over $\R$ by the inner product $\langle u,v \rangle_{L^2} := \int_\Omega uv dx$.
	\item The set $H_0^1 = H_0^1(\Omega)$ stands for the space of real-valued measurable functions $u$ of the Sobolev space of order $1$ whose norm is defined by $\|u\| := (\int_\Omega |\nabla u|^2dx)^{\frac12}$. We regard $H_0^1$ as a Hilbert space over $\R$ by the inner product $\langle u,v \rangle := \int_\Omega \nabla u \nabla v dx$.
	\item $0 < \la_1(\Omega) < \la_2(\Omega) \leq \cdots \leq \la_k(\Omega) \le \cdots$ are the eigenvalues of $-\Delta$ on $\Omega$ with Dirichlet boundary condition, and the corresponding unitized eigenfunctions  are denoted by $\phi_1, \phi_2, \cdots, \phi_k, \cdots$.
 	\item $E(u,\Omega) = \frac12\int_\Omega |\nabla u|^2dx - \frac1p \int_\Omega |u|^pdx.$ When $\Omega = B_1$, we denote $E(u) = E(u,B_1)$.
 	\item Given $\tau \in [\frac12,1]$, $E_\tau(u,\Omega) = \frac12 \int_{\Omega}|\nabla u|^2dx - \frac{\tau}p\int_{\Omega} |u|^pdx$. Particularly, $E_1(u,\Omega) = E(u,\Omega)$. When $\Omega = B_1$, we denote $E_\tau(u) = E_\tau(u,B_1)$.
 	\item For $\mu > 0$, $S_\mu(\Omega) = \bigl\{u \in H_0^1(\Omega): \int_\Omega |u|^2dx = \mu\bigr\}$. When $\Omega = B_1$, we denote $S_\mu = S_\mu(B_1)$.
 	\item $\pm \p = \bigl\{u \in H_0^1(\Omega): \pm u \geq 0\bigr\}$.
 	\item For $\delta > 0$, $(\pm \p)_\delta := \bigl\{u \in H_0^1(\Omega): \text{dist}(u,\pm\p) < \delta\bigr\}$, $D^*_\delta = \overline{\p_\delta} \cup \overline{(-\p)_\delta}$, $S^*(\delta) = H_0^1(\Omega) \backslash D^*_\delta$.
 	\item $C_{p,N}$ is the best constant in Gagliardo-Nirenberg inequality \eqref{eqgninequality}, and $\ga_p = N(\frac12 -\frac1p)$.
 	\item $\mathcal{S}_p$ is the best constant of Sobolev inequality $\|u\|_{L^p} \le \mathcal{S}_p\|u\|$.
 	\item $2^* := 2N/(N-2)^+$ is the Sobolev critical exponent, that is, $2^* = \infty$ if $N =1,2$ and $2^* = 2N/(N-2)$ if $N \ge 3$.
 	\item $p_c := 2 + 4/N$ is the $L^2$-critical exponent.
 	\item For $\rho > 0$, $\B_\rho^\mu = \bigl\{u \in S_\mu(\Omega): \int_\Omega|\nabla u|^2dx < \rho\bigr\}$, $\U_\rho^\mu = \bigl\{u \in S_\mu(\Omega): \int_\Omega|\nabla u|^2dx = \rho\bigr\}$.
 	\item For a real-valued function $u$, $u^+ := \max\{u,0\}$ and $u^- := \min\{u,0\}$.
\end{itemize}

\section{Preliminary} \label{secpre}

Firstly, we recall the celebrated Gagliardo-Nirenberg inequality. For any $u \in H^1(\R^N)$, it holds that
\begin{align} \label{eqgninequality}
	\|u\|_{L^p(\R^N)} \le C_{p,N}\|\nabla u\|_{L^2(\R^N)}^{\ga_p} \|u\|_{L^2(\R^N)}^{1-\ga_p},
\end{align}
where $\ga_p = N(\frac12 -\frac1p)$. Observe that $\ga_p \in (0,1)$ for $2 < p < 2^*$ and that
\begin{align*}
	p\ga_p
		\begin{cases}
			\in (0,2)  & \text{if } 2 < p < p_c, \\
			= 2  & \text{if } p = p_c, \\
			\in (2,2^*) & \text{if } p_c < p < 2^*.
		\end{cases}
\end{align*}
For any $u \in H_0^1(\Omega)$, it is natural to extend $u$ by defining $u(x) = 0$ for $x \notin \Omega$. Then we get
\begin{align} \label{eqgndomain}
	\|u\|_{L^p(\Omega)} \le C_{p,N}\|\nabla u\|_{L^2(\Omega)}^{\ga_p} \|u\|_{L^2(\Omega)}^{1-\ga_p}.
\end{align}

Next, we recall the uniqueness result for positive solution of \eqref{eqonB1}. The parameter $\tau$ is mentioned for convenience to use in Section \ref{secpositionsolu}.

\begin{proposition} \label{propuniquefixlaB1}
	Let $2 < p < 2^*$. Given $\tau \in [1/2,1]$, the following equation
	\begin{align} \label{eqwithfixlaB1}
		-\Delta u + \la u = \tau |u|^{p-1}u \quad \text{in } B_1, \quad u > 0 \quad \text{in } B_1, \quad u \in H_0^1(B_1),
	\end{align}
	has a unique, radial solution, denoted by $u_{\tau,\la}$, for any fixed $\la > -\la_1(\Omega)$.
\end{proposition}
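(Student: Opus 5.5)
The plan is to reduce Proposition \ref{propuniquefixlaB1} to the classical case $\tau = 1$ by a scaling of the amplitude. Here $\Omega = B_1$, so the threshold $-\la_1(\Omega)$ is $-\la_1(B_1)$. Since $\tau$ and $u > 0$ are involved, the term $\tau|u|^{p-1}u$ should be read as $\tau u^{p-1}$, in line with \eqref{eqonB1}.

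Fix $\tau \in [1/2,1]$ and $\la > -\la_1(B_1)$. Set $v = \tau^{\frac{1}{p-2}} u$. Then $-\Delta v + \la v = \tau^{\frac1{p-2}}\tau u^{p-1} = v^{p-1}$, because $\tau^{\frac{1}{p-2}}\cdot\tau = (\tau^{\frac1{p-2}})^{p-1}$. Hence $u$ is a positive $H_0^1(B_1)$ solution of \eqref{eqwithfixlaB1} if and only if $v$ is a positive solution of \eqref{eqonB1}. The correspondence $u \mapsto v$ is a bijection that preserves positivity and radial symmetry. Existence, uniqueness and radiality therefore all reduce to the case $\tau = 1$.

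For $\tau = 1$ I would argue as follows.

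\textbf{Existence.} Minimize $\int_{B_1}(|\nabla w|^2 + \la w^2)\,dx$ on $\{w \in H_0^1(B_1): \|w\|_{L^p} = 1\}$. Coercivity comes from $\la > -\la_1(B_1)$: the quadratic form is bounded below by $\min\{1, 1+\la/\la_1(B_1)\}\|w\|^2$. Compactness of the embedding $H_0^1 \hookrightarrow L^p$ for $p < 2^*$ then gives a minimizer. Replacing it by $|w|$ we may assume it is nonnegative. After rescaling it solves the equation, and the strong maximum principle makes it positive.

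\textbf{Radiality.} By Gidas--Ni--Nirenberg \cite{GNN}, every positive solution on the ball is radially symmetric and radially decreasing.

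\textbf{Uniqueness.} Uniqueness of positive radial solutions of $-\Delta u + \la u = u^{p-1}$ in $B_1$ with Dirichlet data, for every $\la > -\la_1(B_1)$ and $2<p<2^*$, is the known result \cite{Kwong,KL,Zhang,Korman}. For $\la \ge 0$ it is Kwong's and Zhang's ODE analysis. For $-\la_1(B_1) < \la < 0$ it is covered by Korman's and Kabeya--Tanaka-type arguments. The common mechanism is that the linearized equation at any positive radial solution is nondegenerate. A continuation or degree argument in $\la$ then yields uniqueness.

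The only genuine obstacle is this uniqueness/nondegeneracy step, especially for $\la < 0$. Since the paper states it as known and cites \cite{GNN,Korman,Kwong,KL,Zhang}, I would invoke these references rather than reprove them. The remaining steps are the scaling identity and standard variational existence.
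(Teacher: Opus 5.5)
Your proposal is correct and follows essentially the paper's route: a variational existence argument (you minimize the quadratic form on the unit $L^p$-sphere, where the paper invokes the mountain pass lemma), radial symmetry from \cite{GNN}, and uniqueness by citation; your amplitude scaling $v=\tau^{\frac{1}{p-2}}u$ makes explicit the reduction to $\tau=1$ that the paper leaves tacit, and you rightly read the nonlinearity as $\tau u^{p-1}$ and the threshold as $-\la_1(B_1)$. Your attribution of the uniqueness results differs from the paper's: it uses \cite{Kwong} for $\la>0$, Section 2.8 of \cite{GNN} (a scaling argument) for $\la=0$, and, for $-\la_1(B_1)<\la<0$, \cite{KL} (see also \cite{Zhang}) when $N\ge 3$ and \cite{Korman} when $N=2$, so Zhang's result belongs to the negative-$\la$ regime rather than to $\la\ge 0$.
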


\begin{proof}
	The existence easily follows from the mountain pass lemma. The radial symmetry of positive solutions is a direct consequence of \cite{GNN}. The uniqueness in the case $\la > 0$ was proved by Kwong \cite{Kwong} for $N \geq 2$. For $\la \in (-\la_1(B_1),0)$ the uniqueness in dimension $N \ge 3$ was proved by Kwong and Li \cite{KL} (see also \cite{Zhang}) whereas in dimension $N = 2$ it was proved by Korman \cite{Korman}. The case $\la = 0$ is treated in Section 2.8 of \cite{GNN}.
\end{proof}

In the following we make some preparations in search for sign-changing solutions.

\begin{lemma} \label{lemcapnonemp}
	Let
	\begin{align} \label{eqconondelta0}
		0 < \delta < \sqrt{\frac{\la_1(\Omega)}{2}\mu}.
	\end{align}
	Then $\overline{\p_\delta} \cap \overline{(-\p)_\delta} \cap S_\mu(\Omega) = \emptyset$.
\end{lemma}

\begin{proof}
	If $u \in \overline{\p_\delta}$, then
	$$
	\|u^-\|_{L^2}^2 = \inf_{w \in \p}\|u - w\|_{L^2}^2 \leq \la_1(\Omega)^{-1}\inf_{w \in \p}\|u - w\|^2 \leq \la_1(\Omega)^{-1}\delta^2.
	$$
	Similarly, if $u \in \overline{-\p_\delta}$, then $\|u^+\|_{L^2}^2 \leq \la_1(\Omega)^{-1}\delta^2$. Thus, for any $u \in \overline{\p_\delta} \cap \overline{(-\p)_\delta}$, using \eqref{eqconondelta0} we have
	$$
	\|u\|_{L^2}^2 \leq 2\la_1(\Omega)^{-1}\delta^2 < \mu
	$$
	yielding that $\overline{\p_\delta} \cap \overline{(-\p)_\delta} \cap S_\mu(\Omega) = \emptyset$.
\end{proof}

Next, we introduce the definition of Morse index.

\begin{definition} \label{defmorseind}
	For any solution $U \in H^1_0(\Omega)$ of
	\begin{equation}
		\begin{cases}
			-\Delta U + \lambda U = \tau |U|^{p-2}U
			& \text{in } \Omega, \\[1.5\jot]
			\displaystyle
			U(x) = 0
			&\text{on } \partial \Omega,
		\end{cases}
	\end{equation}
	with $\la, \tau \in \R$, we consider
	\begin{equation*}
		Q(\varphi; U, \Omega)
		:= \int_{\Omega} \bigl(|\nabla \varphi|^2
		+ (\lambda- (p-1)\tau|U|^{p-2}) \varphi^2\bigr) dx, \quad \forall \varphi \in H_0^1(\Omega).
	\end{equation*}
	The Morse index of $U$, denoted by $m(U)$, is the maximal dimension of a subspace $W \subset H_0^1(\Omega)$ such that $Q(\varphi; U, \Omega) <0$ for all $\varphi \in W \backslash \{0\}$.
\end{definition}

\begin{definition}\label{def D}
	Let $\phi$ be a $C^2$-functional on $H_0^1(\Omega)$.  For any $u\in H_0^1(\Omega)$, we
	define the continuous bilinear map:
	\begin{equation*}
		D^2\phi(u)
		:= \phi''(u) - \frac{\phi'(u)[u]}{|u|^2} (\cdot,\cdot).
	\end{equation*}
\end{definition}

\begin{remark}
	If $u$ is a critical point of $\phi$ restricted to
	the sphere $S_\mu(\Omega)$, then $D^2 \phi(u)$, seen as a bilinear form on
	$T_u S_\mu(\Omega)$, is the second derivative of $\phi|_{S_\mu(\Omega)}$ at $u$.
\end{remark}

\begin{definition}\label{def: app morse}
	Let $\phi$ be a $C^2$-functional on $H_0^1(\Omega)$. For any $u\in S_\mu(\Omega)$ and
	$\theta > 0$, we define the \emph{approximate Morse index} by
	\begin{align*}
		\tilde m_\theta(u)
		= \sup \bigl\{\dim L: L \text{ is a subspace of }
		T_u S_\mu(\Omega) \text{ such that } D^2\phi(u)[\varphi, \varphi]<-\theta \|\varphi\|^2, \ \forall \varphi \in L \setminus \{0\}
		\bigr\}.
	\end{align*}
	If $u$ is a critical point for the constrained functional
	$\phi|_{S_\mu(\Omega)}$ and $\theta=0$, we say that this is the \emph{Morse
		index of $u$ as constrained critical point}.
\end{definition}

\begin{lemma} \label{lemmorse}
	Let $\{u_n\} \subset S_\mu(\Omega)$ be a Palais-Smale sequence of $E(\cdot,\Omega)$ constrained to $S_\mu(\Omega)$ such that $u_n \to u$ strongly in $H_0^1(\Omega)$. Assume that there exists $\zeta_n \to 0^+$ such that $\tilde m_{\zeta_n}(u_n) \le d$ for some positive integer $d$. Then $\tilde m_0(u) \leq d$. Moreover, for some $\la_u \in \R$, $u$ is a solution of equation \eqref{eqonbounddomain} with $\la = \la_u$ and $m(u) \leq d+1$.
\end{lemma}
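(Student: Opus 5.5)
The plan has three steps: first identify the Lagrange multiplier of the limit $u$, then pass the approximate Morse index bound to the limit, and finally relate the constrained Morse index to the free one $m(u)$.

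\emph{Step 1: the multiplier.} Since $\{u_n\}$ is a constrained Palais--Smale sequence, there are $\la_n \in \R$ with
\[
E'(u_n,\Omega) + \la_n u_n \to 0 \quad \text{in } H^{-1}(\Omega).
\]
Testing with $u_n$ and using $\|u_n\|_{L^2}^2 = \mu$ gives
\[
\la_n = -\mu^{-1}\Big(\|u_n\|^2 - \|u_n\|_{L^p}^p\Big) + o(1).
\]
Strong convergence $u_n \to u$ in $H_0^1(\Omega)$, together with the Sobolev embedding (recall $p$ is subcritical), gives $\la_n \to \la_u$. Passing to the limit in the equation shows that $u \in S_\mu(\Omega)$ solves \eqref{eqonbounddomain} with $\la = \la_u$.

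\emph{Step 2: $\tilde m_0(u) \le d$.} Argue by contradiction. Suppose $L \subset T_u S_\mu(\Omega)$ has dimension $d+1$ and $D^2E(u)[\varphi,\varphi] < 0$ on $L \setminus\{0\}$. The unit sphere of $L$ is compact, so there is $\theta > 0$ with $D^2E(u)[\varphi,\varphi] \le -2\theta \|\varphi\|^2$ on $L$. Transport $L$ to $T_{u_n}S_\mu(\Omega)$ by the projection
\[
P_n\varphi = \varphi - \frac{\langle \varphi, u_n\rangle_{L^2}}{\mu}\, u_n .
\]
As $u_n \to u$, $P_n \to Id$ on $L$ uniformly on bounded sets, so $P_n$ is injective on $L$ for large $n$.

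The form $D^2E(u_n) = E''(u_n) - \frac{E'(u_n)[u_n]}{\mu}\langle\cdot,\cdot\rangle_{L^2}$ depends continuously on $u_n$: the quadratic part $\int (p-1)|u_n|^{p-2}\varphi^2$ is continuous in $H_0^1$ for subcritical $p$, via H\"older and Sobolev, and $E'(u_n)[u_n] \to E'(u)[u]$. Hence for $n$ large $D^2E(u_n)[P_n\varphi,P_n\varphi] < -\theta\|P_n\varphi\|^2$ on $L \setminus\{0\}$. Once $\zeta_n < \theta$ this gives $\tilde m_{\zeta_n}(u_n) \ge d+1$, a contradiction. The main technical point is exactly this uniform continuity of the bilinear forms and of the projections on the compact unit sphere of $L$; it is routine but needs care with the $L^2$ term.

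\emph{Step 3: $m(u) \le d+1$.} On $T_u S_\mu(\Omega)$ one has $E'(u)[u] = -\la_u\mu$, so
\[
D^2E(u)[\varphi,\varphi] = Q(\varphi;u,\Omega) \quad (\tau = 1).
\]
Let $W$ be a subspace where $Q < 0$. Then $W \cap T_u S_\mu(\Omega)$ has codimension at most one in $W$, since $T_u S_\mu(\Omega)$ is the kernel of the single functional $\varphi \mapsto \langle \varphi,u\rangle_{L^2}$. Therefore $\dim W \le \tilde m_0(u) + 1 \le d+1$, which gives $m(u) \le d+1$.
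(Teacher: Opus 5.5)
Your proof is correct and follows essentially the same route as the paper. Both argue by contradiction: compactness of the unit sphere of a $(d+1)$-dimensional negative subspace, together with continuity of $v \mapsto D^2E(v,\Omega)$, contradicts $\tilde m_{\zeta_n}(u_n)\le d$. Both then identify $D^2E(u,\Omega)$ with $Q(\cdot;u,\Omega)$ via $\la_u=-\mu^{-1}E'(u,\Omega)[u]$ and use that $T_uS_\mu(\Omega)$ has codimension one to get $m(u)\le d+1$.

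Your projection $P_n$ onto $T_{u_n}S_\mu(\Omega)$ is a useful extra step. The paper tests $D^2E(u_n,\Omega)$ directly on $W_0\subset T_uS_\mu(\Omega)$ and never explains why this yields a negative subspace of $T_{u_n}S_\mu(\Omega)$; your argument closes that point.
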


\begin{proof}
	Let us first show that the Morse index of $u$, as a constrained critical point, satisfies $\tilde m_0(u) \leq d$.  Assume by contradiction that this is not the case. Then, in view of Definition \ref{def: app morse}, we may assume that there exists a $W_0 \subset T_uS_\mu(\Omega)$ with $\dim W_0 = d+1$ such that
	\begin{equation*}
		D^2 E(u,\Omega)[w,w]<0
		\quad \text{for all } w \in W_0 \setminus \{0\}.
	\end{equation*}
	Since $W_0$ is of finite dimension, its unit sphere is
	compact and there exists $\theta > 0$ such that
	\begin{equation*}
		D^2 E(u,\Omega)[w,w] < -\theta \|w\|^2
		\quad \text{for all } w \in W_0 \setminus\{0\}.
	\end{equation*}
	Now, observing that $E(\cdot,\Omega) \in C^{2,\alpha}$ where $\alpha \in (0,1]$ depending on $p$, it follows that there exists $\delta>0$ small enough such that, for any $v \in S_\mu(\Omega)$ satisfying $\|v-u\| \leq \delta$,
	\begin{equation*}
		D^2 E(v,\Omega)[w,w] < -\frac{\theta}{2} \|w\|^2
		\quad \text{for all } w \in W_0 \setminus \{0\}.
	\end{equation*}
	In particular, for $n$ large enough, $\|u_n - u\| \le
	\delta$ and $\zeta_n < \theta/2$ (as $\zeta_n \to 0^+$),
	so the previous inequality implies
	\begin{equation*}
		D^2 E(u_n,\Omega)[w,w] < -\frac{\theta}{2}\|w\|^2 < -\zeta_n\|w\|^2
		\quad \text{for all } w \in W_0 \setminus \{0\} .
	\end{equation*}
	Remembering that $\dim W_0>d$ and observing that the fact that $\tilde m_{\zeta_n}(u_n) \leq d$ directly implies that if there
	exists a subspace $W_n \subset T_{u_n}S_\mu(\Omega)$ such that
	\begin{equation*}
		D^2 E(u_n,\Omega)[w,w]  < - \zeta_n \|w\|^2,
		\quad \text{for all } w \in W_n \setminus \{0\},
	\end{equation*}
	then necessarily $\dim W_n \leq d$, we have reached a contradiction. Note that
	\begin{align*}
		\la_u = -\frac1\mu E'(u,\Omega)[u] = -\lim_{n\to\infty}\frac1\mu E'(u_n,\Omega)[u_n].
	\end{align*}
	Then, recalling that $S_\mu(\Omega)$ is of codimension $1$ in $H_0^1(\Omega)$ and observing that, for any $w \in H_0^1(\Omega)$,
	\begin{align*}
		D^2 E(u,\Omega)[w,w] & = E''(u,\Omega)[w,w] + \la_u\int_\Omega|w|^2dx \\
		& = \int_\Omega\left( |\nabla w|^2 + \left(\la_u-(p-1)|u|^{p-2}\right)|w|^2 \right)dx,
	\end{align*}
	we obtain that $m(u) \leq d+1$.
\end{proof}

For the completeness, at the end of this section, we recall the abstract results in \cite{JS} on the invariance of some subset with respect to a flow on a manifold.

\vskip0.1in

Let $(E,\langle \cdot,\cdot\rangle)$ and $(H,( \cdot,\cdot))$ be two \emph{infinite-dimensional} Hilbert spaces and assume that
\begin{align*}
	E \hookrightarrow H \hookrightarrow E',
\end{align*}
with continuous injections. For simplicity, we assume that the continuous injection $E \hookrightarrow H$ has norm at most $1$ and identify $E$ with its image in $H$. We also introduce
\begin{align*}
	\left\{
	\begin{aligned}
		\|u\|^2 & = \langle u,u \rangle, \\
		|u|^2 \ & = (u,u), \\
	\end{aligned}
	\right.
	\quad \quad u \in E,
\end{align*}
and, for $\mu \in (0,\infty)$, we define
\begin{align*}
	S_\mu := \bigl\{u \in E: |u|^2 = \mu \bigr\}.
\end{align*}
For our application, it is plain that $E = H_0^1(\Omega)$ and $H = L^2(\Omega)$.

\vskip0.1in

The following result, Proposition 4.5 in \cite{JS}, extends the classical Br\'ezis-Martin type results (see e.g. \cite[Theorem 1]{Bre}, \cite[Theorem 3]{Mar1}, and \cite[Theorem 4.1]{Dei}) to a manifold setting.

\begin{proposition}[Proposition 4.5 in \cite{JS}] \label{propbmmanifold}
	Let $u \in B_\mu \subset S_\mu$, $U$ be a neighborhood of $u$ in $S_\mu$ and $B_\mu \cap \overline{U}$ closed in $S_\mu$. If $V$ is a Lipschitz mapping on $\overline{U}$ with $V(w) \in T_wS_\mu$ for any $w \in \overline{U}$, where $T_wS_\mu$ is the tangent space of $S_\mu$ at $w$ defined as
	\begin{align*}
		T_wS_\mu := \bigl\{\phi \in E: (\phi,w) = 0\bigr\},
	\end{align*}
	and
	\begin{align} \label{eqd=0}
		\lim_{s \searrow 0}s^{-1}\text{dist}(w + sV(w),B_\mu) = 0, \quad \forall w \in \overline{U} \cap B_\mu.
	\end{align}
	Then  there exist $r = r(u) > 0$ and $\eta(t,u)$ satisfying
	\begin{align} \label{eqeta}
		\left\{
		\begin{aligned}
			& \frac{\partial}{\partial t}\eta(t,u) = V(\eta(t,u)), \quad \forall t \in [0,r), \\
			& \eta(0,u) = u, \quad \eta(t,u) \in B_\mu.
		\end{aligned}
		\right.
	\end{align}
\end{proposition}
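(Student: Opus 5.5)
This is a Brézis–Martin-type flow-invariance result transplanted to the sphere $S_\mu$, so I would follow the classical proof (Euler polygons plus a compactness/Cauchy argument), adding a retraction onto $S_\mu$ to keep iterates on the manifold.

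\emph{Step 1: a local frame.} Fix $r_0>0$ with $\overline{B(u,2r_0)}\cap S_\mu\subset \overline U$ and such that $V$ is $L$-Lipschitz and bounded by $M$ there. Use the retraction $R(w)=\sqrt{\mu}\,w/|w|$, which is smooth near $S_\mu$. For $w\in S_\mu$ and $\xi\in T_wS_\mu$ we have $|w+\xi|^2=\mu+|\xi|^2$. Hence $\|R(w+\xi)-(w+\xi)\|=O(\|\xi\|^2)$, since $|\xi|\le\|\xi\|$. The set $K:=B_\mu\cap\overline U$ is closed, so it is a complete metric space.

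\emph{Step 2: approximate solutions.} Given $\varepsilon>0$, build $\varepsilon$-approximate trajectories $\eta_\varepsilon:[0,r]\to S_\mu$ with nodes in $K$. Start from $t_0=0$ and $w_0=u$. Given $w_k\in K$, \eqref{eqd=0} gives a step $h_k\in(0,\varepsilon]$, chosen maximal up to a factor $1/2$ among admissible steps, and a point $w_{k+1}\in K$ with
\begin{align*}
\|w_{k+1}-w_k-h_kV(w_k)\|\le \varepsilon h_k .
\end{align*}
Between nodes, interpolate linearly and then retract by $R$. Since steps are $O(\varepsilon)$, the retraction error is $O(\varepsilon h_k)$. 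Taking $r=r_0/(2(M+1))$ ensures the polygon stays in $B(u,r_0)$. The usual Brézis–Martin device (maximal steps together with the lower semicontinuity of admissible step lengths) ensures $\sum h_k$ reaches $r$ in finitely many steps or via a countable process with a limit in $K$ by closedness. The integrated defect satisfies $\|\eta_\varepsilon(t)-u-\int_0^tV(\eta_\varepsilon)\|\le C\varepsilon t$, and $\mathrm{dist}(\eta_\varepsilon(t),K)\le C\varepsilon$.

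\emph{Step 3: convergence.} For $\varepsilon,\varepsilon'$, the Lipschitz bound and Gronwall give
\begin{align*}
\|\eta_\varepsilon(t)-\eta_{\varepsilon'}(t)\|\le C(\varepsilon+\varepsilon')e^{Lt}.
\end{align*}
So $\eta_\varepsilon$ is uniformly Cauchy, and it converges to some $\eta$ solving $\eta(t)=u+\int_0^tV(\eta)$. The values $\eta(t)$ lie in $S_\mu$ because $S_\mu$ is closed. They also lie in $B_\mu$, because their distance to the closed set $K$ is $0$. This gives \eqref{eqeta}. Alternatively, let $\eta$ be the unique flow of $V$ on the manifold. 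It stays in $S_\mu$ because $V(w)\in T_wS_\mu$ implies $\frac{d}{dt}|\eta|^2=0$. Then compare $\eta$ to the polygons.

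\emph{Main obstacle.} The hard step is Step 2: guaranteeing the step sizes do not degenerate before time $r$, and controlling the manifold curvature error. The condition \eqref{eqd=0} is only pointwise, not uniform. I would handle this by the standard maximal-step/transfinite argument, and would avoid curvature issues by working in $E$ and using the quadratic retraction estimate above.
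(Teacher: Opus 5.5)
Your argument is correct and follows the classical Br\'ezis--Martin Euler-polygon scheme (maximal admissible steps, closedness of $K$, Gronwall comparison), which is exactly what the paper relies on here: it gives no proof of its own and quotes the statement from \cite{JS} as the manifold version of the Br\'ezis, Martin and Deimling theorems. The retraction $R$ is actually dispensable, because $B_\mu\cap\overline U$ is closed in $E$ and \eqref{eqd=0} is stated with the ambient distance: evaluating $V$ only at the nodes, through the piecewise-constant companion $\bar\eta_\varepsilon(t)=w_k$ on $[t_k,t_{k+1})$, yields the same Gronwall estimate, so the classical theorem for locally closed sets applies directly in $E$.
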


In the next lemma, we give a condition to check that hypothesis \eqref{eqd=0} holds.

\begin{lemma} \label{lemconvex}
	Let $B_\mu = \tilde B \cap S_\mu$ where $\tilde B \subset E$ is close, convex in $E$, and satisfying
	\begin{align} \label{eqcone}
		kw \in \tilde B, \quad \forall k \in (0,1), w \in \tilde B.
	\end{align}
	For $u \in B_\mu$, if $V(u)$ has the form of $u - G(u)$ and $G(u) \in \tilde B$, then
	\begin{align} \label{eqd=0atapoint}
		\lim_{s \searrow 0}s^{-1}\text{dist}(u - sV(u),B_\mu) = 0.
	\end{align}
\end{lemma}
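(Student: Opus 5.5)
We need the distance from $u - sV(u)$ to $B_\mu$ to be $o(s)$ as $s \searrow 0$. Write $u - sV(u) = (1-s)u + sG(u)$. Both $u$ and $G(u)$ lie in $\tilde B$, which is convex, so for $s \in (0,1)$ the point $w_s := (1-s)u + sG(u)$ lies in $\tilde B$. It need not lie on $S_\mu$, however. We therefore project radially onto the sphere and set
$$\hat w_s := \sqrt{\mu}\,\frac{w_s}{|w_s|}.$$
We must then check two things: that $\hat w_s \in \tilde B$, and that $\|w_s - \hat w_s\| = o(s)$.

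For the second point we use $V(u) \in T_uS_\mu$, i.e. $(V(u),u) = 0$; this is the standing assumption on $V$ in Proposition \ref{propbmmanifold}, and we assume it here. Then
$$|w_s|^2 = |u|^2 - 2s(V(u),u) + s^2|V(u)|^2 = \mu + s^2|V(u)|^2 .$$
Hence $|w_s| = \sqrt{\mu} + O(s^2)$, and so $\sqrt{\mu}/|w_s| = 1 - O(s^2)$. It follows that
$$\|w_s - \hat w_s\| = \Bigl|1 - \frac{\sqrt{\mu}}{|w_s|}\Bigr|\,\|w_s\| = O(s^2),$$
since $\|w_s\|$ stays bounded. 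This is $o(s)$, as required.

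For the first point: since $|w_s| \ge \sqrt{\mu}$, the factor $k := \sqrt{\mu}/|w_s|$ lies in $(0,1]$. If $k=1$ then $\hat w_s = w_s \in \tilde B$. If $k<1$, then $\hat w_s = k w_s \in \tilde B$ by \eqref{eqcone}. In either case $\hat w_s \in \tilde B \cap S_\mu = B_\mu$. Note also that $w_s \ne 0$, since $|w_s| \ge \sqrt{\mu} > 0$.

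Therefore $\text{dist}(u - sV(u), B_\mu) \le \|w_s - \hat w_s\| = O(s^2)$. Dividing by $s$ and letting $s \to 0^+$ gives \eqref{eqd=0atapoint}.

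The main subtlety, rather than an obstacle, is that we need the tangency $(V(u),u)=0$. Without it the radial correction would be of order $s$ instead of $s^2$. Closedness of $\tilde B$ is not actually needed at this point.
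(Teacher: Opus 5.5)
Your proof is correct and follows the paper's argument essentially step by step. You write $u - sV(u) = (1-s)u + sG(u) \in \tilde B$ by convexity, use $(V(u),u)=0$ to get $|u_s|^2 = \mu + s^2|V(u)|^2$, rescale by the factor $\sqrt{\mu}/|u_s| \in (0,1]$ via \eqref{eqcone}, and bound the distance by $O(s^2)$; your separate handling of the case $\sqrt{\mu}/|w_s| = 1$ (i.e. $V(u)=0$) is a small improvement, since the paper asserts the strict inequality $\sqrt{\mu} < |u_s|$, which fails in that degenerate case.
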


\begin{proof}
	The proof is modified from the one of \cite[Lemma 4.7]{JS}.
	Let $u_s = u - sV(u)$ where $u \in B_\mu \subset S_\mu$. Since $V(u) \in T_uS_u$,
	\begin{align*}
		|u_s|^2 & = (u-sV(u),u-sV(u)) \\
		& = (u,u) - 2s(u,V(u)) + s^2(V(u),V(u)) \\
		& = \mu + s^2|V(u)|^2.
	\end{align*}
	Note that $u_s = sG(u) + (1-s)u \in \tilde B$ by the convexity of $\tilde B$. Moreover, $\sqrt{\mu} < |u_s|$ for $s > 0$. Then, using \eqref{eqcone} we know $\sqrt \mu u_s/|u_s| \in \tilde B \cap S_\mu = B_\mu$. Hence,
	\begin{align*}
		\lim_{s \searrow 0}s^{-1}\text{dist}(u - sV(u),B_\mu) & \leq \lim_{s \searrow 0}s^{-1}\bigl\|u_s -  \frac{\sqrt\mu u_s}{|u_s|}\bigr\| \\
		& = \lim_{s \searrow 0}\frac {s|V(u)|^2}{2\mu}\|u_s\| = 0,
	\end{align*}
	which completes the proof.
\end{proof}

\section{Lagrange multipliers and energies of solutions with bounded Morse index} \label{secLagrange multiplier}

Throughout this section we will deal with a sequence $\{u_n,\la_n,\tau_n\} \subset H_0^1(\Omega) \times \R \times [1/2,1]$ satisfying
\begin{align} \label{equn}
	\begin{cases}
		-\Delta u_n + \la_n u_n = \tau_n|u_n|^{p-2}u_n & \quad \text{in } \Omega, \\
		u_n = 0 & \quad \text{on } \partial \Omega.
	\end{cases}	
\end{align}
We always assume that there exists a positive integer $\bar k$ such that $m(u_n) \le \bar k$ for all $n$ and that $\tau_n \to \bar \tau$ for $\bar\tau \in [1/2,1]$ as $n \to \infty$ in this section. The core study is the relationship between the parameter $\la_n$ and the energy $E_{\tau_n}(u_n,\Omega)$. Roughly speaking, when assuming that the Morse index of $u_n$ is uniformly bounded, we will prove that the boundedness of $\{\la_n\}$ is equivalent to the boundedness of $\{E_{\tau_n}(u_n,\Omega)\}$ if $p \neq p_c$. Main tools in the proof are some results obtained by the Morse index information and by blow-up analyses.

\vskip0.1in
Observe that $\{\la_n\}$ is bounded from below since the Morse index of $u_n$ is uniformly bounded, that is 

\begin{lemma} \label{lemlanbelow}
	It holds that
	\begin{align*}
		\liminf_{n \to \infty}\la_n > -\infty.
	\end{align*}
\end{lemma}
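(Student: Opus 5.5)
We argue by contradiction. Suppose, up to a subsequence, $\la_n \to -\infty$. The idea is to use the Morse index bound $m(u_n)\le \bar k$ together with the fact that, when $\la_n$ is very negative, the quadratic form $Q(\cdot;u_n,\Omega)$ is already negative on a high-dimensional space built only from Dirichlet eigenfunctions. This works because the nonlinear contribution to $Q$ has a favorable sign and can be discarded.

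\textbf{Upper bound on $Q$.} Since $p>2$ and $\tau_n \ge 1/2>0$, we have
$$
-(p-1)\tau_n|u_n|^{p-2}\varphi^2 \le 0 \quad\text{pointwise}.
$$
Hence for every $\varphi \in H_0^1(\Omega)$,
\begin{align*}
	Q(\varphi;u_n,\Omega) \le \int_\Omega |\nabla \varphi|^2dx + \la_n\int_\Omega \varphi^2 dx .
\end{align*}

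\textbf{Test space.} Take $W = \mathrm{span}\{\phi_1,\dots,\phi_{\bar k+1}\}$, the span of the first $\bar k+1$ Dirichlet eigenfunctions. For $\varphi\in W\setminus\{0\}$ the variational characterization of eigenvalues gives
$$
\int_\Omega|\nabla\varphi|^2dx \le \la_{\bar k+1}(\Omega)\int_\Omega\varphi^2dx,
$$
and therefore
$$
Q(\varphi;u_n,\Omega)\le (\la_{\bar k+1}(\Omega)+\la_n)\int_\Omega\varphi^2dx .
$$
Because $\la_n\to-\infty$, for $n$ large we have $\la_n<-\la_{\bar k+1}(\Omega)$. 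Then $Q(\cdot;u_n,\Omega)<0$ on $W\setminus\{0\}$, with $\dim W=\bar k+1$.

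\textbf{Conclusion.} By Definition \ref{defmorseind} this gives $m(u_n)\ge \bar k+1$, contradicting $m(u_n)\le\bar k$. Hence $\liminf_{n\to\infty}\la_n>-\infty$.

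The argument in fact yields the explicit bound $\la_n\ge -\la_{\bar k+1}(\Omega)$ for every $n$, not just asymptotically. The only point needing care is the sign of the nonlinear term in $Q$, which relies on $\tau_n>0$.
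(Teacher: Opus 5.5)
Your proof is correct and follows the same route as the paper: you discard the nonpositive nonlinear term in $Q(\cdot;u_n,\Omega)$ and exhibit a $(\bar k+1)$-dimensional subspace on which $\int_\Omega|\nabla\varphi|^2dx+\la_n\int_\Omega\varphi^2dx<0$ once $\la_n$ is sufficiently negative. The paper takes an arbitrary $W$ and leaves implicit why the threshold on $n$ is uniform in $\varphi$ (norm equivalence on a finite-dimensional space); your choice $W=\mathrm{span}\{\phi_1,\dots,\phi_{\bar k+1}\}$ makes this explicit and yields the quantitative bound $\la_n\ge-\la_{\bar k+1}(\Omega)$ for every $n$.
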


\begin{proof}
	Arguing by contradiction, up to a subsequence we assume that $\la_n \to -\infty$. Let $W \subset H_0^1(\Omega)$ satisfy $\dim W = \bar k + 1$. For any $\varphi \in W \backslash \{0\}$, we have
	\begin{align*}
		Q(\varphi; u_n, \Omega) & = \int_{\Omega} \bigl(|\nabla \varphi|^2
		+ (\la_n- (p-1)\tau|u_n|^{p-2}) \varphi^2\bigr) dx \\
		& \le \int_{\Omega}|\nabla \varphi|^2dx + \la_n\int_{\Omega}| \varphi|^2dx < 0
	\end{align*}
    for sufficiently large $n$ independent of the choice of $\varphi$. This shows that $m(u_n) \geq \bar k +1$ if $n$ is large enough, contradicting that $m(u_n) \le \bar k$ for all $n$.
\end{proof}

In the following lemma we will show that the boundedness of $\{\la_n\}$ is equivalent to the boundedness of $\{u_n\}$ in $L^\infty$.

\begin{lemma} \label{lemlanbounded}
	We have that $\{\la_n\}$ is bounded if and only if $\{u_n\}$ is a $L^\infty$-bounded sequence.
\end{lemma}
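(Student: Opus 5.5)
The proof splits into the two implications.

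\emph{If $\{u_n\}$ is $L^\infty$-bounded, then $\{\la_n\}$ is bounded.} By Lemma \ref{lemlanbelow} only an upper bound is needed. Suppose $\|u_n\|_{L^\infty}\le M$ and, along a subsequence, $\la_n\to+\infty$. Assume $u_n\not\equiv 0$; this holds in all our applications since $\int_\Omega u_n^2=\mu>0$. Testing \eqref{equn} with $u_n$ gives
\begin{align*}
\int_\Omega|\nabla u_n|^2dx+\la_n\int_\Omega u_n^2dx=\tau_n\int_\Omega|u_n|^pdx\le M^{p-2}\int_\Omega u_n^2dx .
\end{align*}
Hence $\la_n\le M^{p-2}$, which is a contradiction. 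In fact this yields the explicit bound $\la_n\le \|u_n\|_{L^\infty}^{p-2}$.

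\emph{If $\{\la_n\}$ is bounded, then $\{u_n\}$ is $L^\infty$-bounded.} Suppose instead that $\|u_n\|_{L^\infty}\to\infty$ along a subsequence. I would run the standard blow-up argument in the Gidas--Spruck style, using the Morse index bound in the spirit of Bahri--Lions and Esposito--Petralla. Choose $x_n\in\Omega$ with $|u_n(x_n)|=\|u_n\|_{L^\infty}=:M_n$; this uses elliptic regularity, which makes $u_n$ continuous up to the boundary and vanishing on $\partial\Omega$. Set $\varepsilon_n=M_n^{-(p-2)/2}\to0$ and define
\begin{align*}
v_n(y)=M_n^{-1}u_n(x_n+\varepsilon_n y),\qquad y\in\Omega_n:=(\Omega-x_n)/\varepsilon_n .
\end{align*}
Then $|v_n|\le1$, $|v_n(0)|=1$, and
\begin{align*}
-\Delta v_n+\varepsilon_n^2\la_n v_n=\tau_n|v_n|^{p-2}v_n \quad\text{in }\Omega_n,
\end{align*}
where $\varepsilon_n^2\la_n\to0$ because $\{\la_n\}$ is bounded. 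Let $d_n=\mathrm{dist}(x_n,\partial\Omega)/\varepsilon_n$. Up to subsequences and rotations, either $d_n\to\infty$ and $\Omega_n\to\R^N$, or $d_n\to d\ge0$ and $\Omega_n$ converges to a half-space; this is where the $C^1$ regularity of $\partial\Omega$ is used.

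In the half-space case, uniform $C^{1,\alpha}$ estimates up to the flattened boundary, together with $v_n=0$ on $\partial\Omega_n$ and $|v_n(0)|=1$, force $d>0$. Local elliptic estimates then give $v_n\to v$ in $C^1_{loc}$, where $v$ is a bounded nontrivial solution of $-\Delta v=\bar\tau|v|^{p-2}v$ on $\R^N$ or on a half-space $H$ with $v=0$ on $\partial H$. The Morse index passes to the limit: if $v$ had $\bar k+1$ compactly supported negative directions for $Q(\cdot;v)$, then rescaling them back to $\Omega$ would produce $\bar k+1$ negative directions for $Q(\cdot;u_n,\Omega)$ for large $n$, since the term $\varepsilon_n^2\la_n\varphi^2$ vanishes in the limit. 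So $v$ has finite Morse index. The Liouville theorems for finite Morse index solutions with $2<p<2^*$ (Farina in $\R^N$, and the half-space version due to Farina and to Bahri--Lions) then give $v\equiv0$, contradicting $|v(0)|=1$.

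The main obstacle is this second implication, specifically the boundary case. One has to ensure that the blow-up point does not escape to $\partial\Omega$ faster than the scale $\varepsilon_n$, and that the limiting domain is a genuine half-space on which the Liouville theorem applies. I would handle this with uniform boundary gradient estimates after flattening the $C^1$ boundary, and these are available precisely because $\partial\Omega$ is $C^1$.
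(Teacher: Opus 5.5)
Your proposal is correct and follows the paper's proof. For the hard direction, the paper uses the same blow-up at a maximum point with scale $\|u_n\|_{L^\infty}^{-(p-2)/2}$ and concludes via the Bahri--Lions type Liouville theorem for bounded finite Morse index solutions on $\R^N$ or a half-space. For the easy direction you test the equation with $u_n$ rather than evaluating it at the maximum point, and you get the same bound $\la_n\le\|u_n\|_{L^\infty}^{p-2}$; both arguments tacitly need $u_n\not\equiv 0$.

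One technical correction: a merely $C^1$ boundary does not give uniform $C^{1,\alpha}$ estimates up to $\partial\Omega_n$. Uniform H\"older estimates up to the boundary are available, since the rescaled domains satisfy a uniform exterior cone condition, and these already force $d>0$, so your argument goes through.
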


\begin{proof}
	\emph{Sufficiency:}
	It is plain that $u_n \in H^2(\Omega) \cap H_0^1(\Omega) \cap C(\overline{\Omega})$ by regularity. Without loss of generality, we take $x_n \in \Omega$ such that $u_n(x_n) = \|u_n\|_{L^\infty} > 0$ and so $u_n(x_n) = \max_{x \in \Omega}u_n(x)$. Then, by \eqref{equn} we get
	\begin{align*}
		\la_n u_n(x_n) \le \tau_n |u_n(x_n)|^{p-2}u_n(x_n),
	\end{align*}
    yielding that
    \begin{align*}
    	\limsup_{n \to \infty}\la_n \le \limsup_{n \to \infty}\tau_n\|u_n\|_{L^\infty}^{p-2} = \bar\tau \limsup_{n \to \infty}\|u_n\|_{L^\infty}^{p-2}.
    \end{align*}
    In view of Lemma \ref{lemlanbelow}, we get that $\{\la_n\}$ is bounded if $\{u_n\}$ is a $L^\infty$-bounded sequence.

    \vskip0.1in
    \emph{Necessity:} Assuming that $\{\la_n\}$ is bounded and $\{u_n\}$ is unbounded in $L^\infty$, we aim to find a contradiction by a classical blow-up argument as \cite{BL2}. In fact, up to a subsequence we assume that $\lim_{n \to \infty}\frac{\la_n}{\|u_n\|_{L^\infty}^{p-2}} = 0$. We then consider
    \begin{align*}
    	\tilde u_n(x) = \|u_n\|_{L^\infty}^{-1}u_n(x_n + \|u_n\|_{L^\infty}^{-(p-2)/2}x), \quad x \in \|u_n\|_{L^\infty}^{(p-2)/2}(\overline{\Omega} - x_n),
    \end{align*}
    which satisfies in $\Omega_n = \|u_n\|_{L^\infty}^{(p-2)/2}(\Omega - x_n)$ the following equation
    \begin{align*}
    	- \Delta \tilde u_n + \frac{\la_n}{\|u_n\|_{L^\infty}^{p-2}}\tilde u_n = |\tilde u_n|^{p-2}\tilde u_n, \quad \tilde u_n = 0 \quad \text{on } \partial \Omega_n.
    \end{align*}
    Then, following the arguments in proving \cite[Theorem 2]{BL2} step by step, we can find a contradiction with the Liouville Theorem, \cite[Theorem 3]{BL2}, which was stated for $N \ge 2$ in \cite{BL2} and also holds true when $N =1$. The proof is complete.
\end{proof}

\begin{remark} \label{rmkpositive}
	If $u_n$ is positive for all $n$, as mentioned in the introduction, it is necessary that $\la_n > -\la_1(B_1)$, see e.g. \cite[Theorem 1.19]{Willem}. Moreover, we remark that the result in Lemma \ref{lemlanbounded} still holds true if we remove the assumption that the Morse index of $u_n$ is uniformly bounded, which can be proved by a blow-up argument as \cite{GS}.
\end{remark}

Next, as $\la_n \to +\infty$ we recall some known results in \cite{PV}. The following result is obtained by combining Proposition 2.10 and Proposition 2.11 in \cite{PV}.

\begin{proposition} \label{proplan}
	Let $\la_n \to +\infty$. Then, up to a subsequence, there exist $x_n^1, \cdots, x_n^k \in \Omega$ with $k \le \bar k$, such that:
	\begin{itemize}
		\item[(1)] As $n \to \infty$,
		\begin{align}
			\sqrt{\la_n}\text{dist}(x_n^i,\partial \Omega) \to +\infty \quad \text{and} \quad \sqrt{\la_n}|x_n^i - x_n^j| \to +\infty, \quad i,j = 1,\cdots,k, \ i \neq j,
		\end{align}
	    and
	    \begin{align}
	    	|u_n(x_n^i)| = \max_{\Omega \cap B_{R_n\la_n^{-1/2}}(x_n^i)}|u_n|, \quad i = 1,\cdots,k,
	    \end{align}
        for some $R_n \to +\infty$. Additionally,
        \begin{align}
        	\lim_{R \to \infty} \limsup_{n \to \infty}\biggl( \la_n^{-\frac{1}{p-2}} \max_{d_{n,k}(x) \ge R \la_n^{-1/2}}|u_n(x)| \biggr) = 0,
        \end{align}
        where
        \begin{align*}
        	d_{n,k}(x) := \min\bigl\{ |x - x_n^i|: i = 1,\cdots,k\bigr\}.
        \end{align*}
        \item[(2)] For some positive constants $C, \ga$,
        \begin{align}
        	|u_n(x)| \le C \la_n^{\frac{1}{p-2}} \sum_{i =1}^k e^{-\ga \sqrt{\la_n}|x - x_n^i|}, \quad \forall x \in \Omega, \ \forall n.       	
        \end{align}
	\end{itemize}
\end{proposition}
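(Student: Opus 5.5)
Proposition \ref{proplan} is quoted from \cite{PV}. The plan is to reproduce the blow-up analysis of \cite{PV} (Propositions 2.10 and 2.11 there), adapted to the factor $\tau_n\to\bar\tau\in[1/2,1]$. The strategy is to rescale at the natural scale $\la_n^{-1/2}$, extract finitely many bubbles using the Morse index bound, and then prove decay away from them by a comparison argument.

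\textbf{Rescaling and the bubble points.} Set $\ep_n=\la_n^{-1/2}\to0$ and
$$
v_n(y)=\la_n^{-\frac1{p-2}}u_n(x+\ep_n y),
$$
which solves $-\Delta v_n+v_n=\tau_n|v_n|^{p-2}v_n$ in $\ep_n^{-1}(\Omega-x)$. The points are chosen inductively. The first point $x_n^1$ is a maximum point of $|u_n|$. Since $v_n$ is bounded near $0$ once $\la_n$ is large, $\la_n^{-1/(p-2)}|u_n(x_n^1)|$ is bounded below by $\tau_n^{-1/(p-2)}\ge1$ from the maximum principle, as in Lemma \ref{lemlanbounded}.

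\textbf{Interior location and Morse index contributions.} To see that $\sqrt{\la_n}\,\mathrm{dist}(x_n^1,\partial\Omega)\to\infty$, I argue by contradiction. Using that $\partial\Omega$ is $C^1$, the rescaled domains converge to a half-space. The limit is then a nontrivial solution with finite Morse index of $-\Delta v+v=\bar\tau|v|^{p-2}v$ on a half-space with Dirichlet data, which is excluded by the Liouville-type theorems for finite Morse index solutions. Uniform $C^{1,\alpha}_{loc}$ bounds on $v_n$ are needed to pass to the limit, and they come from a preliminary blow-up: if $\|v_n\|_{L^\infty}$ were unbounded, rescaling again would give a nontrivial solution of $-\Delta w=\bar\tau|w|^{p-2}w$ on $\R^N$ or a half-space with finite Morse index, contradicting \cite{BL2}. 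Near $x_n^1$ the rescaled functions converge to a nontrivial finite Morse index solution $V$ of $-\Delta V+V=\bar\tau|V|^{p-2}V$ in $\R^N$, and $V$ has Morse index at least $1$. Cutting off a negative direction at scale $R\ep_n$ therefore produces a negative direction for $Q(\cdot;u_n,\Omega)$ supported in $B_{R\ep_n}(x_n^1)$.

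\textbf{Iteration, smallness away from the points, and the exponential bound.} If the quantity in (1) fails to vanish, there are points $y_n$ with $d_{n,k}(y_n)/\ep_n\to\infty$ and $\la_n^{-1/(p-2)}|u_n(y_n)|\ge c>0$. I then choose a new point $x_n^{k+1}$ maximizing $|u_n|$ over the region far from the existing points, so that the local maximum property in (1) holds for some $R_n\to\infty$. Its negative direction has support disjoint from the earlier ones, so the count of disjoint negative directions rises by one. This can happen at most $\bar k$ times, which gives $k\le\bar k$ together with part (1).

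For (2), by part (1) we have $|u_n|^{p-2}\le\frac12\la_n$ outside $\bigcup_i B_{R\ep_n}(x_n^i)$ for $R$ fixed large, using $\tau_n\le1$. There $u_n^2$ satisfies
$$
-\Delta (u_n^2)+\la_n u_n^2\le 0,
$$
so comparison with the supersolution $C\la_n^{2/(p-2)}\sum_i e^{-2\ga\sqrt{\la_n}|x-x_n^i|}$ gives the estimate. The boundary of $\Omega$ causes no trouble since $u_n=0$ there. On the balls themselves the estimate is trivial from the bound $\|v_n\|_{L^\infty}\le C$.

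\textbf{Main obstacle.} The main obstacle is the boundary exclusion. It relies on the half-space Liouville theorem for finite Morse index solutions and on the uniform $C^1$ flattening of the rescaled domains, which is exactly where the $C^1$ regularity of $\partial\Omega$ is used.
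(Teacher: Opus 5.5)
Your plan follows the argument the paper relies on. The paper gives no proof of its own and simply quotes Propositions 2.10 and 2.11 of \cite{PV}, which rescale at scale $\lambda_n^{-1/2}$, extract at most $\bar k$ bubbles through disjointly supported negative directions, and compare with exponential supersolutions. Your part (2) is correct: where $|u_n|^{p-2}\le\frac12\lambda_n$ one has $-\Delta(u_n^2)+\lambda_n u_n^2\le 0$, and $e^{-2\gamma\sqrt{\lambda_n}|x-x_n^i|}$ is a supersolution of $-\Delta+\lambda_n$ away from $x_n^i$ once $\gamma<\frac12$.

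There is a gap in the choice of the new point. Let $x_n^{k+1}$ maximize $|u_n|$ on a region $\{d_{n,k}(x)\ge r_n\}$ with $r_n\sqrt{\lambda_n}\to\infty$. This gives separation from the old points and $\lambda_n^{-1/(p-2)}|u_n(x_n^{k+1})|\ge c>0$, which is all the Morse index count needs. It does not give $|u_n(x_n^{k+1})|=\max_{\Omega\cap B_{R_n\lambda_n^{-1/2}}(x_n^{k+1})}|u_n|$ with $R_n\to\infty$. The maximizer may lie within $O(\lambda_n^{-1/2})$ of the inner boundary $\{d_{n,k}=r_n\}$, and nothing in your argument controls $|u_n|$ just on the other side of it.

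The missing ingredient is the decay of the profiles already found. Each $V_i$ is a bounded finite Morse index solution in $\mathbb{R}^N$, so $V_i(y)\to0$ as $|y|\to\infty$. Fix $R$ with $\max_{|y|\ge R}|V_i|<c/4$. By local uniform convergence and a diagonal argument there are $L_n\to\infty$ with $\lambda_n^{-1/(p-2)}|u_n|\le c/2$ on the annuli $R\lambda_n^{-1/2}\le|x-x_n^i|\le L_n\lambda_n^{-1/2}$. Along a subsequence, choose $\rho_n\to\infty$ so that the failure of the decay clause still gives $\lambda_n^{-1/(p-2)}\max_{d_{n,k}\ge\rho_n\lambda_n^{-1/2}}|u_n|\ge c$. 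Now take $r_n=\min\{\rho_n,\sqrt{L_n}\}\lambda_n^{-1/2}$. Then the maximizer is forced into $\{d_{n,k}>L_n\lambda_n^{-1/2}\}$. The ball of radius $(L_n-\sqrt{L_n})\lambda_n^{-1/2}$ around it stays in the region, which gives the local maximum property.

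Alternatively, once the final decay clause holds, you can recentre each point at a maximizer of $|u_n|$ on $B_{R\lambda_n^{-1/2}}(x_n^i)$, with $R$ fixed so that $\max_{|y|\ge R}|V_i|<\frac12\|V_i\|_{L^\infty}$. This buffer step, not the half-space exclusion (which is a direct appeal to known Liouville theorems), is where the real care in the cited argument lies.
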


Along the line of the proof of \cite[Proposition 2.14]{PV}, using Proposition \ref{proplan} we conclude that

\begin{proposition} \label{propasm}
	As $n \to \infty$ we have
	\begin{align}
		& \tau_n^{\frac{2}{p-2}}\la_n^{\frac{N}{2} - \frac{2}{p-2}} \int_\Omega |u_n|^2dx \to \sum_{i = 1}^{k}\int_{\R^N} |V_i|^2dx, \label{eqntoinftyun2} \\
		& \tau_n^{\frac{p}{p-2}}\la_n^{\frac{N}{2} - \frac{p}{p-2}} \int_\Omega |u_n|^pdx \to \sum_{i = 1}^{k}\int_{\R^N} |V_i|^pdx, \\
		& \tau_n^{\frac{2}{p-2}}\la_n^{\frac{N}{2} - \frac{p}{p-2}}\int_\Omega |\nabla u_n|^2dx \to \sum_{i = 1}^{k}\int_{\R^N} |\nabla V_i|^2dx,
	\end{align}
    where $V_i$ is a bounded solution of
    \begin{align} \label{equationV}
    	-\Delta V + V = |V|^{p-2}V \quad \text{in } \R^N,
    \end{align}
    with $m(V) \leq \bar k$.
\end{proposition}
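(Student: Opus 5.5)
The plan is a blow-up rescaling at the scale $\la_n^{-1/2}$, with amplitude normalized by $(\la_n/\tau_n)^{1/(p-2)}$. Set
\begin{align*}
v_n^i(x) := \Bigl(\frac{\tau_n}{\la_n}\Bigr)^{\frac1{p-2}} u_n\Bigl(x_n^i + \frac{x}{\sqrt{\la_n}}\Bigr), \qquad x \in \Omega_n^i := \sqrt{\la_n}(\Omega - x_n^i),
\end{align*}
with $x_n^1,\dots,x_n^k$ given by Proposition \ref{proplan}. A direct computation shows that $v_n^i$ solves $-\Delta v + v = |v|^{p-2}v$ in $\Omega_n^i$ with $v = 0$ on $\partial\Omega_n^i$. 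By Proposition \ref{proplan}(1), $\Omega_n^i$ exhausts $\R^N$.

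The pointwise bound in Proposition \ref{proplan}(2) gives uniform $L^\infty$ bounds. Since $\tau_n \in [1/2,1]$, it yields
\begin{align*}
|v_n^i(x)| \le C' \sum_{j} e^{-\ga |x - \sqrt{\la_n}(x_n^j - x_n^i)|}.
\end{align*}
Elliptic estimates then give $C^{1,\alpha}_{loc}$ bounds, so after extraction $v_n^i \to V_i$ in $C^1_{loc}(\R^N)$. The limit $V_i$ is a bounded solution of \eqref{equationV}. It is nontrivial, since $|v_n^i(0)|$ stays away from $0$: otherwise the local maximum property together with the decay statement in (1) would force $u_n$ to be small at scale $\la_n^{1/(p-2)}$ everywhere, which contradicts the equation via the maximum-point argument of Lemma \ref{lemlanbounded} (at a maximum point, $\la_n \le \tau_n |u_n|^{p-2}$).

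The Morse index bound $m(V_i) \le \bar k$ follows by transplanting test functions. Given a subspace of $C_c^\infty(\R^N)$ of dimension $\bar k+1$ on which the quadratic form of $V_i$ is negative definite, rescale it back. The scaled form satisfies $Q(\varphi(\sqrt{\la_n}(\cdot - x_n^i)); u_n,\Omega) = \la_n^{1-N/2}\, Q_{v_n^i}(\varphi)$, and $Q_{v_n^i}(\varphi) \to Q_{V_i}(\varphi)$ uniformly on the compact unit sphere of the subspace. This would produce $m(u_n) \ge \bar k+1$, a contradiction.

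For the integrals, change variables. Since $\la_n^{-N/2}$ times the scaling factors gives exactly the stated normalizations, we have
\begin{align*}
\tau_n^{\frac2{p-2}}\la_n^{\frac N2 - \frac2{p-2}}\int_\Omega |u_n|^2 dx = \int_{\Omega_n^i} |v_n^i|^2 dx,
\end{align*}
and similarly for the other two integrals. I would split $\Omega$ into the balls $B_{R\la_n^{-1/2}}(x_n^i)$ and the complement. On each ball, $C^1_{loc}$ convergence gives $\int_{B_R}|V_i|^2$, $\int_{B_R}|V_i|^p$ and $\int_{B_R}|\nabla V_i|^2$. These balls are disjoint for large $n$ by the separation condition in (1). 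On the complement, the exponential bound from (2) makes the contribution to the $L^2$ and $L^p$ integrals at most $C e^{-\ga R}$, uniformly in $n$. Letting $R \to \infty$ gives the first two limits.

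The main obstacle is the gradient term, since (2) gives no pointwise gradient control. I would handle it by testing the equation with $u_n$ times a cutoff $\chi$ equal to $1$ away from the balls and $0$ near the centres, with gradient $O(\sqrt{\la_n}/R)$. This gives
\begin{align*}
\int \chi |\nabla u_n|^2 \le \tau_n\int \chi|u_n|^p - \la_n\int\chi u_n^2 - \int u_n \nabla u_n\cdot\nabla\chi.
\end{align*}
After scaling, the first two terms are exponentially small by the previous step. The last term is bounded by $\tfrac12\int|\nabla u_n|^2$ on the annuli, where $C^1_{loc}$ convergence applies, plus $\la_n R^{-2}\int u_n^2$. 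Both are $o_R(1)$ after normalization. Alternatively, multiply the equation by $u_n$ globally: this identity gives the gradient limit directly from the $L^2$ and $L^p$ limits together with $\int|\nabla V_i|^2 = \int|V_i|^p - \int|V_i|^2$.
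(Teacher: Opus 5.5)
Your proposal is correct and follows essentially the paper's route. The paper gives no argument of its own and simply runs the blow-up argument of Proposition 2.14 in \cite{PV} on top of Proposition \ref{proplan}. That argument is what you do: rescale at each $x_n^i$ by $\la_n^{-1/2}$, pass to a $C^1_{loc}$ limit, and control the region away from the centres with the exponential bound (2). Your global identity $\int_\Omega|\nabla u_n|^2+\la_n\int_\Omega u_n^2=\tau_n\int_\Omega|u_n|^p$ is the cleanest way to obtain the gradient limit.

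Nontriviality of the $V_i$ is not needed for the statement as written. If you want it, apply the maximum-point inequality directly at the interior local maximum $x_n^i$ of $|u_n|$; this gives $|v_n^i(0)|\ge 1$ for every $i$. Your ``small everywhere'' reasoning only produces this for one index.
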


\begin{remark}
	As proved in \cite[Lemma 2.13]{PV}, there exists a constant C, only depending on the full sequence $\{u_n\}$ and not on $V_i$, such that
	\begin{align*}
		\|V_i\|_{H^1(\R^N)}^2 = \|V_i\|_{L^p(\R^N)}^p \le C,
	\end{align*}
    where $V_i$ is the bounded solution of \eqref{equationV} with $m(V) \leq \bar k$, given in Proposition \ref{propasm}.
\end{remark}

Now we can state our main result in this section.

\begin{proposition} \label{proprelationship}
	Let $2 < p < 2^*$. The following hold:
	\begin{itemize}
		\item[(1)] If $p \neq p_c$, we have that $\{\la_n\}$ is bounded if and only if $\{E_{\tau_n}(u_n,\Omega)\}$ is bounded. Moreover,
		\begin{itemize}
			\item when $2 < p < p_c$, up to a subsequence, $\la_n \to +\infty$ if $\{\la_n\}$ is not bounded, and we have that $E_{\tau_n}(u_n,\Omega) \to -\infty$;
			\item when $p_c < p < 2^*$, up to a subsequence, $\la_n \to +\infty$ if $\{\la_n\}$ is not bounded, and we have that $E_{\tau_n}(u_n,\Omega) \to +\infty$.
		\end{itemize}
		\item[(2)] If $p = p_c$, the fact that $\{\la_n\}$ is bounded indicates that $\{E_{\tau_n}(u_n,\Omega)\}$ is bounded.
	\end{itemize}
	
\end{proposition}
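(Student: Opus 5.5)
The plan has two halves. First, bounded $\la_n$ gives bounded energy (this covers part (2) and one direction of part (1)). Second, unbounded $\la_n$ gives divergent energy with the sign dictated by $p$, which yields the converse in part (1).

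\emph{Bounded $\la_n$ implies bounded energy.} Assume $\{\la_n\}$ is bounded. By Lemma \ref{lemlanbounded}, $\{u_n\}$ is bounded in $L^\infty(\Omega)$. Since $\Omega$ is bounded, $\int_\Omega|u_n|^pdx$ and $\int_\Omega|u_n|^2dx$ are then bounded. Testing \eqref{equn} with $u_n$ gives
\[
\int_\Omega|\nabla u_n|^2dx=\tau_n\int_\Omega|u_n|^pdx-\la_n\int_\Omega|u_n|^2dx,
\]
which is bounded. Hence $E_{\tau_n}(u_n,\Omega)$ is bounded. This argument does not use $p\neq p_c$, so it proves part (2) and the ``only if'' direction of part (1).

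\emph{Unbounded $\la_n$ forces divergent energy.} Assume $\{\la_n\}$ is unbounded. By Lemma \ref{lemlanbelow}, up to a subsequence $\la_n\to+\infty$. The Nehari identity above gives
\[
E_{\tau_n}(u_n,\Omega)=\Bigl(\tfrac12-\tfrac1p\Bigr)\tau_n\int_\Omega|u_n|^pdx-\tfrac12\la_n\int_\Omega|u_n|^2dx .
\]
Now apply Proposition \ref{propasm}. Set $A=\sum_i\int_{\R^N}|V_i|^2dx$ and $B=\sum_i\int_{\R^N}|V_i|^pdx$. Each $V_i$ is a nontrivial solution of \eqref{equationV}, and $k\ge1$ because $u_n\not\equiv0$ (this is built into Proposition \ref{proplan}), so $A,B>0$. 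The asymptotics give
\[
\tau_n\int|u_n|^p\sim \tau_n^{-\frac{2}{p-2}}\la_n^{\frac{p}{p-2}-\frac N2}B, \qquad \la_n\int|u_n|^2\sim \tau_n^{-\frac{2}{p-2}}\la_n^{\frac{p}{p-2}-\frac N2}A .
\]
So both terms scale like the same power $\la_n^{\beta}$ with $\beta=\frac{p}{p-2}-\frac N2$, and $\tau_n$ stays in $[1/2,1]$. Therefore
\[
E_{\tau_n}(u_n,\Omega)=\tau_n^{-\frac{2}{p-2}}\la_n^{\beta}\Bigl(\bigl(\tfrac12-\tfrac1p\bigr)B-\tfrac12A+o(1)\Bigr).
\]

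\emph{Sign of the leading coefficient.} Each $V_i$ satisfies the Nehari identity $\|\nabla V_i\|_2^2+\|V_i\|_2^2=\|V_i\|_p^p$. It also satisfies the Pohozaev identity on $\R^N$,
\[
\tfrac{N-2}{2}\|\nabla V_i\|_2^2+\tfrac N2\|V_i\|_2^2=\tfrac Np\|V_i\|_p^p .
\]
This identity is justified because $V_i$ is bounded and, by the estimate in Proposition \ref{proplan}(2), decays exponentially. Combining the two identities gives $\|\nabla V_i\|_2^2=\ga_p\|V_i\|_p^p$ and $\|V_i\|_2^2=(1-\ga_p)\|V_i\|_p^p$. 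Hence
\[
\bigl(\tfrac12-\tfrac1p\bigr)B-\tfrac12A=\tfrac12\bigl(\ga_p-\tfrac2p\bigr)B=\tfrac{1}{2p}\,(p\ga_p-2)\,B .
\]
This is negative for $p<p_c$ and positive for $p>p_c$. Also $\beta=\frac{p}{p-2}-\frac N2>0$, since $p<2^*$ gives $\frac{2p}{p-2}>N$, so $\la_n^\beta\to+\infty$. Thus $E_{\tau_n}(u_n,\Omega)\to-\infty$ when $p<p_c$ and $E_{\tau_n}(u_n,\Omega)\to+\infty$ when $p>p_c$. Since every unbounded subsequence of $\{\la_n\}$ produces unbounded energies, boundedness of the energies forces boundedness of $\{\la_n\}$ when $p\neq p_c$, which is the ``if'' direction of part (1).

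The main obstacle is this sign step: it needs the Pohozaev identity for possibly sign-changing $V_i$ with finite Morse index. I would justify it from the exponential decay and the elliptic regularity of $V_i$. When $p=p_c$ the coefficient vanishes, which is exactly why only part (2) is claimed in that case.
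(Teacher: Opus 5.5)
Your proposal is correct and follows essentially the same route as the paper. Bounded $\la_n$ gives $L^\infty$-bounds via Lemma \ref{lemlanbounded} and hence bounded energy. The case $\la_n\to+\infty$ is handled by rescaling the energy through Proposition \ref{propasm} and applying the Nehari and Pohozaev identities to the limit profiles $V_i$; this yields the coefficient $\frac{p\ga_p-2}{2p}\sum_i\int_{\R^N}|V_i|^pdx$, which is nonzero exactly when $p\neq p_c$. The differences are cosmetic: you prove the first direction directly rather than by contradiction, and you use the Nehari identity for $u_n$ to express the energy through the $L^p$ and $L^2$ terms instead of the gradient and $L^p$ terms, which leads to the same limiting coefficient.
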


\begin{proof}
	\emph{Necessity:} Let $2 < p < 2^*$. Assuming that $\{\la_n\}$ is bounded and $\{E_{\tau_n}(u_n,\Omega)\}$ is not bounded, we aim to find a contradiction. Up to a subsequence, we can assume that $\int_\Omega |\nabla u_n|^2dx \to +\infty$ as $n \to \infty$. Since
	\begin{align*}
		\int_\Omega |\nabla u_n|^2dx + \la_n \int_\Omega |u_n|^2dx = \tau_n \int_\Omega |u_n|^pdx,
	\end{align*}
	one of the following holds
	\begin{itemize}
		\item[(a)] $\la_n \int_\Omega |u_n|^2dx \to -\infty$,
		\item[(b)] $\tau_n \int_\Omega |u_n|^pdx \to +\infty$.
	\end{itemize}
	Observe that both $\{\la_n\}$ and $\{\tau_n\}$ are bounded. If the case (a) occurs, then $\int_\Omega |u_n|^2dx \to +\infty$ and so $\{u_n\}$ is unbounded in $L^\infty$, contradicting Lemma \ref{lemlanbounded}. If (b) holds, we can find a similar contradiction.
	
	\vskip0.1in
	\emph{Sufficiency:} Let $p \neq p_c$ and $\{E_{\tau_n}(u_n,\Omega)\}$ be bounded. We claim that there exists $\bar \la < +\infty$ such that $\la_n \le \bar \la$ for all $n$. By negation, we assume that $\la_n \to +\infty$ up to a subsequence. By Proposition \ref{propasm} we have
	\begin{align} \label{eqlimitenergy}
		\tau_n^{\frac{2}{p-2}}\la_n^{\frac{N}{2} - \frac{p}{p-2}}E_{\tau_n}(u_n,\Omega) \to \frac12\sum_{i = 1}^{k}\int_{\R^N} |\nabla V_i|^2dx - \frac1p \sum_{i = 1}^{k}\int_{\R^N} |V_i|^pdx.
	\end{align}
    On the one hand, since $\{\tau_n\} \subset [1/2,1]$ and $N/2 - p/(p-2) < 0$, we have
    \begin{align*}
    	\tau_n^{\frac{2}{p-2}}\la_n^{\frac{N}{2} - \frac{p}{p-2}}E_{\tau_n}(u_n,\Omega) \to 0.
    \end{align*}
    On the other hand, $V_i \in H^1(\R^N)$ satisfies the following Pohozaev identity
    \begin{align*}
    	\frac{N-2}{2}\int_{\R^N}|\nabla V_i|^2dx + \frac{N}{2}\int_{\R^N}| V_i|^2dx = \frac{N}{p}\int_{\R^N}| V_i|^pdx, \quad i =1,\cdots,k,
    \end{align*}
    see e.g. \cite[Theorem B.3]{Willem}, which implies that
    \begin{align*}
    	\int_{\R^N}|\nabla V_i|^2dx = \left( \frac12 -\frac1p\right) N\int_{\R^N}| V_i|^pdx, \quad i =1,\cdots,k,
    \end{align*}
    and so
    \begin{align} \label{eqlpV}
    	\frac12\sum_{i = 1}^{k}\int_{\R^N} |\nabla V_i|^2dx - \frac1p \sum_{i = 1}^{k}\int_{\R^N} |V_i|^pdx = \left( \frac{p-2}{4p}N - \frac1p\right)\sum_{i = 1}^{k}\int_{\R^N} |V_i|^pdx \neq 0,
    \end{align}
    since $p \neq p_c$. Thus we find a contradiction and prove the claim. In view of Lemma \ref{lemlanbelow} we conclude that $\{\la_n\}$ is bounded.
	
	Moreover, if $\{\la_n\}$ is not bounded, by Lemma \ref{lemlanbelow}, up to a subsequence we have $\la_n \to +\infty$. Then using \eqref{eqlimitenergy} and \eqref{eqlpV} we can complete the proof.
\end{proof}

\section{Exact number of positive solutions} \label{secpositionsolu}

This section is devoted to prove Theorem \ref{thmunique}. In the following we will give two useful lemmas and use the reductio ad absurdum to complete the proof.

\begin{lemma} \label{lemMmu}
	Let $p_c < p < 2^*$ and $u_{\tau,\la}$ be the unique solution of \eqref{eqwithfixlaB1}, given by Proposition \ref{propuniquefixlaB1}; let $M_\tau(\la) = \int_{B_1} |u_{\tau,\la}|^2dx$. Then, there exists $\La_1^*, \La_2^* \in (-\la_1(B_1),\infty)$ independent of $\tau \in [1/2,1]$, such that $M_\tau(\la)$ is strictly increasing in $\la \in (-\la_1(\Omega), \La_1^*)$ and is strictly decreasing in $\la > \La_2^*$.
\end{lemma}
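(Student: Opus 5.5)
By scaling, the dependence on $\tau$ can be removed completely. If $u$ solves \eqref{eqwithfixlaB1} with $\tau=1$, then $v=\tau^{-1/(p-2)}u$ solves it with the given $\tau$ and the same $\la$. By the uniqueness in Proposition \ref{propuniquefixlaB1} we therefore have $u_{\tau,\la}=\tau^{-\frac1{p-2}}u_{1,\la}$, and hence
\begin{align*}
M_\tau(\la)=\tau^{-\frac{2}{p-2}}M_1(\la),\qquad \tau\in[1/2,1].
\end{align*}
So the monotonicity intervals of $M_\tau$ and $M_1$ coincide. It suffices to find $\La_1^*,\La_2^*$ for $M:=M_1$, and they are then automatically independent of $\tau$.

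\textbf{Differentiability.} I will use that $\la\mapsto u_\la:=u_{1,\la}$ is a $C^1$ curve in $H_0^1(B_1)$ (as recalled from \cite{Song}). This rests on the nondegeneracy of the unique positive radial solution and the implicit function theorem. Set $w_\la=\partial_\la u_\la$. Then $w_\la$ is radial and solves
\begin{align*}
L_\la w_\la:=-\Delta w_\la+\la w_\la-(p-1)u_\la^{p-2}w_\la=-u_\la \quad\text{in } B_1,
\end{align*}
and
\begin{align*}
M'(\la)=2\int_{B_1}u_\la w_\la\,dx=-2\langle L_\la^{-1}u_\la,u_\la\rangle_{L^2}.
\end{align*}
The plan is to determine the sign of $M'$ asymptotically at the two ends by a blow-up/rescaling argument.

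\textbf{Large $\la$.} Set $U_\la(y)=\la^{-\frac1{p-2}}u_\la(y/\sqrt\la)$ on $B_{\sqrt\la}$. Then $-\Delta U_\la+U_\la=U_\la^{p-2}U_\la$, and $U_\la\to U$, the ground state of \eqref{equationV}, in $H^1$ and with uniform exponential decay (Proposition \ref{proplan} with $k=1$). Differentiating this scaling gives
\begin{align*}
w_\la(x)=\la^{\frac1{p-2}-1}\Bigl(\tfrac1{p-2}U_\la+\tfrac12 y\cdot\nabla U_\la+\la\,\partial_\la U_\la\Bigr)\Big|_{y=\sqrt\la x}.
\end{align*}
The key point is that $\la\,\partial_\la U_\la\to0$. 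This holds because $\la\,\partial_\la U_\la$ satisfies the rescaled linearized equation with a right-hand side of order $o(1)$ coming only from the moving boundary. The claim then follows from uniform invertibility of the linearized operators $-\Delta+1-(p-1)U_\la^{p-2}$ on radial functions on $B_{\sqrt\la}$, which I get from the radial nondegeneracy of $U$ via a contradiction/compactness argument. Consequently
\begin{align*}
\la^{1-\frac2{p-2}+\frac N2}M'(\la)\to \Bigl(\tfrac2{p-2}-\tfrac N2\Bigr)\int_{\R^N}U^2dy<0,
\end{align*}
since $p>p_c$. This gives $\La_2^*$. I expect this uniform invertibility to be the main obstacle, together with the control of the boundary layer near $\partial B_{\sqrt\la}$, where exponential decay should help.

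\textbf{$\la\to-\la_1(B_1)^+$.} Here $\|u_\la\|_{L^\infty}\to0$, and $u_\la$ lies on the Crandall--Rabinowitz branch bifurcating from $(-\la_1,0)$. Write $u_\la=s(\phi_1+z_s)$ with $z_s\perp\phi_1$ in $L^2$ and $z_s=O(s^{p-2})$. Testing the equation with $\phi_1$ gives
\begin{align*}
\la+\la_1=s^{p-2}\,\frac{\int\phi_1^p}{\int\phi_1^2}\,(1+o(1)),
\end{align*}
with a $C^1$ branch and $d\la/ds>0$ for small $s>0$. At the same time $M=s^2\bigl(\|\phi_1\|_{L^2}^2+\|z_s\|_{L^2}^2\bigr)$ has $dM/ds=2s\|\phi_1\|_{L^2}^2(1+o(1))>0$. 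Hence $dM/d\la>0$ on some $(-\la_1,\La_1^*)$.

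Alternatively, in the same spirit as the large-$\la$ case, one can decompose $w_\la$ along $\phi_1$ and its orthogonal complement, where $L_\la$ is uniformly invertible. This gives
\begin{align*}
M'(\la)\sim\frac{2}{p-2}\,\frac{M(\la)}{\la+\la_1}>0.
\end{align*}
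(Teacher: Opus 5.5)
Your proposal is correct. At $\la\to+\infty$ it follows the paper: same rescaling, same limit $\int_{\R^N}Q_1\bigl(\frac{1}{p-2}Q_1+\frac12 x\cdot\nabla Q_1\bigr)dx=\bigl(\frac1{p-2}-\frac N4\bigr)\int_{\R^N}Q_1^2dx<0$. The only difference there is that the paper quotes the convergence of the rescaled $\partial_\la u$ from \cite[Proposition 11]{FSK}, while you sketch a proof of it. Your sketch works, because the boundary value $-\frac12\sqrt\la\,\partial_rU_\la(\sqrt\la)$ of $\la\partial_\la U_\la$ is exponentially small, which controls the polynomially large area $\sim\la^{(N-1)/2}$ of $\partial B_{\sqrt\la}$ when you lift it to an $H^1$ correction.

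The genuine differences are elsewhere. First, your identity $u_{\tau,\la}=\tau^{-1/(p-2)}u_{1,\la}$, i.e. $M_\tau=\tau^{-2/(p-2)}M_1$, makes the $\tau$-independence of $\La_1^*,\La_2^*$ automatic. The paper instead runs both contradiction arguments along sequences $\tau_n\in[1/2,1]$ and relies on ``small modifications'' of \cite{FSK}; your reduction makes these unnecessary. Second, near $-\la_1(B_1)$ the paper shows $u_{\tau,\la}/\|u_{\tau,\la}\|_{L^2}\to\phi_1$ and $v_{\tau,\la}/\|v_{\tau,\la}\|_{L^2}\to\phi_1$ (again via \cite{FSK}) and reads off $\int uv>0$. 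You instead parametrize the bifurcation branch and compute $d\la/ds$ and $dM/ds$ directly. That is self-contained and even gives the rate $M'\sim\frac{2}{p-2}\frac{M}{\la+\la_1}$.

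To make that second part airtight, add two things:
\begin{itemize}
\item Show first that $u_\la\to0$ as $\la\to-\la_1(B_1)^+$, so that $u_\la$ lies on the local branch. This follows from the $L^\infty$ bound of Remark \ref{rmkpositive} plus testing the limit equation with $\phi_1$, as in Lemma \ref{lemboundedla}.
\item Since $|u|^{p-2}u$ is only $C^{1,p-2}$ for $p<3$, do the Lyapunov--Schmidt reduction in $t=s^{p-2}$ rather than $s$. The reduced map is then $C^1$ up to $t=0$, with $\frac{d\la}{dt}(0)=\int_{B_1}\phi_1^pdx/\int_{B_1}\phi_1^2dx>0$ and $\|z\|=O(t)$. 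Your statements about $d\la/ds$ and $dM/ds$ then follow by the chain rule.
\end{itemize}
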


\begin{proof}
	Note that the map $\la \in (-\la_1(B_1),\infty) \mapsto u_{\tau,\la} \in H_0^1$ is of class $C^1$ (see \cite[Corollary 2.4]{Song} for the proof). Thus $M_\tau'(\la)$ exists in $\la \in (-\la_1(B_1),\infty)$, and $M_\tau'(\la) = 2\int_{B_1}u_{\tau,\la}v_{\tau,\la}dx$, where $v_{\tau,\la} = \partial_\la u_{\tau,\la}$.
	
	Firstly, we prove that there exists $\La_2^* \in (-\la_1(B_1),\infty)$ independent of $\tau \in [1/2,1]$ such that $M_\tau'(\la) < 0$ in $\la > \La_2^*$. We see that $v_{\tau,\la} \in H_0^1(B_1)$ is also radial and satisfies the following equation
	\begin{align*}
		\begin{cases}
			-\Delta v + \la v - \tau(p-1)|u_{\tau,\la}|^{p-2}v = -u_{\tau,\la}, & \quad \text{in } B_1, \\
			v(x) = 0, & \quad \text{on } \partial B_1.
		\end{cases}
	\end{align*}
    For $\la > 0$ and $\tau \in [1/2,1]$ we rescale the solutions as follows:
    \begin{align*}
    	\tilde{u}_{\tau,\la}(x) = \la^{-\frac1{p-2}}\tau^{\frac{1}{p-2}}u_{\tau,\la}(\la^{-\frac12}x), \quad \tilde{v}_{\tau,\la}(x) = \la^{-\frac{3-p}{p-2}}\tau^{\frac{1}{p-2}}v_{\tau,\la}(\la^{-\frac12}x).
    \end{align*}
    Observe that $\tilde{u}_{\tau,\la}(x) \in H_0^1(B_{\sqrt{\la}})$ satisfies
    \begin{align*}
    	-\Delta \tilde{u}_{\tau,\la} + \tilde{u}_{\tau,\la} = |\tilde{u}_{\tau,\la}|^{p-2}\tilde{u}_{\tau,\la} \quad \text{in } B_{\sqrt{\la}},
    \end{align*}
    and that $\tilde{v}_{\tau,\la}(x) \in H_0^1(B_{\sqrt{\la}})$ satisfies
    \begin{align*}
    	-\Delta \tilde{v}_{\tau,\la} + \tilde{v}_{\tau,\la} - (p-1)|\tilde{u}_{\tau,\la}|^{p-2}\tilde{v}_{\tau,\la} = -\tilde{u}_{\tau,\la} \quad \text{in } B_{\sqrt{\la}}.
    \end{align*}
	For any $\{\la_n\}$ and $\{\tau_n\} \subset [1/2,1]$ satisfying $\la_n \to +\infty$, after a small modification of the proof for \cite[Proposition 11]{FSK}, we get that as $n \to \infty$,
	\begin{align*}
		\tilde{u}_{\tau_n,\la_n} \to Q_1  \quad \text{and} \quad \tilde{v}_{\tau_n,\la_n} \to \frac{1}{p-2}Q_1 + \frac12 x \cdot \nabla Q_1, \quad \text{strongly in } H^1(\R^N),
	\end{align*}
    where $Q_1 \in H^1(\R^N)$ is the unique, positive, radial solution of \eqref{eqonrn} with $\la =1$. Moreover, since $p > p_c$, as $n \to \infty$,
    \begin{align*}
    	M_{\tau_n}(\la_n) = \int_{B_1}|u_{\tau_n,\la_n}|^2dx = \la_n^{\frac{2}{p-2}-\frac N2}\tau_n^{-\frac{2}{p-2}}\int_{B_{\sqrt{\la}}}|\tilde{u}_{\tau_n,\la_n}|^2dx \to 0.
    \end{align*}
    Now arguing by contradiction, we suppose that for any $\La > 0$,  there exist $\la > \La$ and $\tau = \tau(\la)$ such that $M_\tau'(\la) = 0$. Then we find $\{\la_n\}$ and $\{\tau_n\} \subset [1/2,1]$ such that $\la_n \to +\infty$ and $M_{\tau_n}'(\la_n) = 0$, and get the following self-contradictory inequality
    \begin{align*}
    	0 = \frac12\la_n^{\frac N2 - \frac{4-p}{p-2}}\tau_n^{\frac{2}{p-2}}M_{\tau_n}'(\la_n) & = \la_n^{\frac N2 - \frac{4-p}{p-2}}\tau_n^{\frac{2}{p-2}}\int_{B_1} u_{\tau_n,\la_n} v_{\tau_n,\la_n} dx \\
    	& = \int_{B_{\sqrt{\la}}} \tilde{u}_{\tau_n,\la_n} \tilde{v}_{\tau_n,\la_n} dx \\
    	& \to \int_{\R^N}Q_1\left( \frac{1}{p-2}Q_1 + \frac12 x \cdot \nabla Q_1\right) dx \\
    	& = \left( \frac{1}{p-2} - \frac N4\right) \int_{\R^N}|Q_1|^2dx \\
    	& < 0.
    \end{align*}
Next, we prove that there exists $\La_1^* \in (-\la_1(B_1),\infty)$ independent of $\tau \in [1/2,1]$ such that $M_\tau'(\la) > 0$ in $\la \in (-\la_1(B_1),\La_1^*)$. For any $\{\la_n\}$ and $\{\tau_n\} \subset [1/2,1]$ satisfying $\la_n \to +\infty$, after a small modification of the proof for \cite[Proposition 12]{FSK}, we get that as $n \to \infty$,
    \begin{align*}
    	\frac{u_{\tau_n,\la_n}}{\|u_{\tau_n,\la_n}\|_{L^2}} \to \phi_1, \quad \text{and} \quad \frac{v_{\tau_n,\la_n}}{\|v_{\tau_n,\la_n}\|_{L^2}} \to \phi_1 \quad \text{strongly in } H_0^1(B_1).
    \end{align*}
    We argue by contradiction and suppose that for any $\La > -\la_1(B_1)$,  there exist $\la \in (-\la_1(B_1), \La)$ and $\tau = \tau(\la)$ such that $M_\tau'(\la) \le 0$. Then we find $\{\la_n\}$ and $\{\tau_n\} \subset [1/2,1]$ such that $\la_n \to +\infty$ and $M_{\tau_n}'(\la_n) \le 0$, and get the following self-contradictory inequality
    \begin{align*}
    0 \ge \frac{1}{2\|u_{\tau_n,\la_n}\|_{L^2}\|v_{\tau_n,\la_n}\|_{L^2}}M_{\tau_n}'(\la_n) = \int_{B_1}\frac{u_{\tau_n,\la_n}}{\|u_{\tau_n,\la_n}\|_{L^2}}\frac{v_{\tau_n,\la_n}}{\|v_{\tau_n,\la_n}\|_{L^2}} dx \to \int_{B_1}|\phi_1|^2dx = 1.
    \end{align*}
    The proof is complete.
\end{proof}

\begin{lemma} \label{lemboundedla}
	Let $u_n \in H_0^1(B_1)$ solve
	\begin{align} \label{eq2.1}
		-\Delta u_n + \la_n u_n = \tau_n|u_n|^{p-2}u_n \quad \text{in } B_1, \quad u_n > 0 \quad \text{in } B_1, \quad \int_{B_1}|u_n|^2dx = \mu_n,
	\end{align}
    with $\{\tau_n\} \subset [1/2,1]$ and $\mu_n \to 0$ as $n \to \infty$. Up to a subsequence, we have
    \begin{align*}
    	\text{either} \quad \la_n \to +\infty \quad \text{or} \quad \la_n \to - \la_1(B_1).
    \end{align*}
\end{lemma}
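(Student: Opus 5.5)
Since each $u_n$ is positive, Remark \ref{rmkpositive} gives $\la_n > -\la_1(B_1)$. Passing to a subsequence, either $\la_n \to +\infty$ or $\la_n \to \bar\la \in [-\la_1(B_1),+\infty)$. It therefore suffices to exclude a finite limit $\bar\la > -\la_1(B_1)$. The plan is to use the uniqueness of positive solutions (Proposition \ref{propuniquefixlaB1}) together with the $C^1$ dependence of $u_{\tau,\la}$ on the parameters.

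Assume, for contradiction, that $\la_n \to \bar\la > -\la_1(B_1)$, and also $\tau_n \to \bar\tau \in [1/2,1]$ after extracting. By Proposition \ref{propuniquefixlaB1}, $u_n = u_{\tau_n,\la_n}$.

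The first step is to show that $\{u_n\}$ is bounded in $H_0^1(B_1)$. Since $\{\la_n\}$ is bounded, I would use Remark \ref{rmkpositive}: the $L^\infty$ bound of Lemma \ref{lemlanbounded} holds without any Morse index assumption. A cheaper route is to note that $u_n$ is a mountain pass solution, hence of Morse index $1$, so Lemma \ref{lemlanbounded} applies directly. Either way $\|u_n\|_{L^\infty} \le C$. Testing the equation with $u_n$ then gives
\begin{align*}
\int_{B_1}|\nabla u_n|^2dx = \tau_n\int_{B_1}|u_n|^pdx - \la_n\mu_n \le C^{p-2}\mu_n + |\la_n|\mu_n \to 0.
\end{align*}
So $u_n \to 0$ in $H_0^1(B_1)$, and also in $L^\infty$.

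The second step derives the contradiction. Set $w_n = u_n/\|u_n\|_{L^2}$, which satisfies
\begin{align*}
-\Delta w_n + \la_n w_n = \tau_n u_n^{p-2} w_n, \qquad \|w_n\|_{L^2}=1.
\end{align*}
Because $u_n^{p-2} \to 0$ uniformly, the identity $\int|\nabla w_n|^2 = -\la_n + \tau_n\int u_n^{p-2}w_n^2$ shows that $\{w_n\}$ is bounded in $H_0^1(B_1)$. Up to a subsequence, $w_n \to w$ weakly in $H_0^1$ and strongly in $L^2$. Then $w \ge 0$, $\|w\|_{L^2} = 1$, and $-\Delta w + \bar\la w = 0$. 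Hence $-\bar\la$ is a Dirichlet eigenvalue with a nonnegative, nontrivial eigenfunction, which forces $-\bar\la = \la_1(B_1)$. This contradicts $\bar\la > -\la_1(B_1)$.

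The argument also covers the possibility that the limit is $\bar\la = -\la_1(B_1)$: the same computation is consistent with it. So the only remaining alternatives are $\la_n \to +\infty$ and $\la_n \to -\la_1(B_1)$, as claimed.

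The main obstacle is the uniform $L^\infty$ bound in the first step, since it is what lets us conclude that $u_n^{p-2} \to 0$. I would rely on the blow-up argument of Lemma \ref{lemlanbounded}, via Remark \ref{rmkpositive} or via Morse index $1$. For radial positive solutions one could alternatively use a direct radial ODE bound.
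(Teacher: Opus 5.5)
Your proof is correct, and its skeleton matches the paper's. In both, positivity gives $\lambda_n>-\lambda_1(B_1)$. A bounded $\{\lambda_n\}$ then gives an $L^\infty$ bound, testing the equation forces $\int_{B_1}|\nabla u_n|^2dx\to0$, and the finite limit $-\bar\lambda$ is shown to be $\lambda_1(B_1)$.

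The differences are in execution:
\begin{itemize}
\item In the first step you go directly from $\int_{B_1}u_n^p\,dx\le\|u_n\|_{L^\infty}^{p-2}\mu_n$ to $\|\nabla u_n\|_{L^2}\to0$. This skips the paper's detour through an $H_0^1$ bound and the Gagliardo--Nirenberg inequality.
\item In the second step the paper invokes ``classical bifurcation theory'' to say $-\bar\lambda$ is an eigenvalue, then uses $\bar\lambda\ge-\lambda_1(B_1)$. Your normalization $w_n=u_n/\|u_n\|_{L^2}$ is exactly the standard proof of that bifurcation fact, written out. You identify the eigenvalue through the sign of the limit $w\ge0$ rather than through the lower bound, so your argument is self-contained.
\item Your alternative route to the $L^\infty$ bound is worth keeping. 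Since $u_n$ is the unique positive solution, it coincides with the Nehari ground state, which has Morse index $1$, so Lemma \ref{lemlanbounded} applies exactly as stated. The paper instead relies on the Morse-index-free version asserted in Remark \ref{rmkpositive}, which it justifies only by pointing to a Gidas--Spruck blow-up argument.
\end{itemize}

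Two small blemishes. First, the claim $u_n\to0$ in $L^\infty$ is asserted without proof. It is true by $W^{2,q}$ estimates, but you do not need it. The $L^\infty$ bound alone bounds $w_n$ in $H_0^1$. Interpolated with $\mu_n\to0$, it gives $u_n^{p-2}\to0$ in every $L^q(B_1)$ with $q<\infty$. That is enough to make $\tau_n\int_{B_1}u_n^{p-2}w_n\varphi\,dx\to0$ for test functions $\varphi$. Second, the announced use of the $C^1$ dependence of $u_{\tau,\lambda}$ on the parameters never enters the argument and can be dropped.
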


\begin{proof}
	As mentioned in the introduction, it is necessary that $\la_n > -\la_1(B_1)$, see e.g. \cite[Theorem 1.19]{Willem}. Passing to a subsequence, we suppose that $\{\la_n\}$ is bounded in $\R$, $\la_n \to \bar \la \geq -\la_1(B_1)$, $\tau_n \to \bar \tau \in [1/2,1]$, and we aim to prove $\bar \la = -\la_1(B_1)$, which will complete our proof.
	
	Since $\{\la_n\}$ is bounded, Remark \ref{rmkpositive} shows that $\{u_n\}$ is bounded in $L^\infty$. Thus $\{u_n\}$ is a $L^p$-bounded sequence. By \eqref{eq2.1}, it follows that
	\begin{align*}
		\int_{B_1} |\nabla u_n|^2dx + \la_n \mu_n = \tau_n\int_{B_1}|u_n|^pdx,
	\end{align*}
    yielding the boundedness of $\{u_n\}$ in $H_0^1(B_1)$. Moreover, by \eqref{eqgndomain}, the boundedness of $\{u_n\}$ in $H_0^1(B_1)$ and the fact that $\mu_n \to 0$, we conclude that $u_n \to 0$ strongly in $L^p$. Then we get
    \begin{align*}
    	\int_{B_1} |\nabla u_n|^2dx = \tau_n\int_{B_1}|u_n|^pdx - \la_n \mu_n \to 0.
    \end{align*}
    This means that $-\bar \la$ is a bifurcation point, and by classical bifurcation theory, $-\bar \la$ is an eigenvalue of $-\Delta$ on $B_1$ with Dirichlet condition. Recalling that $\bar \la \geq -\la_1(B_1)$, we conclude that $\bar \la = -\la_1(B_1)$ immediately. Then the proof is completed.
\end{proof}

\begin{proof}[Proof of Theorem \ref{thmunique}]
	We argue by contradiction and suppose that, for a sequence $\{\mu_n\}$ with $\mu_n \to 0^+$, there exist three different sequences $\{u_{n,1}\}, \{u_{n,2}\}, \{u_{n,3}\} \subset H_0^1(B_1)$ such that $u_{n,k} \neq u_{n,l}$ for $k \neq l$, and $u_{n,i}$ solves \eqref{eqequationtau} with $\tau = \tau_n$ and $\la = \la_{n,i}$, $i = 1,2,3$. By Lemma \ref{lemboundedla}, either $\la_{n,i} \to -\la_1(B_1)$ or $\la_{n,i} \to \infty$ for all $i \in \{1,2,3\}$. Thus, at least two parameters tends to $-\la_1(B_1)$ or to the infinity. If the former holds, without loss of generality, we assume that $\la_{n,1}, \la_{n,2} \to -\la_1(B_1)$. Observe that $M_{\tau_n}(\la_{n,1}) = M_{\tau_n}(\la_{n,2})$ where $M_{\tau}(\la)$ is given in Lemma \ref{lemMmu}. By Lemma \ref{lemMmu} it follows that $\la_{n,1} = \la_{n,2}$ for sufficiently large $n$. However, we get a contradiction since Proposition \ref{propuniquefixlaB1} yields that $u_{n,1} = u_{n,2}$ for $n$ large enough. We obtain a similar contradiction if the latter holds, and complete the proof.
\end{proof}

\section{Descending flow techniques and flow invariance} \label{secinv}

In this section we will construct a descending flow with a pseudogradient operator. Introducing the following bounded subsets of the mass constraint for $\rho > 0$:
\begin{align*}
	\B_\rho^\mu = \bigl\{u \in S_\mu(\Omega): \int_\Omega|\nabla u|^2dx < \rho\bigr\}, \quad \U_\rho^\mu = \bigl\{u \in S_\mu(\Omega): \int_\Omega|\nabla u|^2dx = \rho\bigr\},
\end{align*}
we give a result preparing the introduction a pseudogradient operator.

\begin{proposition} \label{propuniquesolutionw}
	Let $\tau \in [1/2,1]$ and $2 < p < 2^*$. Given $u \in S_\mu(\Omega)$, for each $\bar\la > -\la_1(\Omega)$ the linear equation with a constraint
	\begin{align} \label{eqequofw1}
		\left\{
		\begin{aligned}
			& - \Delta w + \bar \la w = \tau|u|^{p-2}u + \omega u, \quad w \in H_0^1(\Omega), \\
			& \int_\Omega wu dx = \mu,
		\end{aligned}	
		\right.
	\end{align}
    has a unique solution where $\omega \in \R$ is an unknown Lagrange multiplier. Moreover, if $\mu, \rho > 0$ and $\bar \la > -\la_1(\Omega)$ satisfy
    \begin{align} \label{eqcononbarla}
    	C_{p,N}^p\rho^{\frac{(p-1)\ga_p}{2}}\mu^{\frac{p-(p-1)\ga_p - 2}{2}} \leq \left( \frac{\la_1(\Omega) + \min\{0,\bar\la\}}{\sqrt{\la_1(\Omega)}}\right) ^{\ga_p}\left( \la_1(\Omega) + \bar\la\right) ^{1-\ga_p},
    \end{align}
    then $\omega \geq 0$ for all $u \in \B_\rho^\mu$.
\end{proposition}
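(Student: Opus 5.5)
Since $\bar\la > -\la_1(\Omega)$, the operator $L := -\Delta + \bar\la$ is coercive on $H_0^1(\Omega)$, and by Lax--Milgram it is an isomorphism $H_0^1(\Omega) \to H^{-1}(\Omega)$. For fixed $u \in S_\mu(\Omega)$, $|u|^{p-2}u \in L^{p'} \subset H^{-1}$ by Sobolev embedding (here $p<2^*$ is used). Every solution of the first line of \eqref{eqequofw1} therefore has the form
$$w = \tau L^{-1}(|u|^{p-2}u) + \omega L^{-1}u.$$
Setting $a := \int_\Omega L^{-1}(|u|^{p-2}u)\,u\,dx$ and $b := \int_\Omega (L^{-1}u)\,u\,dx$, the constraint reduces to the scalar equation $\tau a + \omega b = \mu$. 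We have $b = \langle L z, z\rangle_{H^{-1},H_0^1}$ with $z = L^{-1}u \neq 0$, so coercivity gives $b>0$. Hence $\omega = (\mu - \tau a)/b$ is uniquely determined, and so is $w$. This proves the existence and uniqueness part.

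For the sign of $\omega$ it suffices to show $\tau a \le \mu$, and since $\tau \le 1$ it is enough to prove $|a| \le \mu$. By symmetry of $L^{-1}$, $a = \int_\Omega |u|^{p-2}u\,(L^{-1}u)\,dx$. Put $z = L^{-1}u$. Testing $Lz = u$ against $z$ and using Poincaré gives
$$\|z\|^2 + \bar\la\|z\|_{L^2}^2 = \int_\Omega uz \le \|u\|_{L^2}\|z\|_{L^2}.$$
We then bound the left side from below in two ways. First, $\|z\|^2 + \bar\la\|z\|_{L^2}^2 \ge (\la_1 + \bar\la)\|z\|_{L^2}^2$, which gives $\|z\|_{L^2} \le \sqrt\mu/(\la_1+\bar\la)$. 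Second, $\|z\|^2 + \bar\la\|z\|_{L^2}^2 \ge \frac{\la_1 + \min\{0,\bar\la\}}{\la_1}\|z\|^2$; the case $\bar\la \ge 0$ is trivial and the case $\bar\la<0$ follows from $\|z\|_{L^2}^2 \le \|z\|^2/\la_1$. Combining this with $\|z\|_{L^2}\le\|z\|/\sqrt{\la_1}$ yields $\|z\| \le \sqrt{\mu}\,\sqrt{\la_1}/(\la_1+\min\{0,\bar\la\})$.

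Next apply Hölder, $|a| \le \|u\|_{L^p}^{p-1}\|z\|_{L^p}$, and the Gagliardo--Nirenberg inequality \eqref{eqgndomain} to both factors:
$$\|u\|_{L^p}^{p-1} \le C_{p,N}^{p-1}\rho^{(p-1)\ga_p/2}\mu^{(p-1)(1-\ga_p)/2} \quad\text{for } u\in\B_\rho^\mu,$$
$$\|z\|_{L^p} \le C_{p,N}\|z\|^{\ga_p}\|z\|_{L^2}^{1-\ga_p}.$$
Inserting the two bounds on $z$ gives
$$\|z\|_{L^p} \le C_{p,N}\,\sqrt\mu\left(\frac{\sqrt{\la_1}}{\la_1+\min\{0,\bar\la\}}\right)^{\ga_p}(\la_1+\bar\la)^{-(1-\ga_p)}.$$
Multiplying, $|a|$ is bounded by $C_{p,N}^p\rho^{(p-1)\ga_p/2}\mu^{((p-1)(1-\ga_p)+1)/2}$ times the reciprocal of the right side of \eqref{eqcononbarla}. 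Condition \eqref{eqcononbarla} is equivalent to this bound being $\le \mu$, because $\frac{(p-1)(1-\ga_p)+1}{2} - 1 = \frac{p-(p-1)\ga_p-2}{2}$. Hence $\tau a \le |a| \le \mu$, and $\omega = (\mu-\tau a)/b \ge 0$.

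The main obstacle is the exponent bookkeeping: the estimates must be split so that the factors match \eqref{eqcononbarla} exactly. In particular, the $\min\{0,\bar\la\}$ factor must come from the $H_0^1$ bound and the $(\la_1+\bar\la)$ factor from the $L^2$ bound on $z$. I expect this split to close as described, with nothing conceptually delicate beyond it.
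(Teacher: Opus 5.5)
Your proof is correct. The sign argument is the same as the paper's. The paper sets $\chi=(-\Delta+\bar\la)^{-1}u$ (your $z$) and proves the same two bounds on $\|\chi\|_{L^2}$ and $\|\nabla\chi\|_{L^2}$. It then uses H\"older and Gagliardo--Nirenberg to get $\tau\int_\Omega|u|^{p-2}u\chi\,dx\le\mu$, and tests the equation against $\chi$ to get $\mu=\tau\int_\Omega|u|^{p-2}u\chi\,dx+\omega\int_\Omega u\chi\,dx$, which is exactly your identity $\mu=\tau a+\omega b$.

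The real difference is in existence and uniqueness. The paper minimizes a quadratic functional over the affine set $\{w:\int_\Omega wu\,dx=\mu\}$ and invokes the Lagrange multiplier rule. It then proves uniqueness separately, by testing the difference of two solutions against itself. You instead invert $-\Delta+\bar\la$ by Lax--Milgram and reduce the whole problem to the scalar equation $\tau a+\omega b=\mu$ with $b>0$.

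Your route is more elementary and settles existence and uniqueness of the pair $(w,\omega)$ at once. It also yields the explicit formula $\omega=(\mu-\tau a)/b$, from which the $C^1$ dependence of $G_\tau$ on $u$ (Lemma \ref{lemproG2}) follows directly, without the implicit function theorem. It also avoids a small slip in the paper: the quadratic part of its functional needs a factor $\frac12$ for the Euler--Lagrange equation to be exactly \eqref{eqequofw1}. The variational route would adapt more easily to non-quadratic problems, but for this linear one it buys nothing extra.
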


\begin{proof}
	\emph{Existence:} Let $\bar\la > -\la_1(\Omega)$. We consider the following minimization problem
    \begin{align} \label{eqminimization problem1}
    	m := \big\{\inf \int_\Omega \left(|\nabla w|^2 + \bar \la w^2 - \tau|u|^{p-2}uw \right)dx: w \in H_0^1(\Omega), \int_\Omega wu dx = \mu \big\}.
    \end{align}
	By H\"older inequality and Sobolev inequality we get
	\begin{align*}
		\tau\int_\Omega|u|^{p-2}uw dx & \leq \left(\int_\Omega|u|^pdx\right)^{\frac{p-1}p}\left(\int_\Omega|w|^pdx\right)^{\frac1p} \leq \mathcal{S}_p\left(\int_\Omega|u|^pdx\right)^{\frac{p-1}p}\left(\int_\Omega |\nabla w|^2dx\right)^{\frac12},
	\end{align*}
	and thus
	\begin{align} \label{eqboungbelow}
		&\int_\Omega \left(|\nabla w|^2 + \bar \la w^2 - \tau|u|^{p-2}uw \right)dx \nonumber \\
& \geq \frac{\la_1(\Omega)+\min\{0,\bar \la\}}{\la_1(\Omega)}\int_\Omega |\nabla w|^2dx - \mathcal{S}_p\left(\int_\Omega|u|^pdx\right)^{\frac{p-1}p}\left(\int_\Omega |\nabla w|^2dx\right)^{\frac12},
	\end{align}
	which indicates that $m$, defined in \eqref{eqminimization problem1}, is bounded from below. Let $\{w_n\}$ be a minimizing sequence for $m$. By \eqref{eqboungbelow} we obtain that the sequence $\{w_n\}$ is bounded in $H_0^1(\Omega)$. Up to a subsequence, we assume that $w_n$ converges to $w$ weakly in $H_0^1(\Omega)$ and strongly in $L^2(\Omega)$ and $L^p(\Omega)$ for some $w \in H_0^1(\Omega)$. It is clear that $\int_\Omega wu dx = \mu$ and $\lim_{n \to \infty}\int_\Omega |u|^{p-2}uw_ndx \to \int_\Omega |u|^{p-2}uw dx$. Then we have
	\begin{align*}
		m \leq \int_\Omega \left(|\nabla w|^2 + \bar \la w^2 - \tau|u|^{p-2}uw \right)dx \leq \liminf_{n \to \infty}\int_\Omega \left(|\nabla w_n|^2 + \bar \la w_n^2 - \tau|u|^{p-2}uw_n \right)dx = m,
	\end{align*}
	which shows that $m$ is attained by $w$. By Lagrange multiplier principle, $w$ solves \eqref{eqequofw1} for some $\omega \in \R$.
	
	\vskip0.1in
	\emph{Uniqueness:} If $w_1, w_2$ are two solutions of \eqref{eqequofw1} with $\omega = \omega_1,\omega_2$ respectively, by direct computation we get
    \begin{align}
    	\int_\Omega\left( |\nabla (w_2-w_1)|^2 + \bar \la (w_2-w_1)^2\right) dx = \int_\Omega \omega_2 u(w_2-w_1)dx - \int_\Omega \omega_1 u(w_2-w_1)dx = 0,
    \end{align}
    implying $w_2 \equiv w_1$, and so $\omega_2 = \omega_1$. Thus \eqref{eqequofw1} has a unique solution.

    \vskip0.1in
	\emph{Sign of $\omega$:} Given $u \in \B_\rho^\mu$, let $\chi = (-\Delta + \bar \la)^{-1}u \neq 0$. It holds that
	\begin{align*}
	   (\la_1(\Omega) + \bar\la)\|\chi\|_{L^2}^2 \le \sqrt{\la_1(\Omega)}\|\chi\|_{L^2(\Omega)}\|\nabla \chi\|_{L^2} + \bar\la \|\chi\|_{L^2}^2 \le \int_\Omega \left( |\nabla \chi|^2 + \bar \la \chi^2\right) dx = \int_\Omega u\chi dx \le \|u\|_{L^2}\|\chi\|_{L^2},
	\end{align*}
    by which it follows that
    \begin{align*}
    	\left(\la_1(\Omega) + \bar\la\right) \|\chi\|_{L^2} \le \|u\|_{L^2} \quad \text{and} \quad \frac{\la_1(\Omega)+\min\{0,\bar \la\}}{\sqrt{\la_1(\Omega)}} \|\nabla \chi\|_{L^2} \leq \|u\|_{L^2}.
    \end{align*}
    Then, by H\"older inequality, Gagliardo-Nirenberg inequality \eqref{eqgndomain}, and \eqref{eqcononbarla},
    \begin{align*}
    	\tau\int_\Omega |u|^{p-2}u\chi dx & \leq \|u\|_{L^p}^{p-1}\|\chi\|_{L^p} \\
    	& \le C_{p,N}^p \|\nabla u\|_{L^2}^{(p-1)\ga_p}\|u\|_{L^2}^{(p-1)(1-\ga_p)}\|\nabla \chi\|_{L^2}^{\ga_p}\|\chi\|_{L^2}^{1-\ga_p} \\
    	& \le C_{p,N}^p \left( \frac{\sqrt{\la_1(\Omega)}}{\la_1(\Omega)+\min\{0,\bar \la\}} \right)^{\ga_p} \left( \frac{1}{\la_1(\Omega) + \bar\la} \right) ^{1-\ga_p}\|\nabla u\|_{L^2}^{(p-1)\ga_p}\|u\|_{L^2}^{p-(p-1)\ga_p} \\
    	& \le \mu.
    \end{align*}
    Multiplying \eqref{eqequofw1} by $\chi$ and integrating by parts yields that
    \begin{align*}
    	\mu = \int_\Omega uw dx = \tau\int_\Omega |u|^{p-2}u\chi dx + \omega\int_\Omega u\chi dx \le \mu + \omega\int_\Omega u\chi dx.
    \end{align*}
    Thus we conclude that $\omega \geq 0$ and complete the proof.
\end{proof}

For $\mu,\rho > 0$, we fix $\bar\la = \bar\la(\rho,\mu) \geq 0$ satisfying \eqref{eqcononbarla}. We can now define the operator
\begin{align} \label{def:G}
	G_\tau : S_\mu(\Omega) \to H_0^1(\Omega), \quad u \mapsto G_\tau(u) = w,
\end{align}
that is, for each $u$, $G_\tau(u)$ is the unique solution of the equation \eqref{eqequofw1}.

\vskip0.1in

Next we state and prove some properties of the operator $G_\tau$.

\begin{lemma} \label{lemproG1}
	Given $u \in S_\mu(\Omega)$, we have $u - G_\tau(u) \in T_uS_\mu(\Omega)$.
\end{lemma}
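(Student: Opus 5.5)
This is essentially immediate from the definition of $T_uS_\mu(\Omega)$ recalled in Proposition \ref{propbmmanifold}. With $E=H_0^1(\Omega)$ and $H=L^2(\Omega)$ we have
\begin{align*}
T_uS_\mu(\Omega)=\Bigl\{\phi\in H_0^1(\Omega): \int_\Omega \phi u\,dx=0\Bigr\}.
\end{align*}
So it suffices to check two facts: $u-G_\tau(u)$ lies in $H_0^1(\Omega)$, and it is $L^2$-orthogonal to $u$.

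For the first, note that $u\in S_\mu(\Omega)\subset H_0^1(\Omega)$. By Proposition \ref{propuniquesolutionw} and the definition \eqref{def:G}, $G_\tau(u)=w$ is the unique solution in $H_0^1(\Omega)$ of \eqref{eqequofw1}. Hence $u-G_\tau(u)\in H_0^1(\Omega)$.

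For the orthogonality, the constraint in \eqref{eqequofw1} gives $\int_\Omega G_\tau(u)u\,dx=\mu$, and $u\in S_\mu(\Omega)$ gives $\int_\Omega u^2\,dx=\mu$. Therefore
\begin{align*}
\int_\Omega \bigl(u-G_\tau(u)\bigr)u\,dx=\int_\Omega u^2\,dx-\int_\Omega G_\tau(u)u\,dx=\mu-\mu=0,
\end{align*}
which is exactly the membership condition.

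The only step needing care is the appeal to Proposition \ref{propuniquesolutionw}. It requires $\bar\la>-\la_1(\Omega)$, which holds because $\bar\la\ge 0$ was fixed. The sign condition \eqref{eqcononbarla} and the restriction $u\in\B_\rho^\mu$ are not needed here, since existence and uniqueness hold for every $u\in S_\mu(\Omega)$. The Lagrange multiplier $\omega$ plays no role beyond ensuring that the constraint is satisfied.
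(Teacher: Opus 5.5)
Your proof is correct and matches the paper's, which consists of exactly the computation $\int_\Omega (u-G_\tau(u))u\,dx=\mu-\mu=0$, using the constraint in \eqref{eqequofw1} together with $u\in S_\mu(\Omega)$. Your two extra remarks are accurate details the paper leaves implicit: that $u-G_\tau(u)\in H_0^1(\Omega)$, and that only $\bar\la>-\la_1(\Omega)$ (guaranteed by $\bar\la\ge 0$) is needed.
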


\begin{proof}
	The result is clear by  noticing
	\begin{align*}
		\int_\Omega (u - G_\tau(u))u dx = \int_\Omega u^2 dx - \int_\Omega wu dx = \mu - \mu = 0,
	\end{align*}
    where $w$ is the unique solution of the  equation \eqref{eqequofw1} given in Proposition \ref{propuniquesolutionw}.
\end{proof}

\begin{lemma} \label{lemproG2}
	The operator $G_\tau \in C^1(S_\mu(\Omega),H_0^1(\Omega))$.
\end{lemma}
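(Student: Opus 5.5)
The idea is to write $G_\tau$ explicitly in terms of $u$ and then read off the $C^1$ regularity from the explicit formula. Let $K := (-\Delta + \bar\la)^{-1}: H^{-1}(\Omega) \to H_0^1(\Omega)$. Since $\bar\la > -\la_1(\Omega)$, this is a bounded linear isomorphism. Applying $K$ to \eqref{eqequofw1} gives
\begin{align*}
	w = \tau K(|u|^{p-2}u) + \omega K u .
\end{align*}
Testing the constraint $\int_\Omega wu\,dx = \mu$ against this identity determines $\omega$:
\begin{align*}
	\omega(u) = \frac{\mu - \tau\int_\Omega u\, K(|u|^{p-2}u)\,dx}{\int_\Omega u\, Ku\,dx}.
\end{align*}
The denominator is positive for $u \neq 0$. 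Indeed, with $\chi = Ku$,
\begin{align*}
	\int_\Omega u\chi\,dx = \int_\Omega \bigl(|\nabla\chi|^2 + \bar\la\chi^2\bigr)dx \ge \frac{\la_1(\Omega)+\min\{0,\bar\la\}}{\la_1(\Omega)}\|\chi\|^2 > 0,
\end{align*}
and $\chi \neq 0$ because $K$ is injective. By the uniqueness part of Proposition \ref{propuniquesolutionw}, this pair $(w,\omega)$ is the solution, so
\begin{align*}
	G_\tau(u) = \tau K(|u|^{p-2}u) + \omega(u)\, Ku .
\end{align*}

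Next I check that each ingredient is $C^1$ on all of $H_0^1(\Omega)\setminus\{0\}$, which is an open set containing $S_\mu(\Omega)$.
\begin{itemize}
	\item The Nemytskii map $u \mapsto |u|^{p-2}u$ is $C^1$ from $L^p(\Omega)$ to $L^{p'}(\Omega)$, because $p>2$ and its derivative is $(p-1)|u|^{p-2}$. Composing with the Sobolev embedding $H_0^1 \hookrightarrow L^p$ (valid since $p<2^*$) and the embedding $L^{p'} \hookrightarrow H^{-1}$ makes $u \mapsto K(|u|^{p-2}u)$ of class $C^1$ from $H_0^1$ to $H_0^1$.
	\item The maps $u \mapsto Ku$ and $u \mapsto \int_\Omega uKu\,dx$ are bounded linear and continuous quadratic respectively, hence smooth.
	\item The map $u \mapsto \int_\Omega u\,K(|u|^{p-2}u)\,dx$ is a product of $C^1$ maps, hence $C^1$.
\end{itemize}
It follows that $\omega$ is $C^1$ on the region where the denominator is positive, and therefore $G_\tau$ is $C^1$ there. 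Restricting to the $C^\infty$ submanifold $S_\mu(\Omega)$ gives $G_\tau \in C^1(S_\mu(\Omega),H_0^1(\Omega))$.

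The only step that needs real care is the Nemytskii differentiability. I would justify it with the standard argument: the pointwise derivative $(p-1)|u|^{p-2}$ maps $L^p$ continuously into $L^{p/(p-2)}$ (a consequence of dominated convergence / Vitali along subsequences). This gives continuity of the derivative as a multiplication operator from $L^p$ to $L^{p'}$, and the $C^1$ property follows from the mean value theorem combined with Hölder's inequality.
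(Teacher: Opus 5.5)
Your proof is correct, but it takes a different route from the paper's.

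The paper applies the implicit function theorem to $\Psi(u,w,\omega)=\bigl(w-(-\Delta+\bar\la)^{-1}(\tau|u|^{p-2}u+\omega u),\int_\Omega wu\,dx-\mu\bigr)$. It checks that $D_{(w,\omega)}\Psi$ is bijective: injectivity comes from testing $-\Delta\bar w+\bar\la\bar w=\bar\omega u$ with $\bar w$, and surjectivity from an explicit preimage whose coefficient has denominator $\int_\Omega u(-\Delta+\bar\la)^{-1}u\,dx$. It then uses the uniqueness in Proposition~\ref{propuniquesolutionw} to identify the local implicit function with $G_\tau$.

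You instead use the fact that the system is linear in $(w,\omega)$ and invert it in closed form. This is really the same linear algebra as the paper's surjectivity step, with the same denominator, whose positivity you verify.

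Your version buys several things:
\begin{itemize}
\item a global formula on the open set $H_0^1(\Omega)\setminus\{0\}$, with no implicit function theorem; it even gives existence and uniqueness for \eqref{eqequofw1} without the minimization argument;
\item an explicit justification of the Nemytskii differentiability, which the paper uses without comment when it calls $\Psi$ a $C^1$ map;
\item immediate joint $C^1$ dependence on $(\tau,u)$, as claimed in the remark after the lemma;
\item direct access to bounds on $\omega(u)$. Your formula for $\omega$ is exactly the identity the paper uses to prove $\omega\ge 0$, and it also gives the uniform upper bound on $\B_\rho^\mu$ needed in Lemma~\ref{lemproG4}.
\end{itemize}

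The paper's version buys robustness. The implicit-function argument does not depend on linearity in $(w,\omega)$, so it would carry over to a nonlinear auxiliary problem. The price is that it only gives local regularity, which must then be made global through uniqueness.
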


\begin{proof}
	It suffices to apply the Implicit Theorem to the $C^1$ map
	\begin{align*}
		& \Psi : S_\mu(\Omega) \times H_0^1(\Omega) \times \R \to H_0^1(\Omega) \times \R, \quad \text{where} \\
		& \Psi(u,w,\omega) = \left( w - (-\Delta + \bar\la)^{-1}(\tau|u|^{p-2}u + \omega u), \int_\Omega wu dx - \mu \right).
	\end{align*}
    Note that \eqref{eqequofw1} holds if and only if $\Psi(u,w,\omega) = 0$. By computing the derivative of $\Psi$ with respect to $w,\omega$ at the point $(u,w,\omega)$ in the direction $(\bar w, \bar \omega)$, we obtain a map $\Phi : H_0^1(\Omega) \times \R \to H_0^1(\Omega) \times \R$ given by
    \begin{align*}
    	\Phi(\bar w, \bar \omega) & := D_{w,\omega}\Psi(u,w,\omega)(\bar w, \bar \omega) \\
    	& = \left(\bar w - \bar\omega (-\Delta + \bar\la)^{-1}u, \int_\Omega \bar wu dx \right)
    \end{align*}
    If $\Phi(\bar w, \bar \omega)= (0,0)$, then we multiply the equation
    \begin{align*}
    	-\Delta \bar w + \bar\la \bar w = \bar \omega u
    \end{align*}
    by $\bar w$ and obtain
    \begin{align*}
    	\int_\Omega \left( |\nabla \bar w|^2 + \bar\la \bar w ^2\right) dx = \bar \omega\int_\Omega \bar wu dx = 0.
    \end{align*}
    By $\bar \la > -\la_1(\Omega)$ we have $\bar w \equiv 0$ and so $\bar \omega = 0$. Hence, $\Phi$ is injective.

    On the other hand, for any $(g,\nu) \in H_0^1(\Omega) \times \R$, let $$\kappa = \frac{1}{\int_\Omega(-\Delta + \bar \la)^{-1}u u dx}\left( \nu -\int_\Omega gu dx\right) .$$
    Computing directly we obtain
    \begin{align*}
    	\Phi(g+\kappa (-\Delta + \bar \la)^{-1}u,\kappa) = (g,\nu).
    \end{align*}
    Hence $\Phi$ is surjective, that is, $\Phi$ is a bijective map. This completes the proof.
\end{proof}

\begin{remark}
	When $G_\tau$ is viewed as a map from $[1/2,1] \times S_\mu(\Omega)$ to $H_0^1(\Omega)$, defined by $(\tau,u) \mapsto G_\tau(u)$, the proof of Lemma \ref{lemproG2} shows that it is also of class $C^1$.
\end{remark}

\begin{lemma} \label{lemproG3}
	Let $\{u_n\} \subset S_\mu(\Omega)$ be a sequence such that $u_n \rightharpoonup u$ weakly in $H_0^1(\Omega)$ and let $w_n = G_{\tau_n}(u_n)$ for some $\{\tau_n\} \subset [1/2,1]$ satisfying $\tau_n \to \bar\tau \in [1/2,1]$. Then $w_n \to w$ strongly in $H_0^1(\Omega)$ for some $w \in H_0^1(\Omega)$.
\end{lemma}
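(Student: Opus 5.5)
We work directly with the equation \eqref{eqequofw1} satisfied by $w_n$ and its multiplier $\omega_n$. Since $u_n \rightharpoonup u$ weakly in $H_0^1(\Omega)$, Rellich--Kondrachov gives $u_n \to u$ strongly in $L^2$ and in $L^p$ (as $p<2^*$). In particular $\int_\Omega u^2dx=\mu$, so $u\neq 0$, and $\tau_n|u_n|^{p-2}u_n \to \bar\tau|u|^{p-2}u$ strongly in $L^{p/(p-1)}$, hence in $H^{-1}(\Omega)$. Let $K=(-\Delta+\bar\la)^{-1}:H^{-1}(\Omega)\to H_0^1(\Omega)$, an isomorphism because $\bar\la>-\la_1(\Omega)$. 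Then we can write
\begin{align*}
	w_n = K\bigl(\tau_n|u_n|^{p-2}u_n\bigr) + \omega_n K u_n .
\end{align*}

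The key step is to find an explicit formula for $\omega_n$. Testing with $u_n$ and using the constraint $\int_\Omega w_nu_ndx=\mu$ gives
\begin{align*}
	\omega_n = \frac{\mu - \int_\Omega K(\tau_n|u_n|^{p-2}u_n)\,u_n dx}{\int_\Omega (Ku_n) u_n dx}.
\end{align*}
The denominator satisfies $\int_\Omega (Ku_n)u_ndx=\int_\Omega(|\nabla \chi_n|^2+\bar\la\chi_n^2)dx$ with $\chi_n=Ku_n$. This quantity is at least $c\|\chi_n\|^2$ with $c>0$, and it is positive because $u_n\neq0$.

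Both $Ku_n\to Ku$ and $K(\tau_n|u_n|^{p-2}u_n)\to K(\bar\tau|u|^{p-2}u)$ strongly in $H_0^1(\Omega)$, since $K$ is continuous from $H^{-1}$ and $u_n\to u$ strongly in $L^2\subset H^{-1}$. Together with $u_n\to u$ in $L^2$, the numerator and denominator of $\omega_n$ converge. The limit denominator is $\int_\Omega (Ku)u\,dx>0$ because $u\neq0$. Therefore $\omega_n\to\omega$, where $\omega$ is given by the same formula with $u,\bar\tau$ in place of $u_n,\tau_n$.

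Consequently
\begin{align*}
	w_n\to w:=K(\bar\tau|u|^{p-2}u)+\omega Ku
\end{align*}
strongly in $H_0^1(\Omega)$. One can also note that $w=G_{\bar\tau}(u)$, which is consistent with uniqueness in Proposition \ref{propuniquesolutionw}.

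The only delicate point is that the denominator of $\omega_n$ stays away from zero. This is exactly where the strong $L^2$ convergence, which preserves the mass $\mu$ in the limit, is used, so the main obstacle is handled by compactness of the embedding.
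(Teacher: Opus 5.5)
Your proof is correct, and it takes a different route from the paper's.

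\textbf{The paper's route.} The paper argues by weak compactness. It bounds $\{w_n\}$ in $H_0^1(\Omega)$ through the coercivity estimate \eqref{eqboungbelow}, and asserts without details that $\{\omega_n\}$ is bounded. It then extracts a subsequence with $w_n\rightharpoonup w$ weakly in $H_0^1(\Omega)$ and strongly in $L^2(\Omega)$ and $L^p(\Omega)$. Testing \eqref{eqequofw1} with $w_n-w$ gives $\|w_n\|\to\|w\|$, hence strong convergence along that subsequence. The limit $w$ is not identified in that proof.

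\textbf{Your route.} You write $w_n=K(\tau_n|u_n|^{p-2}u_n)+\omega_nKu_n$ with $K=(-\Delta+\bar\la)^{-1}$. You solve the constraint explicitly for $\omega_n$ and use continuity of $K$ on $H^{-1}(\Omega)$ together with the compact embeddings $H_0^1(\Omega)\hookrightarrow L^2(\Omega),L^p(\Omega)$. This yields more than the paper's argument:
\begin{itemize}
\item the whole sequence converges, not just a subsequence, which matches the lemma as literally stated;
\item $\omega_n\to\omega$;
\item the limit is identified as $w=G_{\bar\tau}(u)$.
\end{itemize}
The last two facts are exactly what the paper uses later, in Lemmas \ref{lempre1}, \ref{lempscondition} and \ref{lemproG4}. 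Your formula for $\omega_n$, with the denominator controlled, also justifies the boundedness of $\omega_n$ that the paper calls ``clear''.

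The paper's argument is the more routine energy-type compactness argument and avoids the resolvent representation. As written, however, it needs an extra uniqueness step, via Proposition \ref{propuniquesolutionw}, to recover convergence of the full sequence and the identification of the limit.
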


\begin{proof}
	By \eqref{eqboungbelow} it is readily seen that the sequence $\{w_n\}$ is bounded in $H_0^1(\Omega)$. Let $\omega_n$ be the Lagrange multiplier corresponding to $w_n$ given in Proposition \ref{propuniquesolutionw}. It is clear that $\omega_n$ is also bounded. Up to a subsequence, we assume that $w_n$ converges to some $w \in H_0^1(\Omega)$ weakly in $H_0^1(\Omega)$ and strongly in $L^2(\Omega)$ and $L^p(\Omega)$. Multiplying \eqref{eqequofw1} by $w_n - w$ we see that
	\begin{align*}
		\int_\Omega \left( \nabla w_n \nabla (w_n-w) + \bar\la w_n(w_n-w)\right) dx = \int_\Omega\left( \tau_n|u_n|^{p-2}u_n(w_n-w) + \omega_n u_n(w_n-w)\right) dx \to 0
	\end{align*}
    as $n \to \infty$ and therefore $w_n \to w$ strongly in $H_0^1(\Omega)$.
\end{proof}

\begin{lemma} \label{lempre1}
	Define
	\begin{align}
		M_{\pm}(\delta) := \sup\bigl\{\Big|\{x \in \Omega: \pm G_\tau(u)(x) < 0\}\Big| : u \in \overline{\pm\p_\delta} \cap \B_\rho^\mu \bigr\}.
	\end{align}
    Then $\lim_{\delta \to 0}M_{\pm}(\delta) = 0$ uniformly with respect to $\tau \in [1/2,1]$.
\end{lemma}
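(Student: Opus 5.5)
We argue by contradiction and treat only the sign $+$; the case $-$ follows by replacing $u$ with $-u$, since $G_\tau(-u) = -G_\tau(u)$ by uniqueness in Proposition \ref{propuniquesolutionw}. Suppose the claim fails. Then there are $\epsilon_0 > 0$, $\delta_n \to 0$, $\tau_n \in [1/2,1]$ and $u_n \in \overline{\p_{\delta_n}} \cap \B_\rho^\mu$ with
\begin{align*}
\Big|\{x\in\Omega : G_{\tau_n}(u_n)(x) < 0\}\Big| \ge \epsilon_0 .
\end{align*}
Passing to subsequences, we may assume $\tau_n \to \bar\tau$ and $u_n \rightharpoonup u$ weakly in $H_0^1(\Omega)$. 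We may also assume $u_n \to u$ strongly in $L^2$ and $L^p$, so that $u \in S_\mu(\Omega)$. Since $\operatorname{dist}(u_n,\p) \le \delta_n \to 0$, we get $\|u_n^-\|_{L^2} \to 0$ as in Lemma \ref{lemcapnonemp}, and therefore $u \ge 0$. By Lemma \ref{lemproG3}, $w_n := G_{\tau_n}(u_n) \to w$ strongly in $H_0^1(\Omega)$.

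Next we identify $w$. The multipliers $\omega_n$ are bounded, so along a subsequence $\omega_n \to \bar\omega$. Passing to the limit in \eqref{eqequofw1} shows that $w$ solves
\begin{align*}
-\Delta w + \bar\la w = \bar\tau u^{p-1} + \bar\omega u, \qquad \int_\Omega wu\,dx = \mu ,
\end{align*}
so $w = G_{\bar\tau}(u)$. Because $u \in \B_\rho^\mu$ (the gradient norm is weakly lower semicontinuous, and $\rho$ is fixed), or at worst $u \in \overline{\B_\rho^\mu}$, Proposition \ref{propuniquesolutionw} with the fixed $\bar\la$ chosen there gives $\omega_n \ge 0$, hence $\bar\omega \ge 0$. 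The right-hand side is then nonnegative and not identically zero, since $u \ge 0$, $u \ne 0$ and $\bar\tau u^{p-1} \ge 0$. As $\bar\la \ge 0$, the strong maximum principle for $-\Delta + \bar\la$ yields $w > 0$ in $\Omega$.

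To reach the contradiction, note that $w_n \to w$ in $L^2$, so a subsequence converges almost everywhere. Since $w > 0$ a.e., we have $\mathbf 1_{\{w_n<0\}} \to 0$ a.e., and dominated convergence on the bounded set $\Omega$ gives $|\{w_n<0\}| \to 0$. This contradicts the choice of $\epsilon_0$. Uniformity in $\tau$ comes for free, because $\tau_n$ was allowed to vary in the contradiction sequence.

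The main obstacle is justifying $\bar\omega \ge 0$ for the limit, since the weak limit $u$ might only lie in the closure of $\B_\rho^\mu$. We handle this by applying the sign conclusion to each $u_n \in \B_\rho^\mu$, which gives $\omega_n \ge 0$ directly, and then letting $n \to \infty$. We also need the nondegeneracy that $w$ does not vanish on a set of positive measure; this is guaranteed by the strict positivity from the maximum principle.
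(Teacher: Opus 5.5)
Your proposal is correct and follows the paper's own argument. It takes a contradiction sequence $u_n\in\overline{\p_{\delta_n}}\cap\B_\rho^\mu$ with varying $\tau_n$, a nonnegative limit $u$, strong convergence $G_{\tau_n}(u_n)\to w$ from Lemma \ref{lemproG3}, a nonnegative multiplier, and the strong maximum principle giving $w>0$, which forces $|\{G_{\tau_n}(u_n)<0\}|\to 0$. Two of your steps are written more carefully than in the paper: you obtain $\bar\omega\ge 0$ as the limit of $\omega_n\ge 0$ (since $u$ is a priori only in $\overline{\B_\rho^\mu}$), and you give an explicit a.e.\ convergence argument for the final measure estimate.
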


\begin{proof}
	We provide the proof of $\lim_{\delta \to 0}M_{+}(\delta) = 0$ uniformly with respect to $\tau \in [1/2,1]$ and similar arguments yield that $\lim_{\delta \to 0}M_{-}(\delta) = 0$ uniformly with respect to $\tau \in [1/2,1]$. Arguing by contradiction, we suppose that there exist $\delta_n \to 0$, $\{\tau_n\} \subset [1/2,1]$ and a sequence $\{u_n\} \subset \overline{\p_{\delta_n}} \cap \B_\rho^\mu$ such that
	\begin{align} \label{eqgun>0}
		\lim_{n \to \infty}|\{x \in \Omega: G_{\tau_n}(u_n)(x) < 0\}| > 0.
	\end{align}
	Up to a subsequence, we assume that $\tau_n \to \bar\tau \in [1/2,1]$ and that $u_n$ converges to a nonnegative function $u \in S_\mu(\Omega)$ weakly in $H_0^1(\Omega)$ and strongly in $L^2(\Omega)$ and $L^p(\Omega)$. By Lemma \ref{lemproG3} we have $w_n = G_{\tau_n}(u_n) \to w$ strongly in $H_0^1(\Omega)$ and it is readily seen that $w$ solves \eqref{eqequofw1} for $\tau = \bar \tau$ with Lagrange multiplier $\omega$. By Proposition \ref{propuniquesolutionw}, $\omega \ge 0$ and so
	\begin{align*}
		-\Delta w + \bar \la w = \bar\tau |u|^{p-2}u + \omega u \geq 0.
	\end{align*}
	Hence by the strong maximum principle we have $w > 0$ in $\Omega$, and therefore we conclude that $\lim_{n \to \infty}|\{x \in \Omega: w_n(x) < 0\}| = 0$, contradicting \eqref{eqgun>0}. The proof is complete.
\end{proof}

The following lemma not only plays an important role in proving Proposition \ref{propdescendingflow} in this section later, but also is a key step to complete the proof of Theorem \ref{thmlarge} in Section \ref{secappropro}.

\begin{lemma} \label{lemproG4}
    There exists $\hat \delta = \hat\delta(\rho,\mu) > 0$, independent of $\tau \in [1/2,1]$, such that whenever $0 < \delta < \hat\delta$, $G_\tau(u) \in (\pm\p)_{\delta/2}$ for all $u \in \overline{(\pm\p)_\delta} \cap \B_\rho^\mu$.
\end{lemma}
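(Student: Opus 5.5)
We treat the case of $\p$; the case of $-\p$ is symmetric. The key quantity is $\text{dist}(w,\p)$ for $w = G_\tau(u)$. Since the projection of $w$ onto $\p$ in $H_0^1$ is awkward, we use the test function $w^-$ instead and prove the standard estimate
\begin{align*}
\text{dist}(w,\p) \le \|w^-\|, \qquad \|w^-\|^2 \lesssim \int_\Omega \nabla w\cdot\nabla w^-\,dx .
\end{align*}
Testing \eqref{eqequofw1} with $w^-$ (recall $w^- = \min\{w,0\} \le 0$) gives
\begin{align*}
\int_\Omega |\nabla w^-|^2dx + \bar\la\int_\Omega |w^-|^2dx = \tau\int_\Omega |u|^{p-2}u\,w^-dx + \omega\int_\Omega u\,w^-dx .
\end{align*}
The left side is bounded below by $c\|w^-\|^2$, with $c = (\la_1(\Omega)+\min\{0,\bar\la\})/\la_1(\Omega) > 0$, because $\bar\la > -\la_1(\Omega)$ was fixed and does not depend on $\tau$.

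For the right side, on $\{w<0\}$ we have $w^- < 0$. Only the negative part of $u$ can contribute positively, so
\begin{align*}
\text{RHS} \le \tau\int_\Omega |u^-|^{p-1}|w^-|\,dx + \omega\int_\Omega |u^-|\,|w^-|\,dx .
\end{align*}
Here we use $\omega \ge 0$, which holds by Proposition \ref{propuniquesolutionw} since $u \in \B_\rho^\mu$ and $\bar\la$ satisfies \eqref{eqcononbarla}. We need a uniform bound $\omega \le C(\rho,\mu)$ for $u\in\B_\rho^\mu$. To get it, test \eqref{eqequofw1} with $\chi=(-\Delta+\bar\la)^{-1}u$, giving $\omega\int u\chi = \mu - \tau\int|u|^{p-2}u\chi$. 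Since $\int u\chi \ge \|\chi\|_{L^2}^2$-type lower bounds require care, it is cleaner to argue by compactness that $\int u\chi$ is bounded below on $\B_\rho^\mu$: weak limits of $\B_\rho^\mu$ stay in $S_\mu$ because the embedding into $L^2$ is compact. Alternatively, use $\int u\chi \ge \|u\|_{H^{-1}}^2$-type bounds, which are bounded below via $\mu = \int u^2 \le \|u\|\,\|u\|_{H^{-1}}$.

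By H\"older, Sobolev, and the estimate $\|u^-\|_{L^q} \le \mathcal{S}_q\,\text{dist}(u,\p) \le \mathcal{S}_q\delta$ (obtained from $|u^-| \le |u-v|$ for $v\in\p$, as in Lemma \ref{lemcapnonemp}), we get
\begin{align*}
c\|w^-\|^2 \le \Bigl(\tau\,\mathcal{S}_p^{p}\,\delta^{p-1} + C\,\mathcal{S}_2^2\,\delta\Bigr)\|w^-\| .
\end{align*}
This yields $\|w^-\| \le C_1(\delta^{p-1}+\delta)$, and $C_1$ is independent of $\tau\in[1/2,1]$.

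The main obstacle is the linear term $C\delta$: it only gives $\|w^-\| \le C_1\delta$, and we need $\le \delta/2$ with a constant not necessarily small. To fix this, we exploit that $w^-$ lives on the small set $\{w<0\}$, whose measure is at most $M_+(\delta)\to0$ uniformly in $\tau$ by Lemma \ref{lempre1}. In each product $\int |u^-||w^-|$ and $\int |u^-|^{p-1}|w^-|$ we apply H\"older with an extra factor $|\{w<0\}|^{\epsilon}$. For example, $\int|u^-||w^-| \le \|u^-\|_{L^2}\|w^-\|_{L^{q}}|\{w<0\}|^{1/2-1/q}$ with $2<q<2^*$. Similarly, for the nonlinear term we use $\|u^-\|_{L^{r}}$ with $r$ slightly less than $2^*$ and collect a positive power of the measure. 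This gives
\begin{align*}
\|w^-\| \le C_2\bigl(M_+(\delta)^{\epsilon} + \delta^{p-2}\bigr)\delta .
\end{align*}
Finally, we choose $\hat\delta$ so that $C_2(M_+(\delta)^{\epsilon}+\delta^{p-2}) < 1/2$ for all $\delta<\hat\delta$. Then $\text{dist}(G_\tau(u),\p) \le \|w^-\| < \delta/2$, with $\hat\delta$ depending only on $\rho,\mu$.
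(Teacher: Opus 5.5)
Your argument is correct and essentially matches the paper's proof. Both test \eqref{eqequofw1} with $w^-$, use $\omega\ge 0$ together with a uniform bound $\omega<\omega^*(\rho,\mu)$, and bound the nonlinear term by $\mathcal{S}_p^p\delta^{p-1}$. Both then make the linear term small via H\"older with a positive power of $|\{w<0\}|\le M_+(\delta)\to 0$ from Lemma \ref{lempre1}.

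Your explicit bound on $\omega$, obtained by testing with $\chi=(-\Delta+\bar\la)^{-1}u$ and using $\int_\Omega u\chi\,dx\ge \mu^2/\int_\Omega(|\nabla u|^2+\bar\la u^2)dx$, justifies this step more concretely than the paper's brief appeal to Lemmas \ref{lemproG2}--\ref{lemproG3}. The extra measure factor you put on the nonlinear term is unnecessary, since $\delta^{p-2}\to 0$ already suffices there.
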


\begin{proof}
	Let $u \in \overline{\p_\delta} \cap \B_\rho^\mu$ where $\delta$ will be determined later. We aim to prove that $w = G_\tau(u)$ belongs to $\p_{\delta/2}$. The Lagrange multiplier $\omega \geq 0$ is given in Proposition \ref{propuniquesolutionw}. As a consequence of Lemma \ref{lemproG2} and Lemma \ref{lemproG3}, we obtain the existence of $\omega^* = \omega^*(\rho,\mu) < +\infty$ independent of $u$ and of $\tau$, such that $\omega < \omega^*$ for all $u \in \B_\rho^\mu$. Setting $w^- = \min\{w,0\}$ and $u^- = \min\{u,0\}$, recalling $\bar\la \geq 0$ in defining the operator $G_\tau$, we have
	\begin{align*}
		\text{dist}(w,\p)\|w^-\| & \leq \|w^-\|^2 \le \int_\Omega\left( |\nabla w^-|^2 + \bar\la(w^-)^2\right) dx \\
		& = \int_\Omega \left( \nabla w \nabla w^- + \bar \la ww^-\right) dx \\
		& = \int_\Omega \left( \tau|u|^{p-2}u w^- + \omega u w^-\right) dx \\
		& \leq \int_\Omega \left( |u^-|^{p-2}u^- w^- + \omega u^- w^-\right) dx \\
		& \leq \left( \|u^-\|_{L^p}^{p-1}\|w^-\|_{L^p} + \omega M_+(\delta)^{p-2}\|u^-\|_{L^p}\|w^-\|_{L^p}\right) \\
		& = \left( \inf_{v \in \p}\|u-v\|_{L^p}^{p-1} + \omega M_+(\delta)^{p-2}\inf_{v \in \p}\|u-v\|_{L^p} \right) \|w^-\|_{L^p} \\
		& \le \left( \mathcal{S}_p^p \text{dist}(u,\p)^{p-1} + \mathcal{S}_p^2\omega M_+(\delta)^{p-2}\text{dist}(u,\p) \right) \|w^-\|
	\end{align*}
    which indicates that
    \begin{align} \label{eqdistwp}
    	\text{dist}(w,\p) \leq \left( \mathcal{S}_p^p\delta^{p-2} + \mathcal{S}_p^2\omega^* M_+(\delta)^{p-2}\right) \delta,
    \end{align}
    where $\mathcal{S}_p$ is the best constant of Sobolev embedding $H_0^1(\Omega) \hookrightarrow L^p(\Omega)$. By Lemma \ref{lempre1} there exists $\hat \delta = \hat \delta (\rho,\mu)$ such that for $0 < \delta < \hat\delta$,
    \begin{align} \label{eqconondelta}
    	\mathcal{S}_p^p\delta^{p-2} + \mathcal{S}_p^2\omega^* M_+(\delta)^{p-2} < \frac12,
    \end{align}
    and we conclude that $w = G_\tau(u) \in \p_{\delta/2}$. A similar argument works for the "$-$" sign and we complete the proof.
\end{proof}

Now let us define a map
\begin{align} \label{def:V}
	V_\tau: S_\mu(\Omega) \to H_0^1(\Omega), \quad u \mapsto V_\tau(u) = u - G_\tau(u),
\end{align}
where $G_\tau$ is defined in \eqref{def:G}. By Lemma \ref{lemproG1} we have $V_\tau(u) \in T_uS_\mu(\Omega)$ for each $u \in S_\mu(\Omega)$. Moreover, we observe that for $u \in S_\mu(\Omega)$, $V_\tau(u) = 0$ is equivalent to that $u$ solves
	 \begin{align} \label{eqequationomegatau}
	 	\begin{cases}
	 		-\Delta u + \la u = \tau|u|^{p-2}u & \quad \text{in } \Omega, \\
	 		u(x) = 0 & \quad \text{on } \partial \Omega,
	 	\end{cases}
	 \end{align}
with $\la = \bar\la - \omega$.

\vskip0.1in

As a consequence of Lemma \ref{lemproG3} we obtain that $V_\tau$ satisfies the Palais-Smale type condition in $\B_\rho^\mu$.

\begin{lemma}[Palais-Smale type condition] \label{lempscondition}
	Let $\tau \in [1/2,1]$ be fixed and let $\{u_n\} \subset \B_\rho^\mu$ be such that $V_\tau(u_n) \to 0$ strongly in $H_0^1(\Omega)$. Then $\{u_n\}$ has a strongly convergent subsequence in $S_\mu(\Omega)$.
\end{lemma}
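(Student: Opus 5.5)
The plan is to combine the bound $\|u_n\|^2 < \rho$ (since $\{u_n\} \subset \B_\rho^\mu$) with the compactness statement of Lemma \ref{lemproG3}. No blow-up or concentration analysis should be needed, because boundedness in $H_0^1(\Omega)$ is built into the hypothesis.

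First, since $\int_\Omega |\nabla u_n|^2dx < \rho$ for all $n$, the sequence $\{u_n\}$ is bounded in $H_0^1(\Omega)$. Up to a subsequence, $u_n \rightharpoonup u$ weakly in $H_0^1(\Omega)$ for some $u \in H_0^1(\Omega)$. By the compact embedding $H_0^1(\Omega) \hookrightarrow L^2(\Omega)$ on the bounded domain $\Omega$, we also have $u_n \to u$ strongly in $L^2(\Omega)$. In particular $\int_\Omega |u|^2dx = \mu$, so $u \in S_\mu(\Omega)$.

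Next, apply Lemma \ref{lemproG3} with the constant sequence $\tau_n \equiv \tau$. This gives, after passing to a further subsequence, $w_n := G_\tau(u_n) \to w$ strongly in $H_0^1(\Omega)$ for some $w \in H_0^1(\Omega)$. By the definition \eqref{def:V}, $u_n = V_\tau(u_n) + G_\tau(u_n)$. The hypothesis $V_\tau(u_n) \to 0$ strongly in $H_0^1(\Omega)$ then yields $u_n \to w$ strongly in $H_0^1(\Omega)$. Uniqueness of weak limits forces $w = u$, and $u \in S_\mu(\Omega)$ was shown in the previous step. So the subsequence converges strongly in $S_\mu(\Omega)$, as claimed.

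I do not expect a serious obstacle, since the argument is essentially a two-line consequence of Lemma \ref{lemproG3}. The only point requiring care is the passage to subsequences: we need weak convergence of $u_n$ before invoking Lemma \ref{lemproG3}, and then we extract a further subsequence along which $G_\tau(u_n)$ converges strongly. One can also observe, by continuity of $G_\tau$ from Lemma \ref{lemproG2}, that $V_\tau(u) = 0$. By the remark following \eqref{def:V}, this means the limit solves \eqref{eqequationomegatau} with $\la = \bar\la - \omega$. This is not required by the statement but will be useful later.
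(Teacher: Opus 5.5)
Your proof is correct and follows the paper's route: $H_0^1$-boundedness from $\B_\rho^\mu$, a weak limit $u \in S_\mu(\Omega)$ via the compact embedding into $L^2(\Omega)$, and strong convergence of $G_\tau(u_n)$ from Lemma \ref{lemproG3}. The only difference is in the last step. The paper tests $V_\tau(u_n)$ against $u_n - u$ to get $\langle u_n, u_n - u\rangle \to 0$. You read off strong convergence directly from $u_n = V_\tau(u_n) + G_\tau(u_n)$, which has the small advantage of not needing to identify the limit of $G_\tau(u_n)$ as $G_\tau(u)$.
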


\begin{proof}
	It is clear that $\{u_n\}$ is bounded in $H_0^1(\Omega)$. Up to a subsequence, we assume that $u_n \to u$ weakly in $H_0^1(\Omega)$. Note that $u \in S_\mu(\Omega)$. By Lemma \ref{lemproG3} we further assume that $G_\tau(u_n) \to G_\tau(u)$ strongly in $H_0^1(\Omega)$. Then we have
	\begin{align*}
		o(1) = \langle V_\tau(u_n),u_n-u \rangle = \langle u_n,u_n-u \rangle - \langle G_\tau(u_n),u_n-u \rangle = \langle u_n,u_n-u \rangle + o(1),
	\end{align*}
	indicating that $\langle u_n,u_n-u \rangle \to 0$, and thus $u_n \to u$ strongly in $H_0^1(\Omega)$.
\end{proof}

\begin{remark}
	When $\bar\la$ is taken as $0$, the operator $V_\tau$ is exactly the gradient of $E_\tau(\cdot,\Omega)$ as a constrained functional to $S_\mu(\Omega)$. In this case, Lemma \ref{lempscondition} shows that $E_\tau(\cdot,\Omega)$, constrained to $S_\mu(\Omega)$, satisfies the Palais-Smale condition corresponding to the gradient operator on any bounded closed subset of $S_\mu(\Omega)$.
\end{remark}

Next we show that $V_\tau$ is a pseudogradient for $E_\tau(\cdot,\Omega)$ constrained to $S_\mu(\Omega)$.

\begin{lemma} \label{lempseudogradient}
	Recall that $\bar \la \geq 0$. We have
	\begin{align*}
		\|V_\tau(u)\|^2 \leq \langle \nabla E_\tau|_{S_\mu(\Omega)}(u,\Omega), V_\tau(u) \rangle \leq \frac{\la_1(\Omega) + \bar\la}{\la_1(\Omega)} \|V_\tau(u)\|^2, \quad \forall u \in S_\mu(\Omega).
	\end{align*}
\end{lemma}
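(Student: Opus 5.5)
We compare the constrained gradient with $V_\tau$ through the operator $L := -\Delta + \bar\la$ on $H_0^1(\Omega)$. Fix $u \in S_\mu(\Omega)$ and write $w = G_\tau(u)$, $V = V_\tau(u) = u - w$. The constrained gradient is
\[
\nabla E_\tau|_{S_\mu(\Omega)}(u,\Omega) = u - (-\Delta)^{-1}\bigl(\tau|u|^{p-2}u + \la_u u\bigr),
\]
with $\la_u$ fixed by tangency. So for any $\varphi \in T_uS_\mu(\Omega)$ we have
\[
\langle \nabla E_\tau|_{S_\mu(\Omega)}(u,\Omega), \varphi\rangle = \int_\Omega \nabla u\nabla\varphi\,dx - \tau\int_\Omega |u|^{p-2}u\varphi\,dx,
\]
because the multiplier term $\la_u\int_\Omega u\varphi\,dx$ vanishes on the tangent space. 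By Lemma \ref{lemproG1}, $V \in T_uS_\mu(\Omega)$, so the term $\omega\int_\Omega uV\,dx$ also vanishes. Hence the equation \eqref{eqequofw1} for $w$, tested against $V$, gives
\[
\tau\int_\Omega|u|^{p-2}uV\,dx = \int_\Omega\bigl(\nabla w\nabla V + \bar\la wV\bigr)dx.
\]

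Substituting this into the gradient pairing, I obtain
\[
\langle \nabla E_\tau|_{S_\mu(\Omega)}(u,\Omega), V\rangle = \int_\Omega \nabla u\nabla V\,dx - \int_\Omega \nabla w\nabla V\,dx - \bar\la\int_\Omega wV\,dx = \|V\|^2 - \bar\la\int_\Omega wV\,dx.
\]
Next I use $\int_\Omega uV\,dx = 0$, which again is tangency of $V$. It gives $-\int_\Omega wV\,dx = \int_\Omega (u-w)V\,dx = \int_\Omega V^2\,dx$, and therefore
\[
\langle \nabla E_\tau|_{S_\mu(\Omega)}(u,\Omega), V\rangle = \|V\|^2 + \bar\la\|V\|_{L^2}^2 .
\]
This identity is the heart of the argument.

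Both inequalities now follow from it. Since $\bar\la \ge 0$, the right-hand side is at least $\|V\|^2$. For the upper bound, the Poincaré inequality $\|V\|_{L^2}^2 \le \la_1(\Omega)^{-1}\|V\|^2$ gives
\[
\langle \nabla E_\tau|_{S_\mu(\Omega)}(u,\Omega), V\rangle \le \frac{\la_1(\Omega)+\bar\la}{\la_1(\Omega)}\|V\|^2 .
\]

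The main point needing care is the meaning of $\langle\cdot,\cdot\rangle$ for the constrained gradient. The tangent space must be taken in the $L^2$ sense, $(\varphi,u)=0$, as in Proposition \ref{propbmmanifold}. The gradient, however, is the $H_0^1$ Riesz representative, which is why it has the form $u - (-\Delta)^{-1}(\cdots)$ with $\la_u$ chosen so that it lies in $T_uS_\mu(\Omega)$. I will check explicitly that pairing this representative with a tangent vector $\varphi$ reproduces $E_\tau'(u)[\varphi]$. Once that is verified, the remaining steps are direct integrations by parts.
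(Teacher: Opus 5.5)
Your proof is correct and follows essentially the same route as the paper. You test the equation for $w=G_\tau(u)$ against $V=u-w$ and use $\int_\Omega uV\,dx=0$ both to remove the $\omega$-term and to convert $-\bar\la\int_\Omega wV\,dx$ into $\bar\la\|V\|_{L^2}^2$. This gives the identity $\langle \nabla E_\tau|_{S_\mu(\Omega)}(u,\Omega),V\rangle=\|V\|^2+\bar\la\|V\|_{L^2}^2$, from which both inequalities follow by $\bar\la\ge 0$ and the Poincaré inequality. Your check that pairing the $H_0^1$-gradient with a tangent vector reproduces $E_\tau'(u)[\varphi]$ is the step the paper uses implicitly.
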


\begin{proof}
	Let $w = G_\tau(u)$ satisfy \eqref{eqequofw1}. Observe that $\int_\Omega u(u-w)dx = \mu - \mu = 0$. Moreover, multiplying \eqref{eqequofw1} by $u-w$ we see that
	\begin{align*}
		\int_\Omega \left( \nabla w \nabla (u-w) + \bar \la w (u-w)\right) dx = \int_\Omega \left( \tau|u|^{p-2}u(u-w) + \omega u(u-w) \right) dx = \tau\int_\Omega |u|^{p-2}u(u-w)dx,
	\end{align*}
	and so
	\begin{align*}
		\int_\Omega \left( \nabla w \nabla (u-w) - \tau|u|^{p-2}u(u-w)\right) dx = - \bar \la \int_\Omega w(u-w)dx = \bar \la \int_\Omega (u-w)^2dx.
	\end{align*}
	Then we have
	\begin{align*}
		\langle \nabla E_\tau|_{S_\mu(\Omega)}(u,\Omega), V_\tau(u) \rangle & = \int_\Omega\left( \nabla u \nabla (u-w) - \tau|u|^{p-2}u(u-w) \right)dx \\
		& = \int_\Omega|\nabla (u-w)|^2dx + \int_\Omega\left( \nabla w \nabla (u-w) - \tau|u|^{p-2}u(u-w) \right)dx \\
		& = \int_\Omega\left( |\nabla (u-w)|^2 + \bar \la (u-w)^2 \right) dx,
	\end{align*}
    completing the proof.
\end{proof}

\begin{lemma} \label{leminf>0}
	Let $0 < \delta < \hat\delta$ where $\hat \delta$ is given in Lemma \ref{lemproG4}. Then $V_\tau(u) \neq 0$ for all $u \in \B_\rho^\mu \cap (D_\delta^*\backslash (-\p \cup \p))$. Moreover, we have
	\begin{align} \label{eqinf>0}
		\inf_{u \in \B_\rho^\mu \cap (\overline{D_\delta^*\backslash D_{\delta/2}^*})}\|V_\tau(u)\| > 0.
	\end{align}
\end{lemma}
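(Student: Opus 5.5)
Both parts rest on Lemma \ref{lemproG4}: for $0<\delta<\hat\delta$ it maps $\overline{(\pm\p)_\delta}\cap\B_\rho^\mu$ into $(\pm\p)_{\delta/2}$.

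\emph{First assertion.} Take $u \in \B_\rho^\mu \cap (D_\delta^*\setminus(-\p\cup\p))$ and suppose $V_\tau(u)=0$, i.e.\ $u = G_\tau(u)$. Say $u \in \overline{\p_\delta}$; the other case is symmetric. The aim is to show that $u\in\p$, which contradicts the choice of $u$. We know $u = G_\tau(u) \in \p_{\delta/2}$. Iterating Lemma \ref{lemproG4} directly does not work, because the lemma is stated for fixed $\delta$ and $\delta/2$ may leave the admissible range only at the bottom, not the top. Instead, use the quantitative estimate \eqref{eqdistwp} from its proof:
\[
\text{dist}(G_\tau(u),\p)\le \tfrac12\,\text{dist}(u,\p) \quad\text{whenever } \text{dist}(u,\p)\le \delta<\hat\delta .
\]
Its proof runs with $\text{dist}(u,\p)$ in place of $\delta$ in the first factor, and it uses $M_+(\delta)$ with $\delta<\hat\delta$. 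Applying this with $w=u$ gives $d\le d/2$ for $d=\text{dist}(u,\p)$, hence $d=0$. Since $\p$ is closed, $u\in\p$, the desired contradiction. (If one prefers to quote only the statement of Lemma \ref{lemproG4}: apply it with $\delta' = d < \hat\delta$, provided $d>0$, to get $d=\text{dist}(G_\tau(u),\p) < d/2$, which is absurd.)

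\emph{Second assertion \eqref{eqinf>0}.} Argue by contradiction. Take $u_n \in \B_\rho^\mu\cap\overline{D_\delta^*\setminus D_{\delta/2}^*}$ with $V_\tau(u_n)\to 0$. By Lemma \ref{lempscondition}, up to a subsequence $u_n\to u$ strongly in $S_\mu(\Omega)$, with $\|\nabla u\|^2\le\rho$. By continuity of $V_\tau$ (Lemma \ref{lemproG2}), $V_\tau(u)=0$.

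The main obstacle is that $u$ may only lie in the closure of $\B_\rho^\mu$, so neither Lemma \ref{lemproG4} nor part one applies directly. I would handle this by choosing $\bar\la$ to satisfy \eqref{eqcononbarla} with a slightly larger $\rho$, or more simply by noting that the proofs of Lemma \ref{lempre1} and Lemma \ref{lemproG4} extend verbatim to $\{\|\nabla u\|^2\le\rho\}$, since all the estimates are non-strict.

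Since $u$ lies in the closed set $\overline{D^*_\delta\setminus D^*_{\delta/2}}$, we have $u\in D^*_\delta$ and $\text{dist}(u,\p\cup-\p)\ge\delta/2$. In particular $u\notin \p\cup-\p$. Part one then gives $V_\tau(u)\neq0$, a contradiction. Concretely: say $u\in\overline{\p_\delta}$; then $u=G_\tau(u)\in\p_{\delta/2}$, so $\text{dist}(u,\p)<\delta/2$, contradicting $\text{dist}(u,\p)\ge\delta/2$.
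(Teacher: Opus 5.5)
Your proof is correct and follows the paper's route. The first claim comes from Lemma \ref{lemproG4}, and you usefully make explicit, by applying it at the level $d=\text{dist}(u,\p)\le\delta$, why $u=G_\tau(u)$ forces $u\in\p$, a step the paper leaves implicit. The second claim \eqref{eqinf>0} comes from the Palais--Smale property of Lemma \ref{lempscondition} plus closedness, and you correctly catch and patch a point the paper skips: the limit may only satisfy $\int_\Omega|\nabla u|^2dx\le\rho$.

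One small correction: your claim that iterating Lemma \ref{lemproG4} ``does not work'' is false, since every $\delta/2^n$ lies in $(0,\hat\delta)$. Moreover, the same level-$d$ trick gives directly $\|V_\tau(u)\|\ge d-d/2\ge\delta/4$ for every $u$ in the set of \eqref{eqinf>0}, because $\text{dist}(\cdot,\p)$ is $1$-Lipschitz and $d\in[\delta/2,\delta]$ there. This removes the need for compactness and for the boundary patch altogether.
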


\begin{proof}
	For $0 < \delta < \hat \delta$, by Lemma \ref{lemproG4} we have $u \neq G_\tau(u)$ for all $u \in \B_\rho^\mu \cap (D_\delta^*\backslash (-\p \cup \p))$, and so $V_\tau(u) \neq 0$. Then \eqref{eqinf>0} follows from Lemma \ref{lempscondition} since $\overline{D_\delta^*\backslash D_{\delta/2}^*}$ is a close set.
\end{proof}

Now we can state our main result concerning the descending flow and invariance set in this section. Since it will be used for $\tau = 1$ in the next section and Section \ref{secgenus}, we focus on this special case and the proof also works for other fixed $\tau \in [1/2,1]$. In the following we use the notation $V = V_1$.

\begin{proposition} \label{propdescendingflow}
	Given $\mu, \rho > 0$, $\delta \in (0,\hat\delta)$ where $\hat\delta$ is given in Lemma \ref{lemproG4}. Let $S \subset S_\mu(\Omega) \cap S^*(\delta)$, $c \in \R$, $\epsilon, \xi > 0$, such that
	\begin{align} \label{eqlowboundofV}
		\|V(u)\| \geq 2\epsilon /\xi, \quad \forall u \in E^{-1}([c-2\epsilon,c+2\epsilon],\Omega) \cap S(2\xi) \cap S^*(\delta),
	\end{align}
    where
    \begin{align}
    	& S(2\xi) := \bigl\{u \in S_\mu(\Omega): \text{dist}(u,S) \leq 2\xi \bigr\} \subset \B_\rho^\mu, \label{eqS2xi} \\
    	& E^{-1}([c-2\epsilon,c+2\epsilon],\Omega) := \bigl\{u \in S_\mu(\Omega): c-2\epsilon \leq E(u,\Omega) \leq c+2\epsilon \bigr\}.
    \end{align}
    Then there exists $\eta: C([0,\infty) \times S_\mu(\Omega), S_\mu(\Omega))$ such that $\eta(t,-u) = -\eta(t,u)$ and
	\begin{itemize}
		\item[(i)] $\eta(t,u) = u$ for all $t \geq 0$ if $u \notin E^{-1}([c-2\epsilon,c+2\epsilon],\Omega) \cap S(2\xi)$,
		\item[(ii)] $E(\eta(\cdot,u),\Omega)$ is nonincreasing, $\forall u \in S_\mu(\Omega)$,
		\item[(iii)] $\eta(t,u) \in D_\delta^*$ for all $t \geq 0$ if $u \in D_\delta^*$,
		\item[(iv)] $\forall u \in S$ with $E(u,\Omega) \leq c + \epsilon$, either $\eta(\xi,u) \in D_\delta^*$ or $E(\eta(\xi,u),\Omega) \leq c -\epsilon$.
	\end{itemize}
\end{proposition}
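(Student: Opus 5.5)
We build $\eta$ as the flow of a cut-off version of the pseudogradient $V=V_1$ from \eqref{def:V}. Choose a locally Lipschitz, even cut-off $\psi:S_\mu(\Omega)\to[0,1]$ with $\psi=1$ on $E^{-1}([c-\epsilon,c+\epsilon],\Omega)\cap S(\xi)$ and $\psi=0$ outside $E^{-1}([c-2\epsilon,c+2\epsilon],\Omega)\cap S(2\xi)$. A product of distance-type functions works; evenness holds because $E$ is even and, after replacing $S$ by $S\cup(-S)$ if necessary, $S(2\xi)$ is symmetric. Define
\begin{align*}
W(u)=-\psi(u)\frac{V(u)}{\|V(u)\|}\ \text{ if } V(u)\neq0, \qquad W(u)=0 \ \text{ otherwise},
\end{align*}
and let $\eta(t,u)$ solve $\partial_t\eta=W(\eta)$, $\eta(0,u)=u$. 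By Lemma \ref{lemproG2} the map $V$ is $C^1$, hence locally Lipschitz. Oddness of $G_1$ follows from the uniqueness in Proposition \ref{propuniquesolutionw}, since $w\mapsto -w$ maps solutions for $u$ to solutions for $-u$. So $V$ is odd and $\eta(t,-u)=-\eta(t,u)$. The normalization is harmless: on the support of $\psi$ we use a uniform lower bound on $\|V\|$, namely \eqref{eqlowboundofV} on $S^*(\delta)$ and Lemma \ref{leminf>0} inside $D^*_\delta$ away from the cones. On the cones themselves we instead use the unnormalized field $-\psi V$, or a smoothly interpolated normalization, to avoid the singularity. Since $W(u)\in T_uS_\mu(\Omega)$ by Lemma \ref{lemproG1} and $W$ is bounded, the flow is global on $S_\mu(\Omega)$ (after the usual projection onto the sphere). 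Property (i) holds because $\psi=0$ there, and (ii) follows from Lemma \ref{lempseudogradient}, which gives $\frac{d}{dt}E(\eta)=-\psi\langle\nabla E|_{S_\mu},V\rangle/\|V\|\le -\psi\|V\|\le0$.

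For (iii) we apply Proposition \ref{propbmmanifold} with $B_\mu=\overline{\p_\delta}\cap S_\mu(\Omega)$, and symmetrically for $-\p$, to the field $W$. Condition \eqref{eqd=0} is checked through Lemma \ref{lemconvex} with $\tilde B=\overline{\p_\delta}$, which is closed, convex and satisfies \eqref{eqcone} because $\p$ is a convex cone. The field $W$ is a nonnegative multiple $s$ of $-V=G(u)-u$. The sub-tangency condition for $u-sV(u)$ then reduces, via Lemma \ref{lemconvex}, to $G(u)\in\overline{\p_\delta}$, and Lemma \ref{lemproG4} gives the stronger statement $G(u)\in\p_{\delta/2}$ for $u\in\overline{\p_\delta}\cap\B_\rho^\mu$. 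Points outside $\B_\rho^\mu$ are fixed, since $S(2\xi)\subset\B_\rho^\mu$. Proposition \ref{propbmmanifold} gives local existence of a trajectory staying in $B_\mu$, and uniqueness of the Lipschitz flow makes it the trajectory $\eta$. A continuation argument then shows that $\eta(t,u)$ remains in $\overline{\p_\delta}$ for all $t$. Openness issues with $D^*_\delta$ defined through closures are handled since $D^*_\delta=\overline{\p_\delta}\cup\overline{(-\p)_\delta}$ is already closed.

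For (iv), take $u\in S$ with $E(u)\le c+\epsilon$, and suppose $\eta(\xi,u)\notin D^*_\delta$ and $E(\eta(\xi,u))>c-\epsilon$. By (iii), $\eta(t,u)\in S^*(\delta)$ for all $t\in[0,\xi]$, since entering $D^*_\delta$ would be permanent. Because $\|W\|\le1$, we have $\operatorname{dist}(\eta(t,u),S)\le t\le\xi$. Monotonicity gives $E(\eta(t,u))\in(c-\epsilon,c+\epsilon]$, so $\psi=1$ along the trajectory. Then \eqref{eqlowboundofV} yields $\frac{d}{dt}E\le-\|V\|\le-2\epsilon/\xi$, and integrating over $[0,\xi]$ gives $E(\eta(\xi,u))\le c+\epsilon-2\epsilon=c-\epsilon$, a contradiction.

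The main obstacle is the invariance step (iii): matching the exact form required by Lemma \ref{lemconvex}, which needs the field to be $-(u-G(u))$ up to a positive scalar, with the cut-off and the normalization. The technical point is that scaling by a nonnegative function preserves the sub-tangency limit, and this is what we would verify first.
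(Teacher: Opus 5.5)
Your proposal is correct and follows the paper's proof essentially step by step. Both take a cut-off times the normalized field $-V/\|V\|$, get monotonicity from Lemma \ref{lempseudogradient}, obtain invariance of $\overline{(\pm\p)_\delta}$ via Lemmas \ref{lemconvex}, \ref{lemproG4} and Proposition \ref{propbmmanifold}, and use the usual integration argument for (iv). The only real difference is how zeros of $V$ inside $D^*_\delta$ are handled: the paper multiplies by a second cut-off $y$ vanishing near the critical set $\K\subset D^*_\delta$. Your interpolated normalization works equally well, e.g.\ dividing by $\max\{\|V\|,2\epsilon/\xi\}$, which equals $\|V\|$ wherever $\psi>0$ in $S^*(\delta)$; simply switching to $-\psi V$ on the cones would be discontinuous, so use the interpolated version. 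Your symmetrization of $S$ and your continuation argument make explicit points the paper leaves implicit.
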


\begin{proof}
	First, we consider the case when $V(u) \neq 0$ for all $u \in E^{-1}([c-2\epsilon,c+2\epsilon],\Omega) \cap S(2\xi) \cap D^*_\delta$. Let us define
	\begin{align*}
		& U_1 := \bigl\{u \in S(2\xi): |E(u,\Omega)-c| \leq 2\epsilon \bigr\}, \\
		& U_2 := \bigl\{u \in S(\xi): |E(u,\Omega)-c| \leq \epsilon\bigr\}, \\
		& h(u):= \frac{\text{dist}(u,S_\mu(\Omega)\backslash U_1)}{\text{dist}(u,U_2)+\text{dist}(u,S_\mu(\Omega)\backslash U_1)},
	\end{align*}
	so that $h$ is locally Lipschitz continuous, $h = 1$ on $U_2$ and $h = 0$ on $S_\mu(\Omega)\backslash U_1$. Now we fix $\bar\la = \bar\la(\rho,\mu) \geq 0$ satisfying \eqref{eqcononbarla}. With $V$ given by \eqref{def:V}, we define the locally Lipschitz continuous vector field
	\begin{align*}
		W(u) :=
		\left\{
		\begin{aligned}
			& -h(u)\|V(u)\|^{-1}V(u), & u \in U_1, \\
			& 0, & u \notin U_1.
		\end{aligned}
		\right.
	\end{align*}
    With $W$ in hand we can construct a flow $\eta(t,\cdot) : S_\mu(\Omega) \to S_\mu(\Omega)$ as
    \begin{align} \label{eqflow}
    	\left\{
    	\begin{aligned}
    		& \frac{\partial}{\partial t}\eta(t,u) = W(\eta(t,u)), \\
    		& \eta(0,u) = u.
    	\end{aligned}
    	\right.
    \end{align}
    Note that $\|W\| \leq 1$ on $S_\mu(\Omega)$. For each $u \in S_\mu(\Omega)$, it is standard that the problem \eqref{eqflow} has a unique solution $\eta(t,u)$ for all $t \geq 0$ and $\eta \in C([0,\infty) \times S_\mu(\Omega), S_\mu(\Omega))$. Next we verify that $\eta$ satisfies the desired conditions. Observe that $V(-u) = -V(u)$, and $\eta(t,-u) = -\eta(t,u)$ follows. If $u \notin U_1$, then $W(u) = 0$ and $\eta(t,u) = u$ for all $t \geq 0$. Thus (i) holds. Moreover, we get
    \begin{align*}
    	\frac{d}{dt}E(\eta(t,u),\Omega) & = \langle \nabla E|_{S_\mu(\Omega)}(\eta(t,u),\Omega), \frac{\partial}{\partial t}\eta(t,u) \rangle \\
    	& = -h(\eta(t,u))\|V(\eta(t,u))\|^{-1}\langle \nabla E|_{S_\mu(\Omega)}(\eta(t,u),\Omega), V(\eta(t,u)) \rangle \\
    	& \leq 0,
    \end{align*}
	where we have used Lemma \ref{lempseudogradient}, and this indicates that $E(\eta(\cdot,u),\Omega)$ is nonincreasing for any $u \in S_\mu(\Omega)$.

\vskip0.1in

	Next, we check (iii). By (i), it suffices to consider $u \in U_1$. For such $u$, using (i) again we have $\eta(t,u) \in U_1$,  and so $\eta(t,u) \in \B_\rho^\mu$ for all $t \geq 0$. We will use the framework developed in \cite[Section 4]{JS} to complete the proof of (iii). For convenience of readers, we have reviewed the general setting and results of Section \cite[Section 4]{JS} at the end of Section \ref{secpre}. In view of Lemma \ref{lemproG4}, using Lemma \ref{lemconvex} with $E = H_0^1(\Omega), H = L^2(\Omega), S_\mu = S_\mu(\Omega), \tilde B = \overline{(\pm \p)_\delta}$ and $B_\mu = \overline{(\pm \p)_\delta} \cap S_\mu(\Omega)$, we obtain
	\begin{align*}
		\lim_{s \searrow 0}s^{-1}\text{dist}(u - sV(u), \overline{(\pm \p)_\delta} \cap S_\mu(\Omega)) = 0, \quad \forall u \in \overline{(\pm \p)_\delta} \cap \B_\rho^\mu,
	\end{align*}
    and so
    \begin{align*}
    	\lim_{s \searrow 0}s^{-1}\text{dist}(u + sW(u), \overline{(\pm \p)_\delta} \cap S_\mu(\Omega)) = 0, \quad \forall u \in \overline{(\pm \p)_\delta} \cap \B_\rho^\mu.
    \end{align*}
    Now, for $u \in U_1 \cap \overline{(\pm\p)_\delta}$, recalling that $\eta(t,u) \in \B_\rho^\mu$ for all $t\geq 0$, we apply Proposition \ref{propbmmanifold} and conclude that $\eta(t,u) \in \overline{(\pm\p)_\delta}$, which completes the proof of (iii).

    Finally, we prove (iv). If $u \in S$, we claim that $\eta(\xi,u) \in S(\xi)$. In fact,
    \begin{align*}
    	\|\eta(t,u) - u\| = \|\int_0^t W(\eta(\tau,u))d\tau\| \leq t,
    \end{align*}
    thus the claim holds true. For $u \in S$ with $E(u,\Omega) \leq c + \epsilon$, if there exists $t_0 \in [0,\xi]$ such that $\eta(t_0,u) \in D_\delta^*$ or $E(\eta(t_0,u),\Omega) \leq c - \epsilon$, by (ii) and (iii), we get that $\eta(\xi,u) \in D_\delta^*$ or $E(\eta(\xi,u),\Omega) \leq c - \epsilon$. For this reason, arguing by contradiction we can assume that $\eta(t,u) \in U_2$, $E(\eta(t,u)) > c - \epsilon$, and $\eta(t,u) \notin D_\delta^*$ for all $t \in [0,\xi]$. Then we get a self-contradictory inequality
    \begin{align*}
    	c - \epsilon < E(\eta(\xi,u),\Omega) & = E(u,\Omega) + \int_0^\xi \langle \nabla E|_{S_\mu(\Omega)}(\eta(t,u),\Omega), \frac{\partial}{\partial t}\eta(t,u) \rangle dt \\
    	& = E(u,\Omega) - \int_0^\xi \|V(\eta(t,u))\|^{-1}\langle \nabla E|_{S_\mu(\Omega)}(\eta(t,u),\Omega), V(\eta(t,u)) \rangle dt \\
    	& \leq c + \epsilon - \xi \frac{2\epsilon}{\xi} = c - \epsilon.
    \end{align*}

    When $\K := \{u \in U_1 \cap D_\delta^*: V(u) = 0\}$ is not empty, by \eqref{eqlowboundofV} and the continuity of $V$, there exist neighborhoods $\mathcal{N}_2 \supset \mathcal{N}_1 \supset \K$ in $S_\mu(\Omega)$, such that $\mathcal{N}_2 \cap U_1 \subset D_\delta^*$. In this case, the definition of $W$ is modified as
    \begin{align*}
    	W(u) :=
    	\left\{
    	\begin{aligned}
    		& -h(u)y(u)\|V(u)\|^{-1}V(u), & u \in U_1 \text{ and } u \notin \mathcal{N}_1, \\
    		& 0, & u \notin U_1 \ \text{ or } \ u \in \mathcal{N}_1,
    	\end{aligned}
    	\right.
    \end{align*}
    where $y : S_\mu(\Omega) \to [0,1]$ is a locally Lipschitz continuous map such that
    \begin{align*}
    	y(u) :=
    	\left\{
    	\begin{aligned}
    		& 1, & u \notin \mathcal{N}_2, \\
    		& 0, & u \in \mathcal{N}_1.
    	\end{aligned}
    	\right.
    \end{align*}
    Then following the proof of the case $\K = \emptyset$ step by step we can complete the proof.
\end{proof}

\section{A tool to localize bounded Palais-Smale sequences with Morse type information for constrained functionals} \label{sectool}

 The Theorem 2.1 of \cite{BCJN} and its symmetric version Theorem 4.2  of \cite{BCJN} provided a powerful tool to obtain bounded Palais-Smale sequences with Morse type information for some constrained functional having minimax geometry. However, one does not know where the Palais-Smale sequence is localized at. In this section, we will introduce certain conditions for some subsets, which can be checked by results in Section \ref{secinv} in our applications, and prove a theorem to localize the bounded Palais-Smale sequences with Morse type information. This result is an enhanced version of \cite[Theorem 4.2]{BCJN}, which can be seen as a special case with the subsets that we choose being empty. A non-symmetric version can be provided similarly, whose details are not presented since we address problems with a symmetric setting in this paper. As an application, this abstract tool will be used to search for sign-changing critical points with estimates of Morse index for $L^2$-constrained functionals.

\vskip0.1in

We recall the general setting used in \cite{BCJN}. Let $\left(E,\langle \cdot, \cdot \rangle\right)$ and $\left(H,(\cdot,\cdot)\right)$ be two \emph{infinite-dimensional} Hilbert spaces and assume that $E\hookrightarrow H \hookrightarrow E'$,
with continuous injections.  For simplicity, assume that the
continuous injection $E\hookrightarrow H$ has norm at most $1$ and
identify $E$ with its image in $H$. We also introduce
\begin{align*}
	\left\{
	\begin{aligned}
		\|u\|^2 & = \langle u,u \rangle, \\
		|u|^2 \ & = (u,u), \\
	\end{aligned}
	\right.
	\quad \quad u \in E,
\end{align*}
and define for $\mu>0$:
\begin{equation*}
	S_\mu = \bigl\{ u \in E: |u|^2=\mu \bigr\}.
\end{equation*}
Clearly, $S_{\mu}$ is a smooth submanifold of $E$ of codimension
$1$. Furthermore its tangent space at a given point $u \in S_{\mu}$
can be considered as the closed subspace of codimension $1$ of $E$
given by:
\begin{equation*}
	T_u S_{\mu} = \bigl\{v \in E : (u,v) =0 \bigr\}.
\end{equation*}
For $u \in E$ we denote with
\begin{align*}
	B(u,R) = \bigl\{v \in E: \|v - u\| < R\bigr\}, \quad B(S_\mu;u,R) = S_\mu \cap B(u,R).
\end{align*}
In the following definition, we denote  $\|\cdot\|_*$ and
$\|\cdot\|_{**}$, respectively, the operator norm of $\mathscr{L}(E,\R)$ and of
$\mathscr{L}(E,\mathscr{L}(E,\R))$.

\begin{definition}\label{Holder-continuous}
	Let $\phi : E \rightarrow \mathbb{R}$ be a $C^2$-functional on $E$
	and $\alpha \in (0,1]$. We say that $\phi'$ and $\phi''$
	are \emph{$\alpha$-H\"older continuous on bounded sets} if for any
	$R>0$, one can find $M=M(R)>0$ such that, for any
	$u_1,u_2\in B(0,R)$:
	\begin{equation} \label{eqalpha}
		\|\phi'(u_1)-\phi'(u_2)\|_* \leq M \|u_1-u_2\|^{\alpha}, \quad
		\|\phi''(u_1)-\phi''(u_2)\|_{**} \leq M\|u_1-u_2\|^\alpha.
	\end{equation}
\end{definition}

Let $P^* \subset P^{**}$ be open sets in $S_\mu$ and satisfy that $-P^{*} = P^*$ and $-P^{**} = P^{**}$. To state our location result, Theorem \ref{thmuepsym}, we need to introduce two assumptions.

\begin{itemize}
	\item[(A1)] For a $C^1$-functional $\phi$ satisfying that $\phi|_{S_\mu}$ is even, and $R < +\infty$, if there exists a pseudogradient $V$ for $\phi$ constrained to $S_\mu$ satisfying
	\begin{align} \label{eqC0}
		C_0\|\phi'|_{S_\mu}\| \leq \|V(u)\| \leq \|\phi'|_{S_\mu}\|, \forall u \in B(S_\mu;0,R), \quad \text{for some } C_0 \in (0,1],
	\end{align}
	taking $S \subset S_\mu \backslash \overline{P^{**}}$, $c \in \R$, $\ep, \xi > 0$, such that $S(2\xi) \subset B(S_\mu;0,R) \cap (S_\mu \backslash P^*)$ and
	\begin{align*}
		\|V(u)\| \geq 2\epsilon /\xi, \quad \forall u \in \phi|_{S_\mu}^{-1}([c-2\epsilon,c+2\epsilon]) \cap S(2\xi) \cap (S_\mu \backslash \overline{P^{**}}),
	\end{align*}
	where
	\begin{align*}
		& S(2\xi) := \bigl\{u \in S_\mu: \text{dist}(u,S) \leq 2\xi \bigr\}, \\
		& \phi|_{S_\mu}^{-1}([c-2\epsilon,c+2\epsilon]) := \bigl\{u \in S_\mu: c-2\epsilon \leq \phi(u) \leq c+2\epsilon \bigr\},
	\end{align*}
	then there exists $\eta: C([0,\xi] \times S_\mu, S_\mu)$ such that $\eta(t,-u) = -\eta(t,u)$ and
	\begin{itemize}
		\item[(i)] $\eta(t,u) = u$ for $t \in [0,\xi]$ if $u \notin \phi|_{S_\mu}^{-1}([c-2\epsilon,c+2\epsilon]) \cap S(2\xi)$,
		\item[(ii)] $\phi(\eta(\cdot,u))$ is nonincreasing, $\forall u \in S_\mu$,
		\item[(iii)] $\eta(t,u) \in \overline{P^{**}}$ for $t \in [0,\xi]$ if $u \in \overline{P^{**}}$,
		\item[(iv)] $\forall u \in S$ with $\phi(u) \leq c + \epsilon$, either $\eta(\xi,u) \in \overline{P^{**}}$ or $\phi(\eta(\xi,u)) \leq c -\epsilon$.
	\end{itemize}
	\item[(A2)] For a $C^1$-functional $\phi$ satisfying that $\phi|_{S_\mu}$ is even, and $R < +\infty$, assume that $V$, given in (A1), exists, then
	\begin{align*}
		\inf_{u \in B(S_\mu;0,R) \cap \overline{P^{**}} \backslash P^*}\|V(u)\| > 0.
	\end{align*}
\end{itemize}

\begin{definition}
	We consider the action of $\mathbb{Z}_2$ on $\R^d$ determined by an \emph{isometric} involution of $\R^d$ with its usual distance, that we denote by $\sigma$. For any subset $D \subset \R^d$ we denote by:
	\begin{align*}
		\sigma (D) = \bigl\{\sigma(x): x \in D\bigr\}.
	\end{align*}
	A subset $D$ is invariant or stable if $\sigma (D) = D$, in this case, a continuous map $f : D \to S_\mu$ is said to be equivariant if:
	\begin{align*}
		f \circ \sigma = -f.
	\end{align*}
\end{definition}

\begin{theorem} \label{thmuepsym}
	Let $\phi$ be a $C^2$-functional on $E$, such that $\phi'$ and $\phi''$ are $\alpha$-H\"older continuous on bounded sets for some $\alpha \in (0,1]$, and assume that $\phi|_{S_\mu}$ is even; let $R > 1$ and $0 < \alpha_1 \leq \frac{\alpha}{2(\alpha + 2)} < 1$. Moreover, suppose assumptions (A1) and (A2) hold true with respect to $\phi$ and $R$.
	
	Assume that $D$ is a compact subset of $\R^d$, and $c \in \R$ is a number satisfying
	\begin{align} \label{eqsup>c}
		c \le \sup\bigl\{\phi(u): u \in \eta(\xi,g(D)), u \notin \overline{P^{**}} \bigr\}, \quad \text{ for any equivariant } g \in C(D,S_\mu).
	\end{align}
	Then, for $\ep >0$ and for any equivariant $f \in C(D,S_\mu)$ with  $\sup_{u \in f(D) \backslash (f(D) \cap \overline{P^{**}})}\phi(u)\leq c + \ep$, satisfying
	\begin{align} \label{eqboundedballsym}
		K := \bigl\{u \in f(D) \backslash (f(D) \cap P^{**}): \phi(u) \geq c -\ep \bigr\} \subset B(0,R-1),
	\end{align}
	assuming that $\ep > 0$ is sufficiently small there exists $u_\ep \in S_\mu$ such that
	\begin{itemize}
		\item[(1)] $c - \ep \leq \phi(u_\ep) \leq c + \ep$;
		\item[(2)] $\|V(u_\ep)\| \leq 3\ep^{\alpha_1}$ where $V$ is the pseudogradient for $\phi$ constrained to $S_\mu$ given in (A1);
		\item[(3)] $u_\ep \in f(D) \backslash (f(D) \cap \overline{P^{**}})$;
		\item[(4)] If $D^2\phi(u_\ep)[w, w] < -\ep^{\alpha_1} \|w\|^2$ for all $w \neq 0$ in a subspace $W$ of $T_{u_\ep}S_\mu$, then $\dim W \leq d$.
	\end{itemize}
	By symmetry this also holds for $-u_\ep$.
\end{theorem}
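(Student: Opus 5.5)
The plan is to follow the proof of \cite[Theorem 4.2]{BCJN}, which has two parts: a deformation argument that produces an almost-critical point, and a Morse-type argument that selects among such points one with controlled second derivative. The flow $\eta$ coming from (A1) replaces the standard pseudogradient flow, and (A2) keeps the points we find away from $\overline{P^{**}}$.

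\textbf{Step 1: a small pseudogradient on the set $K$.} Fix $\epsilon$ small and an equivariant $f$ as in the statement. Set $S := f(D)\backslash \overline{P^{**}}$ (suitably intersected with $\{\phi \geq c-\epsilon\}$) and choose $\xi := \epsilon^{1-\alpha_1}$-type scales, as in \cite{BCJN}.

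Suppose, arguing by contradiction, that $\|V(u)\| \geq 2\epsilon/\xi$ on $\phi|_{S_\mu}^{-1}([c-2\epsilon,c+2\epsilon]) \cap S(2\xi) \cap (S_\mu\backslash\overline{P^{**}})$. We check $S(2\xi) \subset B(S_\mu;0,R)\cap(S_\mu\backslash P^*)$:
\begin{itemize}
\item the ball inclusion holds because $K \subset B(0,R-1)$ and $2\xi<1$;
\item avoiding $P^*$ holds for small $\xi$, because by (A2) and \eqref{eqC0} the region $\overline{P^{**}}\backslash P^*$ is a positive distance layer in which $\|V\|$ is bounded below.
\end{itemize}
Then (A1) provides an odd deformation $\eta$. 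By (iii) and (iv), every point of $\eta(\xi, f(D))$ either lies in $\overline{P^{**}}$ or has $\phi \leq c-\epsilon$. Points of $f(D)$ with $\phi<c-2\epsilon$ are untouched and stay below $c-\epsilon$. Hence $g := \eta(\xi,f(\cdot))$ is equivariant and violates \eqref{eqsup>c}.

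So there is a set of almost-critical points with $\|V\|\le 2\epsilon/\xi$, close to $f(D)\backslash\overline{P^{**}}$, with energy in $[c-2\epsilon, c+2\epsilon]$. Points of the form $\eta(t,\cdot)$ stay inside $f(D)$ up to the trivial part, which gives (1), (2) and (3).

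\textbf{Step 2: the Morse-type conclusion (4).} I follow \cite{BCJN}. Argue by contradiction and assume that at every almost-critical point $u$ near $K\backslash\overline{P^{**}}$ there is a subspace $W_u$ of dimension $d+1$ on which $D^2\phi(u) < -\epsilon^{\alpha_1}$. By the $\alpha$-H\"older continuity of $\phi''$, this negative definiteness persists on a ball of radius of order $\epsilon^{\alpha_1/\alpha}$. Using these local negative directions, one builds an equivariant local modification of $f$ on $D$ that lowers $\phi$ by an amount exceeding $2\epsilon$ near these points. This is possible because a $(d+1)$-dimensional descent direction cannot be obstructed by the $d$-dimensional parameter set $D$: a Borsuk–Ulam/dimension argument, made equivariant by pairing $u$ and $-u$ using evenness.

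The modification is done only away from $\overline{P^{**}}$. We patch it with the flow $\eta$ of Step 1, using (A1)(iii) so that points of $\overline{P^{**}}$ stay there, and again contradict \eqref{eqsup>c}. The constraint $\alpha_1\le \alpha/(2(\alpha+2))$ is what balances the radius of these balls against $\xi$ and $\epsilon$.

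\textbf{Main obstacle.} The delicate point is the compatibility between the local second-order deformation and the location sets. The second-order deformation of \cite{BCJN} moves points by distances of order $\epsilon^{\alpha_1/\alpha}$, and such a move could cross into $P^{*}$, or leave $\overline{P^{**}}$ in the wrong direction.

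My first attempt is to use (A2) together with \eqref{eqC0}: near-critical points, where $\|V\|\le 3\epsilon^{\alpha_1}$, are at distance $\geq c_0>0$ from $\overline{P^{**}}\backslash P^*$, independent of $\epsilon$. The local deformations therefore never touch that layer for small $\epsilon$. Outside the modification regions only $\eta$ acts, and $\eta$ preserves $\overline{P^{**}}$. This confines both deformations correctly, and the rest follows \cite{BCJN} verbatim. The final statement for $-u_\epsilon$ follows from evenness of $\phi$ and oddness of $V$.
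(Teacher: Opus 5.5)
Your overall architecture is the paper's: a second-order deformation as in Lemma \ref{lemflowdownsym}, combined with the flow from (A1), with (A2) keeping things off the layer $\overline{P^{**}}\setminus P^*$. However, the contradiction is set up in the wrong place, and as written it does not yield the statement.

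\textbf{Gap in Step 1: the point is not on $f(D)$.} You negate a lower bound for $\|V\|$ on the neighbourhood $S(2\xi)$. Refuting it only produces a point \emph{within $2\xi$ of} $f(D)$ with $\phi\in[c-2\epsilon,c+2\epsilon]$. The claim that points $\eta(t,\cdot)$ ``stay inside $f(D)$'' is false, since the flow moves points off $f(D)$. So neither (3) nor the window $[c-\epsilon,c+\epsilon]$ in (1) is obtained.

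The missing idea is to pose the contradiction hypothesis on the image itself. Let $K=\{x\in D: f(x)\notin P^{**},\ \phi(f(x))\ge c-\epsilon\}$, and suppose that for every $x\in K$ either $\|V(f(x))\|>3\epsilon^{\alpha_1}$, or $D^2\phi(f(x))[w,w]<-\epsilon^{\alpha_1}\|w\|^2$ on some $(d+1)$-dimensional $W_x\subset T_{f(x)}S_\mu$.
\begin{itemize}
\item Take $\delta=\delta(\epsilon)$ from \eqref{eqdeltaep}. The $\alpha$-H\"older continuity of $\phi'$, together with \eqref{eqC0}, upgrades the first alternative to $\|V\|>C_0\epsilon^{\alpha_1}$ on the whole ball $B(S_\mu;f(x),2\delta)$.
\item This is exactly the bound (A1) needs with $\xi=\delta/2$, since $C_0\epsilon^{\alpha_1}\ge 4\epsilon/\delta$ for small $\epsilon$.
\item Negating the hypothesis gives some $x\in K$ at which (2) and (4) hold for $u_\epsilon=f(x)$.
\item Then (1) and (3) follow from $x\in K$, from $\sup_{f(D)\setminus\overline{P^{**}}}\phi\le c+\epsilon$, and from (A2). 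Once $3\epsilon^{\alpha_1}$ is below the infimum in (A2), it rules out $f(x)\in\overline{P^{**}}\setminus P^{**}$.
\end{itemize}

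\textbf{Gap in Step 2: the patching must be redone.} The flow of Step 1 does not exist, because its hypothesis is exactly what Step 1 refuted. Also, Lemma \ref{lemflowdownsym} requires the negative subspace at $f(y)$ for $y$ in a compact invariant $K_2\subset D$, not at almost-critical points near $f(D)$. The paper proceeds as follows.
\begin{itemize}
\item Split $K=\overline{T}_1\cup\overline{T}_2$, with $T_1=\{x\in K:\ \|V(u)\|>C_0\epsilon^{\alpha_1}\ \forall u\in B(S_\mu;f(x),2\delta)\}$ and $T_2=K\setminus T_1$.
\item Prove \eqref{eqT2faraway} from (A2) and compactness. Your uniform-distance argument via H\"older continuity and (A2) would also work here.
\item Apply Lemma \ref{lemflowdownsym} on $N_b(\overline{T}_2)$, chosen so that $f(N_b(\overline{T}_2)\cap D)\cap\overline{P^{**}}=\emptyset$. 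This gives an equivariant $g$ with $g=f$ off $N_b(\overline{T}_2)$ and $\|g-f\|\le\delta/2$.
\item The same lemma gives an energy drop $\epsilon^{\alpha_1}\delta^2/(864N_d^2)>2\epsilon$ on $\overline{T}_2$; this is where $\alpha_1<\alpha/(\alpha+2)$ is used.
\item The drop forces $\tilde K=\{x\in D: g(x)\notin P^{**},\ c-\epsilon\le\phi(g(x))\le c+\epsilon\}\subset\overline{T}_1$.
\item Only then is (A1) applied, with $S=g(\tilde K)$ and $\xi=\delta/2$, contradicting \eqref{eqsup>c} for $g$.
\end{itemize}

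\textbf{A smaller point.} Your justification that $S(2\xi)$ avoids $P^*$ does not follow from (A2). (A2) bounds $\|V\|$ on the layer but gives no lower bound on the layer's width, and $S$ may approach $\partial P^{**}$. This needs its own argument. In the application it holds because $P^*$ lies at distance at least half the cone parameter from $S_\mu\setminus P^{**}$.
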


Before proving Theorem \ref{thmuepsym}, we recall a result given in \cite{BCJN}.

\begin{lemma}[Lemma 4.5 in \cite{BCJN}] \label{lemflowdownsym}
	Let $\phi : E \to \R$ be a $C^2$-functional such that $\phi'$ and $\phi''$ are $\alpha$-H\"older continuous ($\alpha \in (0, 1]$) on bounded sets and $\phi|_{S_\mu}$ is even; let $f$ be a continuous equivariant map from a compact subset $D$ of $\R^d$ into $S_\mu$. Suppose $K_2$ is an invariant, compact subset of $D$ with the following properties:
	\begin{itemize}
		\item There exists $R > 1$ such that $f(K_2) \subset B(0, R - 1) \cap S_\mu$.
		\item There exists a constant $\beta > 0$ such that for all $y \in K_2$, there is a subspace $W_y$, of $T_{f(y)}S_\mu$ with $\dim W_y \geq d + 1$ so that
		\begin{align}
			D^2\phi(f(y))[w,w] < -\beta \|w\|^2, \quad \forall w \in W_y \backslash \{0\}.
		\end{align}
	\end{itemize}
	Then for any $0 < \delta \leq \delta_4$ where $\delta_4$ depends on $\mu, \beta, R, d$, ($\delta_4 = \min\{\delta_3,2\sqrt{2\mu}\}$ and $\delta_3$ is introduced in (2.18) of \cite{BCJN}) and $b > 0$ there is a continuous equivariant map $\hat f : D \to S_\mu$ such that:
	\begin{itemize}
		\item[(i)] $\hat f(x) = f(x)$ for $x \in D \backslash N_b(K_2)$ where $N_b(K_2)$ is the $b$-neighbourhood of $K_2$ in $\R^d$;
		\item[(ii)] $\phi(\hat f(x)) \leq \phi(f(x))$ for all $x \in D$;
		\item[(iii)] if $x \in K_2$, then $\hat f(x) < f(x) - \frac{\beta \delta^2}{864N_d^2}$, where $N_d$ is an integer depending on $d$;
		\item[(iv)] $\|\hat f(x) - f(x)\| \leq \frac{\delta}2$ for all $x \in D$.
	\end{itemize}
\end{lemma}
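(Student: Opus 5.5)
The statement just above is Lemma \ref{lemflowdownsym}, i.e.\ Lemma 4.5 of \cite{BCJN}, quoted in the paper without proof. The plan is to rebuild the \cite{BCJN} construction: exploit the uniform negative-definite directions of $D^2\phi$ along $f(K_2)$ to push $f$ downward near $K_2$ by a small equivariant deformation, and use cutoffs to leave $f$ unchanged away from $K_2$.

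\textbf{Step 1: uniform local negative directions.} By $\alpha$-H\"older continuity of $\phi''$ on $B(0,R)$ and compactness of $K_2$, we get the following. There is $r>0$ such that, for every $y\in K_2$ and every $v\in S_\mu$ with $\|v-f(y)\|\le r$, the space $W_y$, transported to $T_vS_\mu$ by the projection $w\mapsto w-\frac{(w,v)}{\mu}v$, still satisfies $D^2\phi(v)[w,w]<-\frac{\beta}{2}\|w\|^2$. This is the same perturbation argument as in Lemma \ref{lemmorse}.

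Also by compactness, we cover $K_2$ by finitely many balls $B(y_j,b_j)$ centered at $y_j \in K_2$, with the $b_j$ small. The number of balls needed in a Lebesgue-covering-dimension sense is controlled by $d$, which is the role of $N_d$. We choose the cover $\sigma$-invariant: pair $y_j$ with $\sigma(y_j)$, and use the fact that $W_{\sigma y}$ can be taken equal to $W_y$ since $f(\sigma y)=-f(y)$ and $D^2\phi$ is even.

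\textbf{Step 2: the deformation and the topological obstacle.} For $x$ near $y_j$ we want to move $f(x)$ along a direction $w\in W_{y_j}$ with $\|w\|\le\delta/2$, and then renormalize onto $S_\mu$. By Taylor expansion, using $\phi'$ and the negative second variation, the energy decreases by at least $c\beta\|w\|^2$ provided the first-order term $\phi'(f(x))[w]$ is $\le 0$.

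The difficulty is choosing $w$ continuously and equivariantly. Since $\dim W_y\ge d+1$ while $D\subset\R^d$, a Borsuk--Ulam / dimension-counting argument shows the following: the unit sphere of $W_y$, which has dimension $\ge d$, always contains a direction with $\phi'(f(x))[w]\le0$ (replace $w$ by $-w$ if needed). More importantly, a continuous selection $x\mapsto w(x)$ exists on a $d$-dimensional set. \textbf{This is the main obstacle.} I would follow \cite{BCJN}: use a subordinate partition of unity $\{\psi_j\}$ with multiplicity at most $N_d$, and on each piece use a fixed direction of the form $t\,w_j$ with $t\in[-\delta/2,\delta/2]$. The available choice of sign, together with the excess dimension $\ge d+1$, allows gluing via a nonvanishing-section argument. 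Losses from the overlaps account for the constant $864N_d^2$.

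\textbf{Step 3: define $\hat f$ and verify the properties.} Set
\[
\hat f(x)=\sqrt{\mu}\,\frac{f(x)+\chi(x)w(x)}{|f(x)+\chi(x)w(x)|},
\]
where $\chi$ is an equivariant cutoff equal to $1$ on $K_2$ and vanishing outside $N_b(K_2)$, and $w(\sigma x)=-w(x)$.
\begin{itemize}
\item (i) holds by the support of $\chi$.
\item (iv) holds because $\|w\|\le\delta/2$, and the normalization changes little since $\delta\le 2\sqrt{2\mu}$.
\item (ii) follows from the first-order sign choice together with the second-order estimate, provided $\delta\le\delta_3$ (depending on $M(R)$, $\alpha$, $\beta$).
\item (iii) follows on $K_2$, where $\chi=1$, from the quantitative decrease $\gtrsim\beta\delta^2/N_d^2$.
\end{itemize}
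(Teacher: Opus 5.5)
\textbf{There is a genuine gap: the step where $\dim W_y\ge d+1$ must be used is never carried out, and the mechanisms you sketch for it fail.} The paper gives no proof of this lemma; it imports it verbatim from \cite{BCJN}. So writing ``follow \cite{BCJN}'' at the decisive point is circular. Your Step 1 (uniform persistence of the negative directions, an invariant finite cover of multiplicity $\le N_d$) is fine. Step 2, however, never shows where the hypothesis enters, and it must enter: already in the unconstrained, non-symmetric analogue with $\dim W_y=d=1$, a path crossing a nondegenerate saddle along its single negative direction cannot be lowered there. One fixed line $t\,w_j$ per patch plus a sign is a one-parameter choice in which the hypothesis plays no role.

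Your selection claim is also false in general. You assert that $x\mapsto w(x)$ can be chosen continuously with $\phi'(f(x))[w(x)]\le0$ and $w\ne0$ on $K_2$. But the hypotheses say nothing about $\ell(x):=\phi'(f(x))|_{W}$. It may vanish at some $x_0\in K_2$ while $\ell(x)/|\ell(x)|$ accumulates on the whole unit sphere of $W$ as $x\to x_0$. Choosing $x_n\to x_0$ with $\ell(x_n)/|\ell(x_n)|\to w(x_0)/\|w(x_0)\|$ then gives $\phi'(f(x_n))[w(x_n)]>0$. Hence (ii) cannot rest on an exact first-order sign.

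\textbf{What is needed is a quantitative admissibility condition.} Take a unit direction $v$ and step length $s(x)=\psi_j(x)\delta/2$. The negative second variation bounds the change of $\phi$ by $s\,\ell(x)[v]-\frac{\beta}{4}s^2$, up to harmless constants. So every direction is admissible where $|\ell(x)|\le\frac{\beta}{8}s(x)$. Prescribe $v=-\ell/|\ell|$ on the relatively closed set $\{\psi_j>0,\ |\ell|\ge\frac{\beta}{8}s\}$, and extend $v$ to all of $\{\psi_j>0\}$ as a map into the unit sphere $S^{m-1}$ of $W_{y_j}$, where $m=\dim W_{y_j}$. The extension exists because $\{\psi_j>0\}\subset\R^d$ has covering dimension $\le d\le m-1$, and maps from closed subsets of such a space into $S^{m-1}$ always extend. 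This is exactly where $\dim W_y\ge d+1$ is used. It gives a decrease of at least $\frac{\beta}{8}s(x)^2$ at every point, and the displacement $s(x)v(x)$ tends to $0$ where $\psi_j\to0$.

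\textbf{Your gluing in Step 3 is also flawed.} A single displacement assembled from $\psi_j(x)t_j(x)w_j$, with $w_j\in W_{y_j}$, mixes vectors from different subspaces. Negative definiteness of $D^2\phi(f(x))$ on each $W_{y_j}$ separately says nothing about $D^2\phi(f(x))[w_j+w_{j'},w_j+w_{j'}]$, since the cross term is uncontrolled. The sum can even nearly cancel. So neither the second-order estimate behind (ii) nor the lower bound behind (iii) survives, and ``losses from the overlaps'' does not address this.

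A scheme that works is to deform successively, one invariant pair $U_j,\sigma(U_j)$ at a time. These are disjoint for small patches, because $\sigma$ acts freely: $f(\sigma x)=-f(x)\neq f(x)$. At step $j$:
\begin{itemize}
\item recompute $\ell$ at the current map $f_{j-1}(x)$;
\item project $W_{y_j}$ onto $T_{f_{j-1}(x)}S_\mu$, which stays negative definite because $f_{j-1}(x)$ remains within your radius $r$ of $f(y_j)$;
\item move along $\sqrt\mu\,(f_{j-1}+sv)/|f_{j-1}+sv|$, with $v$ built as above on $U_j$;
\item set $v(\sigma x)=-v(x)$ on $\sigma(U_j)$. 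This is admissible because evenness of $\phi|_{S_\mu}$ gives $d(\phi|_{S_\mu})(-u)[-w]=d(\phi|_{S_\mu})(u)[w]$ and $D^2\phi(-u)[w,w]=D^2\phi(u)[w,w]$ for $w\in T_uS_\mu$.
\end{itemize}
Each step is non-increasing in $\phi$. The total displacement is at most $\sum_j\psi_j(x)\,\delta/2\le\delta/2$, up to the renormalization error; that error is of order $\frac{R}{\mu}|w|^2$ per step, so the steps must be taken slightly shorter to get (iv) exactly. On $K_2$ some $\psi_j(x)\ge1/N_d$, which gives a decrease $\gtrsim\beta\delta^2/N_d^2$; this is the source of the factor $N_d^{-2}$ in (iii).
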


Now we are prepared to prove Theorem \ref{thmuepsym}.

\begin{proof}[Proof of Theorem \ref{thmuepsym}]
	Let $\delta_3 > 0$ satisfy the equality (2.18) in \cite{BCJN} and $\delta_4 = \min\{\delta_3,2\sqrt{2\mu}\}$. Observe that, we can assume that $\alpha \leq \frac12$. Then, similar to the proof of \cite[Theorem 2.1]{BCJN}, we take a constant $\ga > 0$ such that $\ga \beta^{\frac1\alpha} \leq \delta_4$ for any $\beta > 0$ small enough. Now let
	\begin{align} \label{eqdeltaep}
		\delta := \delta(\ep) = \frac12 \min\left\{ \left( \frac{\ep^{\alpha_1}}{M}\right) ^{\frac1\alpha}, \ga \ep^{\frac{\alpha_1}{\alpha}}\right\}
	\end{align}
	where $M = M(R)$ is given in Definition \ref{Holder-continuous}. Observe that $\delta \leq \delta_4$ when $\beta > 0$ is given by $\beta = \ep^{\alpha_1}$ with $\ep > 0$ small enough.
	
	Recall that $P^{**}$ is open and we consider the closed set
	\begin{align}
		K = \bigl\{x \in D: f(x) \notin P^{**}, \phi(f(x)) \geq c - \ep\bigr\}.
	\end{align}
	It is clear that $K$ is invariant since $\phi$ is even. Since $D$ is a compact subset of $\R^d$, $K$ is also compact.
	
	At this point, arguing by contradiction, we suppose that for all $x \in K$, either $\|V(f(x))\| > 3\ep^{\alpha_1}$ or there is a subspace $W_x$ of $T_{f(x)}S_\mu$ with $\dim W_x \geq d+1$ such that for all $w \in W_x \backslash \{0\}$, we have $D^2 \phi(f(x))[w,w] < -\ep^{\alpha_1}\|w\|^2$.
	
	In view of the assumption \eqref{eqalpha} on $\phi'$ and of the definition of $\delta > 0$ given in \eqref{eqdeltaep}, setting $\hat \delta = 2\delta$ we deduce that for all $x \in K$ such that $\|\phi'|_{S_\mu}(f(x))\| > 3\ep^{\alpha_1}$ we have $\|\phi'|_{S_\mu}(u)\| > \ep^{\alpha_1}$ for all $u \in B(S_\mu; f(x), \hat \delta)$. Recalling \eqref{eqC0} and \eqref{eqboundedballsym}, for all $x \in K$ such that $\|V(f(x))\| > 3\ep^{\alpha_1}$ we have $\|V(u)\| > C_0\ep^{\alpha_1}$ for all $u \in B(S_\mu; f(x), \hat \delta)$, where $C_0 \in (0,1]$ is given in \eqref{eqC0}. Let
	\begin{align*}
		T_1 = \bigl\{x \in K: \|V(u)\| > C_0\ep^{\alpha_1} \text{ for all } u \in B(S_\mu; f(x), \hat \delta)\bigr\}
	\end{align*}
	and $T_2 = K\backslash T_1$. Note that $\overline{T}_1, \overline{T}_2$ are compact and invariant, and $K = \overline{T}_1 \cup \overline{T}_2$. By (A2) we have
	\begin{align*}
		\inf_{u \in B(S_\mu;0,R) \cap \overline{P^{**}} \backslash P^*}\|V(u)\| > 0.
	\end{align*}
	Taking $\ep > 0$ small enough such that
	\begin{align*}
		3\ep^{\alpha_1} < \inf_{u \in B(S_\mu;0,R) \cap \overline{P^{**}} \backslash P^*}\|V(u)\|.
	\end{align*}
	Next, we claim that
	\begin{align} \label{eqT2faraway}
		\text{dist}(f(\overline{T}_2),B(S_\mu;0,R) \cap \overline{P^{**}} \backslash P^*) > 0.
	\end{align}
	By negation, if \eqref{eqT2faraway} does not hold true, there exist $\{x_n\} \subset \overline{T}_2$ and $\{v_n\} \subset B(S_\mu;0,R) \cap \overline{P^{**}} \backslash P^*)$ such that
	\begin{align*}
		\|f(x_n) - v_n\| \to 0 \quad \text{as } n \to \infty.
	\end{align*}
	Since $\overline{T}_2$ is compact, there exists $x \in \overline{T}_2$ such that $\lim_{n \to \infty}x_n = x$, and so $f(x_n) \to f(x)$. Moreover, $v_n \to f(x)$ and this shows that $f(x) \in \overline{P^{**}} \backslash P^*$. Observing that $f(\overline{T}_2) \subset B(S_\mu;0,R-1)$, we get $f(x) \in B(S_\mu;0,R) \cap \overline{P^{**}} \backslash P^*)$ and thus
	\begin{align*}
		\|V(f(x))\| \geq \inf_{u \in B(S_\mu;0,R) \cap \overline{P^{**}} \backslash P^*}\|V(u)\| > 3\ep^{\alpha_1}.
	\end{align*}
	Then we can take a neighborhood $U_x$ of $x$ in $\R^d$ such that
	\begin{align*}
		\|V(f(y))\| > 3\ep^{\alpha_1} \quad \text{for all } y \in U_x.
	\end{align*}
	This indicates that $K \cap U_x \subset T_1$. Hence, $\overline{T}_2 \cap U_x  = \emptyset$, contradicting the convergence of $x_n$ to $x$ in $\R^d$. We complete the proof of \eqref{eqT2faraway}. Now apply Lemma \ref{lemflowdownsym} with $K_2 = \overline T_2$, $\beta = \ep^{\alpha_1}$ and $b > 0$ satisfying
	\begin{align*}
		f(N_b(\overline T_2) \cap D) \subset B(0,R-\frac12) \quad \text{and} \quad f(N_b(\overline T_2) \cap D) \cap \overline{P^{**}} = \emptyset,
	\end{align*}
	to obtain a continuous equivariant map $g : D \to S_\mu$ such that
	\begin{align}
		g(x) = f(x) & \quad \text{for } x \in D \backslash N_b(\overline T_2) \quad \text{and} \quad \phi(g(x)) \leq \phi(f(x)) \quad \text{for } x \in D, \\
		& \phi(g(x)) \leq \phi(f(x)) - \frac{\ep^{\alpha_1}\delta^2}{864N_d^2} \quad \text{for } x \in \overline T_2, \label{eqg(x)down} \\
		& \|g(x) - f(x)\| \leq \frac\delta 2 \quad \text{for } x \in N_b(\overline T_2) \cap D. \label{eqgclosef}
	\end{align}
	Observe that inequality \eqref{eqgclosef} yields that for $x \in \overline T_1$, $B(S_\mu; g(x),\delta) \subset B(S_\mu;f(x),\hat\delta)$, and it is not restrictive to assume that
	\begin{align*}
		g(N_b(\overline T_2) \cap D) \subset B(0,R-\frac13) \quad \text{and} \quad g(N_b(\overline T_2) \cap D) \cap \overline{P^{**}} = \emptyset
	\end{align*}
	for sufficiently small $\ep > 0$. Let \begin{align*}
		\tilde K = \bigl\{x \in D: g(x) \notin P^{**}, c-\ep \leq \phi(g(x)) \leq c+\ep\bigr\}.
	\end{align*}
	If $x \in D \backslash K$, then $f(x) \in P^{**}$ or $\phi(f(x)) < c-\ep$. When the former holds, then $x \in D \backslash N_b(\overline T_2)$ and thus $g(x) = f(x) \in P^{**}$, implying $x \notin \tilde K$. When the latter holds, then $\phi(g(x)) \leq \phi(f(x)) < c -\ep$ and so $x \notin \tilde K$. Namely, $\tilde K \subset K$. Since $\alpha_1 < \frac{\alpha}{\alpha+2}$, in view of \eqref{eqdeltaep}, we have
	\begin{align} \label{eqdownmorethan2ep}
		\frac{\ep^{\alpha_1}\delta^2}{864N_d^2} > 2\ep
	\end{align}
	holds when $\ep > 0$ is small enough. Then, if $x \in \overline T_2$, \eqref{eqg(x)down} shows that $x \notin \tilde K$, and we conclude that $\tilde K \subset \overline T_1$. Now with $S = g(\tilde K)$, we take $\xi = \frac\delta2$, observing that since
	$\alpha_1 < \frac{\alpha}{\alpha +1}$ it holds that
	\begin{align*}
		C_0\ep^{\alpha_1} \geq \frac{4\ep}{\delta} \quad \text{if } \ep > 0 \text{ is sufficiently small},
	\end{align*}
	by using (A1) we deduce that there exists $\eta: C([0,\xi] \times S_\mu, S_\mu)$ such that $\eta(t,-u) = -\eta(t,u)$ and
	\begin{itemize}
		\item[(i)] $\eta(t,u) = u$ for $t \in [0,\xi]$ if $u \notin \phi|_{S_\mu}^{-1}([c-2\epsilon,c+2\epsilon]) \cap S(2\xi)$,
		\item[(ii)] $\phi(\eta(\cdot,u))$ is nonincreasing, $\forall u \in S_\mu$,
		\item[(iii)] $\eta(t,u) \in \overline{P^{**}}$ for $t \in [0,\xi]$ if $u \in \overline{P^{**}}$,
		\item[(iv)] $\forall u \in S$ with $\phi(u) \leq c + \epsilon$, either $\eta(\xi,u) \in \overline{P^{**}}$ or $\phi(\eta(\xi,u)) \leq c -\epsilon$.
	\end{itemize}
	If $x \in D$ with $g(x) \in P^{**}$, by using property (iii) of $\eta$ we conclude that $\eta(\xi,g(x)) \in \overline{P^{**}}$. If $x \in D$ with $\phi(g(x)) < c -\ep$, then by using property (ii) of $\eta$ we obtain that $\phi(\eta(\xi,g(x))) < c -\ep$. Otherwise, $x \in \tilde K$, by using property (iv) of $\eta$ we deduce that either $\eta(\xi,g(x)) \in \overline{P^{**}}$ or $\phi(\eta(\xi,g(x))) \leq c-\ep$. Thus we get that
	\begin{align*}
		\sup\bigl\{\phi(u): u \in \eta(\xi,g(D)), u \notin \overline{P^{**}} \bigr\} \le c-\ep
	\end{align*}
	contradicting \eqref{eqsup>c}, and the proof is completed.
\end{proof}

\section{Sign-changing solutions obtained by genus theory} \label{secgenus}

In this section, we are devoted to search for sign-changing solutions by using genus theory and provide the proof of Theorem \ref{thmsmall}. For this, we introduce the following notions.

\begin{definition} \label{defgenus}
	Let $A \subset H_0^1(\Omega)$ be a closed set, symmetric with respect to the origin (i.e., $-A = A$). We define the genus $\ga$ of $A$ as
	\begin{align*}
		\ga(A) := \sup\bigl\{ m : \exists h \in C(\s^{m-1},A), h(-s) = -h(s) \text{ for any } s \in \s^{m-1} \bigr\}.
	\end{align*}
	Furthermore, we define
	\begin{align*}
		& \Sigma_{\mu} = \bigl\{A \subset S_\mu(\Omega): A \text{ is closed and } -A = A\bigr\}, \quad \quad \Sigma_{\mu}^{(k)} = \bigl\{A \in \Sigma_{\mu}: \ga(A) \geq k\bigr\}; \\
		& \Sigma_{\rho,\mu} = \bigl\{A \subset \overline{\B_\rho^\mu}: A \text{ is closed and } -A = A\bigr\}, \quad \quad \Sigma_{\rho,\mu}^{(k)} = \bigl\{A \in \Sigma_{\rho,\mu}: \ga(A) \geq k\bigr\}.
	\end{align*}
\end{definition}

\begin{lemma} \label{lemsigmanotempty}
	If $\la_k(\Omega) \mu \leq \rho$, then $\Sigma_{\rho,\mu}^{(k)}$ is not empty. Particularly, $\Sigma_{\mu}^{(k)}$ is always non-empty.
\end{lemma}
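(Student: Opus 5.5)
The plan is to exhibit an explicit closed symmetric set of genus at least $k$ inside $\overline{\B_\rho^\mu}$, built from the first $k$ Dirichlet eigenfunctions.

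Let $E_k = \text{span}\{\phi_1,\dots,\phi_k\}$, where the $\phi_i$ are $L^2$-orthonormal eigenfunctions. Set
\begin{align*}
A_k := \Bigl\{ \sqrt{\mu}\sum_{i=1}^k a_i\phi_i : a=(a_1,\dots,a_k)\in \s^{k-1}\Bigr\} = E_k \cap S_\mu(\Omega).
\end{align*}

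First, $A_k$ is symmetric and compact, hence closed in $H_0^1(\Omega)$. Indeed, it is the image of the compact sphere $\s^{k-1}$ under the continuous odd map
\begin{align*}
h(a) = \sqrt{\mu}\sum_{i=1}^k a_i\phi_i .
\end{align*}
By $L^2$-orthonormality, $\int_\Omega |h(a)|^2dx = \mu|a|^2 = \mu$, so $A_k \subset S_\mu(\Omega)$.

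Next, I bound the gradient norm on $A_k$. Because the $\phi_i$ are eigenfunctions, they are also orthogonal in $H_0^1$, with $\int_\Omega\nabla\phi_i\cdot\nabla\phi_j\,dx = \la_i(\Omega)\delta_{ij}$. Hence
\begin{align*}
\int_\Omega |\nabla h(a)|^2dx = \mu\sum_{i=1}^k \la_i(\Omega)a_i^2 \le \la_k(\Omega)\mu \le \rho .
\end{align*}
Therefore $A_k \subset \overline{\B_\rho^\mu}$, where I read $\overline{\B_\rho^\mu}$ as the set where $\int|\nabla u|^2\le\rho$. This is the closure of $\B_\rho^\mu$ in $S_\mu(\Omega)$, which contains $\U_\rho^\mu$. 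So $A_k \in \Sigma_{\rho,\mu}$.

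For the genus, Definition \ref{defgenus} only requires an odd continuous map $\s^{k-1} \to A_k$. The map $h$ is exactly such a map, so $\ga(A_k) \ge k$ and $A_k \in \Sigma_{\rho,\mu}^{(k)}$.

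For the last claim, given any $k$ choose $\rho := \la_k(\Omega)\mu$. Then $\Sigma_{\rho,\mu}^{(k)} \subset \Sigma_\mu^{(k)}$ is nonempty, and in fact $A_k \in \Sigma_\mu^{(k)}$ directly.

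The only potentially delicate point is the boundary case $\la_k(\Omega)\mu = \rho$. There $h(e_k)$ lies on $\U_\rho^\mu$, so one must use that $\overline{\B_\rho^\mu}$ includes the sphere $\U_\rho^\mu$; this is the natural meaning of the closure.
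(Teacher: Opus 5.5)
Your proof is correct and is essentially the paper's own argument: the paper uses the same set $\mathbb{M}_k$ of normalized combinations of $\sqrt{\mu}\phi_1,\dots,\sqrt{\mu}\phi_k$, the same odd map $h$ to get genus at least $k$, and the same bound $\int_\Omega|\nabla u|^2dx\le\la_k(\Omega)\mu\le\rho$. The paper also implicitly reads $\overline{\B_\rho^\mu}$ as the non-strict sublevel set, as you do. That set coincides with the closure of $\B_\rho^\mu$ whenever $\rho>\la_1(\Omega)\mu$, in particular for all $k\ge 2$; the only exception is the degenerate case $k=1$, $\rho=\la_1(\Omega)\mu$, where $\B_\rho^\mu=\emptyset$, so your claim that the two sets coincide fails only there.
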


\begin{proof}
	Let $\phi_1, \cdots, \phi_k$ be the eigenfunctions corresponding to $\la_1(\Omega), \cdots \la_k(\Omega)$ and define
	\begin{align} \label{eqdefMk}
		\mathbb{M}_k := \left\{\sum_{i=1}^ka_i\sqrt{\mu}\phi_i: \sum_{i=1}^ka_i^2 = 1\right\} \subset S_\mu(\Omega).
	\end{align}
    Clearly, $\mathbb{M}_k$ is closed and there exists a function $h \in C(\s^{k-1},\mathbb{M}_k)$ defined by
    \begin{align*}
    	h(a_1,\cdots,a_k) = \sum_{i=1}^ka_i\sqrt{\mu}\phi_i,
    \end{align*}
    and $h(-s) = -h(s)$ for any $s \in \s^{k-1}$. Thus $\ga(\mathbb{M}_k) \geq k$. In fact, $\mathbb{M}_k \cong \s^{k-1}$ and it is well known that $\ga(\mathbb{M}_k) = k$. Moreover, from $\mu \la_k(\Omega) \leq \rho$ we see that $\mathbb{M}_k \subset \overline{\B_\rho^\mu}$ and complete the proof.
\end{proof}

\begin{lemma} \label{lemAcapSnotemp}
	Let $\delta$ satisfy \eqref{eqconondelta0} and $k \geq 2$. Then, for any $A \in \Sigma_{\mu}^{(k)}$ or $A \in \Sigma_{\rho,\mu}^{(k)}$ we have $A \cap S^*(\delta) \neq \emptyset$.
\end{lemma}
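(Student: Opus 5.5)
We argue by contradiction: suppose $A \in \Sigma_\mu^{(k)}$ (the case $A \in \Sigma_{\rho,\mu}^{(k)}$ is identical, since $\Sigma_{\rho,\mu}^{(k)} \subset \Sigma_\mu^{(k)}$) and $A \cap S^*(\delta) = \emptyset$. Then $A \subset D^*_\delta = \overline{\p_\delta} \cup \overline{(-\p)_\delta}$. Since $A \subset S_\mu(\Omega)$ and $\delta$ satisfies \eqref{eqconondelta0}, Lemma \ref{lemcapnonemp} shows that the two closed sets
\begin{align*}
	A_+ := A \cap \overline{\p_\delta}, \quad A_- := A \cap \overline{(-\p)_\delta}
\end{align*}
are disjoint. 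Moreover $A = A_+ \cup A_-$, and $-A_+ = A_-$ because $-\overline{\p_\delta} = \overline{(-\p)_\delta}$ and $-A = A$.

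Next we use the genus. Since $\ga(A) \ge k \ge 2$, there is an odd continuous map $h : \s^{k-1} \to A$, and hence also one from $\s^1$ (restrict to a great circle). Define the continuous map $\psi : A \to \{-1,1\}$ by $\psi = 1$ on $A_+$ and $\psi = -1$ on $A_-$. It is continuous because $A_\pm$ are disjoint closed sets covering $A$, and it is odd because $-A_+ = A_-$. Then $\psi \circ h : \s^{k-1} \to \{-1,1\}$ is continuous and odd. But $\s^{k-1}$ is connected for $k \ge 2$, so $\psi \circ h$ is constant, and a constant map cannot satisfy $\psi\circ h(-s) = -\psi \circ h(s)$. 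This is a contradiction, so $A \cap S^*(\delta) \neq \emptyset$.

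The only delicate point is the disjointness of $A_+$ and $A_-$, which is exactly where the mass constraint $A \subset S_\mu(\Omega)$ and the smallness condition \eqref{eqconondelta0} on $\delta$ enter, through Lemma \ref{lemcapnonemp}. The rest is the elementary fact that a set of genus at least $2$ cannot split into two disjoint closed pieces exchanged by $u \mapsto -u$. Note also that $k \ge 2$ is essential: for $k=1$ the set $\{\pm\sqrt{\mu}\phi_1\}$ lies entirely in $D^*_\delta$.
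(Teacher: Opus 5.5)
Your proof is correct and follows essentially the paper's argument: both combine the disjointness given by Lemma \ref{lemcapnonemp} with the connectedness of $\s^{k-1}$ for $k\ge 2$. The paper joins $s_0$ and $-s_0$ by a path along which $h$ must hit $\overline{\p_\delta}\cap\overline{(-\p)_\delta}$. You instead note that there is no continuous odd map $\s^{k-1}\to\{-1,1\}$, which is a slightly cleaner way to make the same connectedness step rigorous.
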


\begin{proof}
	Let $A$ be any set belonging to $\Sigma_{\mu}^{(k)}$. There exists $h \in C(\s^{k-1},A)$, with $h(-s) = -h(s)$ for any $s \in \s^{k-1}$. By negation, we suppose $A \cap S^*(\delta) = \emptyset$. Thus we can take $s_0 \in \s^{k-1}$ such that $h(s_0) \in \p_\delta$ and $h(-s_0) \in -\p_\delta$. Since $k \ge 2$, the set $\s^{k-1}$ is path connected. Taking a path connecting $s_0$ and $-s_0$, we can find $s^*$ on this path such that $h(s^*) \in \overline{\p_\delta} \cap \overline{-\p_\delta}$. Observe that $h(s^*) \in A \subset S_\mu(\Omega)$, using Lemma \ref{lemcapnonemp} we get a contradiction. Since $\Sigma_{\rho,\mu}^{(k)} \subset \Sigma_{\mu}^{(k)}$, for any $A \in \Sigma_{\rho,\mu}^{(k)}$ we also have $A \cap S^*(\delta) \neq \emptyset$ and complete the proof.
\end{proof}

\begin{lemma} \label{lemnonempty}
	Let $k \geq 2$. Then, for any $A \in \Sigma_{\mu}^{(k)}$ or $A \in \Sigma_{\rho,\mu}^{(k)}$ we have $A \cap \text{span}\{\phi_1,\cdots,\phi_{k-1}\}^{\bot} \neq \emptyset$.
\end{lemma}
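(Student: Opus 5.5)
The argument is the classical Borsuk--Ulam type intersection property of the genus. It suffices to treat $A \in \Sigma_{\mu}^{(k)}$, because $\Sigma_{\rho,\mu}^{(k)} \subset \Sigma_{\mu}^{(k)}$. By Definition \ref{defgenus}, $\ga(A) \ge k$ gives an odd continuous map $h \in C(\s^{k-1},A)$. (If the supremum is attained only at some $m \ge k$, restrict $h$ to an equatorial $\s^{k-1} \subset \s^{m-1}$; the restriction is still odd.)

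Consider the finite-dimensional map
\begin{align*}
	P : \s^{k-1} \to \R^{k-1}, \quad P(s) = \Bigl( \int_\Omega h(s)\phi_1 dx, \cdots, \int_\Omega h(s)\phi_{k-1} dx \Bigr).
\end{align*}
It is continuous, since $h$ is continuous into $H_0^1(\Omega)$, which embeds continuously into $L^2(\Omega)$. It is odd, since $h$ is odd.

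By the Borsuk--Ulam theorem, every odd continuous map from $\s^{k-1}$ to $\R^{k-1}$ has a zero. So there is $s^* \in \s^{k-1}$ with $P(s^*) = 0$. Then $u = h(s^*) \in A$ is $L^2$-orthogonal to $\phi_1, \cdots, \phi_{k-1}$.

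This orthogonality also holds in $H_0^1(\Omega)$, because $\langle u, \phi_i \rangle = \la_i(\Omega) \int_\Omega u\phi_i dx = 0$. Hence $u \in \text{span}\{\phi_1,\cdots,\phi_{k-1}\}^{\bot}$ in either inner product. Moreover $u \neq 0$ because $u \in S_\mu(\Omega)$, so $A \cap \text{span}\{\phi_1,\cdots,\phi_{k-1}\}^{\bot} \neq \emptyset$.

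The only point needing care is invoking Borsuk--Ulam in the right form (odd maps $\s^{k-1} \to \R^{k-1}$ vanish somewhere), together with the observation about the restriction when $\ga(A) > k$. The hypothesis $k \ge 2$ simply ensures the target $\R^{k-1}$ is nontrivial.
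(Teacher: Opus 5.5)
Your proof is correct and is essentially the paper's argument. The paper argues by contradiction: it normalizes the vector $\bigl(\int_\Omega \phi_i u\,dx\bigr)_{i=1}^{k-1}$ to obtain an odd map $\s^{k-1}\to\s^{k-2}$, which Borsuk--Ulam forbids; this is just the contrapositive of your formulation that odd maps $\s^{k-1}\to\R^{k-1}$ vanish somewhere. Your remarks on restricting $h$ to an equatorial $\s^{k-1}$ and on $L^2$- versus $H_0^1$-orthogonality fill in details the paper leaves implicit.
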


\begin{proof}
	Reasoning as mentioned at the end of the proof of Lemma \ref{lemAcapSnotemp}, we just prove the case of $A \in \Sigma_{\mu}^{(k)}$. Arguing by contradiction, we suppose there exists $A \in \Sigma_{\mu}^{(k)}$ such that $A \cap \text{span}\{\phi_1,\cdots,\phi_{k-1}\}^{\bot} = \emptyset$. Let us define a map by
	\begin{align*}
		P : S_\mu(\Omega) \to \s^{k-2}, \quad u \mapsto (l_u\int_\Omega \phi_1 udx, \cdots, l_u\int_\Omega \phi_{k-1} udx) \quad \text{where } l_u = \left( \sum_{i=1}^{k-1}\left( \int_\Omega \phi_i udx\right)^2 \right) ^{-\frac12}.
	\end{align*}
    Note that $P(u)$ is well-defined and continuous in a small neighborhood of $u \in S_\mu(\Omega) \backslash (\text{span}\{\phi_1,\cdots,\phi_{k-1}\}^{\bot}  \cap S_\mu(\Omega))$. Moreover, $P(-u) = -P(u)$. Since $A \in \Sigma_{\mu}^{(k)}$, there exists $h \in C(\s^{k-1},A)$, with $h(-s) = -h(s)$ for any $s \in \s^{k-1}$. Thus the map $P \circ h \in C(\s^{k-1},\s^{k-2})$ satisfies $p \circ h (-s) = - p \circ h (s)$ for any $s \in \s^{k-1}$. This is impossible by Borsuk-Ulam theorem.
\end{proof}

\subsection{$L^2$-subcritical case}

Let $2 < p < p_c$. By Lemma \ref{lemsigmanotempty} and Lemma \ref{lemAcapSnotemp} we can define the following minimax value:
\begin{align} \label{eqdefofnu}
	\nu_{\mu,k,\delta} := \inf_{A \in \Sigma_{\mu}^{(k)}}\sup_{u \in A \cap S^*(\delta)}E(u,\Omega).
\end{align}
We will prove $\nu_{\mu,k,\delta}$ is indeed a critical value with $\delta > 0$ sufficiently small. More precisely, we have the following theorem.

\begin{theorem} \label{thmgenussub}
	Let $2 < p < p_c$. For $k \geq 2$ and $\mu > 0$, there exists $\tilde \delta = \tilde \delta(\mu,k) > 0$ such that when $0 < \delta < \tilde \delta$, there exists a sign-changing constrained critical point $u_{\mu,k}$ of $E(\cdot,\Omega)$ on $S_\mu(\Omega)$ at the level $\nu_{\mu,k,\delta}$. The function $u_{\mu,k}$ solves \eqref{eqonbounddomain} with $\la = \la_{\mu,k}$ for some $\la_{\mu,k} \in \R$ and $m(u_{\mu,k}) \le k+1$. Moreover, the following hold:
	\begin{itemize}
		\item when $\mu$ is fixed, for any sequence $\{\delta_k\}_{k \ge 2}$ such that $0 < \delta_k < \tilde \delta$, we have $\nu_{\mu,k,\delta_k} \to +\infty$ as $k \to +\infty$;
		\item when $k$ is fixed, for any sequence $\{\delta_\mu\}$ such that $0 < \delta_\mu < \tilde \delta$, we have $\nu_{\mu,k,\delta_\mu} \to 0$ as $\mu \to 0^+$ and $\nu_{\mu,k,\delta_\mu} \to -\infty$ as $\mu \to +\infty$.
	\end{itemize}
\end{theorem}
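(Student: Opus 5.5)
The plan is to apply Theorem \ref{thmuepsym} with $\phi = E(\cdot,\Omega)$, $P^* = D^*_{\delta/2}\cap S_\mu(\Omega)$ (more precisely its interior) and $P^{**} = D^*_\delta \cap S_\mu(\Omega)$ (interior), using $V = V_1$ from \eqref{def:V} as pseudogradient. In the subcritical case $E$ is coercive on $S_\mu(\Omega)$ by \eqref{eqgndomain}, since $p\ga_p<2$. Hence every relevant sublevel set lies in some $\B_\rho^\mu$.

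\emph{Step 1: fix the parameters.} Using Lemma \ref{lemsigmanotempty} with the set $\mathbb{M}_k$, I would bound $\nu_{\mu,k,\delta} \le \max_{\mathbb{M}_k}E \le \frac12\la_k(\Omega)\mu$, uniformly in $\delta$. Coercivity then gives $\rho=\rho(\mu,k)$ such that the sublevel $\{E\le \frac12\la_k\mu+1\}$ lies in $\B_{\rho/2}^\mu$. Next I fix $\bar\la$ so that \eqref{eqcononbarla} holds, and set $\tilde\delta=\min\{\hat\delta(\rho,\mu),\sqrt{\la_1\mu/2}\}$. For $\delta<\tilde\delta$, Lemma \ref{lemAcapSnotemp} makes $\nu_{\mu,k,\delta}$ well defined.

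\emph{Step 2: verify the hypotheses of Theorem \ref{thmuepsym}.} (A1) is Proposition \ref{propdescendingflow}, with $\eta$ restricted to $[0,\xi]$. (A2) is Lemma \ref{leminf>0}. The pseudogradient bounds \eqref{eqC0} follow from Lemma \ref{lempseudogradient}, with $C_0=\la_1/(\la_1+\bar\la)$. Take $D=\s^{k-1}$ with $\sigma=-\mathrm{id}$ and $c=\nu_{\mu,k,\delta}$. To check \eqref{eqsup>c}, let $g$ be equivariant and set $A=\overline{\eta(\xi,g(D))}$. This set is symmetric with genus $\ge k$, because $\eta(\xi,\cdot)\circ g$ is odd. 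Therefore $\sup_{A\cap S^*(\delta)}E\ge c$. Since $S^*(\delta)$ is exactly the complement of $\overline{P^{**}}$, this gives \eqref{eqsup>c}.

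For $\ep\to0$, I choose an odd $f$ with $\sup_{f(D)\cap S^*(\delta)}E\le c+\ep$, which exists by the definition of $\nu$ after shrinking a nearly optimal $A$ to an image of a sphere. Condition \eqref{eqboundedballsym} holds by coercivity. The theorem then yields $u_\ep\in S^*(\delta)\cap\B_\rho^\mu$ with $E(u_\ep)\to c$, $\|V(u_\ep)\|\to0$ (so also $\|\nabla E|_{S_\mu}(u_\ep)\|\to 0$ by Lemma \ref{lempseudogradient}), and approximate Morse index $\le k$. Lemma \ref{lempscondition} gives strong convergence to some $u_{\mu,k}\in\overline{S^*(\delta)}$, which is sign-changing. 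Lemma \ref{lemmorse} gives $m(u_{\mu,k})\le k+1$.

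\emph{Step 3: asymptotics.} This is the main obstacle. For $k\to\infty$ with $\mu$ fixed, I would argue by contradiction with Proposition \ref{proprelationship}. Suppose the levels $\nu_{\mu,k,\delta_k}$ stay bounded. The corresponding solutions have bounded energy, so their $\la$ are bounded. However, their Morse index is only $\le k+1$, which is not uniform in $k$, so Proposition \ref{proprelationship} does not apply directly. Instead I would use a direct genus estimate. By Lemma \ref{lemnonempty}, every $A$ meets $\mathrm{span}\{\phi_1,\dots,\phi_{k-1}\}^\perp$. On that set, $\|\nabla u\|^2\ge\la_k\mu$, and with Gagliardo–Nirenberg this gives
\begin{align*}
E\ge \tfrac12 t-Ct^{p\ga_p/2}\mu^{p(1-\ga_p)/2}, \qquad t\ge\la_k\mu .
\end{align*}
This lower bound tends to $\infty$ as $k\to\infty$. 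I still need the intersection point to lie in $S^*(\delta)$. For that I would intersect $A$ with the orthogonal complement of $\mathrm{span}\{\phi_1,\dots,\phi_{k-2}\}$ and argue as in Lemma \ref{lemAcapSnotemp}, because the remaining set has genus $\ge2$ (this is a genus subadditivity argument).

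For $\mu\to0$, the upper bound $\frac12\la_k\mu\to0$ and the lower bound $-C\mu^{\cdots}\to0$ together give $\nu\to0$. For $\mu\to\infty$, I would scale $\mathbb{M}_k$ by placing $k$ disjoint concentrated bumps, so that $\sup E\to-\infty$, using $p<p_c$.
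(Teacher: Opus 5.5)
Your proposal is correct. Steps 1--2 are essentially the paper's argument in streamlined form. The paper first runs a separate deformation argument (Proposition \ref{propdescendingflow} plus the genus) to get a Palais--Smale sequence in $S^*(\delta)$, and only then calls Theorem \ref{thmuepsym} for the Morse information. You get both at once from Theorem \ref{thmuepsym}, checking \eqref{eqsup>c} through the oddness of $\eta(\xi,\cdot)\circ g$.

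One small precision: the bound $C_0\|\nabla E|_{S_\mu(\Omega)}(u)\|\le\|V(u)\|$ with $C_0=\la_1/(\la_1+\bar\la)$ does not follow from the inequalities stated in Lemma \ref{lempseudogradient}. It follows from the identity behind them, $\langle\nabla E|_{S_\mu(\Omega)}(u),\varphi\rangle=\langle V(u),\varphi\rangle+\bar\la\int_\Omega V(u)\varphi\,dx$ for $\varphi\in T_uS_\mu(\Omega)$.

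The genuine difference is in Step 3, for the limits in $\mu$. The paper argues by blow-up. If $\liminf_{\mu\to0^+}\nu<0$, the solutions would be unbounded in $L^\infty$, so $\la\to+\infty$ by Lemmas \ref{lemlanbelow} and \ref{lemlanbounded}, and then \eqref{eqntoinftyun2} forces the mass to blow up. For $\mu\to+\infty$ it uses $\|u_{\mu,k}\|_{L^\infty}^2\ge\mu/|\Omega|$, hence $\la_{\mu,k}\to+\infty$ and $E\to-\infty$ by Proposition \ref{proprelationship}. That route needs $m(u_{\mu,k})\le k+1$ and gives $\la_{\mu,k}\to+\infty$ as a by-product.

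Your route uses only the functional. The lower bound is $\nu_{\mu,k,\delta}\ge\inf_{S_\mu(\Omega)}E\ge\min_{t>0}f(t)=-(\frac{1}{p\ga_p}-\frac12)t_\mu\to0$, and the upper bound comes from a test set. For the test set you need neither concentrated bumps nor $p<p_c$: by H\"older, $\sup_{\mathbb{M}_k}E\le\frac12\la_k(\Omega)\mu-\frac1p|\Omega|^{\frac{2-p}{2}}\mu^{\frac p2}\to-\infty$ for every $p>2$.

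For $k\to\infty$ you use the paper's ingredients (Lemma \ref{lemnonempty} and \eqref{eq5.3}). You rightly notice a point its derivation of \eqref{eq5.7} passes over: $\nu_{\mu,k,\delta}$ only sees $A\cap S^*(\delta)$, while Lemma \ref{lemnonempty} gives a point of $A\cap\mathrm{span}\{\phi_1,\dots,\phi_{k-1}\}^\perp$ that may lie in $D^*_\delta$.

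Your repair works, but state it carefully. The paper's $\ga$ is a coindex-type quantity, and $A\cap\mathrm{span}\{\phi_1,\dots,\phi_{k-2}\}^\perp$ need not contain an odd image of $\s^1$. So ``genus $\ge2$'' must be read in the Krasnoselskii sense, on the compact zero set in $\s^{k-1}$, and you conclude by splitting it as $F\cup(-F)$ rather than by the path argument of Lemma \ref{lemAcapSnotemp}.

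Most simply, take an odd $h\in C(\s^{k-1},A)$. Borsuk--Ulam gives a zero of the odd map $s\mapsto\bigl(\int_\Omega h(s)\phi_1\,dx,\dots,\int_\Omega h(s)\phi_{k-2}\,dx,\ \mathrm{dist}(h(s),\p)-\mathrm{dist}(h(s),-\p)\bigr)\in\R^{k-1}$. By Lemma \ref{lemcapnonemp} this zero lies in $S^*(\delta)\cap\mathrm{span}\{\phi_1,\dots,\phi_{k-2}\}^\perp$, so $\nu_{\mu,k,\delta}\ge\inf_{t\ge\la_{k-1}(\Omega)\mu}f(t)\to+\infty$.

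The price is $\la_{k-1}$ in place of $\la_k$. This is harmless for the present statement. It does not, however, recover the sharp bound \eqref{eq5.7} that the paper later uses to identify $\lim_{\mu\to0^+}\mu^{-1}E(u_{\mu,j},\Omega)=\frac12\la_{k_j}(\Omega)$.
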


\begin{proof}
	By Gagliardo-Nirenberg inequality \eqref{eqgndomain} we have
	\begin{align} \label{eq5.3}
		\frac12 \int_\Omega |\nabla u|^2dx \ge E(u,\Omega) \geq \frac12 \int_\Omega |\nabla u|^2dx - \frac1p C_{p,N}^p \mu^{\frac{p(1-\ga_p)}{2}}\left( \int_\Omega |\nabla u|^2dx\right) ^{\frac{p\ga_p}{2}}, \quad \forall u \in S_\mu(\Omega),
	\end{align}
    where $\ga_p = N(\frac12-\frac1p)$. Note that $\frac{p\ga_p}{2} \in (0,1)$ when $2 < p < p_c$. Thus $E(\cdot, \Omega)$ is bounded from below on $S_\mu(\Omega)$ and
    \begin{align} \label{eqcoer}
    	E(u,\Omega) \to +\infty \quad \Leftrightarrow \quad \|u\| \to +\infty.
    \end{align}
    Together with Lemma \ref{lemsigmanotempty} and Lemma \ref{lemAcapSnotemp}, the value $\nu_{\mu,k,\delta}$ is well defined and finite if we assume \eqref{eqconondelta0}. By definition of $\nu_{\mu,k,\delta}$, for any $\ep > 0$, there exists $A \in \Sigma_{\mu}^{(k)}$ such that $\sup_{u \in A \cap S^*(\delta)}E(u,\Omega) < \nu_{\mu,k,\delta} + \ep$. Taking $\ep \leq 1$, there exists $\rho > 0$ independent of $\ep$ such that $A \cap S^*(\delta) \subset \B_{\rho-1}^\mu$. In fact, by \eqref{eqcoer} we can take $\rho > 0$ such that
    \begin{align*}
    	\bigl\{u \in S_\mu(\Omega) : E(u,\Omega) \leq \nu_{\mu,k} + 1\bigr\} \subset \B_{\rho-1}^\mu,
    \end{align*}
    where
    \begin{align}
    	\nu_{\mu,k} := \inf_{A \in \Sigma_{\mu}^{(k)}}\sup_{u \in A}E(u,\Omega)
    \end{align}
    is well-defined, finite and independent of $\delta$. It is clear that $\nu_{\mu,k,\delta} \leq \nu_{\mu,k}$. Thus $\rho$, whose choice depends on $k$ and on $\mu$ but is independent of $\delta$, satisfies our requirement. Then, we aim to apply Proposition \ref{propdescendingflow} with $c  = \nu_{\mu,k,\delta}$ and $S = A \cap S^*(\delta)$ with sufficiently small $\delta > 0$ depending on $\rho$ and on $\mu$. Let us take $\xi > 0$ such that $S(2\xi) \subset \B_{\rho}^\mu$, where the definition of $S(2\xi)$ is given in \eqref{eqS2xi}. We claim that there exists $u_\ep \in E^{-1}([c-2\epsilon,c+2\epsilon],\Omega) \cap S(2\xi) \cap S^*(\delta)$ such that $\|V(u_\ep)\| < 2\epsilon /\xi$, where $V$ is the pseudogradient defined in \eqref{def:V} with $\tau = 1$. Otherwise, \eqref{eqlowboundofV} holds true and by Proposition \ref{propdescendingflow}, there exists $\hat \delta = \hat \delta (\rho,\mu)$ such that for $0 < \delta < \hat \delta$, there exists $\eta: C([0,\infty) \times S_\mu(\Omega), S_\mu(\Omega))$ such that $\eta(t,-u) = \eta(t,u)$ and
    \begin{itemize}
    	\item[(i)] $\eta(t,u) \in D_\delta^*$ for all $t \geq 0$ if $u \in D_\delta^*$,
    	\item[(ii)] $\forall u \in S$ with $E(u,\Omega) \leq \nu_{\mu,k,\delta} + \epsilon$, either $\eta(\xi,u) \in D_\delta^*$ or $E(\eta(\xi,u),\Omega) \leq \nu_{\mu,k,\delta} -\epsilon$.
    \end{itemize}
    It is clear that $\eta(\xi,A) \in \Sigma_{\mu}^{(k)}$ and so $\sup_{u \in \eta(\xi,A) \cap S^*(\delta)}E(u,\Omega) \geq \nu_{\mu,k,\delta}$. On the other hand, by properties (i) and (ii) we deduce that $\sup_{u \in \eta(\xi,A) \cap S^*(\delta)}E(u,\Omega) \leq \nu_{\mu,k,\delta} - \ep < \nu_{\mu,k,\delta}$. This is a contradiction and validates the claim. We take $\delta \in (0,\tilde \delta)$ where
    \begin{align*}
    	\tilde \delta = \tilde \delta(\mu,k) = \min\biggl\{\sqrt{\frac{\la_1(\Omega)}{2}\mu},\hat \delta(\rho(\mu,k),\mu)\biggr\}.
    \end{align*}
    At this point, with $\ep_n \to 0^+$ we obtain an $H_0^1$-bounded sequence $\{u_n\} = \{u_{\ep_n}\} \subset \B_{\rho}^\mu \cap S^*(\delta)$ such that
    \begin{align} \label{eqpssequencesub}
    	E(u_n,\Omega) \to \nu_{\mu,k,\delta}, \quad V(u_n) \to 0 \text{ strongly in } H_0^1(\Omega), \quad \text{ as } n \to \infty.
    \end{align}
    By Lemma \ref{lempscondition}, $\{u_n\}$ has a strongly convergent subsequence and we obtain a constrained critical point $u_{\mu,k}$ contained in $S_\mu(\Omega) \cap \overline{S^*(\delta)}$, at level $\nu_{\mu,k,\delta}$. By $u_{\mu,k} \in \overline{S^*(\delta)}$ it follows that $u$ is sign-changing. According to Lagrange multiplier principle, $u_{\mu,k}$ solves \eqref{eqonbounddomain} with $\la = \la_{\mu,k}$ for some $\la_{\mu,k} \in \R$.

    To obtain the Morse index information, without loss of generality, for the set $A \in \Sigma_{\mu}^{(k)}$ satisfying $\sup_{u \in A \cap S^*(\delta)}E(u,\Omega) < \nu_{\mu,k,\delta} + \ep$ we may assume $A = h(\s^{k-1})$ for some continuous map $h$ with $h(-s) = -h(s)$ for any $s \in \s^{k-1}$. In fact, setting $m = \ga(A)$ and by the definition of the genus, there exists $h \in C(\s^{m-1},A)$ with $h(-s) = -h(s)$ for any $s \in \s^{m-1}$. Then we can define a new set
    \begin{align*}
    	\tilde A := \bigl\{h(s_1,s_2,\cdots,s_k,0,\cdots,0): (s_1,\cdots,s_k) \in \s^{k-1}\bigr\},
    \end{align*}
    so that $\tilde A \in \Sigma_{\mu}^{(k)}$ and
    \begin{align*}
    	\sup_{u \in \tilde A \cap S^*(\delta)}E(u,\Omega) \leq \sup_{u \in A \cap S^*(\delta)}E(u,\Omega) < \nu_{\mu,k,\delta} + \ep.
    \end{align*}
    In view of Lemma \ref{leminf>0} and Proposition \ref{propdescendingflow}, using Theorem \ref{thmuepsym} with $P^* = \p_{\delta/2} \cup (-\p)_{\delta/2}$, $P^{**} = \p_\delta \cup (-\p_\delta)$, $R = \rho$ and $c = \nu_{\mu,k,\delta}$, we can further assume that $\tilde m_{\zeta_n} (u_n) \leq k$ for some $\{\zeta_n\}$ with $\zeta_n \to 0^+$, where $\{u_n\}\subset \B_{\rho}^\mu \cap S^*(\delta)$ is the Palais-Smale sequence satisfying \eqref{eqpssequencesub}. Then by Lemma \ref{lemmorse} we obtain a constrained critical point $u_{\mu,k} \in S_\mu(\Omega) \cap \overline{S^*(\delta)}$ such that $\tilde m_0(u_{\mu,k}) \leq k$ and $m(u_{\mu,k}) \leq k+1$.

    When $\mu$ is fixed, for any sequence $\{\delta_k\}_{k \ge 2}$ such that $0 < \delta_k < \tilde \delta$, by Lemma \ref{lemnonempty} and \eqref{eq5.3} we get
    \begin{align} \label{eq5.7}
    	\nu_{\mu,k,\delta_k} \geq \inf_{S_\mu(\Omega) \cap \text{span}\{\phi_1,\cdots,\phi_{k-1}\}^{\bot}}\left( \frac12 \int_\Omega |\nabla u|^2dx - \frac1p C_{p,N}^p \mu^{\frac{p(1-\ga_p)}{2}}\left( \int_\Omega |\nabla u|^2dx\right) ^{\frac{p\ga_p}{2}} \right).
    \end{align}
    Observing that $\int_\Omega |\nabla u|^2dx \ge \la_k(\Omega) \mu \to +\infty$ as $k \to +\infty$ for any $u \in S_\mu(\Omega) \cap \text{span}\{\phi_1,\cdots,\phi_{k-1}\}^{\bot}$, we conclude that $\nu_{\mu,k,\delta_k} \to +\infty$ as $k \to +\infty$.

    When $k$ is fixed, for any sequence $\{\delta_\mu\}$ such that $0 < \delta_\mu < \tilde \delta$, by \eqref{eq5.3} and the fact that $\mathbb{M}_k  \in \Sigma_{\mu}^{(k)}$, it follows that
    \begin{align} \label{eq7.8}
    	\nu_{\mu,k,\delta_\mu} \le \frac12 \sup_{u \in \mathbb{M}_k}\int_\Omega |\nabla u|^2dx \le \frac12\la_k(\Omega) \mu \to 0^+ \quad \text{as } \mu \to 0^+,
    \end{align}
    where $\mathbb{M}_k$ is defined in \eqref{eqdefMk}. Thus $\limsup_{\mu \to 0^+}\nu_{\mu,k,\delta_\mu} \le 0$. Next we prove that $\liminf_{\mu \to 0^+}\nu_{\mu,k,\delta_\mu} \ge 0$. By negation, we suppose that there exists $\{\mu_n\}$ such that $\mu_n \to 0$ and $\lim_{n \to \infty}\nu_{\mu_n,k,\delta_n} < 0$, where $\delta_n = \delta_{\mu_n}$. Let us take a sequence $\{u_n\} = \{u_{\mu_n,k}\} \subset S_{\mu_n}(\Omega)$ such that $E(u_n,\Omega) = \nu_{\mu_n,k,\delta_n}$, $u_n$ solves \eqref{eqonbounddomain} with $\la = \la_n$, $\mu = \mu_n \to 0$, and $m(u_n) \le k+1$. We claim that $\{u_n\}$ is not bounded in $L^\infty$. In fact, if $\{u_n\}$ is a $L^\infty$-bounded sequence, by $\int_\Omega |u_n|^2dx = \mu_n \to 0$ and by the boundedness of $\Omega$ we get that $\int_\Omega |u_n|^pdx \to 0$, thus
    \begin{align*}
    	\lim_{n\to\infty}E(u_n,\Omega) = \frac12\lim_{n\to\infty}\int_\Omega |\nabla u_n|^2dx \geq 0,
    \end{align*}
    contradicting $\lim_{n \to \infty}\nu_{\mu_n,k,\delta_n} < 0$. Since $\{u_n\}$ is not bounded in $L^\infty$, by Lemma \ref{lemlanbelow} and Lemma \ref{lemlanbounded} we get that $\la_n \to +\infty$. Then using \eqref{eqntoinftyun2} in Proposition \ref{propasm} we conclude that $\int_\Omega |u_n|^2dx \to +\infty$, contradicting $\mu_n \to 0$. This enables us to obtain that $\lim_{\mu \to 0^+}\nu_{\mu,k,\delta_\mu} = 0$.

    In the following we analyze the case with $\mu \to +\infty$. It is readily seen that
    \begin{align*}
    	\|u_{\mu,k}\|_{L^\infty}^2 \geq \frac{1}{|\Omega|}\int_\Omega |u_{\mu,k}|^2dx = \frac{\mu}{|\Omega|} \to +\infty, \quad \text{as } \mu \to +\infty.
    \end{align*}
    Let $\la_{\mu,k}$ be the Lagrange multiplier corresponding to $u_{\mu,k}$. Using Lemma \ref{lemlanbelow} and Lemma \ref{lemlanbounded} we know $\la_{\mu,k} \to +\infty$, and then, by Proposition \ref{proprelationship} we conclude that $E(u_{\mu,k},\Omega) \to -\infty$ as $\mu \to +\infty$. Combining above discussions we prove that when $k$ is fixed, for any sequence $\{\delta_\mu\}$ such that $0 < \delta_\mu < \tilde \delta$, $\nu_{\mu,k,\delta_\mu} \to 0$ as $\mu \to 0^+$ and $\nu_{\mu,k,\delta_\mu} \to -\infty$ as $\mu \to +\infty$. The proof is complete.
\end{proof}

\subsection{$L^2$-critical case}

Let $p = p_c$. Similar to the $L^2$-subcritical case, we can prove that $\nu_{\mu,k,\delta}$, defined in \eqref{eqdefofnu}, is a critical value if
\begin{align} \label{eqbarmu}
	0 < \mu < \bar\mu := \left( \frac{p_c}{2C_{p_c,N}^{p_c}}\right) ^{\frac{2}{p_c-2}}.
\end{align}

\begin{theorem} \label{thmgenuscri}
	Let $p = p_c$, $k \geq 2$ and let $\mu$ satisfy \eqref{eqbarmu}. Then,
	there exists $\tilde \delta = \tilde \delta(\mu,k) > 0$ such that when $0 < \delta < \tilde \delta$, there exists a sign-changing constrained critical point $u_{\mu,k}$ of $E(\cdot,\Omega)$ on $S_\mu(\Omega)$ at the level $\nu_{\mu,k,\delta}$. The function $u_{\mu,k}$ solves \eqref{eqonbounddomain} with $\la = \la_{\mu,k}$ for some $\la_{\mu,k} \in \R$ and $m(u_{\mu,k}) \le k+1$. Moreover, the following hold:
	\begin{itemize}
		\item when $\mu$ is fixed, for any sequence $\{\delta_k\}_{k \ge 2}$ such that $0 < \delta_k < \tilde \delta$, we have $\nu_{\mu,k,\delta_k} \to +\infty$ as $k \to +\infty$;
		\item when $k$ is fixed, for any sequence $\{\delta_\mu\}$ such that $0 < \delta_\mu < \tilde \delta$, we have $\nu_{\mu,k,\delta_\mu} \to 0$ as $\mu \to 0^+$.
	\end{itemize}
\end{theorem}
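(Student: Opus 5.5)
The plan is to follow the proof of Theorem \ref{thmgenussub} almost verbatim; the only change is in how coercivity of $E(\cdot,\Omega)$ on $S_\mu(\Omega)$ is obtained. For $p=p_c$ we have $p\ga_p = 2$, so \eqref{eq5.3} becomes
\begin{align*}
	\frac12\int_\Omega|\nabla u|^2dx \ge E(u,\Omega) \ge \left(\frac12 - \frac1{p_c}C_{p_c,N}^{p_c}\mu^{\frac{p_c-2}{2}}\right)\int_\Omega|\nabla u|^2dx, \quad \forall u \in S_\mu(\Omega),
\end{align*}
using $p_c(1-\ga_{p_c})/2 = (p_c-2)/2$. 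Condition \eqref{eqbarmu} is exactly what makes the bracket positive. Hence $E(\cdot,\Omega)$ is bounded below (indeed nonnegative) on $S_\mu(\Omega)$ and \eqref{eqcoer} holds.

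With coercivity in hand, the existence part carries over unchanged. The levels $\nu_{\mu,k}$ and $\nu_{\mu,k,\delta}$ are finite by Lemma \ref{lemsigmanotempty} and Lemma \ref{lemAcapSnotemp}. I choose $\rho=\rho(\mu,k)$, independent of $\delta$, with $\{E\le \nu_{\mu,k}+1\}\subset\B_{\rho-1}^\mu$, and set $\tilde\delta=\min\{\sqrt{\la_1(\Omega)\mu/2},\hat\delta(\rho,\mu)\}$. Arguing by contradiction with Proposition \ref{propdescendingflow} then yields a Palais--Smale sequence in $\B_\rho^\mu\cap S^*(\delta)$. The refined version of this sequence comes from Theorem \ref{thmuepsym}, with (A1) supplied by Proposition \ref{propdescendingflow} and (A2) by Lemma \ref{leminf>0}; it carries $\tilde m_{\zeta_n}(u_n)\le k$. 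Lemma \ref{lempscondition} gives strong convergence, and Lemma \ref{lemmorse} then gives a critical point $u_{\mu,k}\in\overline{S^*(\delta)}$, which is therefore sign-changing, with $m(u_{\mu,k})\le k+1$. As before, $A$ is reduced to $h(\s^{k-1})$.

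For the asymptotics as $k\to\infty$, Lemma \ref{lemnonempty} together with the displayed lower bound gives
\begin{align*}
	\nu_{\mu,k,\delta_k}\ge \left(\frac12-\frac1{p_c}C_{p_c,N}^{p_c}\mu^{\frac{p_c-2}{2}}\right)\la_k(\Omega)\mu\to+\infty.
\end{align*}
For fixed $k$ and $\mu\to0^+$, testing with $\mathbb{M}_k$ gives $\nu_{\mu,k,\delta_\mu}\le\frac12\la_k(\Omega)\mu\to0$, and the lower bound gives $\nu_{\mu,k,\delta_\mu}\ge0$. So $\nu_{\mu,k,\delta_\mu}\to0$ directly, and the blow-up argument of the subcritical case is not needed here; this part is actually easier than before.

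The main point to watch is that every step of the subcritical proof that used $p<p_c$ went only through coercivity and boundedness below. In particular, Proposition \ref{propuniquesolutionw} only needs \eqref{eqcononbarla}, which is satisfiable for any $\rho,\mu$ by taking $\bar\la$ large. So no step relies on Proposition \ref{proprelationship}(1), which fails at $p=p_c$. The $\mu\to+\infty$ statement is dropped, consistent with the restriction $\mu<\bar\mu$.
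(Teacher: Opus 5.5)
Your proposal is correct and follows essentially the same route as the paper. With $p\ga_p=2$, the Gagliardo--Nirenberg bound becomes the two-sided estimate \eqref{eq5.3two}, and \eqref{eqbarmu} makes its coefficient positive, so the existence and Morse-index part of Theorem \ref{thmgenussub} carries over verbatim. The asymptotics then follow, as in the paper, from Lemma \ref{lemnonempty} with $\la_k(\Omega)\mu\to+\infty$, and from the sandwich $0\le\nu_{\mu,k,\delta_\mu}\le\frac12\la_k(\Omega)\mu$ obtained by testing on $\mathbb{M}_k$, with no blow-up argument needed.
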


\begin{proof}

	When $p = p_c$, the value $p\ga_p$ is equal to $2$ and \eqref{eq5.3} becomes
	\begin{align} \label{eq5.3two}
		\frac12 \int_\Omega |\nabla u|^2dx \ge E(u,\Omega) \geq \left( \frac12 - \frac1p C_{p,N}^p \mu^{\frac{p-2}{2}}\right)  \int_\Omega |\nabla u|^2dx.
	\end{align}
    Using \eqref{eqbarmu} and along the line in proving Theorem \ref{thmgenussub}, we obtain the existence of $\tilde \delta = \tilde \delta(\mu,k) > 0$ such that when $0 < \delta < \tilde \delta$, there exists a sign-changing constrained critical point $u_{\mu,k}$ of $E(\cdot,\Omega)$ on $S_\mu(\Omega)$ at the level $\nu_{\mu,k,\delta}$. Moreover, $u_{\mu,k}$ solves \eqref{eqonbounddomain} with $\la = \la_{\mu,k}$ for some $\la_{\mu,k} \in \R$ and $m(u_{\mu,k}) \le k+1$.

    When $\mu$ is fixed, for any sequence $\{\delta_k\}_{k \ge 2}$ such that $0 < \delta_k < \tilde \delta$, by Lemma \ref{lemnonempty} and \eqref{eq5.3two} we get, as $k \to +\infty$,
    \begin{align}
    	\nu_{\mu,k,\delta_k} \geq \left( \frac12 - \frac1p C_{p,N}^p \mu^{\frac{p-2}{2}}\right)\inf_{S_\mu(\Omega) \cap \text{span}\{\phi_1,\cdots,\phi_{k-1}\}^{\bot}}\int_\Omega |\nabla u|^2dx \ge \left( \frac12 - \frac1p C_{p,N}^p \mu^{\frac{p-2}{2}}\right)\la_k(\Omega) \mu \to +\infty.
    \end{align}

    When $k$ is fixed, for any sequence $\{\delta_\mu\}$ such that $0 < \delta_\mu < \tilde \delta$, by \eqref{eq5.3two} and the fact that $\mathbb{M}_k  \in \Sigma_{\mu}^{(k)}$, it follows that
    \begin{align*}
    	0 \le \nu_{\mu,k,\delta_\mu} \le \frac12 \sup_{u \in \mathbb{M}_k}\int_\Omega |\nabla u|^2dx \le \frac12\la_k(\Omega) \mu \to 0^+ \quad \text{as } \mu \to 0^+,
    \end{align*}
    where $\mathbb{M}_k$ is defined in \eqref{eqdefMk}, which shows that $\nu_{\mu,k,\delta_\mu} \to 0$ as $\mu \to 0^+$. The proof is complete.
\end{proof}

\subsection{$L^2$-supercritical case}

In the $L^2$-supercritical case, $p\ga_p > 2$ and $E(\cdot,\Omega)$ is not bounded from below on $S_\mu(\Omega)$ and \eqref{eqcoer} fails. This brings difficulties in finding bounded Palais-Smale sequences. To overcome this issue, we introduce the following minimax value:
\begin{align}
	\nu_{\rho,\mu,k,\delta} := \inf_{A \in \Sigma_{\rho,\mu}^{(k)}}\sup_{u \in A \cap S^*(\delta)}E(u,\Omega).
\end{align}
We will prove $\nu_{\rho,\mu,k,\delta}$ is indeed a critical value under certain conditions. More precisely, we have the following theorem.

\begin{theorem} \label{thmgenus}
	Let $k \geq 2$, $\mu > 0$, $\rho \geq \la_k(\Omega)\mu$, $\tau > 0$ be fixed, and let us denote with $\K_c$ the set of critical points of $E$ constrained to $S_\mu(\Omega)$ at level $c$ contained in $\B_\rho^\mu$. If
	\begin{align} \label{eqconininte}
		\nu_{\rho,\mu,k,\delta} < \hat \nu_{\rho,\mu,k,\delta} := \inf_{A \in \Sigma_{\rho,\mu}^{(k)} \atop A \backslash \B_{\rho -\tau}^\mu \cap S^*(\delta) \neq \emptyset} \sup_{u \in A \backslash \B_{\rho -\tau}^\mu \cap S^*(\delta)}E(u,\Omega),
	\end{align}
    then, with sufficiently small $\delta > 0$ depending on $\rho$ and on $\mu$, the set $\K_{\nu_{\rho,\mu,k,\delta}} \cap \overline{S^*(\delta)} \neq \emptyset$ and it contains a critical point of Morse index less or equal to $k+1$. Moreover, in case $\rho \geq \la_{k+r}(\Omega)\mu$, \eqref{eqconininte} holds for $k, \cdots, k+r$, and $\nu = \nu_{\rho,\mu,k,\delta} = \cdots = \nu_{\rho,\mu,k+r,\delta}$ for some $r \ge 1$, then the genus of $\K_{\nu} \cap \overline{S^*(\delta)}$ is no less than $r+1$, so that $\K_{\nu} \cap \overline{S^*(\delta)}$ contains infinitely many critical points.
\end{theorem}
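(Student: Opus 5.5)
The plan is to repeat the scheme of Theorem \ref{thmgenussub}, now working inside the bounded set $\overline{\B_\rho^\mu}$. Condition \eqref{eqconininte} is what keeps the descending flow from leaving $\B_\rho^\mu$. Set $c=\nu_{\rho,\mu,k,\delta}$. By Lemma \ref{lemsigmanotempty} and Lemma \ref{lemAcapSnotemp}, $c$ is finite once $\delta<\sqrt{\la_1(\Omega)\mu/2}$; it is bounded below on $\overline{\B_\rho^\mu}$ because $E\ge -\frac1p C_{p,N}^p\mu^{p(1-\ga_p)/2}\rho^{p\ga_p/2}$ there. Choose $\ep>0$ with $2\ep<\hat\nu_{\rho,\mu,k,\delta}-c$, and pick $A\in\Sigma_{\rho,\mu}^{(k)}$ with $\sup_{A\cap S^*(\delta)}E<c+\ep$. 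By the definition of $\hat\nu$, no $u\in A\cap S^*(\delta)$ lies outside $\B_{\rho-\tau}^\mu$, so $A\cap S^*(\delta)\subset \B^\mu_{\rho-\tau}$.

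Take $S=A\cap S^*(\delta)$ and $\xi>0$ so small that $S(2\xi)\subset\B_{\rho-\tau/2}^\mu\subset\B_\rho^\mu$. Fix $\bar\la=\bar\la(\rho,\mu)$ satisfying \eqref{eqcononbarla}, and $\delta<\hat\delta(\rho,\mu)$ from Lemma \ref{lemproG4}. Suppose \eqref{eqlowboundofV} holds. Then Proposition \ref{propdescendingflow} gives an odd flow $\eta$, and I will check that $\eta(\xi,A)\in\Sigma_{\rho,\mu}^{(k)}$. Points of $A$ outside $S(2\xi)$ are fixed by (i). Points of $S(2\xi)$ move at most a distance $\xi$ (since $\|W\|\le1$), so they stay in $\B_\rho^\mu$. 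Oddness and continuity preserve genus, and closedness is preserved because $\eta(\xi,\cdot)$ is a homeomorphism. Properties (iii) and (iv) then give $\sup_{\eta(\xi,A)\cap S^*(\delta)}E\le c-\ep$, which contradicts the definition of $c$. Hence there are $u_n\in S(2\xi)\cap S^*(\delta)\subset \B^\mu_\rho$ with $E(u_n)\to c$ and $V(u_n)\to0$. Lemma \ref{lempscondition} gives a strong limit $u\in\overline{S^*(\delta)}\cap S_\mu(\Omega)$. Its $H^1_0$-norm squared is at most $\rho-\tau/2<\rho$, so $u\in\K_c$, and $u$ is sign-changing.

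For the Morse index I reduce $A$ to $h(\s^{k-1})$, as in the subcritical proof, and apply Theorem \ref{thmuepsym}. The choices are $P^*=\p_{\delta/2}\cup(-\p)_{\delta/2}$, $P^{**}=\p_\delta\cup(-\p)_\delta$, $R$ with $\B_\rho^\mu\subset B(0,R-1)$, and $D=\s^{k-1}$. Assumption (A1) comes from Proposition \ref{propdescendingflow}, where \eqref{eqC0} follows from Lemma \ref{lempseudogradient}, and (A2) comes from Lemma \ref{leminf>0}. Condition \eqref{eqboundedballsym} holds by the localization above. This produces a Palais--Smale sequence with $\tilde m_{\zeta_n}(u_n)\le k$, and Lemma \ref{lemmorse} then gives $m(u)\le k+1$. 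The step I expect to be the main obstacle is verifying \eqref{eqsup>c} for arbitrary odd $g$. The flow need not keep $g(D)$ inside $\overline{\B_\rho^\mu}$, so $\eta(\xi,g(D))$ may fail to lie in $\Sigma_{\rho,\mu}^{(k)}$. I will restrict attention to those $g$ that are $\ep$-almost optimal. For these, the localization through $\hat\nu$ forces the relevant part of $g(D)$ into $\B_{\rho-\tau}^\mu$, so the flow stays in $\B_\rho^\mu$. Alternatively, I will invoke Theorem \ref{thmuepsym} only with $f=g$ chosen near-optimal, which is all the argument actually uses.

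For the multiplicity statement I use the standard genus argument. Suppose $\ga(\K_\nu\cap\overline{S^*(\delta)})\le r$; this set is compact by Lemma \ref{lempscondition} and symmetric. Choose a closed symmetric neighborhood $N$ of it with $\ga(N)\le r$, and carry out the construction of Proposition \ref{propdescendingflow} with $S$ replaced by $S\setminus N$. The lower bound \eqref{eqlowboundofV} then holds away from $N$, again by the Palais--Smale property. Take $A\in\Sigma_{\rho,\mu}^{(k+r)}$ nearly optimal for $\nu_{\rho,\mu,k+r,\delta}=\nu$. Then $\overline{A\setminus N}$ has genus at least $k$ and lies in $\overline{\B_\rho^\mu}$. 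Deforming it yields a set in $\Sigma^{(k)}_{\rho,\mu}$ whose sup over $S^*(\delta)$ is at most $\nu-\ep$, which contradicts $\nu_{\rho,\mu,k,\delta}=\nu$. Therefore the genus is at least $r+1\ge2$, and a set of genus at least $2$ is infinite.
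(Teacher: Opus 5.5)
Your proposal is correct and follows essentially the paper's own route. You localize a near-optimal $A$ so that $A\cap S^*(\delta)\subset\B_{\rho-\tau}^\mu$ via \eqref{eqconininte}, deform with the odd flow of Proposition \ref{propdescendingflow} to extract a Palais--Smale sequence in $S^*(\delta)$ that converges by Lemma \ref{lempscondition}, get the Morse bound from Theorem \ref{thmuepsym} and Lemma \ref{lemmorse}, and obtain multiplicity from the standard genus-subadditivity argument with a symmetric neighborhood of the critical set. Your explicit check that $\eta(\xi,A)$ stays in $\overline{\B_\rho^\mu}$ makes precise a step the paper only asserts (``it is clear'', ``similar to''); so does your observation that \eqref{eqsup>c} can only be verified for small perturbations of a near-optimal $f$, which are the only $g$ the proof of Theorem \ref{thmuepsym} actually uses.
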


\begin{proof}
	Recall that
	\begin{align*}
		\frac12 \int_\Omega |\nabla u|^2dx \ge E(u,\Omega) \geq \frac12 \int_\Omega |\nabla u|^2dx - \frac1p C_{p,N}^p \mu^{\frac{p(1-\ga_p)}{2}}\left( \int_\Omega |\nabla u|^2dx\right) ^{\frac{p\ga_p}{2}}, \quad \forall u \in S_\mu(\Omega).
	\end{align*}
    Note that $p\ga_p > 2$ when $p_c < p < 2^*$. It is cleat that $|E(u,\Omega)|$ is uniformly bounded with respect to $u \in \B_\rho^\mu$. Together with Lemma \ref{lemsigmanotempty} and Lemma \ref{lemAcapSnotemp}, the value $\nu_{\rho,\mu,k,\delta}$ is well defined and finite. Firstly, we assume that there exists $A \in \Sigma_{\rho,\mu}^{(k)}$ such that $A \backslash \B_{\rho -\tau}^\mu \cap S^*(\delta) \neq \emptyset$, in this case, the value $\hat \nu_{\rho,\mu,k,\delta}$ is well defined and finite. By definition of $\nu_{\rho,\mu,k,\delta}$, for any $\ep > 0$, there exists $A \in \Sigma_{\rho,\mu}^{(k)}$ such that $\sup_{u \in A \cap S^*(\delta)}E(u,\Omega) < \nu_{\rho,\mu,k,\delta} + \ep$. With $\ep > 0$ small enough, it follows by assumption \eqref{eqconininte} that $A \cap S^*(\delta) \subset \B_{\rho-\tau}^\mu$. Then, we aim to apply Proposition \ref{propdescendingflow} with $c  = \nu_{\rho,\mu,k,\delta}$ and $S = A \cap S^*(\delta)$. Let us take $\xi > 0$, depending on $\tau$ but not on $A$ and not on $\ep$, such that $S(2\xi) \subset \B_{\rho-\tau/2}^\mu$, where the definition of $S(2\xi)$ is given in \eqref{eqS2xi}. We claim that there exists $u_\ep \in E^{-1}([c-2\epsilon,c+2\epsilon],\Omega) \cap S(2\xi) \cap S^*(\delta)$ such that $\|V(u_\ep)\| < 2\epsilon /\xi$, where $V$ is the pseudogradient defined in \eqref{def:V} with $\tau =1$. Otherwise, \eqref{eqlowboundofV} holds true and by Proposition \ref{propdescendingflow}, when $\delta > 0$ is sufficiently small, there exists $\eta: C([0,\infty) \times S_\mu(\Omega), S_\mu(\Omega))$ such that $\eta(t,-u) = \eta(t,u)$ and
    \begin{itemize}
    	\item[(i)] $\eta(t,u) \in D_\delta^*$ for all $t \geq 0$ if $u \in D_\delta^*$,
    	\item[(ii)] $\forall u \in S$ with $E(u,\Omega) \leq \nu_{\rho,\mu,k,\delta} + \epsilon$, either $\eta(\xi,u) \in D_\delta^*$ or $E(\eta(\xi,u),\Omega) \leq \nu_{\rho,\mu,k,\delta} -\epsilon$.
    \end{itemize}
    It is clear that $\eta(\xi,A) \in \Sigma_{\rho,\mu}^{(k)}$ and so $\sup_{u \in \eta(\xi,A) \cap S^*(\delta)}E(u,\Omega) \geq \nu_{\rho,\mu,k,\delta}$. On the other hand, by properties (i) and (ii) we deduce that $\sup_{u \in \eta(\xi,A) \cap S^*(\delta)}E(u,\Omega) \leq \nu_{\rho,\mu,k,\delta} - \ep < \nu_{\rho,\mu,k,\delta}$. This is a contradiction and validates the claim. At this point, with $\ep_n \to 0^+$ we obtain an $H_0^1$-bounded sequence $\{u_n\} = \{u_{\ep_n}\} \subset \B_{\rho-\tau/2}^\mu \cap S^*(\delta)$ such that
    \begin{align} \label{eqpssequence}
    	E(u_n,\Omega) \to \nu_{\rho,\mu,k,\delta}, \quad V(u_n) \to 0 \text{ strongly in } H_0^1(\Omega), \quad \text{ as } n \to \infty.
    \end{align}
    By Lemma \ref{lempscondition}, $\{u_n\}$ has a strongly convergent subsequence and we obtain a constrained critical point $u$ contained in $\B_\rho^\mu \cap \overline{S^*(\delta)}$, that is, $\K_{\nu_{\rho,\mu,k,\delta}} \cap \overline{S^*(\delta)} \neq \emptyset$.

    To obtain the Morse index information, similar to the proof of Theorem \ref{thmgenussub}, in view of Lemma \ref{leminf>0} and Proposition \ref{propdescendingflow}, using Theorem \ref{thmuepsym}, we can further assume that $\tilde m_{\zeta_n} (u_n) \leq k$ for some $\{\zeta_n\}$ with $\zeta_n \to 0^+$, where $\{u_n\}\subset \B_{\rho-\tau/2}^\mu \cap S^*(\delta)$ is the Palais-Smale sequence satisfying \eqref{eqpssequence}. Then by Lemma \ref{lemmorse} we obtain a constrained critical point $u \in \B_\rho^\mu \cap \overline{S^*(\delta)}$ such that $\tilde m_0(u) \leq k$ and $m(u) \leq k+1$.

    Secondly, we consider the case such that $A \backslash \B_{\rho -\tau}^\mu \cap S^*(\delta) = \emptyset$ for any $A \in \Sigma_{\rho,\mu}^{(k)}$. The value $\nu_{\rho,\mu,k,\delta}$ is no longer well defined, however, we have $A \cap S^*(\delta) \subset \B_{\rho -\tau}^\mu$ for $A \in \Sigma_{\rho,\mu}^{(k)}$. Then following previously arguments step by step we conclude that for sufficiently small $\delta > 0$, the set $\K_{\nu_{\rho,\mu,k,\delta}} \cap \overline{S^*(\delta)} \neq \emptyset$ and it contains a critical point of Morse index less or equal to $k+1$.

    Finally, we suppose that $\rho \geq \la_{k+r}(\Omega)\mu$, \eqref{eqconininte} holds for $k, \cdots, k+r$, and $\nu = \nu_{\rho,\mu,k,\delta} = \cdots = \nu_{\rho,\mu,k+r,\delta}$ for some $r \ge 1$. The fact $\ga(\K_{\nu} \cap S^*(\delta)) \geq r+1$ can be proved by combining the flow invariance arguments in Section \ref{secinv} and extension of standard arguments in such a question without sign-change requirement. By the definition of $\nu_{\rho,\mu,k+r,\delta}$, for $\ep > 0$ there exists $A \in \Sigma_{\rho,\mu}^{(k+r)}$ such that $\sup_{u \in A \cap S^*(\delta)}E(u,\Omega) < \nu + \ep$. With $\ep > 0$ small enough, it follows by assumption \eqref{eqconininte} for $k+r$ that $A \cap S^*(\delta) \subset \B_{\rho-\tau}^\mu$. Let $N$ be a small invariant open neighborhood of $\mathcal{K}_\nu \cap \overline{S^*(\delta)}$ in $S_\mu(\Omega)$ such that $\ga(\mathcal{K}_\nu \cap \overline{S^*(\delta)}) = \ga(\overline{N})$. Then, we aim to apply Proposition \ref{propdescendingflow} with $c  = \nu$ and $S = A \backslash (A \cap N)$. Let us take $\xi > 0$, depending on $\tau$ but not on $A$ and not on $\ep$, such that $S(2\xi) \subset \B_{\rho-\tau/2}^\mu$. Then, by Lemma \ref{lempscondition}, the assumption \eqref{eqlowboundofV} holds true if we assume that $\ep > 0$ is sufficiently small. Proposition \ref{propdescendingflow} yields that there exists $\eta: C([0,\infty) \times S_\mu(\Omega), S_\mu(\Omega))$ such that $\eta(t,-u) = \eta(t,u)$ and
    \begin{itemize}
    	\item[(i)] $\eta(t,u) \in D_\delta^*$ for all $t \geq 0$ if $u \in D_\delta^*$,
    	\item[(ii)] $\forall u \in A \backslash (A \cap N)$ with $E(u,\Omega) \leq \nu + \epsilon$, either $\eta(\xi,u) \in D_\delta^*$ or $E(\eta(\xi,u),\Omega) \leq \nu -\epsilon$.
    \end{itemize}
    We claim that $\ga(A \backslash (A \cap N)) \le k -1$. Otherwise, we have $\eta(\xi,A \backslash (A \cap N)) \in \Sigma_{\rho,\mu}^{(k)}$. However, it holds that
    \begin{align*}
    	\sup_{\eta(\xi,A \backslash (A \cap N)) \cap S^*(\delta)}E(\cdot,\Omega) \leq \nu - \ep < \nu,
    \end{align*}
    contradicting the definition of $\nu_{\rho,\mu,k,\delta}$. Then well known properties of genus yield that
    \begin{align*}
    	k + r & \le \ga(A) \le \ga(A \cap \overline{N}) + \ga(A \backslash (A \cap N)) \\
    	& \le \ga(\overline{N}) + k -1 \\
    	& = \ga(\mathcal{K}_\nu \cap \overline{S^*(\delta)}) + k -1,
    \end{align*}
    showing that $\ga(\mathcal{K}_\nu \cap \overline{S^*(\delta)}) \ge r +1$. The proof is complete.
\end{proof}

We now provide a sufficient condition to guarantee the validity of assumption \eqref{eqconininte}.

\begin{lemma} \label{lemsufficientcon}
	Let $k \geq 2$, $\mu > 0$, $\rho > 0$ satisfy
	\begin{align} \label{eqsufficientcon}
		\la_k(\Omega)\mu + \frac2p C_{p,N}^p \mu^{\frac{p(1-\ga_p)}{2}}\rho ^{\frac{p\ga_p}{2}} < \rho + \frac2p |\Omega|^{\frac{2-p}{2}}\mu^{\frac p2},
	\end{align}
	then \eqref{eqconininte} holds true for sufficiently small $\tau > 0$.
\end{lemma}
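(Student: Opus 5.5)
The plan is to bound $\nu_{\rho,\mu,k,\delta}$ from above by testing with $\mathbb{M}_k$, and to bound $\hat\nu_{\rho,\mu,k,\delta}$ from below by a pointwise estimate on the shell $\overline{\B_\rho^\mu}\backslash\B_{\rho-\tau}^\mu$. Condition \eqref{eqsufficientcon} should be exactly the gap between these two bounds.

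\emph{Upper bound.} By Lemma \ref{lemsigmanotempty}, the hypothesis \eqref{eqsufficientcon} gives $\la_k(\Omega)\mu<\rho$, so $\mathbb{M}_k\in\Sigma_{\rho,\mu}^{(k)}$. For $u=\sum a_i\sqrt\mu\phi_i\in\mathbb{M}_k$ we have $\int_\Omega|\nabla u|^2dx\le\la_k(\Omega)\mu$. Hölder's inequality on the bounded domain gives
\begin{align*}
\mu=\int_\Omega|u|^2dx\le|\Omega|^{\frac{p-2}{p}}\|u\|_{L^p}^2,
\end{align*}
that is, $\int_\Omega|u|^pdx\ge|\Omega|^{\frac{2-p}{2}}\mu^{\frac p2}$ for every $u\in S_\mu(\Omega)$. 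Hence
\begin{align*}
\nu_{\rho,\mu,k,\delta}\le\sup_{u\in\mathbb{M}_k\cap S^*(\delta)}E(u,\Omega)\le\frac12\la_k(\Omega)\mu-\frac1p|\Omega|^{\frac{2-p}{2}}\mu^{\frac p2}.
\end{align*}
If $\mathbb{M}_k\cap S^*(\delta)$ were empty, Lemma \ref{lemAcapSnotemp} would be contradicted, so the supremum is over a nonempty set.

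\emph{Lower bound.} Any admissible $A$ in the definition of $\hat\nu$ contains a point $u\in\overline{\B_\rho^\mu}$ with $\rho-\tau\le t:=\int_\Omega|\nabla u|^2dx\le\rho$. The Gagliardo–Nirenberg inequality \eqref{eqgndomain} gives
\begin{align*}
E(u,\Omega)\ge\frac12 t-\frac1pC_{p,N}^p\mu^{\frac{p(1-\ga_p)}2}t^{\frac{p\ga_p}2}\ge\frac12(\rho-\tau)-\frac1pC_{p,N}^p\mu^{\frac{p(1-\ga_p)}2}\rho^{\frac{p\ga_p}2}.
\end{align*}
Therefore $\hat\nu_{\rho,\mu,k,\delta}$ is at least this right-hand side. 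If no admissible $A$ exists, then $\hat\nu=+\infty$ by the usual convention, and the second case in the proof of Theorem \ref{thmgenus} applies anyway.

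\emph{Conclusion.} Multiplying \eqref{eqsufficientcon} by $\frac12$ gives
\begin{align*}
\frac12\la_k(\Omega)\mu-\frac1p|\Omega|^{\frac{2-p}{2}}\mu^{\frac p2}<\frac12\rho-\frac1pC_{p,N}^p\mu^{\frac{p(1-\ga_p)}2}\rho^{\frac{p\ga_p}2}.
\end{align*}
The inequality is strict and the lower bound depends continuously on $\tau$. So there is $\tau_0>0$ such that the lower bound stays strictly above the upper bound for $0<\tau<\tau_0$. This yields \eqref{eqconininte}, uniformly in $\delta$.

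The only step needing care is the lower bound: we must use monotonicity in $t$ correctly, bounding the positive term below by its value at $t=\rho-\tau$ and the negative term above by its value at $t=\rho$. No real obstacle is expected.
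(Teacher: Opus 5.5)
Your argument is the paper's own proof. You bound $\nu_{\rho,\mu,k,\delta}$ from above by testing with $\mathbb{M}_k$ together with the H\"older lower bound on $\int_\Omega|u|^p\,dx$. You bound $\hat\nu_{\rho,\mu,k,\delta}$ from below by Gagliardo--Nirenberg on the shell $\overline{\B_\rho^\mu}\backslash\B_{\rho-\tau}^\mu$, and you conclude from the strict inequality \eqref{eqsufficientcon}. Your explicit error term $-\tau/2$ is just the paper's $o_\tau(1)$ written out.

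One justification needs repair. \eqref{eqsufficientcon} alone does not give $\la_k(\Omega)\mu<\rho$: for large $\mu$ and $\rho\to0^+$ it holds while $\overline{\B_\rho^\mu}=\emptyset$. Also, Lemma \ref{lemsigmanotempty} only runs in the direction from $\la_k(\Omega)\mu\le\rho$ to non-emptiness. There are two ways to fix this:
\begin{itemize}
\item Invoke the standing hypothesis $\rho\ge\la_k(\Omega)\mu$ of Theorem \ref{thmgenus}, which the paper also uses implicitly.
\item If $\rho\ge\la_1(\Omega)\mu$, pick any $u\in\U_\rho^\mu$. Then H\"older and Gagliardo--Nirenberg give $|\Omega|^{\frac{2-p}{2}}\mu^{\frac p2}\le C_{p,N}^p\mu^{\frac{p(1-\ga_p)}{2}}\rho^{\frac{p\ga_p}{2}}$, and \eqref{eqsufficientcon} then forces $\la_k(\Omega)\mu<\rho$.
\end{itemize}
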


\begin{proof}
	On the one hand, observing $\mathbb{M}_k \subset \Sigma_{\rho,\mu}^{(k)}$ and $\int_\Omega|u|^pdx \geq |\Omega|^{\frac{2-p}{2}}\mu^{\frac p2}$, we have
	\begin{align} \label{eqnukleq}
		\nu_{\rho,\mu,k,\delta} \leq \frac12 \la_k(\Omega)\mu - \frac1p |\Omega|^{\frac{2-p}{2}}\mu^{\frac p2}.
	\end{align}
    On the other hand, by \eqref{eq5.3} it holds that
    \begin{align*}
    	\hat \nu_{\rho,\mu,k,\delta} \geq \inf_{\overline{\B_\rho^\mu} \backslash \B_{\rho-\tau}^\mu} E(\cdot,\Omega) \ge \frac12 \rho - \frac1p C_{p,N}^p \mu^{\frac{p(1-\ga_p)}{2}}\rho ^{\frac{p\ga_p}{2}} + o_\tau(1),
    \end{align*}
    where $o_\tau(1) \to 0$ as $\tau \to 0^+$. Thus, using \eqref{eqsufficientcon}, with $\tau > 0$ sufficiently small we have that \eqref{eqconininte} holds true.
\end{proof}

\subsection{Proof of Theorem \ref{thmsmall}}

At the end of this section we complete the proof of Theorem \ref{thmsmall}.

\begin{proof}[Proof of Theorem \ref{thmsmall}]
	We consider subcritical, critical, supercritical cases respectively.
	
	\vskip0.1in
	\emph{(1) Subcritical case with $2 < p < p_c$: }
	
	By Theorem \ref{thmgenussub}, for $k \geq 2$ and $\mu > 0$, there exists $\tilde \delta = \tilde \delta(\mu,k) > 0$ such that when $0 < \delta < \tilde \delta$, there exists a sign-changing constrained critical point $u_{\mu,k}$ of $E(\cdot,\Omega)$ on $S_\mu(\Omega)$ at the level $\nu_{\mu,k,\delta}$. Choosing $\delta_k \in (0,\tilde \delta)$, we have $\nu_{\mu,k,\delta_k} \to +\infty$ as $k \to +\infty$. Thus we can find a sequence $\{k_j\}$ with $k_1 = 2$ such that $\nu_{\mu,k_1,\delta_{k_1}} < \cdots < \nu_{\mu,k_j,\delta_{k_j}} < \cdots$. We redeonte $u_{\mu,j} = u_{\mu,k_j}$ and proved that \eqref{eqonbounddomain} has infinitely many sign-changing solutions $u_{\mu,1}, \cdots, u_{\mu,j}, \cdots$ . For fixed $\mu > 0$, we have
	\begin{align*}
		E(u_{\mu,j},\Omega) = \nu_{\mu,k_j,\delta_{k_j}} \to +\infty \quad \text{as } j \to +\infty.
	\end{align*}
	Moreover, for fixed $j$, the choice of $\delta_{k_j}$ depends on $\mu$ and we denote it by $\delta_\mu$. Then, by Theorem \ref{thmgenussub},
	\begin{align*}
		E(u_{\mu,j},\Omega) = \nu_{\mu,k_j,\delta_{\mu}} \to 0 \quad \text{as } \mu \to 0^+,
	\end{align*}
    and
    \begin{align*}
    	E(u_{\mu,j},\Omega) = \nu_{\mu,k_j,\delta_{\mu}} \to -\infty \quad \text{as } \mu \to +\infty.
    \end{align*}
    Let $\la_{\mu,j}$ be the Lagrange multiplier corresponding to $u_{\mu,j}$. As a direct consequence of Proposition \ref{proprelationship} we have $\la_{\mu,j} \to +\infty$ as $\mu \to +\infty$.

    In the following we study the limit behavior of $\la_{\mu,j}$ when $\mu \to 0^+$.

For fixed $j$, on the one hand, by \eqref{eq7.8} we have
\begin{align*}
	E(u_{\mu,j},\Omega) = \nu_{\mu,k_j,\delta_\mu} \le \frac12\la_{k_j}(\Omega) \mu.
\end{align*}
On the other hand, using \eqref{eq5.7} we get that
\begin{align*}
	E(u_{\mu,j},\Omega) = \nu_{\mu,k_j,\delta_\mu} \geq \inf_{S_\mu(\Omega) \cap \text{span}\{\phi_1,\cdots,\phi_{k_j-1}\}^{\bot}}\left( \frac12 \int_\Omega |\nabla u|^2dx - \frac1p C_{p,N}^p \mu^{\frac{p(1-\ga_p)}{2}}\left( \int_\Omega |\nabla u|^2dx\right) ^{\frac{p\ga_p}{2}} \right).
\end{align*}
Let
\begin{align*}
	f(t) = \frac12 t - \frac1p C_{p,N}^p \mu^{\frac{p(1-\ga_p)}{2}}t ^{\frac{p\ga_p}{2}}, \quad t > 0.
\end{align*}
Computing directly we get that
\begin{align*}
	f'(t) = \frac12 - \frac{\ga_p}2 C_{p,N}^p \mu^{\frac{p(1-\ga_p)}{2}}t ^{\frac{p\ga_p}{2}-1},
\end{align*}
and since $p\ga_p < 2$,
\begin{align*}
	\min_{t > 0}f(t) = f(t_\mu) = \left( \frac12 - \frac{1}{p\ga_p}\right) t_\mu \to 0 \quad \text{as } \mu \to 0^+,
\end{align*}
where
\begin{align*}
	t_\mu = \left( \frac{1}{\ga_pC_{p,N}^p}\right) ^{\frac{2}{p\ga_p-2}}\mu^{\frac{p(1-\ga_p)}{2-p\ga_p}} \to 0 \quad \text{as } \mu \to 0^+.
\end{align*}
Note that $\int_\Omega |\nabla u|^2dx \geq \la_{k_j}(\Omega)\mu$ for $u \in S_\mu(\Omega) \cap \text{span}\{\phi_1,\cdots,\phi_{k_j-1}\}^{\bot}$. By the fact
\begin{align*}
	\frac{p(1-\ga_p)}{2-p\ga_p} > 1,
\end{align*}
as $\mu \to 0^+$, it is not restrictive to assume that $\la_{k_j}(\Omega)\mu > t_\mu$. Thus
\begin{align*}
	E(u_{\mu,j},\Omega) \geq \inf_{u \in S_\mu(\Omega) \cap \text{span}\{\phi_1,\cdots,\phi_{k_j-1}\}^{\bot}}f(\int_\Omega |\nabla u|^2dx) \ge f(\la_{k_j}(\Omega)\mu).
\end{align*}
Recalling that $E(u_{\mu,j},\Omega) \le \frac12\la_{k_j}(\Omega) \mu$, we get
\begin{align*}
	\frac12\la_{k_j}(\Omega) - \frac1p C_{p,N}^p \la_{k_j}(\Omega)^{\frac{p\ga_p}{2}} \mu^{\frac{p}{2}-1} \le \mu^{-1}E(u_{\mu,j},\Omega) \le \frac12\la_{k_j}(\Omega),
\end{align*}
implying that
\begin{align*}
	\mu^{-1}E(u_{\mu,j},\Omega) \to \frac12\la_{k_j}(\Omega) \quad \text{as } \mu \to 0^+.
\end{align*}
By Lemma \ref{lemlanbounded} and Proposition \ref{proprelationship}, we have that $u_{\mu,j}$ is bounded in $L^\infty$ uniformly with respect to $\mu \to 0^+$, and so
\begin{align*}
	\int_\Omega |u_{\mu,j}|^pdx \le \|u_{\mu,j}\|_{L^\infty}^{p-2}\mu \to 0 \quad \text{as } \mu \to 0^+,
\end{align*}
thus
\begin{align*}
	\int_\Omega |\nabla u_{\mu,j}|^2dx = 2E(u_{\mu,j},\Omega) + \frac2p\int_\Omega |u_{\mu,j}|^pdx \to 0 \quad \text{as } \mu \to 0^+.
\end{align*}
By Sobolev inequality we see that
\begin{align*}
	\frac{\int_\Omega |u_{\mu,j}|^pdx}{\int_\Omega |\nabla u_{\mu,j}|^2dx} \to 0 \quad \text{as } \mu \to 0^+.
\end{align*}
Then we can conclude
\begin{align*}
	\mu^{-1}\int_\Omega|\nabla u_{\mu,j}|^2dx \to \la_{k_j}(\Omega) \quad \text{as } \mu \to 0^+.
\end{align*}
From
\begin{align*}
	\int_\Omega |\nabla u_{\mu,j}|^2 dx + \la_{\mu,j} \int_\Omega |u_{\mu,j}|^2dx = \int_\Omega |u_{\mu,j}|^p dx,
\end{align*}
it follows that
\begin{align*}
	\la_{\mu,j} = \mu^{-1}\int_\Omega |u_{\mu,j}|^p dx - \mu^{-1}\int_\Omega |\nabla u_{\mu,j}|^2 dx \to - \la_{k_j}(\Omega) \quad \text{as } \mu \to 0^+.
\end{align*}
Particularly, since $k_1 = 2$ we have $\la_{\mu,2} \to -\la_2(\Omega)$ as $\mu \to 0^+$. We complete the proof of this case.

\vskip0.1in
\emph{(2) Critical case with $p = p_c$: }

By Theorem \ref{thmgenuscri}, for $k \geq 2$ and $\mu \in (0,\bar\mu)$ where $\bar\mu$ is given in \eqref{eqbarmu}, there exists $\tilde \delta = \tilde \delta(\mu,k) > 0$ such that when $0 < \delta < \tilde \delta$, there exists a sign-changing constrained critical point $u_{\mu,k}$ of $E(\cdot,\Omega)$ on $S_\mu(\Omega)$ at the level $\nu_{\mu,k,\delta}$. Choosing $\delta_k \in (0,\tilde \delta)$, we have $\nu_{\mu,k,\delta_k} \to +\infty$ as $k \to +\infty$. Thus we can find a sequence $\{k_j\}$ with $k_1 = 2$ such that $\nu_{\mu,k_1,\delta_{k_1}} < \cdots < \nu_{\mu,k_j,\delta_{k_j}} < \cdots$. We redeonte $u_{\mu,j} = u_{\mu,k_j}$ and proved that \eqref{eqonbounddomain} has infinitely many sign-changing solutions $u_{\mu,1}, \cdots, u_{\mu,j}, \cdots$ . For fixed $\mu > 0$, we have
\begin{align*}
	E(u_{\mu,j},\Omega) = \nu_{\mu,k_j,\delta_{k_j}} \to +\infty \quad \text{as } j \to +\infty.
\end{align*}
Moreover, for fixed $j$, the choice of $\delta_{k_j}$ depends on $\mu$ and we denote it by $\delta_\mu$. Then, by Theorem \ref{thmgenuscri},
\begin{align*}
	E(u_{\mu,j},\Omega) = \nu_{\mu,k_j,\delta_{\mu}} \to 0 \quad \text{as } \mu \to 0^+.
\end{align*}

For fixed $j$, similar to the subcritical case, we have
\begin{align*}
	\left( \frac12 - \frac1p C_{p,N}^p\mu^{\frac{p}{2}-1}\right)  \la_{k_j}(\Omega) \le \mu^{-1}E(u_{\mu,j},\Omega)  \le \frac12\la_{k_j}(\Omega).
\end{align*}
Then, by \eqref{eq5.3two} it follows
\begin{align*}
	\mu^{-1}\int_\Omega |\nabla u_{\mu,j}|^2dx \to \la_{k_j}(\Omega) \quad \text{as } \mu \to 0^+.
\end{align*}
Using \eqref{eqgninequality} we obtain
\begin{align*}
	\mu^{-1}\int_\Omega|u_{\mu,j}|^pdx \le C_{p,N}^p \mu^{\frac{p}{2}-1} \mu^{-1}\int_\Omega |\nabla u_{\mu,j}|^2dx \to 0 \quad \text{as } \mu \to 0^+.
\end{align*}
Then, along the line in the subscritical case we can complete the proof.

\vskip0.1in
\emph{(3) Supercritical case with $p_c < p < 2^*$:}

Given a positive integer $j$, there exists $\mu^*_{p,j}$ such that for all $0 < \mu < \mu^*_{p,j}$, the set
\begin{align} \label{eqsetrho}
	\biggl\{\rho: \rho > \la_{j+1}(\Omega)\mu, \ \la_{j+1}(\Omega)\mu + \frac2p C_{p,N}^p \mu^{\frac{p(1-\ga_p)}{2}}\rho ^{\frac{p\ga_p}{2}} < \rho + \frac2p |\Omega|^{\frac{2-p}{2}}\mu^{\frac p2}\biggr\}
\end{align}
is not empty. In fact, there exists $\mu^*_{p,j}$ such that for any $0 < \mu < \mu^*_{p,j}$, we can take $s = s(\mu) > 0$ such that
\begin{align*}
	\frac2p C_{p,N}^p \mu^{\frac{p}{2}}(\la_{j+1}(\Omega) + s) ^{\frac{p\ga_p}{2}} < s\mu + \frac2p |\Omega|^{\frac{2-p}{2}}\mu^{\frac p2}
\end{align*}
and thus $\rho = (\la_{j+1}(\Omega) + s)\mu$ belongs to the set defined in \eqref{eqsetrho}, showing that this set is not empty. For $\mu \in (0,\mu^*_{p,j})$ we take $\rho = \rho_\mu$ in the set defined in \eqref{eqsetrho}. By Lemma \ref{lemsufficientcon}, there exists $\tau > 0$ such that \eqref{eqconininte} holds true for $k = 2,\cdots,j+1$. By Theorem \ref{thmgenus}, for any $k \in \{2,\cdots,j+1\}$, there exists a sign-changing constrained critical point $\tilde u_{\mu,k}$ of $E(\cdot,\Omega)$ on $S_\mu(\Omega)$ at the level $\nu_{\rho,\mu,k,\delta}$, whose Morse index is less or equal to $k+1$.

We firstly assume that $\nu_{\rho,\mu,2,\delta} < \nu_{\rho,\mu,3,\delta} < \cdots < \nu_{\rho,\mu,j+1,\delta}$. Denote $u_{\mu,1} = \tilde u_{\mu,2}, \cdots, u_{\mu,j} = \tilde u_{\mu,j+1}$, and these are $j$ different sign-changing solutions of \eqref{eqonbounddomain} with $\la = \la_{\mu,1}$, $\la_{\mu,2}$, $\cdots$, $\la_{\mu,j}$. For fixed $i \in \{1,\cdots,j\}$,  denoting $\rho = \rho_\mu$ and $\delta = \delta_\mu$, by the fact that $\mathbb{M}_{i+1}  \in \Sigma_{\rho_\mu,\mu}^{(i+1)}$, it follows that
\begin{align} \label{eq7.18}
	E(u_{\mu,i},\Omega) = \nu_{\rho_\mu,\mu,i+1,\delta_\mu} \le \frac12 \sup_{u \in \mathbb{M}_{i+1}}\int_\Omega |\nabla u|^2dx \le \frac12\la_{i+1}(\Omega) \mu \to 0^+ \quad \text{as } \mu \to 0^+,
\end{align}
where $\mathbb{M}_{i+1}$ is defined in \eqref{eqdefMk}. Thus we get $\limsup_{\mu \to 0^+}\nu_{\rho_\mu,\mu,i+1,\delta_\mu} \le 0$. In the following we prove that $\liminf_{\mu \to 0^+}\nu_{\rho_\mu,\mu,i+1,\delta_\mu} \ge 0$. By negation, we suppose that there exists $\{\mu_n\}$ such that $\mu_n \to 0$ and $\lim_{n \to \infty}\nu_{\rho_n,\mu_n,i+1,\delta_n} < 0$, where $\rho_n = \rho_{\mu_n}$ and $\delta_n = \delta_{\mu_n}$. Let us take a sequence $\{u_n\} = \{u_{\mu_n,i}\} \subset S_{\mu_n}(\Omega)$ such that $E(u_n,\Omega) = \nu_{\rho_n,\mu_n,i+1,\delta_n}$, $u_n$ solves \eqref{eqonbounddomain} with $\la = \la_n$, $\mu = \mu_n \to 0$, and $m(u_n) \le i+2$. Similar to the proof of Theorem \ref{thmgenussub}, by Lemma \ref{lemlanbelow} and Lemma \ref{lemlanbounded} we get that $\la_n \to +\infty$. Following the arguments in proving Proposition \ref{proprelationship}, by using Proposition \ref{propasm} we can prove that $E(u_n,\Omega) \to +\infty$, contradicting \eqref{eq7.18}. This shows that $E(u_{\mu,i},\Omega) \to 0$ as $\mu \to 0^+$. Moreover, by Lemma \ref{lemnonempty} and \eqref{eq5.3} it holds that
\begin{align} \label{eqnuineq}
	\nu_{\rho_\mu,\mu,i+1,\delta_\mu} \geq \inf_{\B_{\rho_\mu}^\mu \cap \text{span}\{\phi_1,\cdots,\phi_{i}\}^{\bot}}\left( \frac12 \int_\Omega |\nabla u|^2dx - \frac1p C_{p,N}^p \mu^{\frac{p(1-\ga_p)}{2}}\left( \int_\Omega |\nabla u|^2dx\right) ^{\frac{p\ga_p}{2}} \right).
\end{align}
Recall that $f$ is introduced in the subcritical case by
\begin{align*}
	f(t) = \frac12 t - \frac1p C_{p,N}^p \mu^{\frac{p(1-\ga_p)}{2}}t ^{\frac{p\ga_p}{2}}, \quad t > 0,
\end{align*}
and
\begin{align*}
	f'(t) = \frac12 - \frac{\ga_p}2 C_{p,N}^p \mu^{\frac{p(1-\ga_p)}{2}}t ^{\frac{p\ga_p}{2}-1}.
\end{align*}
In this case, $p\ga_p > 2$, thus $f(t)$ is strictly decreasing in $t \ge t_\mu$,
where
\begin{align*}
	t_\mu = \left( \frac{1}{\ga_pC_{p,N}^p}\right) ^{\frac{2}{p\ga_p-2}}\mu^{\frac{p(1-\ga_p)}{2-p\ga_p}} \to 0 \quad \text{as } \mu \to 0^+.
\end{align*}
As discussed at the start of this case, $\rho_\mu$ can be chosen as $\rho_\mu = (\la_{i+1}(\Omega)+s(\mu))\mu$ where
\begin{align*}
	s(\mu) \to 0 \quad \text{and} \quad s(\mu)\mu^{\frac{2-p}{2}} \to +\infty \quad \text{as } \mu \to 0^+.
\end{align*}
Note that $\la_{i+1}(\Omega)\mu \le \int_\Omega |\nabla u|^2dx \le (\la_{i+1}(\Omega)+s(\mu))\mu$ for $u \in \B_{\rho_\mu}^\mu \cap \text{span}\{\phi_1,\cdots,\phi_{i}\}^{\bot}$. By the fact
\begin{align*}
	\frac{p(1-\ga_p)}{2-p\ga_p} > 1,
\end{align*}
as $\mu \to 0^+$, it is not restrictive to assume that $\la_{i+1}(\Omega)\mu > t_\mu$. Thus \eqref{eqnuineq} yields that
\begin{align*}
	\nu_{\rho_\mu,\mu,i+1,\delta_\mu} \geq f((\la_{i+1}(\Omega)+s(\mu))\mu) = \frac12\la_{i+1}(\Omega)\mu + o(\mu),
\end{align*}
where $\mu^{-1}o(\mu) \to 0$ as $\mu \to 0^+$. Together with \eqref{eq7.18}, we conclude that
\begin{align*}
	\mu^{-1}E(u_{\mu,i},\Omega) \to \frac12\la_{i+1}(\Omega) \quad \text{as } \mu \to 0^+.
\end{align*}
Then, following the discussions in the proof of subcritical case, we can obtain that $\la_{\mu,i} \to -\la_{i+1}(\Omega)$, particularly, $\la_{\mu,1} \to -\la_2(\Omega)$ as $\mu \to 0^+$.

Next, we assume that $\nu_{\rho,\mu,k,\delta} = \nu_{\rho,\mu,k+1,\delta}$ for some $k \in \{2,\cdots,j\}$. By Theorem \ref{thmgenus}, \eqref{eqonbounddomain} has $j$ different sign-changing solutions $u_{\mu,1}$, $u_{\mu,2}$, $\cdots$, $u_{\mu,j}$, with $\la = \la_{\mu,1}$, $\la_{\mu,2}$, $\cdots$, $\la_{\mu,j}$, where $u_{\mu,1}$ is at level $\nu_{\rho,\mu,2,\delta}$, $u_{\mu,2}, \cdots, u_{\mu,j}$ are at level $\nu = \nu_{\rho,\mu,k,\delta} = \nu_{\rho,\mu,k+1,\delta}$, and $m(u_{\mu,1}) \le 3$, $m(u_{\mu,2}) \le k+1$. As proved above, we have
\begin{align*}
	\mu^{-1}\nu_{\rho_\mu,\mu,2,\delta_\mu} \to \frac12\la_2(\Omega) \quad \text{and} \quad \mu^{-1}\nu \to \frac12\la_{k}(\Omega) \quad \text{as } \mu \to 0^+,
\end{align*}
which implies that
\begin{align*}
	E(u_{\mu,i},\Omega) \to 0 \quad \text{as } \mu \to 0^+
\end{align*}
for any $i \in \{1,2,\cdots,j\}$. Moreover, we have that $\la_{\mu,1} \to -\la_2(\Omega)$ and $\la_{\mu,2} \to -\la_k(\Omega)$ as $\mu \to 0^+$. In the following we consider $i \in \{3,\cdots,j\}$. By the boundedness of $u_{\mu,i}$ in $H_0^1(\Omega)$ uniformly with respect to $\mu \to 0^+$ and by Gagliardo-Nirenberg inequality \eqref{eqgninequality} we get
\begin{align*}
	\int_\Omega|u_{\mu,i}|^pdx \to 0 \quad \text{as } \mu \to 0^+,
\end{align*}
and so
\begin{align*}
	\int_\Omega |\nabla u_{\mu,i}|^2dx = 2E(u_{\mu,i},\Omega) + \frac2p\int_\Omega |u_{\mu,i}|^pdx \to 0 \quad \text{as } \mu \to 0^+.
\end{align*}
By Sobolev inequality we see that
\begin{align*}
	\frac{\int_\Omega |u_{\mu,i}|^pdx}{\int_\Omega |\nabla u_{\mu,i}|^2dx} \to 0 \quad \text{as } \mu \to 0^+.
\end{align*}
Thus we can conclude
\begin{align*}
	\mu^{-1}\int_\Omega|\nabla u_{\mu,i}|^2dx \to \la_k(\Omega) \quad \text{as } \mu \to 0^+.
\end{align*}
From
\begin{align*}
	\int_\Omega |\nabla u_{\mu,i}|^2 dx + \la_{\mu,i} \int_\Omega |u_{\mu,i}|^2dx = \int_\Omega |u_{\mu,i}|^p dx,
\end{align*}
it follows that
\begin{align*}
	\la_{\mu,i} = \mu^{-1}\int_\Omega |u_{\mu,i}|^p dx - \mu^{-1}\int_\Omega |\nabla u_{\mu,i}|^2 dx \to -\la_k(\Omega) \quad \text{as } \mu \to 0^+.
\end{align*}
The proof is complete.
\end{proof}

\section{Review and development of a theorem in \cite{BCJN}} \label{secreview}

In this section, we use the general setting introduced in Section \ref{sectool}. Firstly, we review the results in \cite[Theorem 1.12]{BCJN} and the fact that $\tilde m_{\zeta_n}(u_n) \leq 1$ if $k =1$ in (iv) comes from \cite[Theorem 1.5]{BCJN}.

\begin{theorem}\label{Objective1}
    Let $I\subset (0,\infty)$ be an interval and consider a family of $C^2$ functionals $\Phi_\tau\colon E\to \R$ of the form
	\begin{equation*}
		\Phi_\tau(u)=A(u)-\tau B(u),\quad \tau\in I,
	\end{equation*}
	where $B(u)\geq 0$ for all $u\in E$ and
	\begin{equation}\label{hp coer}
		\text{ either } A(u) \to +\infty
		\text{ or } B(u) \to +\infty
		\quad \text{as } u \in E \text{ and }
		\|u\| \to +\infty.
	\end{equation}
	Suppose that, for every $\tau \in I$, $\Phi_\tau|_{S_\mu}$ is even, and moreover that $\Phi'_\tau$ and $\Phi''_\tau$ are
	$\alpha$-H\"older continuous on bounded sets in the sense of
	Definition~\ref{Holder-continuous} for some
	$\alpha \in (0,1]$. Finally, suppose that there exists an integer $k \geq 1$ and two odd functions
	$\gamma_{i,k} : \mathbb{S}^{k-1} \to S_{\mu}$ where $i =0,1$, such
	that the set
	\begin{align} \label{SMP1}
		\Gamma_k := \big\{\gamma \in C([0,1]\times \s^{k-1}, S_\mu):  \forall t \in [0,1], \gamma(t,\cdot) \text{ is odd}, \gamma(0,\cdot) = \gamma_{0,k}, \text{ and } \gamma(1,\cdot) = \gamma_{1,k}\big\}
	\end{align}
	is not empty and is independent of $\tau$, and
	\begin{align} \label{SMP2}
		c_{\tau}^k := \inf_{\gamma \in \Gamma_k}\sup_{(t,s) \in [0,1]\times\s^{k-1}}\Phi_\tau(\gamma(t,s)) > \max_{s\in \s^{k-1}}\max\bigl\{\Phi_\tau(\gamma_{0,k}(s)), \Phi_\tau(\gamma_{1,k}(s))\bigr\}.
	\end{align}
	Then, for almost every $\tau \in I$, there exist sequences
	$\{u_n\} \subset S_\mu$ and $\zeta_n \to 0^+$ such that, as
	$n \to + \infty$,
	\begin{itemize}
		\item[(i)] $\Phi_\tau(u_n) \to c_{\tau}^k$;
		\item[(ii)] $\bigl\| \nabla \Phi_\tau|_{S_\mu}(u_n)\| \to 0$;
		\item[(iii)] $\{u_n\}$ is bounded in $E$;
		\item[(iv)] $\tilde m_{\zeta_n}(u_n) \leq k+1$; moreover, $\tilde m_{\zeta_n}(u_n) \leq 1$ if $k =1$.
	\end{itemize}
\end{theorem}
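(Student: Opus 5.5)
This statement is \cite[Theorem 1.12]{BCJN} (with the refinement for $k=1$ taken from \cite[Theorem 1.5]{BCJN}), so the plan is to reproduce its proof rather than build new machinery. It rests on two ingredients: the monotonicity trick of Jeanjean, carried out inside the class $\Gamma_k$, and the Morse-type deformation of Lemma \ref{lemflowdownsym}.

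\emph{Monotonicity of the minimax levels.} Since $B\ge 0$, for every $\gamma\in\Gamma_k$ the map $\tau\mapsto \sup\Phi_\tau(\gamma)$ is nonincreasing. The class $\Gamma_k$ does not depend on $\tau$, so $\tau\mapsto c_\tau^k$ is also nonincreasing on $I$. By Lebesgue's theorem it is therefore differentiable at almost every $\tau$. Fix such a $\tau$ and take $\tau_n\nearrow\tau$. Choose $\gamma_n\in\Gamma_k$ with $\sup\Phi_{\tau_n}(\gamma_n)\le c_{\tau_n}^k+(\tau-\tau_n)$. At every point $u=\gamma_n(t,s)$ with $\Phi_\tau(u)\ge c_\tau^k-(\tau-\tau_n)$, we have
\[
B(u)=\frac{\Phi_{\tau_n}(u)-\Phi_\tau(u)}{\tau-\tau_n}\le -(c^k_\tau)'+3 .
\]
With the value of $\Phi_\tau$ also bounded, $A(u)$ is bounded too. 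Condition \eqref{hp coer} then gives a radius $R$, independent of $n$, such that all these ``almost maximal'' points lie in $B(0,R-1)$.

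\emph{Bounded Palais--Smale sequence with Morse information.} Apply the symmetric deformation argument of Theorem \ref{thmuepsym}, taking the parameter space $D=[0,1]\times\s^{k-1}$ with the involution $(t,s)\mapsto(t,-s)$, so that $d=k+1$, and taking $P^*=P^{**}=\emptyset$. In that case (A1) reduces to the standard equivariant deformation lemma and (A2) is empty. The strict inequality \eqref{SMP2} keeps the endpoint maps $\gamma_{0,k},\gamma_{1,k}$ below the level $c_\tau^k-\ep$ for small $\ep$, so any deformation can be cut off near $t=0,1$ and the deformed map stays in $\Gamma_k$. Argue by contradiction: suppose no point of $\gamma_n(D)$ at level near $c_\tau^k$ has a small gradient together with approximate Morse index at most $k+1$. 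Then Lemma \ref{lemflowdownsym} and a pseudogradient flow push $\sup\Phi_\tau(\gamma_n)$ below $c_\tau^k$, contradicting the definition of $c_\tau^k$. Letting $\ep\to0$ produces $u_n\in B(0,R)$ satisfying (i)--(iv) with $\tilde m_{\zeta_n}(u_n)\le k+1$.

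\emph{The case $k=1$.} Here $D=[0,1]\times\{\pm1\}$, so the minimax is an ordinary mountain pass on paths. The sharper bound $\tilde m_{\zeta_n}(u_n)\le 1$ comes from \cite[Theorem 1.5]{BCJN}, where the deformation lemma is applied to the one-dimensional family and $\mathbb Z_2$-symmetry costs nothing. The main obstacle is the first part of the second step. There we must check that the $\tau$-uniform bound on $B$ near the top of the paths gives a bound on the whole set $K$ used in \eqref{eqboundedballsym}, and that the quantitative decrease in Lemma \ref{lemflowdownsym}(iii) dominates the $\ep$-windows. This is exactly where the exponent condition on $\alpha_1$ enters, as in \eqref{eqdownmorethan2ep}.
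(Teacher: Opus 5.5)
Your proposal is correct and follows the route the paper relies on. The paper does not reprove this result; it quotes \cite[Theorem 1.12]{BCJN}: the monotonicity trick at a differentiability point of $\tau\mapsto c_\tau^k$ gives an $n$-independent radius $R$, and the symmetric Morse-type deformation on $D=[0,1]\times\s^{k-1}\subset\R^{k+1}$ then gives $\tilde m_{\zeta_n}(u_n)\le k+1$. For $k=1$ it cites the mountain-pass result \cite[Theorem 1.5]{BCJN}, exactly as you reduce it.

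Two bookkeeping points, both of which you partly flag:
\begin{itemize}
\item The upper window is $\ep_n=(2-(c_\tau^k)')(\tau-\tau_n)$, which exceeds $\tau-\tau_n$. Your bound on $B$ should therefore be recomputed on $\{\Phi_\tau\ge c_\tau^k-\ep_n\}$; only the constant changes.
\item Hypothesis \eqref{eqsup>c} of Theorem \ref{thmuepsym} is stated for all equivariant $g$, so it cannot be quoted verbatim for $\Gamma_k$. One must rerun its proof, using your observation that both deformations fix the ends $t=0,1$.
\end{itemize}
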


\begin{remark} \label{rmkalmost}
	For $\tau \in I$ such that $(c_\tau^k)'$, the derivative of $c_\tau^k$ with respect to $\tau$, exists, the results in Theorem \ref{Objective1} hold. Since $c_\tau^k$ is non-increasing, such $\tau$ is almost everywhere. See details in \cite{BCJN}.
\end{remark}

In the following, we study a special situation when $c_\tau^1 = c_\tau^2$. In this case, with an additional condition \eqref{eq6.5}, we can obtain the location information of the bounded Palais-Smale sequence that is established in Theorem \ref{Objective1}. This observation is useful to search for sign-changing critical points.

\begin{proposition} \label{propc1=c2}
	Assume that the assumptions in Theorem \ref{Objective1} hold with both $k =1$ and $k =2$. Let $F^* \subset S_\mu$ be a closed set satisfying
	\begin{align} \label{eq6.5}
		F^* \cap (-F^*) \cap S_\mu = \emptyset, \quad \text{where } -F^* = \bigl\{-u: u \in F^*\bigr\}.
	\end{align}
	For almost every $\tau \in I$, if $c = c_\tau^1 = c_\tau^2$, there exist sequences
	$\{u_n\} \subset S_\mu \backslash (-F^* \cup F^*)$ and $\zeta_n \to 0^+$ such that, as
	$n \to + \infty$,
	\begin{itemize}
		\item[(i)] $\Phi_\tau(u_n) \to c$;
		\item[(ii)] $\bigl\| \nabla \Phi_\tau|_{S_\mu}(u_n)\| \to 0$;
		\item[(iii)] $\{u_n\}$ is bounded in $E$;
		\item[(iv)] $\tilde m_{\zeta_n}(u_n) \leq 1$.
	\end{itemize}
\end{proposition}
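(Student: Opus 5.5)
We work at a fixed $\tau\in I$ where the derivatives of $\tau\mapsto c_\tau^1$ and $\tau\mapsto c_\tau^2$ both exist. By Remark \ref{rmkalmost} this holds for almost every $\tau$, and it is exactly what the monotonicity trick of \cite{BCJN} needs to produce bounded, almost-optimal paths. The starting point is the construction in \cite[Theorem 1.5]{BCJN} used for Theorem \ref{Objective1} with $k=1$. There one takes paths $\gamma_n\in\Gamma_1$ that are almost optimal for $c_{\tau_n}$ with $\tau_n\nearrow\tau$. One then shows that every point of $\gamma_n$ at level $\ge c-\epsilon_n$ lies in a fixed ball $B(0,R-1)$. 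After a deformation (Lemma \ref{lemflowdownsym} with $d=1$ combined with the deformation lemma), a point of such a path carries the properties (i)--(iv). The new point is that, because $c_\tau^2=c_\tau^1$, we may instead work with $\Gamma_2$. A map $\gamma\in\Gamma_2$ is an odd family $s\mapsto\gamma(\cdot,s)$ over $\s^1$. Its restriction to any arc of $\s^1$ joining $s_0$ to $-s_0$ gives a "loop" of paths, whose two endpoint paths are each other's negatives.

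The first step is to prove a topological claim. Suppose $\gamma\in\Gamma_2$ is almost optimal, and suppose the high-level set $K_\gamma=\{(t,s):\Phi_\tau(\gamma(t,s))\ge c-\epsilon\}$ is contained in $\gamma^{-1}(F^*\cup -F^*)$. Then $\gamma$ can be modified so as to produce a path in $\Gamma_1$ whose maximum is $<c-\epsilon$, or even an element of $\Gamma_1$ with sup below $c$, contradicting $c_\tau^1=c$. The argument is as follows. Write $K_\gamma=K^+\cup K^-$, where $K^\pm=K_\gamma\cap\gamma^{-1}(\pm F^*)$. These sets are closed and disjoint by \eqref{eq6.5}, and oddness gives $K^-=\sigma(K^+)$. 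The path $t\mapsto\gamma(t,s)$ must cross the level $c-\epsilon$ for every $s$, since its maximum is at least $c_\tau^1=c$. So each slice $\{t\}\times\s^1$ meets $K_\gamma$, and $K_\gamma$ separates $\{0\}\times\s^1$ from $\{1\}\times\s^1$ in the annulus $[0,1]\times\s^1$. Because $K_\gamma=K^+\sqcup K^-$ with $K^-$ the antipodal image of $K^+$, a Borsuk--Ulam/connectedness argument on the annulus yields a path $\beta$ from $\{0\}\times\s^1$ to $\{1\}\times\s^1$ that avoids $K^+$ entirely. By symmetry, $-\beta$ avoids $K^-$. I would then combine these with a $\Gamma_1$-reparametrization; this is where the details are delicate. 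The cleaner route is probably a different one: show that one of the sets $K^\pm$ alone does not separate, so there is a path of $\Gamma_1$ type (after composing with $\gamma$, and noting $\gamma(0,\cdot),\gamma(1,\cdot)$ run through the fixed endpoints) along which every point at level $\ge c-\epsilon$ lies in $F^*$ only. I would then use the genus-type count: the location of the high part in $F^*$ forces it to be avoidable by the odd symmetry.

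Having secured that the high part of almost optimal $\Gamma_2$-maps meets $S_\mu\setminus(F^*\cup -F^*)$, with the meeting happening at a positive distance uniformly in $n$ after shrinking $F^*$ to an open neighborhood, I would run the scheme of the proof of Theorem \ref{thmuepsym} with $d=1$. There $P^{**}$ is a small symmetric neighborhood of $F^*\cup(-F^*)$, and the deformation is the one from \cite{BCJN} localized away from $P^{**}$. The decrease estimate \eqref{eqdownmorethan2ep}, together with the a.e. differentiability bound ensuring boundedness (iii), yields points $u_n\notin F^*\cup -F^*$ with (i)--(iii). The Morse index bound $\tilde m_{\zeta_n}(u_n)\le 1$ in (iv), rather than $\le2$, comes from applying the second-order deformation with dimension $d=1$. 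This is legitimate because, when $c^2_\tau=c^1_\tau$, it suffices to destroy points having two negative directions along a one-parameter slice. That is the same mechanism as the "$k=1$" case of \cite[Theorem 1.5]{BCJN}.

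The main obstacle is the topological step: converting the equality $c_\tau^1=c_\tau^2$ into the statement that almost optimal odd families cannot keep all their high points inside $F^*\cup(-F^*)$. I would first try to prove it by contradiction, building from a bad $\gamma\in\Gamma_2$ an element of $\Gamma_1$ with max strictly below $c$. The idea is to cut the annulus along $K^+$ and use that $K^+$ and $\sigma(K^+)$ are disjoint.
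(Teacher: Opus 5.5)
Your proposal has a genuine gap: the localization step, as you set it up, does not go through. Even after repair, your topological step only shows that the high part of an almost optimal $\gamma\in\Gamma_2$ \emph{meets} $S_\mu\setminus(F^*\cup(-F^*))$. That is not enough. The deformation argument of \cite{BCJN} only produces \emph{some} high point of the family satisfying (i)--(iv), and that point may lie in $F^*\cup(-F^*)$.

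To force it outside, you propose to run the proof of Theorem \ref{thmuepsym} with $P^{**}$ a neighborhood of $F^*\cup(-F^*)$. That theorem needs (A1), in particular (iii), invariance of $\overline{P^{**}}$ under the deformation, and it needs (A2). In Proposition \ref{propc1=c2}, $F^*$ is an arbitrary closed set satisfying \eqref{eq6.5}, and no flow preserving a neighborhood of it is available. In the paper such invariance comes only from the special pseudogradient of Section \ref{secinv} via Lemma \ref{lemproG4}. Cutting the flow off near $P^{**}$ does not help, since points can stall just outside $\overline{P^{**}}$ at high energy. Likewise, ``$d=1$'' is only legitimate on a one-parameter family. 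On the two-dimensional domain $[0,1]\times\s^1$ of a $\Gamma_2$-map, the second-order deformation only yields $\tilde m_{\zeta_n}(u_n)\le 2$ at best, so (iv) also depends on first reducing to a one-parameter family.

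The missing idea is to use the topology to pass from $\Gamma_2$ to a good element of $\Gamma_1$. Then location is automatic and no invariance is needed.
\begin{itemize}
\item Take almost optimal $\gamma_n\in\Gamma_2$ from the monotonicity trick (using $(c_\tau^2)'$). These satisfy $\max\Phi_\tau\circ\gamma_n\le c+\varepsilon_n$, and their points of level $\ge c-\varepsilon_n$ lie in $B(0,R-1)$.
\item Apply the separation argument not to the whole high set but to $A_n=\{(t,s):\Phi_\tau(\gamma_n(t,s))\ge c-\varepsilon_n\}\cap\gamma_n^{-1}(F^*\cup(-F^*))$. By \eqref{SMP2} this set lies in $(0,1)\times\s^1$. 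It splits into two disjoint closed antipodal pieces $A_n^\pm=A_n\cap\gamma_n^{-1}(\pm F^*)$.
\item Suppose $A_n$ separated the two boundary circles. Then it would contain a closed curve of nonzero degree over $\s^1$ (Lemma \ref{lemtopo1}). Being connected, that curve lies in one piece, say $A_n^+$. By Lemma \ref{lemtopo2} it contains a pair $(t,s),(t,-s)$, so $\gamma_n(t,s)\in F^*\cap(-F^*)$, contradicting \eqref{eq6.5}.
\item Hence some bottom-to-top path $\beta$ avoids $A_n$. Close it up along the low-energy boundary circles so that it joins the points where $\Gamma_1$ and $\Gamma_2$ share endpoints. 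Set $\hat\gamma_n(t,\pm1)=\pm\gamma_n(\beta(t))$. This map lies in $\Gamma_1$ and has maximum $\le c+\varepsilon_n=c_\tau^1+\varepsilon_n$. Every point of level $\ge c-\varepsilon_n$ lies in $B(0,R-1)\setminus(F^*\cup(-F^*))$.
\item Now \cite[Theorem 4.2]{BCJN} with $d=1$, which has no location hypothesis, returns $u_n$ as a high point of $\hat\gamma_n$. So $u_n$ lies outside $F^*\cup(-F^*)$ and satisfies $\tilde m_{\zeta_n}(u_n)\le 1$.
\end{itemize}

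Two smaller points.
\begin{itemize}
\item Your justification of separation (``each slice path has maximum $\ge c_\tau^1$'') is not valid. For general $s$ the path $t\mapsto\gamma(t,s)$ is not in $\Gamma_1$, because its endpoints are not those of $\Gamma_1$. Separation follows instead by closing a bottom-to-top path with the low-energy boundary circles, as above.
\item Your ``cleaner route'', a path whose high part lies in $F^*$ only, gives no contradiction at all. A positive mountain-pass path is exactly of that type.
\end{itemize}
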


We present two topological lemmas preparing the proof of Proposition \ref{propc1=c2}.

\begin{lemma} \label{lemtopo1}
	Let $A \subset (0,1) \times \s^1$ be a closed set and let the map $P$ be defined as
	\begin{align*}
		P: [0,1] \times \s^1 \to \s^1, \quad (t,s) \mapsto s.
	\end{align*}
	Then $[0,1] \times \s^1 \backslash A$ is not path-connected is equivalent to that $A$ contains a continuous closed curve $\mathcal{C}$ such that the degree $\deg(P|_{\mathcal{C}},\mathcal{C},\s^1) \neq 0$.
\end{lemma}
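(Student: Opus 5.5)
The plan treats the two implications separately and works throughout with the covering map $\R \to \s^1$, $\theta \mapsto e^{i\theta}$, so that degrees can be read off as total angular increments. Write $Z := [0,1]\times\s^1$. Since $A$ is closed in $Z$ and contained in $(0,1)\times\s^1$, it is compact and misses both boundary circles $\{0\}\times\s^1$ and $\{1\}\times\s^1$. Each of these circles is connected, so each lies in a single path component of $Z\backslash A$.

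\emph{Curve $\Rightarrow$ not path-connected.} Let $\mathcal{C}\subset A$ be a closed curve, parametrized by a continuous map $c:[0,1]\to A$ with $c(0)=c(1)$, and suppose $\deg(P|_{\mathcal{C}},\mathcal{C},\s^1)=m\neq 0$. Assume by contradiction that $Z\backslash A$ is path-connected. Then there is a path $\sigma$ in $Z\backslash A$ from $(0,s_0)$ to $(1,s_0)$.
\begin{itemize}
\item First I close $\sigma$ into a loop $\Sigma$ by running back along the segment $\{(t,s_0)\}$. On $Z=[0,1]\times\s^1$ the loop $\Sigma$ has well-defined intersection behaviour with $\mathcal{C}$.
\item The cleaner way to extract the contradiction is with the map $F(a,b) = (c(a),\sigma(b))$ and the homotopy class of $\mathcal{C}$ in $Z\backslash\sigma$. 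The set $Z\backslash\sigma([0,1])$ is contained in $Z$ minus a path joining the two boundary circles. Cutting the annulus along such a path leaves a space each of whose components is simply connected in the relevant sense, in that any loop in it has zero winding.
\item Concretely, I lift to the universal cover $[0,1]\times\R$. There $\sigma$ lifts to a path $\tilde\sigma$ from $\{0\}\times\R$ to $\{1\}\times\R$, and its translates $\tilde\sigma+2\pi j$ are pairwise disjoint. They cut the strip into regions, and a path in the strip that avoids all of them can change its $\R$-coordinate by less than a bounded amount (Jordan-type separation in the strip).
\item The lift of $\mathcal{C}$ starts at $x$ and ends at $x+2\pi m$ with $m\neq 0$, so it must cross some translate $\tilde\sigma + 2\pi j$. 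This contradicts $\mathcal{C}\subset A$ and $\sigma\cap A=\emptyset$.
\end{itemize}
The separation fact in the strip is a standard Jordan-arc argument, and I would cite it rather than reprove it.

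\emph{Not path-connected $\Rightarrow$ curve.} Here I would use compactness of $A$ together with Alexander duality in the annulus, via the unicoherence-type statement: if a compact $A$ separates the two boundary circles of $Z$, then some component of $A$ carries nonzero Čech class in $\check H^1$ under $P^*$.
\begin{itemize}
\item I would take the component $W$ of $Z\backslash A$ containing $\{0\}\times\s^1$.
\item Next I approximate $\partial W$ by polygonal closed curves $\mathcal{C}_\ep\subset N_\ep(A)$ of degree $1$.
\item Finally I pass to a limit inside $A$ by compactness, extracting a closed curve in $A$ using a Hausdorff-limit argument on a chain of arcs.
\end{itemize}
The final extraction step is delicate, since a limit continuum need not be a curve; for the paper's application it suffices to produce a closed curve in $A$ up to arbitrarily small perturbation.

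\emph{Main obstacle.} The main obstacle is the case in the forward direction where the disconnection of $Z\backslash A$ is produced by a component that contains neither boundary circle. For example, if $A$ is a small circle around a point, then $Z\backslash A$ is not path-connected, yet every closed curve in $A$ has degree $0$. So the literal equivalence must be checked against this case. I would first verify whether the intended meaning of ``not path-connected'' is that the two boundary circles lie in different path components of $Z\backslash A$; this is how the lemma is used for Proposition~\ref{propc1=c2}, with separating $\gamma_{0,2}$ from $\gamma_{1,2}$. If so, I would add that reading explicitly to the statement before running the argument above. If not, the forward implication as worded fails by the small-circle example, and only the reverse implication can be proved.
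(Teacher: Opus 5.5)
Your small-circle example is correct, and it shows the statement as written is false. A small circle $A$ around an interior point disconnects $Z=[0,1]\times\s^1$, yet any closed curve in $A$ is mapped by $P$ into a proper arc of $\s^1$, so it has degree $0$.

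The paper's proof of necessity fails at exactly this point. $\deg(P|_{\widetilde{\mathcal{C}}},\widetilde{\mathcal{C}},\s^1)=0$ only makes $\widetilde{\mathcal{C}}$ null-homotopic in $Z$, not contractible inside the branch $A_1$. Your circle is a branch on which every loop has degree $0$, yet it separates.

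Your reading (the two boundary circles lie in different path components of $Z\backslash A$) is what Proposition~\ref{propc1=c2} actually needs, with one proviso. There $A$ must be taken as the part of $\gamma_n^{-1}(F^*\cup(-F^*))$ on which $\Phi_\tau\circ\gamma_n\ge c-\varepsilon_n$. By \eqref{SMP2} this set lies in $(0,1)\times\s^1$ for large $n$. The full preimage does not: for $F^*=\overline{\p_\delta}$ it meets $\{0\}\times\s^1$ where $\gamma_{0,2}=\pm\sqrt{\mu}\phi_1$.

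For the true implication (a curve of degree $m\neq0$ in $A$ separates the boundary circles), your route differs from the paper's and is the better one. The paper deforms $\mathcal{C}$ to $\{\frac12\}\times\s^1$ and reads off the separation. That takes for granted that separation survives the deformation, which is exactly what must be proved; your lift to $[0,1]\times\R$ proves it. Two repairs are needed.
\begin{itemize}
\item The translates $\tilde\sigma+2\pi j$ are pairwise disjoint only if $\sigma$ is an arc, but you do not need disjointness. If $\tilde\sigma\subset[0,1]\times[-K,K]$, each translate joins the two boundary lines, lies in $[0,1]\times[2\pi j-K,2\pi j+K]$, and separates the two ends of the strip. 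This alone bounds, in the $\R$-direction, every component of the complement of their union.
\item A single lift moving by $2\pi m$ need not exceed that bound. Argue instead that the component containing the lift of $\mathcal{C}$ is mapped to itself by the deck translation by $2\pi m$, hence is unbounded, a contradiction.
\end{itemize}

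The genuine gap is in the other implication, and it survives your correction. Separation of the boundary circles does \emph{not} force $A$ to contain a closed curve of nonzero degree, so your Hausdorff-limit extraction is not merely delicate but impossible in general.

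For a counterexample, write $s=e^{i\theta}$ and take a Warsaw circle wound once around $Z$. It consists of the graph $\{(\frac12+\frac14\sin(1/\theta),\theta):0<\theta\le\pi\}$, the bar $[\frac14,\frac34]\times\{0\}$, and a graph over $\pi\le\theta\le2\pi$ joining the endpoint of the first graph at $\theta=\pi$ to $(\frac14,0)$. Its complement is $\{t<g(\theta)\}\cup\{t>h(\theta)\}$, where the lower and upper envelopes $g,h$ are lower and upper semicontinuous, so it separates the boundary circles. But it has trivial fundamental group, so every loop in it has $P$-degree $0$.

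What is true is the approximate statement you mention in passing, and your Alexander-duality and polygonal-approximation steps already give it. If $A$ separates the boundary circles, every $\varepsilon$-neighbourhood of $A$ contains a closed polygonal curve of nonzero degree. This suffices for Proposition~\ref{propc1=c2}:
\begin{itemize}
\item Let $A_\pm=A\cap\gamma_n^{-1}(\pm F^*)$. These are disjoint compact sets by \eqref{eq6.5}.
\item For small $\varepsilon$, a connected approximating curve lies near only one of them, say $A_+$.
\item Lemma~\ref{lemtopo2} gives points $(t_\varepsilon,\pm s_\varepsilon)$ on that curve.
\item Letting $\varepsilon\to0$ yields $(t,s),(t,-s)\in A_+$. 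By oddness, $\gamma_n(t,s)\in F^*\cap(-F^*)$, a contradiction.
\end{itemize}
You should state and prove this approximate version rather than look for a curve inside $A$.
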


\begin{proof}
	\emph{Sufficiency:} Let $\mathcal{C}$ be a continuous closed curve contained in $(0,1) \times \s^1$ such that $\deg(P|_{\mathcal{C}},\mathcal{C},\s^1) \neq 0$. Without loss of generality, we assume $\deg(P|_{\mathcal{C}},\mathcal{C},\s^1) =1$. After a continuous deformation, we assume that $\mathcal{C} = \{\frac12\} \times \s^1$ and $[0,1] \times \s^1 \backslash \mathcal{C}$ is not path-connected. Since $\mathcal{C} \subset A$, we get that $[0,1] \times \s^1 \backslash A$ is not path-connected.
	
	\vskip0.1in
	\emph{Necessity:}
	Let $A$ be an union of single points, then $[0,1] \times \s^1 \backslash A$ is path-connected. Moreover, if any path-connected branch of $A$ can continuously contract into a single point, then $[0,1] \times \s^1 \backslash A$ is path-connected. Let $A_1$, not being a single point, be a path-connected branch of $A$. Next we prove that $A_1$ can continuously contract into a single point provided that any continuous closed curve $\mathcal{C} \subset A_1$ satisfies $\deg(P|_{\mathcal{C}},\mathcal{C},\s^1) = 0$. Let us take $p \in A_1$. For any other point $\bar p \in A_1$, since $A_1$ is path-connected, we can find a continuous closed curve $\widetilde{\mathcal{C}} \subset A_1$ such that $p, \bar p \in \widetilde{\mathcal{C}}$. By $\deg(P|_{\widetilde{\mathcal{C}}},\widetilde{\mathcal{C}},\s^1) = 0$ it follows that $\widetilde{\mathcal{C}}$ can continuously contract into the single point $p_1$. This shows that $A_1$ can continuously contract into a single point, completing the proof.
\end{proof}

\begin{lemma} \label{lemtopo2}
	Let $\mathcal{C}$ be a continuous closed curve contained in $(0,1) \times \s^1$ satisfying $\deg(P|_{\mathcal{C}},\mathcal{C},\s^1) \neq 0$. There exist $t \in (0,1)$ and $s \in \s^1$ such that $(t,s) \in \mathcal{C}$ and $(t,-s) \in \mathcal{C}$.
\end{lemma}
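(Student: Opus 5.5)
The plan is to lift the curve to the universal cover and apply an intermediate value argument. Parametrize $\mathcal{C}$ by a continuous map $c:[0,1]\to (0,1)\times\s^1$ with $c(0)=c(1)$, and write $c(r)=(t(r),e^{2\pi i\theta(r)})$, where $\theta:[0,1]\to\R$ is a continuous lift of $P\circ c$. The degree hypothesis says exactly that $\theta(1)-\theta(0)=d$ for some nonzero integer $d$. Replacing $r$ by $1-r$ if necessary, we may assume $d\ge 1$.

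Next, extend $c$ periodically to all of $\R$. Then $t$ becomes $1$-periodic and $\theta(r+1)=\theta(r)+d$. Since $d\ge1$, the lift is unbounded in both directions. So for every $r$ there are parameters $r'>r$ with $\theta(r')=\theta(r)+\frac12$ (and likewise $\theta(r)+\frac12+m$ for any integer $m\ge0$). Any such pair $c(r),c(r')$ has antipodal $\s^1$-components. It therefore suffices to find such a pair with $t(r)=t(r')$.

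To arrange this, define $\sigma(r)=\inf\{r'>r:\theta(r')=\theta(r)+\frac12\}$, and consider $g(r)=t(\sigma(r))-t(r)$. The main obstacle is that $\sigma$ need not be continuous. I would avoid it with a cleaner choice. Pick $r_0\in[0,1]$ where $t$ attains its maximum on $\mathcal C$, and $r_1$ where it attains its minimum. Consider the set
$$Z=\{(r,r')\in\R^2:\theta(r')-\theta(r)=\tfrac12\},$$
and the function $h(r,r')=t(r')-t(r)$ on $Z$.
\begin{itemize}
\item At any pair $(r_0,r')\in Z$ we have $h\le 0$, because $t(r_0)$ is maximal.
\item At any pair $(r_1,r'')\in Z$ we have $h\ge0$, because $t(r_1)$ is minimal.
\end{itemize}

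It then remains to join these two points by a connected subset of $Z$, after which the intermediate value theorem gives a zero of $h$. For this I would use the standard fact from the universal chord/horizontal chord theorem argument. The function $F(r,r')=\theta(r')-\theta(r)-\frac12$ is continuous on the strip $\{r\in[r_0,r_0+1],\,r'\in\R\}$. On the strip, $F\to\pm\infty$ as $r'\to\pm\infty$, uniformly in $r$, because $\theta$ grows by $d$ per period. Hence the zero set of $F$ separates the region where $F$ is very negative from the region where it is very positive. By a connectedness argument of Janiszewski type in the plane, $Z$ contains a connected component $Z_0$ whose projection onto the $r$-axis covers $[r_0,r_0+1]$. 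This $Z_0$ contains a point over $r_0$ and a point over some $r_1+m$ in that interval.

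Applying the intermediate value theorem to $h$ on the connected set $Z_0$ yields $(r,r')$ with $t(r)=t(r')$ and antipodal angles. Setting $t=t(r)\in(0,1)$ and $s=P(c(r))$ gives $(t,s),(t,-s)\in\mathcal C$. The planar separation step is the one I expect to need the most care.
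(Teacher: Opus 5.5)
Your proof is correct, and at the one delicate step it follows a genuinely different and more careful route than the paper.

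\textbf{The paper's route.} The paper parametrizes $\mathcal{C}$ by $\epsilon\in[-1,1]$. It then asserts, without proof, that "allowing repeated points" one may assume $s(\epsilon-1)=-s(\epsilon)$ for all $\epsilon\in[0,1]$. With this, $g(\epsilon)=t(\epsilon)-t(\epsilon-1)$ satisfies $g(1)=-g(0)$ because the curve is closed, and the intermediate value theorem finishes.

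\textbf{Why that step is not free.} The assertion is exactly a continuous \emph{path} in your set $Z$ joining $(r,r')$ to $(r',r+1)$. That is, it presupposes the continuous antipodal partner you rightly distrusted when you noticed that $\sigma$ may jump. It is a mountain-climbing type synchronization, which is delicate without some monotonicity. It also forces the traversed loop to have odd degree: the continuous function $\theta(\epsilon)-\theta(\epsilon-1)$ takes values in $\frac12+\mathbb{Z}$, hence is constant, so the total increment is odd. Any re-traversal $c\circ\gamma$ of the given parametrization, even with pauses or backtracking, has degree a multiple of $d$. So for even $d$ such a loop must come from somewhere else, and the paper does not address this.

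\textbf{What you do instead.} You replace the path by a merely connected set of antipodal pairs, obtained from the planar crossing lemma. You replace the antisymmetry $g(1)=-g(0)$ by the sign information at the maximum and minimum of $t$. This works for every continuous curve and every $d\neq0$, at the price of invoking that lemma. The paper's argument is shorter, but it is self-contained only when the synchronized parametrization is evident, e.g.\ for a monotone degree-one curve.

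\textbf{Making the separation step precise.}
\begin{itemize}
\item Work in the compact rectangle $Q=[r_0,r_0+1]\times[-M,M]$, with $M$ chosen so that $F<0$ on the bottom side and $F>0$ on the top side.
\item Every connected subset of $Q$ joining these two sides meets $Z\cap Q$, by the intermediate value theorem.
\item The standard lemma (a compact set meeting every continuum from top to bottom has a component meeting both vertical sides) then gives a component of $Z\cap Q$ meeting both vertical sides.
\end{itemize}
That component is your $Z_0$, and its projection onto the $r$-axis covers $[r_0,r_0+1]$.
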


\begin{proof}
	Let $\mathcal{C}$ be parameterized by $\ep \in [-1,1]$, that is $\mathcal{C} = \{(t(\ep),s(\ep)): \ep \in [-1,1]\}$ for some $(t(\ep),s(\ep)) \in C([-1,1],(0,1) \times \s^1)$ satisfying $(t(1),s(1)) = (t(-1),s(-1))$. Allowing $(t(\ep_1),s(\ep_1)) = (t(\ep_2),s(\ep_2))$ for $\ep_1 \neq \ep_2$, we can assume that $s(\ep-1) = -s(\ep)$ for $\ep \in [0,1]$ since $\deg(P|_{\mathcal{C}},\mathcal{C},\s^1) \neq 0$. We consider the function
	\begin{align*}
		g(\ep) = t(\ep) - t(\ep-1), \quad \ep \in [0,1].
	\end{align*}
	Without loss of generality, assuming $g(0) \le 0$, we have
	\begin{align*}
		g(1) = t(1) - t(0) = -(t(0) - t(-1)) = -g(0) \ge 0.
	\end{align*}
	By continuity, there exists $\ep_0 \in [0,1]$ such that $g(\ep_0) = 0$, that is $t(\ep_0) = t(\ep_0-1)$. Recalling $s(\ep_0-1) = -s(\ep_0)$, the pair of points $(t(\ep_0),s(\ep_0))$ and $(t(\ep_0-1),s(\ep_0-1))$ satisfies our requirement. The proof is complete.
\end{proof}

\begin{proof}[Proof of Proposition \ref{propc1=c2}]
	By Remark \ref{rmkalmost} we assume that $(c_\tau^2)'$ exists. Let $\{\tau_n\}\subset I$ be a monotone increasing sequence converging $\tau$ and $\varepsilon_n = (2-(c_\tau^2)')(\tau - \tau_n) \to 0^+$. Then, using the so-called "monotonicity trick", see details in \cite{Louis} or in the proof of \cite[Theorem 1.10]{BCJN}, we obtain a fixed $R = R((c_\tau^2)') > 0$ and $\{\gamma_n\} \subset \Gamma_2$ such that
	\begin{itemize}
		\item[(i)] $\displaystyle\max_{(t,s)\in [0,1]\times\s^1}\Phi_\tau(\gamma_n(t,s)) \leq c + \varepsilon_n$,
		\item[(ii)] $\gamma_n(t,s) \in B(0,R-1)$ whenever
		\begin{align*}
			\Phi_\tau(\gamma_n(t,s)) \geq c - \varepsilon_n, \quad (t,s) \in [0,1] \times \s^{1}.
		\end{align*}
	\end{itemize}
    In view of \eqref{eq6.5}, Lemma \ref{lemtopo1} and Lemma \ref{lemtopo2}, the set
    \begin{align*}
    	\bigl\{(t,s) \in [0,1]\times\s^1: \ga_n(t,s) \notin F^* \cup (-F^*)\bigr\}
    \end{align*}
     is path-connected. Thus, using \eqref{SMP2}, with $n$ sufficiently large we can find $\hat \gamma_n \in \Ga_1$ such that
     \begin{itemize}
     	\item[(a)] $\displaystyle\max_{(t,s)\in [0,1]\times\s^0}\Phi_\tau(\hat\gamma_n(t,s)) \leq c + \varepsilon_n$,
     	\item[(b)] $\hat\gamma_n(t,s) \in B(0,R-1)$ whenever
     	\begin{align*}
     		\Phi_\tau(\hat\gamma_n(t,s)) \geq c - \varepsilon_n, \quad (t,s) \in [0,1] \times \s^{0},
     	\end{align*}
        \item[(c)] $\hat \ga_n(t,s) \notin F^* \cup (-F^*)$ whenever
        \begin{align*}
        	\Phi_\tau(\hat\gamma_n(t,s)) \geq c - \varepsilon_n, \quad (t,s) \in [0,1] \times \s^{0}.
        \end{align*}
     \end{itemize}
     Then using \cite[Theorem 4.2]{BCJN}, we obtain $u_{n} \in S_\mu$ such that
     \begin{itemize}
     	\item[(1)]  $c - \varepsilon_n \le \Phi_\tau(u_{n}) \le c + \varepsilon_n$;
     	\item[(2)] $\|\nabla \Phi_\tau|_{S_\mu}(u_n)\| \leq 3\varepsilon_n^{\alpha_1}$ where $ 0 < \alpha_1 \le \frac{\alpha}{2(\alpha+2)} < 1$;
     	\item[(3)] $u_n \in \bigl\{\hat \gamma_n(t,s): (t,s) \in [0,1] \times \s^0\bigr\}$;
     	\item[(4)] If $D^2\Phi_\tau(u_n)[w, w] < -\varepsilon_n^{\alpha_1}\|w\|^2$ for all $w \neq 0$ in a subspace $W$ of $T_{u_n}S_\mu$, then
     	$\dim W \le 1$.
     \end{itemize}
     In view of (c) and (3) we conclude that $u_n \in S_\mu \backslash (-F^* \cup F^*)$. Recalling that $\varepsilon_n = (2-(c_\tau^2)')(\tau - \tau_n) \to 0^+$ we complete the proof.
\end{proof}

\section{Saddle-point type sign-changing solutions} \label{secappropro}

\subsection{Approximating problems with a parameter in the nonlinear term}

As already anticipated in the introduction, we can construct minimax structures by generalizing the mountain pass structure and obtain Palais-Smale sequences. However, in the present setting the existence of a bounded Palais-Smale sequence at the minimax level, is not straightforward. To overcome this issue, we follow the framework in \cite{BCJN} and introduce the family of functionals
\begin{align*}
	E_\tau(u,\Omega) = \frac12\int_\Omega|\nabla u|^2dx - \frac \tau p\int_\Omega|u|^pdx,
\end{align*}
depending on the parameter $\tau \in [1/2,1]$. The main idea is to apply Theorem \ref{Objective1}. In the following lemma we show that there exists the uniformly minimax geometry.

\begin{lemma} \label{lemsadpoi}
	Suppose $p_c < p < 2^*$. Let $k$ be a positive integer and let
	\begin{align} \label{eqassumonmu}
		\mu \in \bigl\{\mu > 0: \exists \rho > 0 \quad \text{such that} \quad \la_k(\Omega)\mu + \frac2p C_{p,N}^p \mu^{\frac{p(1-\ga_p)}{2}}\rho ^{\frac{p\ga_p}{2}} < \rho\bigr\}.
	\end{align}
	Then there exist two odd functions $\gamma_{i,k}: \s^{k-1} \to S_\mu(\Omega)$ where $i = 0,1$, such that the set
	\begin{align}
		\Gamma_k := \big\{\gamma \in C([0,1]\times \s^{k-1}, S_\mu(\Omega)): \forall t \in [0,1], \gamma(t,\cdot) \text{ is odd}, \gamma(0,\cdot) = \gamma_{0,k}, \text{ and } \gamma(1,\cdot) = \gamma_{1,k}\big\}
	\end{align}
    is not empty and is independent of $\tau$, and
    \begin{align} \label{eqctauN}
    	c_\tau^k := \inf_{\gamma \in \Gamma_k}\sup_{(t,s) \in [0,1]\times\s^{k-1}}E_\tau(\gamma(t,s),\Omega) > \max_{s\in \s^{k-1}}\max\big\{ E_\tau(\gamma_{0,k}(s),\Omega), E_\tau(\gamma_{1,k}(s),\Omega)\big\}.
    \end{align}
    Further, we have $c^k_\tau \to c^k_1$ as $\tau \to 1^-$.
\end{lemma}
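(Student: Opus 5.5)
We construct the two odd maps explicitly. For the starting map, take $\gamma_{0,k}(s) = \sqrt{\mu}\sum_{i=1}^k s_i\phi_i$ for $s=(s_1,\dots,s_k)\in\s^{k-1}$; its image is $\mathbb{M}_k$ from \eqref{eqdefMk}, which lies in $\overline{\B^\mu_{\la_k(\Omega)\mu}}$. For the endpoint map, use the $L^2$-preserving dilation $u\mapsto u_t(x):=t^{N/2}u(tx)$. Fix a ball $B_r(x_0)\subset\Omega$ and choose $k$ functions $\psi_1,\dots,\psi_k\in C_c^\infty(B_r(x_0))$ that are $L^2$-orthonormal. Then the odd map $s\mapsto \sqrt{\mu}\sum s_i\psi_i$ into $S_\mu(\Omega)$ can be concentrated by dilations around $x_0$, which stay supported in $\Omega$ for $t\ge1$. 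Along such dilations, $\int|\nabla u_t|^2 = t^2\int|\nabla u|^2$ and $\int|u_t|^p = t^{p\ga_p}\int|u|^p$. Since $p\ga_p>2$ and $\int|u|^p\ge |\Omega|^{(2-p)/2}\mu^{p/2}>0$ uniformly on the compact family, we can pick $T$ large so that, for all $s$, $\|(\sqrt\mu\sum s_i\psi_i)_T\|^2>\rho$ and $E_{1/2}\le E_{\tau}\big((\cdot)_T,\Omega\big)<0$ holds for every $\tau\in[1/2,1]$. Set $\gamma_{1,k}(s):=(\sqrt\mu\sum s_i\psi_i)_T$. Both maps are odd and independent of $\tau$.

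\textbf{$\Gamma_k\neq\emptyset$.} We need an odd homotopy in $S_\mu(\Omega)$ from $\gamma_{0,k}$ to $\sqrt\mu\sum s_i\psi_i$; after that, we concatenate with the dilation path $t\mapsto(\cdot)_t$, $t\in[1,T]$. Use the linear interpolation $(1-\theta)\sqrt\mu\sum s_i\phi_i+\theta\sqrt\mu\sum s_i\psi_i$, normalized in $L^2$. This works provided the interpolant never vanishes, i.e.\ the $k\times k$ Gram pairings do not degenerate. This is arranged by first rotating within $\mathrm{span}\{\psi_i\}$, or more simply by using the fact that any two injective linear maps $\R^k\to H_0^1$ are connected through injective linear maps, since the space of $k$-frames in an infinite-dimensional space is path connected. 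Linearity in $s$ gives oddness.

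\textbf{Strict inequality \eqref{eqctauN}.} The key tool is linking with $\U^\mu_\rho\cap\mathrm{span}\{\phi_1,\dots,\phi_{k-1}\}^\perp$. For $\gamma\in\Gamma_k$, consider
\[
A_\gamma=\{(t,s): \|\gamma(t,s)\|^2=\rho\}.
\]
Since $\|\gamma(0,s)\|^2\le\la_k\mu<\rho$ (by \eqref{eqassumonmu}) and $\|\gamma(1,s)\|^2>\rho$, the set $A_\gamma$ separates $\{0\}\times\s^{k-1}$ from $\{1\}\times\s^{k-1}$, and it is symmetric. A standard genus argument shows $\ga(A_\gamma)\ge k$: the projection onto the $s$-factor cannot be deformed within the complement, otherwise we would obtain a retraction contradicting Borsuk–Ulam. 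Hence $\gamma(A_\gamma)$ has genus $\ge k$, and by the Borsuk–Ulam argument of Lemma \ref{lemnonempty} it meets $\mathrm{span}\{\phi_1,\dots,\phi_{k-1}\}^\perp$ at some point of $\U^\mu_\rho$. At such a point, the Gagliardo–Nirenberg bound \eqref{eq5.3} and $\tau\le1$ give
\[
E_\tau\ge \tfrac12\rho-\tfrac1pC_{p,N}^p\mu^{p(1-\ga_p)/2}\rho^{p\ga_p/2}>\tfrac12\la_k(\Omega)\mu\ge \max_s E_\tau(\gamma_{0,k}(s),\Omega),
\]
using \eqref{eqassumonmu}. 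Together with $E_\tau(\gamma_{1,k}(s),\Omega)<0$, this yields \eqref{eqctauN}. The genus-of-separating-set step is the main obstacle; if it proves delicate, I would instead follow \cite[Remark 4.5]{PV} and \cite{CGJT}, which treat exactly this structure.

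\textbf{Continuity $c^k_\tau\to c^k_1$ as $\tau\to1^-$.} The map $\tau\mapsto c^k_\tau$ is nonincreasing, so $c^k_\tau\ge c^k_1$. For the reverse inequality, given $\varepsilon>0$, choose $\gamma\in\Gamma_k$ with $\sup E_1\circ\gamma<c_1^k+\varepsilon$. Its image is compact, hence $\int|\gamma|^p\le C$, and therefore $\sup E_\tau\circ\gamma\le c^k_1+\varepsilon+\frac{(1-\tau)}pC$. Letting $\tau\to1^-$ and then $\varepsilon\to0$ gives $\limsup_{\tau\to1^-}c^k_\tau\le c^k_1$, which completes the argument.
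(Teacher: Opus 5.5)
Your proof is correct. Its skeleton matches the paper's: $\gamma_{0,k}$ onto $\mathbb{M}_k$, an $L^2$-preserving dilation for $\gamma_{1,k}$, a connecting homotopy, the Gagliardo--Nirenberg bound on $\U^\mu_\rho$, and monotonicity in $\tau$.

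\textbf{Strict inequality.} The real difference is in your proof of \eqref{eqctauN}, where you take a much heavier route than necessary. The paper only uses that, for any $\gamma\in\Gamma_k$ and any fixed $s$, the continuous map $t\mapsto\|\gamma(t,s)\|^2$ goes from a value $\le\la_k(\Omega)\mu<\rho$ to a value $>\rho$. Hence $\gamma$ meets $\U^\mu_\rho$. The Gagliardo--Nirenberg estimate holds on \emph{all} of $\U^\mu_\rho$, which gives $c_\tau^k\ge\inf_{\U^\mu_\rho}E_\tau(\cdot,\Omega)\ge\frac12\rho-\frac1pC_{p,N}^p\mu^{p(1-\ga_p)/2}\rho^{p\ga_p/2}>\frac12\la_k(\Omega)\mu$.

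Your final display uses only that the point lies on $\U^\mu_\rho$, never its orthogonality to $\phi_1,\dots,\phi_{k-1}$. So the genus/linking step you flag as the main obstacle can simply be deleted. It is also the one place where your write-up is not airtight as stated. With the paper's definition of genus (existence of an odd map from $\s^{k-1}$ into the set), a closed symmetric separating set $A_\gamma$ need not have genus $\ge k$. The claim is true for Krasnoselskii's genus: identify $[0,1]\times\s^{k-1}$ with an annulus in $\R^k$, and note that $A_\gamma$ contains the boundary of a bounded symmetric open neighbourhood of $0$.

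\textbf{Where your version is cleaner than the paper's.}
\begin{itemize}
\item Dilating about a centre $x_0$ with $B_r(x_0)\subset\Omega$ keeps the supports inside $\Omega$. The paper's $t^{N/2}v_i(tx)$ tacitly needs this.
\item The H\"older bound $\int_\Omega|u|^p\,dx\ge|\Omega|^{(2-p)/2}\mu^{p/2}$ gives uniform control on the compact family.
\item Path-connectedness of the space of injective linear maps $\R^k\to H_0^1(\Omega)$ is a more robust way to show $\Gamma_k\neq\emptyset$ than the $SO(d)$ rotation.
\item You prove $\limsup_{\tau\to1^-}c_\tau^k\le c_1^k$ directly rather than citing it.
\end{itemize}

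\textbf{A small slip.} Since $E_\tau=A-\tau B$ with $B\ge0$, the correct inequality for $\tau\in[1/2,1]$ is $E_\tau(u_T,\Omega)\le E_{1/2}(u_T,\Omega)<0$, not $E_{1/2}\le E_\tau$.
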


\begin{proof}
	Let us define
	\begin{align*}
		\gamma_{0,k}: \s^{k-1} \to S_\mu(\Omega), \quad  (a_1,a_2,\cdots,a_N) \mapsto \sum_{i=1}^ka_i\sqrt{\mu}\phi_i.
	\end{align*}
	Note that $\gamma_{0,k}(\s^{k-1})$ is exactly the set $\mathbb{M}_k$ given in \eqref{eqdefMk}. If $\rho > \la_k(\Omega)\mu$ we have $\gamma_{0,k}(\s^{k-1}) \subset \B_\rho^\mu$. Let $\rho$ satisfy the inequality in \eqref{eqassumonmu}. By H\"older inequality, the assumption \eqref{eqassumonmu} and Gagliardo-Nirenberg inequality \eqref{eqgndomain}, for all $\tau \in [1/2,1]$, we get
	\begin{align} \label{eqmaxsn-1}
		\max_{s \in \s^{k-1}}E_\tau(\gamma_{0,k}(s),\Omega)  \leq \frac12 \la_k(\Omega)\mu < \frac12 \rho - \frac1p C_{p,N}^p \mu^{\frac{p(1-\ga_p)}{2}}\rho ^{\frac{p\ga_p}{2}} \leq \inf_{u \in \U_\rho^\mu}E_1(u,\Omega) \le \inf_{u \in \U_\rho^\mu}E_\tau(u,\Omega).
	\end{align}
	
	Let us take $k$ non-zero $H_0^1$ functions $v_1, v_2, \cdots, v_k$ such that $\text{supp} \ v_i \cap \text{supp} \ v_j = \emptyset$ and so $v_i$ is orthogonal to $v_j$ both in $H_0^1$ and $L^2$ for $i \neq k$. Denoting $v_{i,t}(x) := t^{N/2}v_i(tx)$ with $t > 1$. For any $i \in \{1,2,\cdots,k\}$, we can check that $v_{i,t} \in S_\mu(\Omega)$, and that
	\begin{align*}
		\int_\Omega|\nabla v_{i,t}|^2dx = t^2\int_\Omega|\nabla v_i|^2dx.
	\end{align*}
	So $\int_\Omega|\nabla v_{i,t}|^2dx > \rho$ and $v_{i,t} \notin \overline{\B_\rho^\mu}$ for $t$ sufficiently large (independent of $\tau$). Moreover, for $u = \sum_{i=1}^k a_iv_i$ with $(a_1,\cdots,a_N) \in \s^{k-1}$, we have $u_t = \sum_{i=1}^k a_iv_{i,t}$ where $u_t = t^{N/2}u(tx)$ and it holds that $u_t \in S_\mu(\Omega)$ and
	\begin{align*}
		E_\tau(u_t,\Omega) = \frac {t^2}2\int_\Omega|\nabla u|^2dx - \frac{\tau t^{\frac{(p-2)N}2}}p\int_\Omega|u|^pdx.
	\end{align*}
	Note that
	\begin{align*}
		& \int_\Omega|\nabla u|^2dx = \sum_{i=1}^k a_i^2 \int_\Omega|\nabla v_i|^2dx \leq \max_i \int_\Omega|\nabla v_i|^2dx, \\
		& \int_\Omega|u|^pdx = \sum_{i=1}^k a_i^p \int_\Omega|v_i|^pdx \geq k^{\frac{2-p}{p}}\min_i \int_\Omega|v_i|^pdx.
	\end{align*}
	Then we get
	\begin{align*}
		E_\tau(u_t,\Omega) \le \frac {t^2}2\max_i \int_\Omega|\nabla v_i|^2dx - \frac{ t^{\frac{(p-2)N}2}}{2p}k^{\frac{2-p}{p}}\min_i \int_\Omega|v_i|^pdx,
	\end{align*}
    and since $p > p_c$,
	\begin{align*}
		E_\tau(u_t,\Omega) < \max_{s \in \s^{k-1}}E_1(\gamma_{0,k}(s),\Omega) \leq \max_{s \in \s^{k-1}}E_\tau(\gamma_{0,k}(s),\Omega), \quad \tau \in [\frac12,1],
	\end{align*}
	for $t$ sufficiently large independent of $\tau$ and of $(a_1,\cdots,a_N) \in \s^{k-1}$. Now we take $w_i = v_{i,t}$ with $t$ sufficiently large and independent of $\tau$. Without loss of generality, we also assume that $w_i$ is non-negative in $\Omega$. Define
	\begin{align*}
		\gamma_{1,k}: \s^{k-1} \to S_\mu(\Omega), \quad (a_1,a_2,\cdots,a_N) \mapsto \sum_{i=1}^ka_iw_i.
	\end{align*}
	Then any $u \in \gamma_{1,k}(\s^{k-1})$ does not belong to $\B_{\rho}^\mu$ and
	\begin{align} \label{eqmaxsn-1two}
		\max_{s \in \s^{k-1}}E_\tau(\gamma_{1,k}(s),\Omega) < \max_{s \in \s^{k-1}}E_\tau(\gamma_{0,k}(s),\Omega).
	\end{align}
	
	We define the subspace $V_k=\text{span}\{\sqrt{\mu}\phi_1,\sqrt{\mu}\phi_2,\cdots, \sqrt{\mu}\phi_k;w_1, w_2,\cdots,w_k\}$. Note that $d = \dim(V_k) \leq 2k$. Let $R$ be an operator (in $L^2(\Omega)$) such that $R=I$ on $V_k^{\perp}$, $R(\sqrt{\mu}\phi_i)= w_i$, $i=1,2,\cdots ,k$. Possibly after permutations, we can choose $R$ such that $R\big |_{V_k}\in SO(d)$ (actually, there are infinitely many different choices of $R$). Now, since $SO(d)$ is connected, there is a continuous path $\tilde{\gamma}:\,[0,1]\rightarrow SO(d)$ such that $\tilde \gamma(0)=I$, $\tilde \gamma(1)=R\big |_{V_k}$. Let us define the map
	\begin{align*}
		\gamma: [0,1]\times \s^{k-1} \to S_\mu(\Omega), \quad (t,a_1,a_2,\cdots,a_k) \mapsto \sum_{i=1}^ka_i\tilde \gamma(t).
	\end{align*}
	It is clear that $\gamma$ is continuous, $\gamma(t,\cdot)$ is odd for all $t \in [0,1]$, and $\gamma(0,\cdot) = \gamma_{0,k}(\cdot)$, $\gamma(1,\cdot) = \gamma_{1,k}(\cdot)$. Hence, $\Gamma_k$ is not empty.
	
	For any $s \in \s^{k-1}$, we have $\gamma_{0,k}(s) \in \B^\mu_{\rho}$ and $\gamma_{1,k}(s) \notin \B^\mu_{\rho}$. By continuity, for any $\gamma \in \Gamma_k$, there exists $(t',s') \in [0,1]\times\s^{k-1}$ such that $\gamma(t',s') \in \U_{\rho}^\mu$. Hence,
	\begin{align*}
		\max_{(t,s) \in [0,1]\times\s^{N-1}}E_\tau(\gamma(t,s),\Omega) \geq E_\tau(\gamma(t',s'),\Omega) \geq \inf_{u \in \U_{\rho}^\mu}E_\tau(u,\Omega),
	\end{align*}
	and so, combining \eqref{eqmaxsn-1} and \eqref{eqmaxsn-1two} we conclude
	\begin{align*}
		c_\tau^k \geq \inf_{u \in \U_{\rho}^\mu}E_\tau(u,\Omega) > \max_{s \in \s^{N-1}}E_\tau(\gamma_{0,k}(s),\Omega) > \max_{s \in \s^{N-1}}E_\tau(\gamma_{1,k}(s),\Omega).
	\end{align*}
	
	Finally, we prove $c^k_\tau \to c^k_1$ as $\tau \to 1^-$. On the one hand, for any $u \in S_\mu(\Omega)$ and $\tau \in [1/2,1]$, we have $E_\tau(u,\Omega) \geq E_1(u,\Omega)$. This shows that $\liminf_{\tau \to 1^-}c^k_\tau \geq c^k_1$. On the other hand, it is well known that $\limsup_{\tau \to 1^-}c^k_\tau \leq c^k_1$ see, e.g. \cite{Louis}. Hence, we have $c^k_\tau \to c^k_1$ as $\tau \to 1^-$
	and complete the proof.
\end{proof}

\begin{remark} \label{rmklowerbound}
	It seems that the choice of $\ga_{1,k}$ in the proof of Lemma \ref{lemsadpoi}, and so the set $\Ga_k$, depends on $\rho$. However, for $\mu$ satisfying \eqref{eqassumonmu}, we have
	\begin{align*}
		\la_k(\Omega) \mu < \rho < \left(\frac p {2C_{p,N}^p}  \right)^{\frac{2}{p\ga_p-2}}\mu^{-\frac{p(1-\ga_p)}{p\ga_p-2}}
	\end{align*}
    since $p\ga_p > 2$. Thus, we can take $\ga_{1,k}(\s^{k-1})$ outside $\B_{g(\mu)}^\mu$ where
    $$
    g(\mu) = \left(\frac p {2C_{p,N}^p}  \right)^{\frac{2}{p\ga_p-2}}\mu^{-\frac{p(1-\ga_p)}{p\ga_p-2}},
    $$
    independent of the choice of $\rho$. Moreover, we provide a lower bound of $c_\tau^k$ independent of $\rho$. We define
    \begin{align*}
    	\Theta(\mu) := \bigl\{\rho > 0: \la_k(\Omega)\mu + \frac2p C_{p,N}^p \mu^{\frac{p(1-\ga_p)}{2}}\rho ^{\frac{p\ga_p}{2}} < \rho\bigr\}.
    \end{align*}
    We observe that the assumption \eqref{eqassumonmu} holds if and only if $ \Theta(\mu) \neq \emptyset$. Using \eqref{eqmaxsn-1} in the proof of Lemma \ref{lemsadpoi}, for all $\tau \in [1/2,1]$ we have
    \begin{align} \label{eqctooo}
    	c_\tau^k \geq \sup_{\rho \in \Theta(\mu)}\left( \frac12 \rho - \frac1p C_{p,N}^p \mu^{\frac{p(1-\ga_p)}{2}}\rho ^{\frac{p\ga_p}{2}}\right).
    \end{align}
     This estimate is useful to show that $c_\tau^k \to +\infty$ as $\mu \to 0^+$.
\end{remark}

Now we can apply Theorem \ref{Objective1} and Proposition \ref{propc1=c2} on the approximating problems.

\begin{proposition} \label{propaetau}
	Under the assumptions of Lemma \ref{lemsadpoi}, for almost every $\tau \in [1/2,1]$, there exists a critical point $u_\tau^k$ of $E_\tau(\cdot,\Omega)$ constrained to $S_\mu(\Omega)$ at level $c_\tau^k$, which solves
	\begin{align} \label{eqequoftau}
		\begin{cases}
			-\Delta u_\tau^k + \la_\tau^k u_\tau^k = \tau |u_\tau^k|^{p-2}u_\tau^k & \text{in } \Omega, \\[1.5\jot]
			\displaystyle
			u_\tau^k(x) = 0 & \text{ on } \partial \Omega,
		\end{cases}
	\end{align}
	for some $\la_\tau^k \geq -C_k$ where $C_k > 0$ is a constant depending on $k$ and independent of $\tau$. The Morse index $m(u_\tau^k) \leq k+2$. Moreover, $u_\tau^1$ is positive in $\Omega$ and $m(u_\tau^1) \leq 2$; and if $c_\tau^1 = c_\tau^2$, $u_\tau^2$ can be chosen as a sign-changing function with $m(u_\tau^2) \leq 2$. As $\mu \to 0^+$, we have $c_\tau^k \to +\infty$ uniformly with respect to $\tau$.
\end{proposition}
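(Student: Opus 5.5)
The plan is to apply Theorem \ref{Objective1} to $\Phi_\tau = E_\tau(\cdot,\Omega)$ on $E=H_0^1(\Omega)$, $H=L^2(\Omega)$, $I=[1/2,1]$. Here $A(u)=\frac12\|u\|^2$ and $B(u)=\frac1p\int_\Omega|u|^p\ge0$; condition \eqref{hp coer} holds because $A\to+\infty$, and $\alpha$-H\"older continuity of $E_\tau'$ and $E_\tau''$ on bounded sets is standard for $2<p<2^*$. The minimax geometry \eqref{SMP1}--\eqref{SMP2} is supplied, uniformly in $\tau$, by Lemma \ref{lemsadpoi}.

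\textbf{Existence and Morse bound.} For a.e. $\tau$ (all $k$ at once, since countable unions of null sets are null), Theorem \ref{Objective1} gives a bounded Palais--Smale sequence $\{u_n\}$ at level $c_\tau^k$ with $\tilde m_{\zeta_n}(u_n)\le k+1$. Boundedness and Lemma \ref{lempscondition} (with $\bar\la=0$, so that $V_\tau$ is the constrained gradient) give strong convergence to a constrained critical point $u_\tau^k$. The $\tau$-version of Lemma \ref{lemmorse} then gives $m(u_\tau^k)\le k+2$ and the equation \eqref{eqequoftau}.

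\textbf{Lower bound on $\la_\tau^k$.} The Morse bound gives it by the argument of Lemma \ref{lemlanbelow}. If $\la_\tau^k<-\la_{k+3}(\Omega)$, then $Q<0$ on $\mathrm{span}\{\phi_1,\dots,\phi_{k+3}\}$, contradicting $m\le k+2$. So $C_k=\la_{k+3}(\Omega)$ works, independent of $\tau$.

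\textbf{Case $k=1$.} Here $\tilde m_{\zeta_n}\le1$, so $m(u_\tau^1)\le2$. For positivity, I would use that $E_\tau(|u|)=E_\tau(u)$ and that $\Gamma_1$ consists of paths from $\pm\gamma_{0,1}$ to $\pm\gamma_{1,1}$ with $\gamma_{0,1}=\sqrt\mu\phi_1\ge0$ and $w_1\ge0$. Replacing a path by its absolute value gives an admissible path at the same level, so almost-optimal paths can be taken nonnegative. The monotonicity-trick construction then localizes the Palais--Smale sequence near nonnegative functions. In the limit this gives $u_\tau^1\ge0$, and the strong maximum principle gives $u_\tau^1>0$. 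A cleaner alternative is to invoke Theorem \ref{thmuepsym}-type localization with $P^{**}=\emptyset$ restricted to the cone.

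\textbf{Case $c_\tau^1=c_\tau^2$.} Apply Proposition \ref{propc1=c2} with $F^*=\overline{\p_\delta}\cap S_\mu(\Omega)$, with $\delta$ as in \eqref{eqconondelta0}. Lemma \ref{lemcapnonemp} gives $F^*\cap(-F^*)=\emptyset$. This yields a bounded Palais--Smale sequence in $S^*(\delta)$ with $\tilde m_{\zeta_n}\le1$. Its strong limit lies in $\overline{S^*(\delta)}$, hence is sign-changing, and has $m\le2$.

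\textbf{Divergence as $\mu\to0^+$.} Use \eqref{eqctooo} with the choice $\rho=\theta\,\mu^{-p(1-\ga_p)/(p\ga_p-2)}$ for a small fixed $\theta$. Then
\[
\frac12\rho-\frac1pC_{p,N}^p\mu^{\frac{p(1-\ga_p)}2}\rho^{\frac{p\ga_p}2}=\rho\Bigl(\frac12-\frac1pC_{p,N}^p\theta^{\frac{p\ga_p-2}2}\Bigr)\ge\frac\rho4 .
\]
Also $\la_k(\Omega)\mu<\rho/2$ for small $\mu$, so $\rho\in\Theta(\mu)$. Hence $c_\tau^k\ge\rho/4\to+\infty$ uniformly in $\tau$.

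\textbf{Main obstacle.} I expect the hardest step to be the positivity of $u_\tau^1$. It requires carrying sign information through the monotonicity trick in Theorem \ref{Objective1} rather than only through its conclusion. I would try first to redo the localization of \cite[Theorem 1.5]{BCJN} along the nonnegative almost-optimal paths.
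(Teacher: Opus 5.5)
Your proposal is correct and follows the paper's proof essentially step by step: Theorem~\ref{Objective1} with the geometry of Lemma~\ref{lemsadpoi}, strong convergence via Lemma~\ref{lempscondition}, Morse bounds via Lemma~\ref{lemmorse}, the lower bound on $\la_\tau^k$ via the argument of Lemma~\ref{lemlanbelow}, Proposition~\ref{propc1=c2} with $F^*=\overline{\p_\delta}$ for the sign-changing case, and \eqref{eqctooo} for the divergence of $c_\tau^k$. The differences are only cosmetic: you make the constant explicit ($C_k=\la_{k+3}(\Omega)$), you take $\rho=\theta\mu^{-p(1-\ga_p)/(p\ga_p-2)}$ instead of the exact maximizer $\rho_\mu$, and you sketch the localization along nonnegative almost-optimal paths to get positivity of $u_\tau^1$, where the paper simply cites the arguments of \cite{CJS}.
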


\begin{proof}
    We apply Theorem \ref{Objective1} to the family of functionals $E_\tau(\cdot,\Omega)$, with $E = H_0^1(\Omega)$, $H = L^2(\Omega)$, and $\Gamma_k$ defined in Lemma \ref{lemsadpoi}. Setting
	\begin{align*}
		A(u) = \frac12 \int_\Omega|\nabla u|^2dx \quad \text{and} \quad B(u) = \frac1p\int_\Omega|u|^pdx,
	\end{align*}
    assumption \eqref{hp coer} holds, since we have
    \begin{align*}
    	u \in S_\mu(\Omega), \|u\| \to +\infty \quad \Rightarrow \quad A(u) \to +\infty.
    \end{align*}
    Moreover, the unconstrained first and second derivatives of $E_\tau$ are H\"older continuous on bounded sets of $S_\mu(\Omega)$. In this way, together with Lemma \ref{lemsadpoi}, for almost every $\tau \in [1/2,1]$ there exists a bounded Palais-Smale sequence $\{u_n\} \subset S_\mu(\Omega)$ for the constrained functional $E_\tau(\cdot,\Omega)|_{S_\mu(\Omega)}$ at level $c_\tau^k$, and $\zeta_n \to 0^+$, such that $\tilde m_{\zeta_n}(u_n) \leq k+1$. By Lemma \ref{lempscondition}, we have $u_n \to u_\tau^k$ strongly in $H_0^1$, and $u_\tau^k$ is a constrained critical point, thus a solution to \eqref{eqequoftau} for some Lagrange multiplier $\la_\tau^k$. By Lemma \ref{lemmorse} we have $m(u_\tau^k) \leq k+2$. The existence of $C_k > 0$ depending on $k$ and independent of $\tau \in [1/2,1]$ such that $\la_\tau^k \geq -C_k$ can be deduced immediately from the uniform boundedness of Morse index of $u_\tau^k$, by using arguments in the proof of Lemma \ref{lemlanbelow}. Particularly, when $k = 1$, the geometry presented in Lemma \ref{lemsadpoi} degenerates to the mountain pass geometry and the positivity of $u_\tau^1$ and $m(u_\tau^1) \leq 2$ can be proved using arguments in \cite{CJS}. If $c_\tau^1 = c_\tau^2$, we use Proposition \ref{propc1=c2} with $F^* = \overline{\p_\delta}$, where $\delta > 0$ satisfies \eqref{eqconondelta0}. In view of Lemma \ref{lemcapnonemp}, we know \eqref{eq6.5} holds true. Thus we obtian a bounded Palais-Smale sequence $\{u_n\} \subset S_\mu(\Omega) \backslash (\overline{\p_\delta} \cup (\overline{-\p_\delta}))$ for the constrained functional $E_\tau(\cdot,\Omega)|_{S_\mu(\Omega)}$ at level $c_\tau^2$, and $\zeta_n \to 0^+$, such that $\tilde m_{\zeta_n}(u_n) \leq 1$. Then we can get a constrained critical point $u_\tau^2 \in S_\mu(\Omega) \backslash (\p_\delta \cup (-\p_\delta))$, yielding that $u_\tau^2$ is sign-changing, with $m(u_\tau^2) \leq 2$.

    Finally, we study the asymptotic behavior of $c_\tau^k$ as $\mu \to 0^+$. Let
    \begin{align*}
    	f(\rho) = \frac12 \rho - \frac1p C_{p,N}^p \mu^{\frac{p(1-\ga_p)}{2}}\rho ^{\frac{p\ga_p}{2}}, \quad \rho > 0.
    \end{align*}
    Computing directly we get that
    \begin{align*}
    	f'(\rho) = \frac12 - \frac{\ga_p}2 C_{p,N}^p \mu^{\frac{p(1-\ga_p)}{2}}\rho ^{\frac{p\ga_p}{2}-1},
    \end{align*}
    and
    \begin{align*}
    	f(\rho_\mu) = \left( \frac12 - \frac{1}{p\ga_p}\right) \rho_\mu = \max_{\rho > 0}f(\rho),
    \end{align*}
    where
    \begin{align*}
    	\rho_\mu = \left( \frac{1}{\ga_pC_{p,N}^p}\right) ^{\frac{2}{p\ga_p-2}}\mu^{-\frac{p(1-\ga_p)}{p\ga_p-2}}.
    \end{align*}
    Since $\ga_p < 1$ and $p\ga_p > 2$, for $\mu > 0$ sufficiently small we have
    \begin{align*}
    	\la_k(\Omega)\mu < \left( 1 - \frac{2}{p\ga_p}\right) \rho_\mu,
    \end{align*}
    showing that $\Theta(\mu) \neq \emptyset$, where $\Theta(\mu)$ is introduced in Remark \ref{rmklowerbound}. By \eqref{eqctooo}, we obtain that, as $\mu \to 0^+$,
    \begin{align*}
        c_\tau^k \geq \sup_{\rho \in \Theta(\mu)}f(\rho) = f(\rho_\mu) = \left( \frac12 - \frac{1}{p\ga_p}\right)\left( \frac{1}{\ga_pC_{p,N}^p}\right) ^{\frac{2}{p\ga_p-2}}\mu^{-\frac{p(1-\ga_p)}{p\ga_p-2}} \to +\infty.
    \end{align*}
    The proof is complete.
\end{proof}

\subsection{Proof of Theorem \ref{thmlarge}}

As a consequence of Proposition \ref{proprelationship}, we have the following result which is prepared to prove Theorem \ref{thmlarge}.

\begin{proposition} \label{proplabounded}
	Let $p_c < p < 2^*$ and $\{u_n\} \subset H_0^1(\Omega)$ be a sequence of solutions to
	\begin{align} \label{eq9.8}
		\begin{cases}
			-\Delta u_n + \la_n u_n = \tau_n u_n^{p-1} & \text{in } \Omega, \\[1.5\jot]
			u(x) = 0 & \text{ on } \partial \Omega,
		\end{cases}
	\end{align}
	where $\tau_n \to 1$ and $\la_n \in \R$. Suppose that
	\begin{align*}
		\int_\Omega|u_n|^2dx = \mu, \quad m(u_n) \leq \bar{k}, \quad \forall n \in \N,
	\end{align*}
	for some $\mu > 0$ and $\bar{k} \in \N$, and that
	\begin{align}
		\text{the sequence of the energy levels } \{E_{\tau_n}(u_n,\Omega)\} \text{ is bounded}.
	\end{align}
	Then the sequences $\{\la_n\} \subset \R$ and $\{u_n\} \subset H_0^1(\Omega)$ must be bounded. In addition, $\{u_n\}$ is a bounded Palais-Smale sequence for $E_1(\cdot,\Omega)$ constrained on $S_\mu(\Omega)$.
\end{proposition}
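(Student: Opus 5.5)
The plan is to reduce everything to Proposition \ref{proprelationship} and Lemma \ref{lemlanbounded}. These apply verbatim here: the sequence $\{u_n,\la_n,\tau_n\}$ solves \eqref{equn} with $m(u_n)\le\bar k$ and $\tau_n\to\bar\tau=1\in[1/2,1]$. (The nonlinearity in \eqref{eq9.8} is $\tau_n u_n^{p-1}$, which I read as $\tau_n|u_n|^{p-2}u_n$.) Since $p\neq p_c$, part (1) of Proposition \ref{proprelationship} says that boundedness of $\{E_{\tau_n}(u_n,\Omega)\}$ is equivalent to boundedness of $\{\la_n\}$. So the hypothesis immediately gives that $\{\la_n\}$ is bounded.

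Next I bound $u_n$ in $H_0^1(\Omega)$. By Lemma \ref{lemlanbounded}, $\{u_n\}$ is bounded in $L^\infty$. Since $\Omega$ is bounded, $\int_\Omega|u_n|^p\,dx\le |\Omega|\,\|u_n\|_{L^\infty}^p$ is then bounded. Testing the equation with $u_n$ gives
\begin{align*}
\int_\Omega|\nabla u_n|^2dx=\tau_n\int_\Omega|u_n|^pdx-\la_n\mu,
\end{align*}
and the right-hand side is bounded. Alternatively, one can combine this identity with the bounded energy $E_{\tau_n}(u_n,\Omega)$: eliminating $\int_\Omega|u_n|^p\,dx$ expresses $\left(\frac12-\frac1p\right)\int_\Omega|\nabla u_n|^2dx$ in terms of $E_{\tau_n}$ and $\la_n\mu$, both bounded.

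Finally, I show $\{u_n\}$ is a Palais--Smale sequence for $E_1$ on $S_\mu(\Omega)$. Each $u_n$ is a constrained critical point of $E_{\tau_n}(\cdot,\Omega)$ on $S_\mu(\Omega)$. For $\varphi\in T_{u_n}S_\mu(\Omega)$ with $\|\varphi\|\le1$, Hölder and Sobolev give
\begin{align*}
|E_1'(u_n,\Omega)[\varphi]| = |E_1'(u_n,\Omega)[\varphi]-E_{\tau_n}'(u_n,\Omega)[\varphi]| =(1-\tau_n)\left|\int_\Omega|u_n|^{p-2}u_n\varphi\,dx\right|\le(1-\tau_n)\mathcal S_p\|u_n\|_{L^p}^{p-1}.
\end{align*}
This tends to $0$ because $\tau_n\to1$ and $\|u_n\|_{L^p}$ is bounded. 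Hence $\|\nabla E_1|_{S_\mu(\Omega)}(u_n,\Omega)\|\to0$.

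Likewise $|E_1(u_n,\Omega)-E_{\tau_n}(u_n,\Omega)|=\frac{1-\tau_n}{p}\int_\Omega|u_n|^pdx\to0$, so the energies $E_1(u_n,\Omega)$ stay bounded. Passing to a subsequence, they converge to a level, which is the Palais--Smale level.

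I expect no real obstacle: all the hard analysis, namely the blow-up argument and the Pohozaev-based energy identity, is already contained in Proposition \ref{proprelationship} and Lemma \ref{lemlanbounded}. The only point to check is that those results were proved for sign-changing solutions with bounded Morse index and $\tau_n\to\bar\tau$. They were, so they apply here unchanged.
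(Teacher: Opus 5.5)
Your proposal is correct and follows the paper's own route: Proposition \ref{proprelationship} (using $p\neq p_c$) gives boundedness of $\{\la_n\}$, Lemma \ref{lemlanbounded} then gives the $L^\infty$ and hence $L^p$ bound, and testing the equation with $u_n$ gives the $H_0^1$ bound. Your explicit estimate $\|\nabla E_1|_{S_\mu(\Omega)}(u_n,\Omega)\|\le |1-\tau_n|\,\mathcal S_p\|u_n\|_{L^p}^{p-1}\to 0$ fills in the Palais--Smale claim that the paper leaves implicit, and your energy-identity alternative even makes the $L^\infty$ step unnecessary once $\{\la_n\}$ is known to be bounded.
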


\begin{proof}
	By Proposition \ref{proprelationship}, the sequence  $\{\la_n\}$ is bounded. Then using Lemma \ref{lemlanbounded} we know $\{u_n\}$ is bounded in $L^\infty$ and so in $L^p$. By \eqref{eq9.8} we get that $\{u_n\}$ is bounded in $H_0^1$ immediately and complete the proof.
\end{proof}

Finally we complete the proof of Theorem \ref{thmlarge}.

\begin{proof}[Proof of Theorem \ref{thmlarge}]
	Recall that
	\begin{align*}
		\Theta(\mu) := \bigl\{\rho > 0: \la_2(\Omega)\mu + \frac2p C_{p,N}^p \mu^{\frac{p(1-\ga_p)}{2}}\rho ^{\frac{p\ga_p}{2}} < \rho\bigr\}
	\end{align*}
	is given in Remark \ref{rmklowerbound} and we take $k =2$ here. Firstly, we prove that $\Theta(\mu) \neq \emptyset$ is equivalent to $0 < \mu < \mu_2$ where
	\begin{align} \label{eqmu2}
		\mu_2 := \sup \bigl\{\mu > 0: \exists \rho > 0 \quad \text{such that} \quad \la_2(\Omega)\mu + \frac2p C_{p,N}^p \mu^{\frac{p(1-\ga_p)}{2}}\rho ^{\frac{p\ga_p}{2}} < \rho\bigr\}.
	\end{align}
	From $p\ga_p > 2$ we see that $\mu_2 < +\infty$. Moreover, for any $\rho >0$ we have
	\begin{align*}
		\la_2(\Omega)\mu_2 + \frac2p C_{p,N}^p \mu_2^{\frac{p(1-\ga_p)}{2}}\rho ^{\frac{p\ga_p}{2}} \geq \rho
	\end{align*}
	by using the definition of $\mu_2$. Thus $\Theta(\mu_2) = \emptyset$. Next it suffices to show that for $\mu \in (0,\mu_2)$ it holds that $\Theta(\mu) \neq \emptyset$. By the definition of $\mu_2$, there exist $\omega_n < \mu_2$ and $\rho_n \in \Theta(\omega_n)$ such that $\omega_n \to \mu_2$. Since $p\ga_p > 2$ we get that $\rho_n$ is bounded, and up to a subsequence, we assume that $\rho_n \to \rho_0$ for some $\rho_0 > 0$. Therefore,
	\begin{align*}
		\la_2(\Omega)\mu_2 + \frac2p C_{p,N}^p \mu_2^{\frac{p(1-\ga_p)}{2}}\rho_0 ^{\frac{p\ga_p}{2}} = \rho_0.
	\end{align*}
	Then, for $\mu < \mu_2$ we have
	\begin{align*}
		\la_2(\Omega)\mu + \frac2p C_{p,N}^p \mu^{\frac{p(1-\ga_p)}{2}}\rho_0 ^{\frac{p\ga_p}{2}} < \rho_0,
	\end{align*}
	yielding that $\Theta(\mu) \neq \emptyset$.

	Let $0 < \mu < \mu_2$. By Proposition \ref{propaetau}, for almost every $\tau \in [1/2,1]$, there exist critical points $u_\tau^1$ and $u_\tau^2$ of $E_\tau(\cdot,\Omega)$ constrained to $S_\mu$ at levels $c_\tau^1$ and $c_\tau^2$ respectively, which solves \eqref{eqequationtau} for some $\la = \la_\tau^k \geq -C_k$, $k =1,2$. The Morse index $m(u_\tau^1) \leq 2$ and $m(u_\tau^2) \leq 4$. Moreover, $u_\tau^1$ is positive; and if $c_\tau^1 = c_\tau^2$, $u_\tau^2$ can be chosen as a sign-changing function with $m(u_\tau^2) \leq 2$. From now on we emphasize that $\Omega = B_1$ in this proof. As $\mu \to 0^+$, we have $c_\tau^1, c_\tau^2 \to +\infty$ uniformly with respect to $\tau$. If $c_\tau^1 < c_\tau^2$, we have $u_\tau^1 \neq u_\tau^2$. By Theorem \ref{thmunique}, there exists $\mu^{**}_p \in (0,\min\{\mu_2,\mu^*_{p,0}\})$ such that for $0 < \mu < \mu^{**}_p$, the function $u_\tau^2$ is sign-changing. From above discussion we always assume that $0 < \mu < \mu^{**}_p$ and $u_\tau^2$ is sign-changing with $m(u_\tau^2) \le 4$. Observer that $c_1^2 \le E_\tau(u_\tau^2) = c_\tau^2 \le c_{1/2}^2$. Hence, we can take a sequence $\tau_n \to 1^-$, with a corresponding sequence $\{u_n\} = \{u_{\tau_n}\}$ whose energy is uniformly bounded with respect to $n$. By Proposition \ref{proplabounded}, $\{u_n\}$ is a bounded Palais-Smale sequence of $E_1$ constrained to $S_\mu$. Up to a subsequence, there exists $u_{\text{sc},\mu} \in S_\mu$ such that $u_n \to u_{\text{sc},\mu}$ strongly in $H_0^1(B_1)$, and thus $u_{\text{sc},\mu}$ is a constrained critical point of $E_1$ constrained to $S_\mu$. By Lagrange multiplier principle, $u_{\text{sc},\mu}$ solves \eqref{eqonbounddomain} for a Lagrange multiplier $\la=\la_{\text{sc},\mu}$. It is relatively standard to see that $m(u_{\text{sc},\mu}) \leq 4$.
	
	Now we prove that $u_{\text{sc},\mu}$ is indeed sign-changing. Take $\rho > 0$ such that $\{u_n\} = \{u_{\tau_n}\} \subset \B_\rho^\mu$. Let $V_\tau$ be defined in \eqref{def:V}. It is clear that $V_{\tau_n}(u_n) = 0$. By Lemma \ref{lemproG4}, there exists $\delta > 0$ depending on $\rho$ and on $\mu$ but independent of $n$ such that $u_n \notin D^*_\delta$, that is $u_n \in S^*(\delta)$. Thus $u_{\text{sc},\mu} \in \overline{S^*(\delta)}$, showing that $u_{\text{sc},\mu}$ is sign-changing.
	
	Recall that
	\begin{align} \label{eqelarge}
		E(u_{\text{sc},\mu}) = c_1^2 \to +\infty \quad \text{as } \mu \to 0^+.
	\end{align}
	To complete the proof, it needs to prove that $\la_{\text{sc},\mu} \to +\infty$ as $\mu \to 0^+$. Observe that $m(u_{\text{sc},\mu}) \leq 4$, and, by Lemma \ref{lemlanbelow} and by Proposition \ref{proprelationship} we complete the proof.
\end{proof}

\medskip

\vskip0.3in

{\small \noindent \textit{\bf Statements and Declarations:} The authors have no relevant financial or non-financial interests to disclose.}

{\small \noindent \textit{\bf Data availability:} Data sharing is not applicable to this article as no datasets were generated or analysed during the current study.}

{\small \noindent \textit{\bf Acknowledgements:} This work was completed when the first author was visiting Universit\'e Marie et Louis Pasteur, LmB (UMR 6623). Song is funded by China Postdoctoral Science Foundation (2024T170452) and Zou is funded by National Key R\&D Program of China (Grant 2023YFA1010001) and NSFC (12571123). The authors thank Louis Jeanjean for reading carefully and providing useful suggestions to help improve the article.}

\end{document}